\DeclareMathOperator{\Rank}{rank}
\DeclareMathOperator{\vect}{vec}
\newcommand{\bs}[1]{\boldsymbol{#1}}
\newcommand{\argmin}{\mathop{\mathrm{arg\,min}}}
\newcommand{\half}{\frac{1}{2}}
\newcommand{\LRc}[1]{\left\{ #1 \right\}}
\newcommand{\LRp}[1]{\left( #1 \right)}
\newcommand{\LRs}[1]{\left[ #1 \right]}
\newcommand{\GM}[2]{\mc{N}\left( #1, #2 \right)}
\def\etal{{\it et al.~}}
\newcommand{\db}{\mb{d}}
\newcommand{\umap}{\ub^{\text{\normalfont{MAP}}}}
\newcommand{\umapSA}{\hat{\ub}^{\text{\normalfont{MAP}}}_\N}
\newcommand{\urma}{\ub^{\text{\normalfont{RMA}}}}
\newcommand{\urmap}{\ub^{\text{\normalfont{RMAP}}}}
\newcommand{\urs}{\ub^{\text{\normalfont{RS}}}}
\newcommand{\ursuOne}{\ub^{\text{\normalfont{RS\_U1}}}}
\newcommand{\urmarmap}{\ub^{\text{\normalfont{RMA+RMAP}}}}
\newcommand{\uenkf}{\ub^{\text{\normalfont{ENKF}}}}
\newcommand{\F}{\mc{F}}
\newcommand{\sCinv}{\sC^{-1}}
\newcommand{\Linv}{\L^{-1}}
\renewcommand{\Pr}[1]{\mathbb{P}\LRs{#1}}
\newcommand{\nor}[1]{\left\| #1 \right\|}
\newcommand{\snor}[1]{\left| #1 \right|}
\newcommand{\norm}[1]{\left \Vert #1 \right \Vert}
\newcommand{\abs}[1]{\left \vert #1 \right \vert}
\newcommand{\grad}{\nabla}
\newcommand{\bigO}{\mathcal{O}}
\newcommand{\mc}[1]{\mathcal{#1}}
\newcommand{\R}{\mathbb{R}}
\newcommand{\prob}{\mathbb{P}}
\renewcommand{\L}{{\bs{\Sigma}}}
\newcommand{\A}{\mc{A}}
\newcommand{\N}{N}
\newcommand{\M}{M}
\newcommand{\n}{n}
\newcommand{\I}{\mc{I}}
\newcommand{\J}{\mc{J}}
\renewcommand{\j}{j}
\renewcommand{\u}{u}
\newcommand{\U}{U}
\newcommand{\V}{V}
\renewcommand{\S}{S}
\newcommand{\ub}{{\bs{\u}}}
\newcommand{\zb}{{\bs{z}}}
\newcommand{\eb}{{\bs{e}}}
\newcommand{\sigb}{{\bs{\sigma}}}
\newcommand{\eps}{{{\varepsilon}}}
\newcommand{\epsb}{{\bs{\eps}}}
\newcommand{\lamb}{{\bs{\lambda}}}
\newcommand{\Ub}{{\bs{\U}}}
\newcommand{\Vb}{{\bs{\V}}}
\newcommand{\Sb}{{\bs{\S}}}
\newcommand{\Jstoch}{\tilde\J}
\newcommand{\Jsa}{\Jstoch_\N}
\newcommand{\Jexpect}{\mathscr{J}}
\newcommand{\JexpectSA}{\Jexpect_{\N}}
\newcommand{\rand}{{\bs{\xi}}}
\newcommand{\randP}{\mathcal{\pi}}
\newcommand{\randSpace}{\bs{\Xi}}
\newcommand{\del}{{\bs{\delta}}}
\newcommand{\lam}{{\bs{\lambda}}}
\newcommand{\omb}{{\bs{\omega}}}
\newcommand{\taub}{{\bs{\tau}}}
\newcommand{\epsP}{\randP_{\epsb}}
\newcommand{\delP}{\randP_{\del}}
\newcommand{\lamP}{\randP_{\lam}}
\newcommand{\sigP}{\randP_{\sigb}}
\newcommand{\omP}{\randP_{\omb}}
\newcommand{\ident}{\I}
\newcommand{\reals}{{\mathbb{R}}}
\renewcommand{\P}{\bs{P}}
\newcommand{\Sn}{\bs{S}_{\N}}
\newcommand{\Ln}{\bs{L}_{\N}}
\newcommand{\sigbarN}{\bar{\sigb}_N}
\newcommand{\delbarN}{\bar{\del}_N}
\newcommand{\meanIN}{\frac{1}{\N} \sum_{i=1}^{\N}}
\newcommand{\tol}{\beta}
\newcommand{\lip}{\mathcal{L}}
\newcommand{\likeb}{{\pi_{\text{like}}\LRp{\db|\ub}}}
\newcommand{\post}{{\pi\LRp{\ub | \db}}}
\newcommand{\prior}{{\pi_{\text{prior}}\LRp{\ub}}}
\newcommand{\Ex}{{\mathbb{E}}}
\newcommand{\expect}{\Ex}
\newcommand{\piprior}{{\pi_{\text{prior}}}}
\newcommand{\asconv}{\stackrel{a.s.}{\rightarrow}}
\newcommand{\figlab}[1]{\label{fig:#1}}
\newcommand{\eqnlab}[1]{\label{eq:#1}}
\newcommand{\theolab}[1]{\label{theo:#1}}
\newcommand{\corolab}[1]{\label{coro:#1}}
\newcommand{\propolab}[1]{\label{propo:#1}}
\newcommand{\lemlab}[1]{\label{lem:#1}}
\newcommand{\figref}[1]{\ref{fig:#1}}
\newcommand{\theoref}[1]{\ref{theo:#1}}
\newcommand{\cororef}[1]{\ref{coro:#1}}
\newcommand{\proporef}[1]{\ref{propo:#1}}
\newcommand{\lemref}[1]{\ref{lem:#1}}
\newcommand{\eqnref}[1]{\eqref{eq:#1}}
\newcommand{\seclab}[1]{\label{sect:#1}}
\newcommand{\secref}[1]{\ref{sect:#1}}
\algnewcommand\algorithmicparfor{\textbf{For all}}
\algnewcommand\algorithmicpardo{\textbf{do in parallel}}
\algnewcommand\algorithmicendparfor{\textbf{end\ For all}}
\def\jmpu{{\lbrack\!\lbrack \widetilde{\gamma}> \\
  <g,\widetilde{\Balpha}> \\
<g,\widetilde{\Bbeta}> \end{bmatrix}$, whereu\rbrack\!\rbrack}}
\def\sC{{\bs{\Gamma}}}
\def\Balpha{\mbox{\boldmath$\alpha$}}
\def\Bbeta{\mbox{\boldmath$\beta$}}
\newcommand{\mb}[1]{\mathbf{#1}}
\def\bd{\mb{d}}
\def\by{\mb{y}}
\newcommand{\TheTitle}{On Unifying  Randomized Methods for Inverse 
Problems}
\newcommand{\specificthanks}[1]{\@fnsymbol{#1}}
\title{{\TheTitle}}
\author{Jonathan Wittmer \\ Oden Institute \\ University of Texas at Austin \\ Austin, TX 78712  
        \And C G Krishnanunni \\ Department of Aerospace Engineering and Engineering Mechanics \\ University of Texas at Austin \\ Austin, TX 78712 
        \And Hai V. Nguyen \\ Department of Aerospace Engineering and Engineering Mechanics \\ University of Texas at Austin \\ Austin, TX 78712   
        \And  Tan Bui-Thanh \\ Oden Institute \\ Department of Aerospace Engineering and Engineering Mechanics \\ University of Texas at Austin \\ Austin, TX 78712 } 
\newcounter{theoremcount}
\newtheorem{theorem}[theoremcount]{Theorem}
\newtheorem{proposition}[theoremcount]{Proposition}
\newtheorem{lemma}[theoremcount]{Lemma}
\newtheorem{corollary}[theoremcount]{Corollary}
\newtheorem{remark}[theoremcount]{Remark}
\begin{document}

\bibliographystyle{vancouver}
\maketitle

\begin{abstract}  
This work unifies the analysis of various randomized methods for solving linear and nonlinear inverse problems
by framing the problem in a stochastic optimization setting. By doing so, 
we show that many randomized methods are variants of a sample average approximation.
More importantly, we are able to prove a single theoretical
result that guarantees the asymptotic convergence for a variety of randomized methods.
Additionally, viewing randomized methods as a sample average approximation enables us to 
prove, for the first time, a single non-asymptotic error result that holds for randomized methods under consideration. Another important consequence of our unified framework is that it allows us to discover new randomization methods. We present various numerical
results for linear, nonlinear, algebraic, and PDE-constrained inverse problems that verify the theoretical convergence results and provide a discussion on the apparently different convergence rates and the behavior for
various randomized methods. 
\end{abstract}
\keywords{Randomization, Bayesian Inversion, Ensemble Kalman Filter, randomized maximum {\textit a posteriori}}

\section{Introduction}
Solving large-scale ill-posed inverse problems that are governed by
partial differential equations (PDEs), though tremendously
challenging, is of great practical
importance in science and engineering. Classical deterministic inverse methodologies, which provide
point estimates of the solution, are not capable of 
accounting for the uncertainty in the inverse solution in a principled way. The Bayesian
formulation provides a systematic quantification of uncertainty by
posing the inverse problem as one of 
statistical inference. 
The Bayesian framework for inverse problems proceeds as follows: given
observational data $\db \in \R^k$ and their uncertainty, the governing forward
problem and its uncertainty, and a prior probability density function
describing uncertainty in the parameters $\ub \in
\R^n$, the solution of the inverse problems is the posterior
probability distribution $\post$ over the
parameters. Bayes' Theorem explicitly gives the posterior density as
\begin{align*}
\post \propto \likeb \times \prior
\end{align*}
which updates the prior knowledge $\prior$ using the
likelihood $\likeb$. The prior encodes any
knowledge or assumptions about the parameter space that we may wish to
impose before any data are observed, while the likelihood explicitly represents
the probability that a given set of parameters $\ub$ might give
rise to the observed data $\db$.

Even when the prior and noise probability distributions are Gaussian,
the posterior need not be Gaussian, due to possible nonlinearity embedded
in the likelihood.  For large-scale inverse problems, exploring
non-Gaussian posteriors in high dimensions to compute statistics is a
grand challenge since evaluating the posterior at each point
in the parameter space requires a solution of the parameter-to-observable map, 
including a potentially expensive forward model solve.
Using numerical quadrature to compute the mean and covariance matrix,
for example, is generally infeasible in high dimensions. 
Usually the method of choice for
computing statistics is Markov chain Monte Carlo (MCMC), which
judiciously samples the posterior distribution, so that sample
statistics can be used to approximate the exact ones.

The Metropolis-Hastings (MH) algorithm, first developed by
Metropolis \etal \cite{MetropolisRosenbluthRosenbluthEtAl53} and then
generalized by Hastings \cite{Hastings70}, is perhaps the most popular MCMC
method. Its popularity and attractiveness come from the ease of
implementation and minimal requirements on the target density and the
proposal density \cite{HaarioLaineMiraveteEtAl06, RobertCasella05}.
The problem, however, is
that standard MCMC methods often require millions of samples for
convergence; since each sample requires an evaluation of the
parameter-to-observable map, this could entail millions of expensive
forward PDE simulations\textemdash a prohibitive proposition.
On one hand, with the rapid development of parallel computing,
parallel MCMC methods \cite{Wang14, Byrd10, Wilkinson05, Brockwell06, Strid10}
are studied to accelerate the computation.
While parallelization allows MCMC algorithms to produce more samples in a
shorter time with multiple processors, such accelerations typically do not
improve the mixing and convergence of MCMC algorithms.
More sophisticated MCMC methods that exploit 
the gradient and higher derivatives of the log posterior (and hence
the parameter-to-observable map) \cite{DuaneKennedyPendletonEtAl87,
  Neal10, GirolamiCalderhead11, BeskosPinskiSanz-SernaEtAl11,
  Bui-ThanhGirolami14, CuiMartinMarzoukEtAl14, CuiLawMarzouk16,
  MartinWilcoxBursteddeEtAl12, Bui-ThanhGhattas12d,
  PetraMartinStadlerEtAl14} can, on the other hand, improve the mixing, acceptance rate,
and convergence of MCMC. Another sample-based family of approaches that
provide uncertainty quantification and are well-suited for 
parallelization on large clusters is the various forms of 
Stein variational gradient descent 
\cite{liu2016,han2018,chenghattas2020,zhuo2018}. Of related interest 
are particle filter methods such as those found in 
\cite{carpenter1999,vandermerwe2000,yang2013,soto2005} that evolve particles through a dynamical system 
over time, updating both an estimate of the state 
and uncertainty.

One approach to addressing the computational challenge in high-dimensional 
statistical inverse problems pose is to use randomization, either to 
reduce the dimension of the optimization problem used in estimating 
the maximum a posteriori (MAP) point \cite{LeEtAl2017}, or to aid in sampling from the posterior distribution 
\cite{WangBui-ThanhGhattas18}. Several methods have been proposed which utilize randomization 
to accelerate the solution of inverse problems
\cite{LeEtAl2017,WangBui-ThanhGhattas18,chen2020,avron2013,wang2017sketching,iglesias2013ensemble}. 
{\em As the main contribution of this paper}, we derive unified results 
of randomized inverse approaches that apply to a broad 
class of linear and nonlinear inverse problems
not only in the asymptotic regime,
but also for the non-asymptotic setting.
The asymptotic convergence and a non-asymptotic error bound of various existing
methods follows immediately as special cases of the general result.
\section{A unified analysis of randomized inverse problems through a sample average approximation lens}
\seclab{theory}
For the remainder of this paper, we will use lower case letters for 
scalar quantities ($\alpha$), boldface lower case letters for vectors ($\ub$) 
and boldface upper case letters for matrices ($\bs{A}$).
Further, we will use superscript lower case letters to denote 
sample index and subscript uppercase letters to denote the total 
number of samples, i.e. $\lamb^i$ is the $i^{\text{th}}$ sample
and $\ub_\N$ is a quantity depending on $\N$ samples. Lastly, 
descriptions or method names will be in uppercase superscripts, such 
as $\umap$ which is $\ub$ at the MAP point, for example. 
This should be clear from the context.

Therefore, let $\ub,
\ub_0 \in \R^\n$. 
The posterior measure $\nu$ in this case has the
density $\post$ with respect to the Lebesgue measure:
\begin{equation}
  \eqnlab{posterior}
\post \propto \likeb \times \prior,
\end{equation}
where the likelihood is given by $\likeb \propto \exp\LRp{-\Phi\LRp{\ub,\db}} = \exp\LRp{-\half\norm{\db -
    \F\LRp{\ub}}^2_{\L^{-1}}}$ and the prior by $\piprior \propto \exp\LRp{-\half \norm{\ub - \ub_0}^2_{\sC^{-1}}}$. Here, $\F\LRp{\ub}$ is known as the parameter-to-observable (PtO) map, an evaluation of which typically requires a solution of the forward model (e.g. partial differential equations) governing the underlying physics.
The {\em maximum a posteriori} (MAP) problem reads
\begin{equation}
\eqnlab{MAPfinite}
\umap := \arg\min_\ub \J\LRp{\ub; \ub_0, \db} :=\half\norm{\db -
    \F\LRp{\ub}}^2_{\L^{-1}} + \half \norm{\ub - \ub_0}^2_{\sC^{-1}},
\end{equation}
where $\sC \in \R^{\n\times \n}$ is the prior covariance matrix 
and $\L \in \R^{k \times k}$ is the noise covariance matrix.

To the end of the paper, we denote by $\Ex$ the expectation with subscript as the random variable with respect to which the expectation is taken. When the random variable is clear from the context we simply omit the subscript for brevity. 
Let $\sigb, \epsb, \del,$ and $\lam$ be finite dimensional independent 
  random variables with bounded second moments
  such that: 
  \begin{equation}
    \expect \LRs{\sigb} = 0, \quad
    \expect \LRs{\epsb \epsb^T} = \L^{-1}, \quad
    \expect \LRs{\del} = 0, \quad
    \expect \LRs{\lam \lam^T} = \sC^{-1}.
    \eqnlab{boundednessSigC}
  \end{equation}
  
  Let us define
  $ \rand = \LRs{\sigb, \epsb, \del, \lam}^T \in \randSpace$
  with joint probability distribution
  $\randP = \sigP \times \epsP \times \delP \times \lamP$. 
  Consider the following stochastic cost function:
  \begin{align}
    \eqnlab{stochastic_cost}
    \begin{split}
      \Jstoch\LRp{\ub; \ub_0, \db, \rand}:=
      &\half \norm{\epsb^T \LRp{\db + \sigb - \F\LRp{\ub}}}^2_2 + \half \norm{\lam^T \LRp{\ub - \ub_0 - \del}}^2_2\\
      =&\half \LRp{\db + \sigb - \F\LRp{\ub}}^T \epsb \epsb^T 
      \LRp{\db + \sigb - \F(\ub)} \\
      + &\half \LRp{\ub - \ub_0 - \del}^T \lam \lam^T 
      \LRp{\ub - \ub_0 - \del}.
    \end{split}   
  \end{align}
  Define 
  \begin{equation*}
    \eqnlab{expect_cost}
    \Jexpect \LRp{\ub; \ub_0, \db} := 
    \expect_\randP \LRs{\Jstoch \LRp{\ub; \ub_0, \db, \rand}},
  \end{equation*}
  and the sample average approximation (SAA) of $\Jexpect$ to be
  \begin{equation}
    \JexpectSA :=
    \frac{1}{\N} \sum_{\j=1}^\N \Jstoch \LRp{\ub; \ub_0, \db, \rand^\j}
    \eqnlab{SA_cost}
  \end{equation}
  where $\rand^\j$ are i.i.d. samples from $\randP$.
  Assume that both $\J$ and $\JexpectSA$ have a minimum and
  let us define
  \begin{equation}
    \eqnlab{MAPsols}
    \umap := \argmin_\ub \J, \text{ and }
    \umapSA := \argmin_\ub \JexpectSA.
  \end{equation}
Below we study  asymptotic and non-asymptotic convergence of $ \umapSA$ to $ \umap$.

\subsection{Asymptotic convergence analysis for inverse problems}
\seclab{asymptotic_theory}

\begin{theorem}[Asymptotic convergence of randomized nonlinear inverse problems]
  \theolab{asymptotic_convergence}
  Assume that $\F\LRp{\ub}$ is such that $\Jstoch$ is a convex, twice continuously differentiable function in $\ub$ for almost every $\rand$, and measurable\footnote{Here, measurable is with respect to the the $\sigma$-algebra given by the product $\sigma$-algebras of the deterministic $\LRp{\ub,\ub_0,\db}$ and random variables $\rand$.}. 
  Then the following hold true:
  \begin{enumerate}[i)]
  \item Minimizing $\Jexpect$ is equivalent to minimizing $\J$ in the sense: $\argmin_\ub \J = \argmin_\ub \Jexpect$.
  \item $\umapSA 
    \xrightarrow[N \to \infty]{a.s.}
    \umap$.
  \end{enumerate}
  \theolab{asymptotic}
\end{theorem}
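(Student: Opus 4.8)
The plan is to treat part (i) by a direct expectation computation and then to deduce part (ii) by recognizing the sample average $\JexpectSA$ as a quadratic function of the residuals $\db-\F\LRp{\ub}$ and $\ub-\ub_0$ whose coefficients are empirical means of i.i.d.\ quantities, so that the strong law of large numbers together with convexity gives consistency of the minimizers.

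\emph{Part (i).} This is pure algebra; convexity is not needed. Fix $\ub$, so that $\db-\F\LRp{\ub}$ and $\ub-\ub_0$ are deterministic. Expanding the two quadratic summands of $\Jstoch$ in \eqnref{stochastic_cost}, applying $\expect_\randP$, and using the mutual independence of $\sigb,\epsb,\del,\lam$ together with \eqnref{boundednessSigC} --- in particular $\expect\LRs{\epsb\epsb^T\sigb}=\expect\LRs{\epsb\epsb^T}\expect\LRs{\sigb}=\bzero$, $\expect\LRs{\lam\lam^T\del}=\bzero$, $\expect\LRs{\sigb^T\epsb\epsb^T\sigb}=\Trace\LRp{\L^{-1}\expect\LRs{\sigb\sigb^T}}$, and the analogous scalar identity for the prior term --- one obtains
\begin{equation*}
\Jexpect\LRp{\ub;\ub_0,\db}=\half\norm{\db-\F\LRp{\ub}}_{\L^{-1}}^2+\half\norm{\ub-\ub_0}_{\sC^{-1}}^2+C=\J\LRp{\ub;\ub_0,\db}+C ,
\end{equation*}
where $C=\half\Trace\LRp{\L^{-1}\expect\LRs{\sigb\sigb^T}}+\half\Trace\LRp{\sC^{-1}\expect\LRs{\del\del^T}}$ is finite (bounded second moments) and, crucially, independent of $\ub$. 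Since adding a constant leaves the set of minimizers unchanged, $\argmin_\ub\Jexpect=\argmin_\ub\J$, proving (i). As a byproduct, $\Jexpect=\expect_\randP\LRs{\Jstoch}$ is an average of the a.e.\ convex $\Jstoch$, hence convex, and so is $\J$; because $\sC^{-1}\succ0$, $\J$ is in fact strictly convex and coercive, so $\umap$ is its unique minimizer.

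\emph{Part (ii).} Expanding $\JexpectSA$ in \eqnref{SA_cost} in the same way, every realization takes the form
\begin{equation*}
\JexpectSA\LRp{\ub}=\half\LRp{\db-\F\LRp{\ub}}^{T}\bA_\N\LRp{\db-\F\LRp{\ub}}+\LRp{\db-\F\LRp{\ub}}^{T}\ba_\N+\half\LRp{\ub-\ub_0}^{T}\bB_\N\LRp{\ub-\ub_0}-\LRp{\ub-\ub_0}^{T}\bb_\N+C_\N ,
\end{equation*}
with $\bA_\N=\meanJN\epsb^\j{\epsb^\j}^{T}$, $\ba_\N=\meanJN\epsb^\j{\epsb^\j}^{T}\sigb^\j$, $\bB_\N=\meanJN\lam^\j{\lam^\j}^{T}$, $\bb_\N=\meanJN\lam^\j{\lam^\j}^{T}\del^\j$, and $C_\N$ the sum of the remaining scalar empirical averages. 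By the strong law of large numbers (all the means being finite by \eqnref{boundednessSigC}), on a single event of probability one $\bA_\N\to\L^{-1}$, $\bB_\N\to\sC^{-1}$, $\ba_\N\to\bzero$, $\bb_\N\to\bzero$, and $C_\N\to C$. Since $\F$ is continuous and $\JexpectSA$ depends jointly continuously on $\ub$ and on these finitely many coefficients, it follows that $\JexpectSA\to\Jexpect$ a.s., uniformly on every compact subset of $\R^\n$.

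It remains to pass from uniform-on-compacts convergence to convergence of the minimizers, and this is where the real work lies: one must rule out escape of $\umapSA$ to infinity. For $\N$ large, $\bB_\N\succeq\half\sC^{-1}\succ0$, $\bA_\N\succeq\bzero$ with $\ba_\N$ in the range of $\bA_\N$, and $\ba_\N,\bb_\N,|C_\N-C|$ are small; completing the square then gives $\JexpectSA\LRp{\ub}\ge\frac{1}{4}\norm{\ub-\ub_0}_{\sC^{-1}}^2-\nor{\ub-\ub_0}+C-2$, a lower bound that is coercive in $\ub$ uniformly in (large) $\N$. Together with the pointwise convergence $\JexpectSA\LRp{\ub_0}\to\Jexpect\LRp{\ub_0}$, this confines the sub-level set $\{\ub:\JexpectSA\LRp{\ub}\le\Jexpect\LRp{\ub_0}+1\}$ --- and hence $\umapSA$ --- to a fixed compact $\compact\subset\R^\n$ for all large $\N$, a.s. On $\compact$, writing $\delta_\N:=\sup_{\ub\in\compact}\snor{\JexpectSA\LRp{\ub}-\Jexpect\LRp{\ub}}\to0$ and using $\JexpectSA\LRp{\umapSA}\le\JexpectSA\LRp{\umap}$ together with $\JexpectSA\LRp{\umap}\to\Jexpect\LRp{\umap}$, we get $\Jexpect\LRp{\umapSA}\le\JexpectSA\LRp{\umapSA}+\delta_\N\le\Jexpect\LRp{\umap}+o(1)$, so $\Jexpect\LRp{\umapSA}\to\min\Jexpect$; since $\Jexpect$ is strictly convex, coercive, and continuous, this forces $\umapSA\to\umap$, i.e.\ $\umapSA\asconv\umap$. (Equivalently: pointwise convergence of convex functions implies epi-convergence, and one may invoke the standard epi-convergence / sample-average-approximation consistency theorem.) I expect the confinement step to be the main obstacle --- neither the pointwise strong law of large numbers nor uniform-on-compacts convergence controls $\JexpectSA$ at infinity --- and it is the uniform coercivity inherited from the positive-definite prior precision $\sC^{-1}=\expect\LRs{\lam\lam^T}$ that resolves it.
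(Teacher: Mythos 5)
Your part (i) is essentially the paper's computation (expand, use independence and \eqnref{boundednessSigC}, discard the $\ub$-independent constant), so there is nothing to flag there. Your part (ii), however, takes a genuinely different route. The paper simply invokes the SAA consistency theorem of Shapiro--Dentcheva--Ruszczynski (their Theorem 5.4) and checks its hypotheses: random lower semicontinuity of $\Jstoch$, a.e.\ convexity in $\ub$, finiteness/lsc of $\Jexpect$ near a point, nonemptiness and boundedness of the true solution set, and the pointwise LLN. You instead exploit the special quadratic-in-residual structure of $\Jstoch$: the sample objective is an affine function of finitely many empirical matrices and vectors ($\bA_\N,\ba_\N,\bB_\N,\bb_\N,C_\N$), so the strong LLN for those coefficients gives a.s.\ uniform convergence of $\JexpectSA$ to $\Jexpect$ on compacts, and the randomized prior term supplies a coercive lower bound, uniform in large $\N$, that confines $\umapSA$ to a fixed compact set; the standard argmin-consistency argument then finishes. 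What the paper's route buys is brevity and generality (the cited theorem handles measurability and non-compactness through convexity of random lsc integrands); what your route buys is a self-contained, elementary proof in which the only probabilistic input is the classical SLLN, and in which a.e.\ convexity of $\Jstoch$ is not even needed for the confinement step --- the compactness issue you correctly identify as the crux is resolved by the positive definiteness of $\sC^{-1}=\expect\LRs{\lam\lam^T}$ rather than by convexity. (Your SLLN applications are legitimate: the products such as $\epsb\epsb^T\sigb$ and $\sigb^T\epsb\epsb^T\sigb$ have finite first moments by independence, Cauchy--Schwarz, and \eqnref{boundednessSigC}.)

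Two caveats. First, your parenthetical claim that $\J$ is \emph{strictly} convex ``because $\sC^{-1}\succ 0$'' does not follow from the stated hypotheses: strict convexity of $\J=f+g$ with $g=\half\norm{\ub-\ub_0}^2_{\sC^{-1}}$ requires the misfit term $f$ itself to be convex, and a.e.\ convexity of $\Jstoch$ (whose prior part is only a rank-one quadratic per sample) does not by itself deliver convexity of $f$; a convex sum can fail to be strictly convex if $f$ curves downward exactly where $g$ curves upward. This matters only for uniqueness of $\umap$ in your final step. Without uniqueness your argument still yields that every limit point of $\umapSA$ minimizes $\Jexpect$, i.e.\ convergence to the solution set --- which is precisely what the paper's cited theorem gives as well, the pointwise statement $\umapSA\asconv\umap$ being meaningful in either proof only under the (implicit) assumption that the minimizer is unique. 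Second, you do not address measurability of $\umapSA$ (needed even to state a.s.\ convergence); the paper discharges this through the random lower semicontinuity condition of the cited theorem, and a one-line appeal to a measurable-selection result would close the same gap in your version.
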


\begin{proof}
  For the first assertion, consider only the first term of $\Jexpect$ as the second term follows analogously. We have
  \begin{align}
    \half \expect_{\randP} &\LRs{\LRp{\db + \sigb - \F(\ub)}^T \epsb \epsb^T 
                             \LRp{\db + \sigb - \F(\ub)}} \nonumber \\ 
                           &= \half \expect_{\sigP \times \epsP} \LRs{\LRp{\db + \sigb - \F(\ub)}^T \epsb \epsb^T 
                             \LRp{\db + \sigb - \F(\ub)}} \nonumber\\
                           &=\half \expect_{\sigP} \LRs{\LRp{\db + \sigb - \F(\ub)}^T \expect_{\epsP}
                             \LRs{\epsb \epsb^T} 
                             \LRp{\db + \sigb - \F(\ub)}} \nonumber \\
                           &=\half \expect_{\sigP} \LRs{\LRp{\db + \sigb - \F(\ub)}^T \L^{-1}
                             \LRp{\db + \sigb - \F(\ub)}} \nonumber \\
                           &= \half \LRp{\db  - \F(\ub)}^T \L^{-1}\LRp{\db - \F(\ub)} + 
                             \expect_{\sigP} \LRs{\sigb^T \L^{-1} \sigb}.
                             \eqnlab{expect_final}
  \end{align}
  
  The final term in \eqnref{expect_final} is constant with respect to $\ub$ 
  and can be ignored, leaving only the first term of  
  $\J$. Applying the same procedure to the second term of $\Jexpect$ shows that 
  minimizing $\Jexpect$ is equivalent to minimizing $\J$.
  
  We invoke \cite[Theorem 5.4]{ShapiroDentchevaRuszczynski09} to prove the second assertion. It is sufficient to verify the
  following conditions:

  \begin{enumerate}[(i)]
  \item $\Jstoch\LRp{\ub; \ub_0, \db, \rand}$ is random lower semicontinuous,
    
  \item  for almost every $\rand \in \randSpace$, 
    $\Jstoch\LRp{\ub; \ub_0, \db, \rand}$ is convex in $\ub$,
    
  \item 
  $\Jexpect\LRp{ \ub; \ub_0, \db}$ is lower semicontinuous in $\ub$ and there
    exists a point $\bar \ub \in \reals^\n$ such that 
    $\Jexpect \LRp{ \ub; \ub_0, \db} < \infty$ for all $\ub$ in a neighborhood 
    of $\bar \ub$; 
    
  \item the set of optimal solutions of the true problem 
    is nonempty and bounded; and
    
  \item the law of large numbers (LLN) \cite{Feller71,Durrett19} holds pointwise for $\JexpectSA$. 
  \end{enumerate}
  
  Clearly, $\Jstoch$ is a continuous function for every $\rand$, thus random lower semicontinuous as well.
  By assumption, $\Jstoch$ is also convex for almost every $\rand$. 
  Due to the boundedness assumptions \eqnref{boundednessSigC} and the fact that $\J$ is a continuous and convex function,
  $\Jexpect$ is also a continuous and convex function. 
  Furthermore, taking, for example, $\bar \ub = \ub_1$ in \eqnref{MAPsolution} it is straightforward to see that
  $\Jexpect \LRp{ \ub; \ub_0, \db} < \infty$ for any ball with finite radius centered at $\bar \ub$. The last two conditions are clear.
\end{proof}

An important special case of this theorem
occurs when we consider an inverse problem with a linear parameter-to-observable map. 
When the forward map $\F\LRp{\ub}$ is linear, the convexity and continuous differentiability assumptions are satisfied.
While requiring convexity is a strong assumption in general, this is not an insurmountable issue for regularized inverse problems.
Note that the Hessian of $\J$ is given by
\begin{equation*}
  \grad^2_\ub \J = \grad^2_\ub \F\LRp{\ub} \L^{-1} \LRp{\F\LRp{\ub} - \db} + \grad_\ub \F\LRp{\ub}^T \L^{-1} \grad_\ub \F\LRp{\ub} + \sC^{-1}. 
\end{equation*}
Thus the prior covariance matrix can be chosen such that $\grad^2_\ub \J$ is semi-positive definite. Indeed, this is the major
role that the prior covariance plays in regularizing the ill-posed inverse problem. 

Lastly, note that instead of treating all of the random variables 
$\sigb, \epsb, \lamb, $ and $\del$ as a single random variable, sampling them 
together as in \eqnref{SA_cost}, we could have chosen to approximate each random variable 
separately, applying \cite[Theorem 5.4]{ShapiroDentchevaRuszczynski09} 
four times to complete the asymptotic proof. 
\begin{equation}
    \eqnlab{independent_SA_sums}
    \tilde{\ub}^{\text{\normalfont{MAP}}}_N := \argmin_\ub \frac{1}{\N_1\N_2\N_3\N_4}\sum_{i=1}^{\N_1}
    \sum_{j=1}^{\N_2}\sum_{k=1}^{\N_3}\sum_{l=1}^{\N_4} 
    \Jsa \LRp{\ub; \ub_0, \db, \LRs{\sigb^i; \epsb^j; 
    \lamb^k; \del^l}}
\end{equation}
This flexibility in deciding how samples will be drawn will aid in both 
non-asymptotic convergence analysis and will provide great freedom in designing 
a variety of randomized methods to solve inverse problems in section 
\ref{section:numerical_results}. 

To alleviate some notational burden, let us define a few new quantities. 
\begin{subequations}
\eqnlab{SA_definitions}
\begin{align}
  \Sn &:= \meanIN \epsb^i (\epsb^i)^T, \quad \quad 
  \Ln := \meanIN \lam^i (\lam^i)^T, \\
  \sigbarN &:= \meanIN \sigb^i, \quad \quad \quad \quad 
  \delbarN := \meanIN \del^i.
\end{align}
\end{subequations}
Written in terms of norms, equation \eqnref{independent_SA_sums} becomes
\begin{align}
\eqnlab{independent_SA_loss}
\tilde{\ub}^{\text{MAP}}_N = \argmin_{\ub} \half \norm{\db + \sigbarN - \F \LRp{\ub}}_{\Sn}^2
 + \half \norm{\ub - \ub_0 - \delbarN}_{\Ln}^2.
 \end{align}
Note that \eqnref{independent_SA_loss} is equivalent to 
\eqnref{SA_cost} when at most 
one of $\epsb$ and $\sigb$ are randomized and at most one of 
$\lamb$ and $\del$ are randomized. This is because the only difference 
between the two cost functions is how samples of $\epsb$ interact
with samples of $\sigb$ and likewise, how samples of $\lamb$ interact with 
samples of $\del$.

\subsection{Non-asymptotic error analysis for nonlinear inverse problems}
\seclab{nonasymptotic_theory}
In addition to proving a general asymptotic convergence of randomized inverse problems,
it is also possible to derive a general non-asymptotic error bound with the 
slightly stronger assumption that the random variables are subgaussian
\cite{vershynin_2018}.
The general form of this non-asymptotic bound is useful in that it is easy
to identify the key components that go into forming the bound --- giving insight into the performance
of various methods by enabling easy simplification in the case that certain quantities are not randomized.
The derived bound gives a probabilistic worst-case for finite sample size $\N$ when all
of the above randomizations are implemented at the same time. By fixing some of the quantities,
the given bound can be simplified in a straightforward manner --- yielding a more insightful bound.

Additionally, we follow the standard vector norm convention of $\norm{\ub}_\infty := \max \LRp{\snor{\ub_1}, ..., \snor{\ub_n}}$ and $\norm{\ub}_1 := \sum_{i=1}^n \abs{\ub_i}$ for a vector $\ub \in \reals^n$. Matrix norms 
are understood to be induced norms \cite{trefethen1997numerical}.
Due to the equivalence of norms in finite dimensional spaces, all the results are also valid for other norms, albeit with different constants that possibly depend on the dimension. We use w.p. as the abbreviation for {\em with probability}.

\begin{proposition}[Convergence of mean-zero subgaussian random vector]
  \propolab{zero_mean}
    Let $\del^i$, for $i = 1, ..., \N$, be independent subgaussian random vectors in $\reals^n$ such that $\expect \LRs{\del^i} = \mb{0}$,
    $\expect \LRs{\del^i (\del^i)^T} = \Gamma$,
    and $\expect \LRs{(\del^i)^T\del^i} < \infty$. 
    Denote the empirical mean $\bar{\del} := \frac{1}{\N} \sum_{i=1}^{\N} \del^i$.
    Further, let
    $\zeta\LRp{\N, \tol} := \exp\LRp{-c \tol^2 \N}$
    for some $\tol > 0$ and $c$ is a constant possibly depending on the dimension $n$ but not on $\N$.
    Then 
    \begin{equation}
        \norm{\bar{\del}}_{\infty} \leq \tol \norm{\Gamma^\half}_\infty \quad \text{w.p. at least } 
        1 - \zeta \LRp{\N,\tol}.
    \end{equation}
\end{proposition}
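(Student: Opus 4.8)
The plan is to reduce the $\ell_\infty$ bound to $n$ scalar concentration statements, one per coordinate, and to control each by a standard sub-gaussian tail estimate. Since $\norm{\bar{\del}}_\infty > \tol\norm{\Gamma^\half}_\infty$ forces $\snor{(\bar{\del})_j} > \tol\norm{\Gamma^\half}_\infty$ for at least one index $j \in \LRc{1,\dots,n}$, a union bound gives
\begin{equation*}
  \Pr{\norm{\bar{\del}}_\infty > \tol\norm{\Gamma^\half}_\infty}
  \le \sum_{j=1}^{n}\Pr{\snor{\frac{1}{\N}\sum_{i=1}^{\N}(\del^i)_j} > \tol\norm{\Gamma^\half}_\infty}.
\end{equation*}
For a fixed $j$ the scalars $(\del^i)_j = \LRa{\del^i,e_j}$, $i = 1,\dots,\N$, where $e_j$ is the $j$-th standard basis vector, are independent, mean zero, and sub-gaussian; I write $K$ for a common bound on their $\psi_2$-norms, finite by the sub-gaussian hypothesis (for instance when the $\del^i$ are i.i.d., as in the sample-average setting above).

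Next I would apply the general Hoeffding inequality for a sum of independent mean-zero sub-gaussian random variables \cite{vershynin_2018}: for every $t \ge 0$,
\begin{equation*}
  \Pr{\snor{\sum_{i=1}^{\N}(\del^i)_j} \ge t}
  \le 2\exp\LRp{-\frac{c_0\, t^2}{\sum_{i=1}^{\N}\norm{(\del^i)_j}_{\psi_2}^2}}
  \le 2\exp\LRp{-\frac{c_0\, t^2}{\N K^2}},
\end{equation*}
with $c_0$ an absolute constant. Taking $t = \N\tol\norm{\Gamma^\half}_\infty$ (the claim being trivial if $\Gamma = 0$, since then $\del^i = \mb{0}$ a.s., so we may assume $\norm{\Gamma^\half}_\infty > 0$) bounds each summand above by $2\exp\LRp{-c_0\tol^2\N\,\norm{\Gamma^\half}_\infty^2/K^2}$, whence
\begin{equation*}
  \Pr{\norm{\bar{\del}}_\infty > \tol\norm{\Gamma^\half}_\infty}
  \le 2n\exp\LRp{-c_0\tol^2\N\,\norm{\Gamma^\half}_\infty^2/K^2}.
\end{equation*}
Absorbing the factor $2n$ together with the ratio $\norm{\Gamma^\half}_\infty^2/K^2$ into a single constant $c$ that is independent of $\N$ gives $\Pr{\norm{\bar{\del}}_\infty > \tol\norm{\Gamma^\half}_\infty} \le \exp(-c\tol^2\N) = \zeta(\N,\tol)$, which is the complementary-probability statement; the mild point that $2n\exp(-a\N\tol^2)\le\exp(-c\N\tol^2)$ needs $\N\tol^2$ bounded below is absorbed into $c$ as well (or handled in the remaining small-$\N\tol^2$ range via the Chebyshev bound $\Pr{\norm{\bar{\del}}_\infty > \tol\norm{\Gamma^\half}_\infty} \le \Trace(\Gamma)/(\N\tol^2\norm{\Gamma^\half}_\infty^2)$).

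The step I expect to be the real obstacle is the interface between the sub-gaussian parameter $K$ and $\norm{\Gamma^\half}_\infty$: a mean-zero sub-gaussian variable can have $\psi_2$-norm arbitrarily large relative to its standard deviation, so the ratio $K/\norm{\Gamma^\half}_\infty$, and hence $c$, genuinely depends on the law of $\del^i$ and --- through equivalence of norms on $\R^n$ --- on $n$, which is exactly what the hypothesis on $c$ permits. In the Gaussian case $\del^i \sim \GM{\mb{0}}{\Gamma}$ relevant to the randomized methods studied here, $\norm{\LRa{\del^i,x}}_{\psi_2} \le C_g\norm{\Gamma^\half x}_2$ for an absolute constant $C_g$, so $K \le C_g\norm{\Gamma^\half}_2 \le C_g\sqrt{n}\,\norm{\Gamma^\half}_\infty$ and $c$ may be chosen to depend only on $n$, consistent with the statement. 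All remaining manipulations --- the union bound, the choice of $t$, and the collection of constants --- are routine.
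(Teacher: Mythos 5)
Your overall route---reduce the $\ell_\infty$ bound to coordinates, apply general Hoeffding to each coordinate, and take a union bound---is a legitimate alternative to the paper's argument, which instead whitens $\del^i = \Gamma^\half\taub^i$ and invokes an $\ell_1$-norm tail bound for the whitened vector from \cite{gao2022tail}, so that the dimension enters the constant $c$ in the exponent rather than as a multiplicative prefactor. The genuine gap in your version is the last step, where $2n\exp\LRp{-a\N\tol^2}$ is converted into $\exp\LRp{-c\N\tol^2}$. That conversion is valid only when $\N\tol^2 \ge \ln(2n)/(a-c)$; for smaller $\N\tol^2$ no constant $c$ independent of $\N$ can absorb the factor $2n$, since your union bound is then larger than $\exp\LRp{-c\N\tol^2}$ (indeed larger than $1$ once $\N\tol^2 < \ln(2n)/a$), and the trivial bound $\Pr{\cdot}\le 1$ does not suffice because the target $\exp\LRp{-c\N\tol^2}$ is strictly below $1$. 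The Chebyshev fallback you offer does not close this regime either: precisely when $\N\tol^2 \le \ln(2n)/(a-c)$, the quantity $\Trace(\Gamma)/\LRp{\N\tol^2\norm{\Gamma^\half}_\infty^2}$ is of constant order or larger, whereas you would need it below $\exp\LRp{-c\N\tol^2}\approx 1-c\N\tol^2$. In this small-deviation regime the claim amounts to a lower bound of order $\N\tol^2$ on $\Pr{\norm{\bar\del}_\infty \le \tol\norm{\Gamma^\half}_\infty}$, i.e., small-ball/anti-concentration information that subgaussianity alone does not provide: with Rademacher coordinates, $n=\N=1$, $\Gamma=1$ and $\tol<1$, the failure probability equals $1$, which exceeds $\exp\LRp{-c\tol^2}$ for every finite $c$.

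To repair your argument you must do one of the following: keep the dimensional prefactor in the conclusion (exactly as the paper itself does in Corollary \cororef{alpha_subexponential_error}, which carries the factor $2n$); restrict the statement to $\N\tol^2 \ge C\log n$; or use distributional structure near the origin, e.g.\ Gaussian $\taub^i$, which is effectively what the paper's proof assumes when it writes $\del^i=\Gamma^\half\taub^i$ with $\taub^i\sim\GM{0}{\ident}$ and applies a prefactor-free tail bound to $\norm{\taub}_1$. Aside from this, your bookkeeping is sound: the per-coordinate Hoeffding step, the choice $t=\N\tol\norm{\Gamma^\half}_\infty$, and your explicit discussion of the interface between the coordinate $\psi_2$-norm $K$ and $\norm{\Gamma^\half}_\infty$ (with $c$ allowed to depend on $n$ and on the law) are all correct, and in the Gaussian case you could sharpen it by using $\norm{(\del)_j}_{\psi_2}\le C\sqrt{\Gamma_{jj}}\le C\norm{\Gamma^\half}_\infty$, which avoids the extra factor $\sqrt{n}$ coming from $\norm{\Gamma^\half}_2$.
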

 \begin{proof}
Define $\del^i = \Gamma^\half \taub^i$, where $\taub^i \sim \GM{0}{\ident}$. 
Thus, $\overline{\taub} = \frac{1}{N}\sum_{i=1}^N\taub^i \sim \GM{0}{\frac{\ident}{N}}$.
First from\footnote{While \cite[Theorem 1]{gao2022tail} is derived for Gaussian random matrices, it also applies to subgaussian random 
matrices because subgaussian random variables have the same bound for the expected value of 
their moment generating function (see \cite[Proposition 2.5.2]{vershynin_2018} for the details).
} \cite[Theorem 1]{gao2022tail} we have 
      \[
      \Pr{\nor{\taub}_{{\infty}} > \tol_1 } \;\; \leq \;\; \Pr{\nor{\taub}_{1} > \tol_1 } \;\; \leq \;\; \exp\LRp{-\frac{\tol_1^2}{4cn}},
      \]
      where $c$ is an absolute positive constant. Therefore, abusing notation to consolidate the constant terms, we have
    \[
      \Pr{\norm{\overline{\taub}}_\infty \; > \; \frac{\tol_1}{\sqrt{N}} } \;\; \le \;\; \exp\LRp{-c\tol_1^2}
      \]
      since $\sqrt{\N} \cdot \overline{\taub}$ has identity covariance. Next, set $\tol = \frac{\tol_1}{\sqrt{N}} $ and note that
      \[
      \Pr{\norm{\overline{\del}}_\infty \; \le \; \tol \norm{\Gamma^\half}_\infty} \;\; \ge \;\; \Pr{\norm{\overline{\taub}}_\infty \; \le \; \tol}
      \;\; \ge \;\;  1 - \exp\LRp{-c \tol^2 \N},
      \]
      and this concludes the proof.
\end{proof}

\begin{proposition}[Convergence of subgaussian random vector outer product]
  \propolab{outer_product}
    Let $\omb^i$ be a subgaussian random vector in $\reals^n$ such that 
    $\expect \LRs{\omb^i (\omb^i)^T} = \Gamma$ for $i = 1, ..., N$. 
    Define $\Omega_\N$ to be the random matrix formed by stacking $\omega^i$
    in the columns and scaling by $1/\sqrt{\N}$. 
    It follows that 
    \begin{equation}
        \norm{\Omega_\N \Omega_\N^T - \Gamma}_\infty \leq \tol \norm{\Gamma}_\infty
        \text{ w.p. at least } 1 - 2 \zeta\LRp{\N, \tol},
    \end{equation}  
    where $c$ is a constant depending on $n$ but not on $\N$.
\end{proposition}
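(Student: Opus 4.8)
The plan is to whiten the $\omb^i$ so as to reduce the statement to a standard concentration bound for a sample covariance matrix, bound each entry of that matrix by a sub-exponential Bernstein inequality, and then pass from the entrywise bound to the induced $\infty$-norm by a union bound, absorbing every dimension-dependent factor into the constant $c$ as the statement permits. Concretely, I would first write $\omb^i = \Gamma^{\half}\taub^i$, with $\taub^i$ i.i.d., $\expect\LRs{\taub^i} = \mb{0}$ and $\expect\LRs{\taub^i(\taub^i)^T} = \ident$, exactly as in the proof of Proposition~\proporef{zero_mean} (and, as noted there, the general sub-gaussian case is covered via \cite[Proposition~2.5.2]{vershynin_2018}). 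Setting $T_\N := \frac{1}{\N}\sum_{i=1}^{\N}\taub^i(\taub^i)^T$, one has $\Omega_\N\Omega_\N^T - \Gamma = \Gamma^{\half}\LRp{T_\N - \ident}\Gamma^{\half}$, so submultiplicativity of the induced $\infty$-norm gives $\norm{\Omega_\N\Omega_\N^T - \Gamma}_\infty \le \norm{\Gamma^{\half}}_\infty^2\,\norm{T_\N - \ident}_\infty$. Since $\Gamma$ is symmetric positive semidefinite, $\norm{\Gamma^{\half}}_2^2 = \norm{\Gamma}_2 \le \norm{\Gamma}_\infty$, and finite-dimensional norm equivalence yields $\norm{\Gamma^{\half}}_\infty^2 \le n\,\norm{\Gamma^{\half}}_2^2$; hence $\norm{\Omega_\N\Omega_\N^T - \Gamma}_\infty \le n\,\norm{\Gamma}_\infty\,\norm{T_\N - \ident}_\infty$, and it suffices to show $\norm{T_\N - \ident}_\infty \le \tol/n$ with probability at least $1 - 2\zeta(\N,\tol)$.

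For the probabilistic part, each entry $\LRp{T_\N - \ident}_{jk} = \frac{1}{\N}\sum_{i=1}^{\N}\LRp{\taub^i_j\taub^i_k - \ident_{jk}}$ is an average of i.i.d.\ centered random variables (centered because $\expect\LRs{\taub^i(\taub^i)^T} = \ident$), each a product of two coordinates of a sub-gaussian vector minus its mean, hence sub-exponential with sub-exponential norm bounded by an absolute constant. Bernstein's inequality for sub-exponential sums then gives $\Pr{\snor{\LRp{T_\N - \ident}_{jk}} \ge s} \le 2\exp\LRp{-c\,\N\min\LRp{s^2,s}}$, which for small $s$ is the Gaussian-type tail $2\exp\LRp{-c\,\N s^2}$. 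Since $\norm{A}_\infty = \max_j\sum_k\snor{A_{jk}} \le n\max_{j,k}\snor{A_{jk}}$, applying this with $s = \tol/n^2$ and a union bound over the $n^2$ entries gives $\Pr{\norm{T_\N - \ident}_\infty \ge \tol/n} \le 2n^2\exp\LRp{-c\,\N\tol^2/n^4}$; absorbing the factor $n^2$ and the powers of $n$ into a new $c$ depending only on $n$ (harmless, since in the regime where that absorption would fail one has $\zeta(\N,\tol) \ge \half$ and the asserted lower bound on the probability is vacuous) and combining with the deterministic reduction above yields $\Pr{\norm{\Omega_\N\Omega_\N^T - \Gamma}_\infty > \tol\norm{\Gamma}_\infty} \le 2\zeta(\N,\tol)$, as claimed.

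I expect the main obstacle to be the interaction of the factor $\Gamma^{\half}$ with the induced $\infty$-norm: unlike for the spectral norm, $\norm{\Gamma^{\half}}_\infty^2$ is not controlled by $\norm{\Gamma}_\infty$, so a naive estimate leaves $\norm{\Gamma^{\half}}_\infty^2$ on the right-hand side instead of $\norm{\Gamma}_\infty$; routing through the spectral norm, where $\norm{\Gamma^{\half}}_2^2 = \norm{\Gamma}_2 \le \norm{\Gamma}_\infty$, repairs this at the cost of the dimension-dependent constant that the statement already allows. The remaining steps — the sub-exponential norm being absolute after whitening, and absorbing the union-bound prefactor into the exponent — are routine bookkeeping; alternatively, one could invoke \cite[Theorem~1]{gao2022tail} rowwise, as in Proposition~\proporef{zero_mean}, in place of Bernstein's inequality.
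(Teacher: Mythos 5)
Your proof is essentially correct at the paper's level of rigor, but it takes a genuinely different route. The paper disposes of this proposition in one line by invoking the two-sided spectral-norm concentration of the sample covariance of sub-gaussian isotropic rows (\cite[Theorem 4.6.1]{vershynin_2018}) and then "algebraically manipulating" — i.e., whitening by $\Gamma^{\half}$ and converting the operator $2$-norm bound into an $\infty$-norm bound, with all dimension factors absorbed into $c$. You instead rebuild the concentration from scratch: after the same whitening, you bound each entry of $T_\N - \ident$ by Bernstein's inequality for sub-exponential variables (products of two sub-gaussian coordinates), take a union bound over the $n^2$ entries, and pass to the induced $\infty$-norm; your observation that $\norm{\Gamma^{\half}}_\infty^2$ is not controlled by $\norm{\Gamma}_\infty$ and must be routed through the spectral norm (using $\norm{\Gamma}_2 \le \norm{\Gamma}_\infty$ for symmetric $\Gamma$ plus norm equivalence) is exactly the right fix and is the same conversion the paper's citation implicitly requires. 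What the paper's route buys is brevity and a sharper, explicitly dimension-aware deviation ($\propto \sqrt{n/\N}$); what yours buys is a self-contained, elementary argument with direct entrywise control, at the cost of cruder constants — which the statement permits. Two caveats, both shared with the paper rather than specific to you: the constant also depends on the sub-gaussian norm of the whitened vectors (and the whitening presumes $\Gamma$ invertible, or working on its range), and for large $\tol$ the sub-exponential branch of Bernstein gives a tail $\exp\LRp{-c\N\tol}$ rather than $\exp\LRp{-c\N\tol^2}$, so your "the bound is vacuous there" remark does not actually cover that regime — but \cite[Theorem 4.6.1]{vershynin_2018} has the same $\max\LRp{\delta,\delta^2}$ behavior, so the paper's stated Gaussian-type tail carries the identical imprecision; for the small-$\tol$ regime the paper actually uses, your absorption argument is sound.
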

\begin{proof}
    This result follows from straightforward algebraic manipulation of 
    \cite[Theorem 4.6.1]{vershynin_2018}.
\end{proof}

An additional fact needed to prove a non-asymptotic bound is that the product of 
three subgaussian random variables is $\alpha$-subexponential  with $\alpha = 2/3$. The following discussion on $\alpha-$subexponential random variables is based on \cite{sambale2020some}. 
For a more compete treatment of the topic, \cite{sambale2020some} can be consulted. 

It has been established that the product of two subgaussian random variables is subexponential
\cite[Lemma 2.7.7]{vershynin_2018}. 
A centered random variable $X$ is said to be  $\alpha$-subexponential 
(or sub-Weibull \cite{vladimirova2020sub,zhang2022sharper})
if it satisfies  
\begin{equation*}
    \prob[\abs{X} \geq \tol] \leq 2 \exp \LRp{-c\tol^{\alpha}}, 
\end{equation*}
for any $\tol > 0$, $\alpha \in (0, 2]$, and some positive constant $c$.
To show that a random variable satisfies this condition, it is sufficient to show 
that the following Orlicz (quasi-) norm \cite{sambale2020some,vershynin_2018} is finite: 
\begin{equation}
    \eqnlab{alpha_subexponential_prob}
    \norm{X}_{\psi_{\alpha}} := \inf \LRs{\tol > 0 : \expect \exp \LRp{\LRp{\abs{X} / \tol}^\alpha} \leq 2} < \infty.
\end{equation}
When $\alpha < 1$, this is a quasi-norm since it does not satisfy the triangle inequality. 

\begin{proposition}[$\alpha$-subexponential from  product of three subgaussian random variables]
    \propolab{three_subgaussians}
    Let $X_1, X_2, X_3$ be subgaussian random variables. 
    Then $Y = X_1 X_2 X_3$ is an $\alpha$-subexponential random variable with $\alpha = 2/3$.
\end{proposition}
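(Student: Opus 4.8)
The plan is to verify the condition the paper has already singled out as sufficient: that the Orlicz quasi-norm $\norm{\cdot}_{\psi_\alpha}$ from \eqnref{alpha_subexponential_prob} is finite for $\alpha = 2/3$, i.e.\ $\norm{Y}_{\psi_{2/3}} < \infty$. Since each $X_i$ is subgaussian we have $\norm{X_i}_{\psi_2} < \infty$ for $i = 1,2,3$, so the statement reduces entirely to a product estimate for $\psi_\alpha$-norms with a bookkeeping of the exponents.

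The main tool I would invoke is the generalized H\"older inequality for Orlicz $\psi_\alpha$-(quasi-)norms: if $U \in L^{\psi_{\alpha_1}}$ and $V \in L^{\psi_{\alpha_2}}$ with $\alpha_1,\alpha_2 \in (0,\infty]$, then $UV \in L^{\psi_\gamma}$ with $1/\gamma = 1/\alpha_1 + 1/\alpha_2$ and $\norm{UV}_{\psi_\gamma} \le \norm{U}_{\psi_{\alpha_1}}\norm{V}_{\psi_{\alpha_2}}$ (this is the extension of \cite[Lemma 2.7.7]{vershynin_2018} to general exponents, as discussed in \cite{sambale2020some}). Applying it once with $U = X_1$, $V = X_2$ gives $\norm{X_1 X_2}_{\psi_1} \le \norm{X_1}_{\psi_2}\norm{X_2}_{\psi_2} < \infty$, since $1/1 = 1/2 + 1/2$; applying it a second time with $U = X_1 X_2 \in L^{\psi_1}$ and $V = X_3 \in L^{\psi_2}$ gives $\norm{X_1 X_2 X_3}_{\psi_{2/3}} \le \norm{X_1 X_2}_{\psi_1}\norm{X_3}_{\psi_2} < \infty$, since $3/2 = 1 + 1/2$. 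Hence $Y$ is $\alpha$-subexponential with $\alpha = 2/3$.

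As a self-contained alternative not relying on the Orlicz machinery, I would argue directly at the level of tails: if $\abs{X_1 X_2 X_3} \ge t$ then $\abs{X_i} \ge t^{1/3}$ for at least one $i$, so a union bound with the subgaussian tail bounds $\prob[\abs{X_i} \ge s] \le 2\exp(-c_i s^2)$ yields $\prob[\abs{Y} \ge t] \le 6\exp(-c\,t^{2/3})$ with $c = \min_i c_i$. The leading factor $6$ is then folded into the required factor $2$ by shrinking the constant: taking $c' = c\ln 2/\ln 6$, one has $2\exp(-c' t^{2/3}) \ge 1$ whenever $t^{2/3} \le \ln 6/c$, and $6\exp(-c t^{2/3}) \le 2\exp(-c' t^{2/3})$ whenever $t^{2/3} \ge \ln 6/c$, which together cover all $t > 0$.

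I expect the only real subtlety to be technical rather than conceptual. In the first approach one uses the product inequality in the regime $\gamma = 2/3 < 1$, where $\norm{\cdot}_{\psi_\gamma}$ is merely a quasi-norm, so the version of the generalized H\"older inequality cited must be the one stated for quasi-norms (as in \cite{sambale2020some}) rather than the classical $\psi_1$/$\psi_2$ statement. Additionally, if one insists on $Y$ being centered as in the definition, one should note that passing from $Y$ to $Y - \expect[Y]$ changes the Orlicz quasi-norm only by a bounded additive amount and leaves the exponent $\alpha = 2/3$ unchanged.
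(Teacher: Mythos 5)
Your proposal is correct, and both of your routes reach the stated conclusion, though neither is literally the paper's computation. The paper normalizes $\norm{X_i}_{\psi_2}=1$ and verifies $\expect \exp\LRp{\abs{Y}^{2/3}}\leq 2$ directly by two applications of Young's inequality: first $\abs{X_1}^{2/3}\abs{X_2}^{2/3}\abs{X_3}^{2/3}\leq \frac{1}{3}\LRp{X_1^2+X_2^2+X_3^2}$, then $uvw\leq\frac{1}{3}\LRp{u^3+v^3+w^3}$ to split the expectation of the product of exponentials. Your first route, iterating the generalized H\"older inequality $\norm{UV}_{\psi_\gamma}\leq\norm{U}_{\psi_{\alpha_1}}\norm{V}_{\psi_{\alpha_2}}$ with $1/\gamma=1/\alpha_1+1/\alpha_2$, is the same underlying idea packaged as a black box: the product inequality in the quasi-norm regime $\gamma<1$ is itself proved by exactly the paper's Young-inequality calculation, so your version trades a short self-contained computation for a citation, while also recording the quantitative bound $\norm{Y}_{\psi_{2/3}}\leq\norm{X_1}_{\psi_2}\norm{X_2}_{\psi_2}\norm{X_3}_{\psi_2}$ that the paper's WLOG normalization gives implicitly. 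Your second route is genuinely different and more elementary: the observation that $\abs{Y}\geq t$ forces $\abs{X_i}\geq t^{1/3}$ for some $i$, the union bound giving $6\exp\LRp{-c\,t^{2/3}}$, and your constant-shrinking argument to absorb the factor $6$ into the required $2$ are all correct, and they establish the tail-form definition directly without any Orlicz machinery; the mild cost is that it yields the defining tail inequality rather than an explicit $\psi_{2/3}$-norm bound, which is all that is needed downstream since Corollary \cororef{alpha_subexponential_error} only uses the tail behavior. One small looseness: for $\alpha<1$ the $\psi_\alpha$ functional obeys only a quasi-triangle inequality, so subtracting $\expect[Y]$ changes the quasi-norm up to a constant factor rather than by a purely additive amount; the conclusion you draw (finiteness and the exponent $2/3$ are preserved) is nevertheless right, and in the paper's application the relevant variable is already zero-mean, so nothing is affected.
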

\begin{proof}
    It suffices to show that $\expect \exp\LRp{\LRp{\abs{Y}}^{2/3}}\leq 2$.
    Without loss of generality, assume $\norm{X_i}_{\psi_2} = 1$. Then $\expect \exp\LRp{X_i^2} \leq 2$ and we have
    \begin{align*}
        \expect \exp\LRp{\LRp{\abs{Y}}^{2/3}} &= \expect \exp\LRp{\LRp{\abs{X_1}^{2/3}\abs{X_2}^{2/3}\abs{X_3}^{2/3}}}\\
        &\leq \expect \exp\LRp{ 
            \frac{{X_1}^{2}}{3} +
            \frac{{X_2}^{2}}{3} +
            \frac{{X_3}^{2}}{3}
        }  \quad \LRp{\text{Young's inequality for 3 variables \cite{Youngs3Variables}}}\\
        &= \expect 
            \exp\LRp{\frac{{X_1}^{2}}{3}}
            \exp\LRp{\frac{{X_2}^{2}}{3}}
            \exp\LRp{\frac{{X_3}^{2}}{3}} \\
        &\leq \expect \frac{1}{3}\LRs{
            \exp\LRp{{X_1}^{2}} +
            \exp\LRp{{X_2}^{2}} +
            \exp\LRp{{X_3}^{2}}
        }  \quad \LRp{\text{Young's inequality again}}\\
        &\leq 2.
    \end{align*}
\end{proof}

\begin{corollary}
    \corolab{alpha_subexponential_error}
    Let $\del \in \R^n$ be a zero-mean random vector such that $\expect[\del \del^T] = \sC$ 
    with  $\alpha$-subexponential entries. Define 
    $\zeta_\alpha\LRp{N, \tol} := \exp\LRp{-c\tol^\alpha N^{\alpha / 2}}$.
    Then 
    \begin{equation}
        \prob \LRs{\norm{\frac{1}{N}\sum_{i=1}^n \del^i}_\infty \leq \tol \norm{\sC^{\half}}_\infty}  
        \geq 1 - 2n\zeta_\alpha \LRp{N, \tol}.
    \label{eq:subExpBound}
    \end{equation}
\end{corollary}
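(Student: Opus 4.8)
The plan is to reduce Corollary \ref{coro:alpha_subexponential_error} to a coordinatewise tail bound on empirical averages of $\alpha$-subexponential scalars, then union bound over the $n$ coordinates. First I would write $\bar{\del} := \frac{1}{N}\sum_{i=1}^N \del^i$ and observe that $\norm{\bar\del}_\infty = \max_{1\le k\le n} \abs{(\bar\del)_k}$, where $(\bar\del)_k = \frac{1}{N}\sum_{i=1}^N (\del^i)_k$ is an average of $N$ i.i.d.\ mean-zero $\alpha$-subexponential random variables. The key probabilistic ingredient is a Bernstein-type concentration inequality for sums of independent $\alpha$-subexponential variables (e.g.\ \cite{sambale2020some}): for $\alpha \in (0,1]$ the centered sum $\sum_{i=1}^N Y_i$ of i.i.d.\ copies with $\norm{Y}_{\psi_\alpha} = K$ satisfies, for moderate deviations, $\prob[\abs{\frac{1}{N}\sum_i Y_i} \ge t] \le 2\exp(-c N^{\alpha/2} (t/K)^\alpha)$ — the $N^{\alpha/2}$ scaling (rather than $N$) is exactly the price paid for the heavier-than-subexponential tail, and it is what produces the $\zeta_\alpha(N,\tol) = \exp(-c\tol^\alpha N^{\alpha/2})$ factor in the statement. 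I would cite this directly rather than re-derive it.

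The second step is to convert the per-coordinate threshold into the $\norm{\sC^\half}_\infty$ form appearing in the bound, mirroring the argument already used in Proposition \ref{propo:zero_mean}. Writing $\sigma_k^2 := (\sC)_{kk} = \expect[(\del^i)_k^2]$, each coordinate satisfies $\prob[\abs{(\bar\del)_k} > \tol \sigma_k] \le 2\zeta_\alpha(N,\tol)$ after absorbing constants, since rescaling $Y = (\del^i)_k / \sigma_k$ leaves an $\alpha$-subexponential variable of unit-order Orlicz norm. Then I would note $\sigma_k = \norm{\sC^\half e_k}_2 \le \norm{\sC^\half}_\infty$ is dominated by the induced $\infty$-norm (here one uses $\norm{\sC^\half}_\infty = \max_k \sum_j \abs{(\sC^\half)_{kj}} \ge \max_k \abs{(\sC^\half)_{kk}}$, and more carefully that $\sigma_k = \norm{k\text{-th row of }\sC^\half}_2 \le \norm{k\text{-th row of }\sC^\half}_1 \le \norm{\sC^\half}_\infty$), so that $\{\abs{(\bar\del)_k} \le \tol \norm{\sC^\half}_\infty\} \supseteq \{\abs{(\bar\del)_k} \le \tol \sigma_k\}$ and the per-coordinate failure probability is still at most $2\zeta_\alpha(N,\tol)$.

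The final step is the union bound: $\prob[\norm{\bar\del}_\infty > \tol \norm{\sC^\half}_\infty] \le \sum_{k=1}^n \prob[\abs{(\bar\del)_k} > \tol\norm{\sC^\half}_\infty] \le 2n\,\zeta_\alpha(N,\tol)$, which upon complementation gives exactly \eqref{eq:subExpBound}. (The stray $\sum_{i=1}^n$ in the statement should read $\sum_{i=1}^N$; I would fix this in passing.) I expect the main obstacle to be \emph{bookkeeping of constants and the correct form of the $\alpha$-subexponential Bernstein inequality}: unlike the subgaussian/subexponential cases there is a genuine crossover between a Gaussian-type regime and a Weibull-type regime in the tail, and one must make sure that the regime giving the $\exp(-cN^{\alpha/2}\tol^\alpha)$ term is the relevant one for all $\tol > 0$ (or, alternatively, restrict to a stated range of $\tol$, or state that $c$ may depend on which regime dominates). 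Keeping the dependence of $c$ on $n$ out of it — the corollary allows $c$ to depend on $n$, but actually the $n$-dependence here is only the explicit factor $2n$ from the union bound, so I would be careful to state that $c$ is in fact dimension-free and the dimension enters solely through the prefactor.
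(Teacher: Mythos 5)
Your proposal is correct in substance and, in fact, supplies an argument that the paper leaves implicit: Corollary \cororef{alpha_subexponential_error} is stated without a proof. The route suggested by the surrounding text would be to mimic Proposition \proporef{zero_mean}, but the whitening-and-rescaling step used there (that $\sqrt{N}\,\overline{\taub}$ has the same Gaussian form as $\taub$) has no sub-Weibull analogue, since $\psi_\alpha$ laws are not stable under averaging; your coordinatewise reduction to a Bernstein-type inequality for sums of $\alpha$-subexponential variables \cite{sambale2020some}, followed by a union bound over the $n$ coordinates, is the right way to make the claim rigorous. It also explains the $N^{\alpha/2}$ rate: the two-regime bound $\exp\LRp{-c\min\LRc{N\tol^2,\LRp{N\tol}^{\alpha}}}$ for the empirical mean dominates $\exp\LRp{-cN^{\alpha/2}\tol^{\alpha}}$ precisely when $\tol \geq N^{-1/2}$, and for $\tol < N^{-1/2}$ one may simply take the constant in $\zeta_\alpha$ no larger than $\ln 2$ so that the asserted probability bound exceeds one and holds vacuously; this disposes of the crossover issue you flag without restricting the range of $\tol$. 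Your treatment of the threshold, $\sigma_k = \norm{\text{row}_k\LRp{\sC^{\half}}}_2 \leq \norm{\sC^{\half}}_\infty$, the union bound producing the prefactor $2n$, and the correction of the summation index from $n$ to $N$ in \eqref{eq:subExpBound} are all right. The one point you should state explicitly is that rescaling a coordinate by its standard deviation does not by itself produce a unit-order Orlicz norm: the constant $c$ must absorb the ratio $\max_k \norm{\LRp{\del}_k}_{\psi_\alpha}/\sigma_k$, a distributional (not dimensional) quantity. This is consistent with the paper's loose handling of constants, but without acknowledging it the per-coordinate bound at threshold $\tol\sigma_k$ is not justified exactly as written.
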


Note that while inequality \eqref{eq:subExpBound} has the dimension $n$ in front of the exponential, 
this is fixed and the probability of committing an error greater than some tolerance 
still decreases exponentially in the number of 
samples $N$. Though the decaying is at the slower rate $\propto\exp\LRp{-N^{\alpha / 2}}$ compared to 
the subgaussian error rate $\propto\exp\LRp{-N}$, it is not surprising because 
subgaussian is a special case of $\alpha$-subexponential when $\alpha = 2$ \cite{sambale2020some}.
These results can be combined to derive a probabilistic non-asymptotic error bound for
nonlinear inverse problems.

\begin{lemma}[Non-asymptotic error analysis for randomized nonlinear inverse problems]
\lemlab{nonasymptotic_bounds}
Let $\vect\LRp{\L^{-1}}$ denote a vectorization of  a matrix $\L^{-1}$. Define 
\[ 
\P := \LRs{\vect\LRp{\L^{-1}};\; \vect\LRp{\sC^{-1}};\; \eb ;\; \zb}
\]
 as a vector concatenating all  four vectors $\vect\LRp{\L^{-1}}, \vect\LRp{\sC^{-1}}, \eb$ and $ \zb$, where $\L \in \R^{k\times k}, \sC \in \R^{n\times n}, \eb \in \R^k$, and $\zb\in \R^n$. 
Define the function 
\begin{equation*}
   g\LRp{\P;\ub} 
    := \grad_\ub \F\LRp{\ub} \LRs{\L^{-1} \LRp{\F \LRp{\ub} - \db} - \eb}
    + \sC^{-1} \LRp{\ub - \ub_0} - \zb.
\end{equation*}
Assume that the problem $g\LRp{\P;\ub}  = 0$, with $\P$ as parameters and $\ub$ as solution, is Lipschitz well-posed \cite{latz2020well}
with Lipschitz constant $\lip$, and we define $\mathcal{G}\LRp{\P}$ as the solution $\ub$. 
Let
\begin{align*}
\P^{\text{\normalfont{MAP}}} \,\; &:= \LRs{\vect\LRp{\L^{-1}}; \;\vect\LRp{\sC^{-1}};\; 0;\; 0},\\
\tilde\P_{N} \quad\; &:= \LRs{\vect\LRp{\Sn};\; \vect\LRp{\Ln};\; 
\Sn\sigbarN;\; \Ln\delbarN},\\
\hat\P_{N}\quad\; &:= \LRs{\vect\LRp{\Sn};\; \vect\LRp{\Ln};\; 
\frac{1}{N}\sum_{i=1}^N \epsb^i \LRp{\epsb^i}^T \sigb^i;\;
\frac{1}{N}\sum_{i=1}^N \lamb^i \LRp{\lamb^i}^T \del^i},
\end{align*}
where $\expect[\sigb \sigb^T] = \L$ and $\expect[\del \del^T] = \sC$.
Then
\begin{align}
   \eqnlab{independent_SA_nonasymptotic}
   \norm{\umap - \tilde{\ub}^{\text{\normalfont{MAP}}}_N}_\infty &\leq \tol \lip 
   \LRp{\norm{\L^{-1}}_\infty + \norm{\sC^{-1}}_\infty 
   + \LRp{1 + \tol}\norm{\L^{-\half}}_\infty +
   \LRp{1 + \tol}\norm{\sC^{-\half}}_\infty} \\
          &\text{ w.p. at least  } 1 - 10 \zeta\LRp{\N, \tol}, \nonumber
\end{align}
    and 
\begin{align}
    \eqnlab{SA_nonasymptotic}
    \norm{\umap - \umapSA}_\infty &\leq \tol \lip \LRp{\norm{\L^{-1}}_\infty + \norm{\sC^{-1}}_\infty +            \norm{\L^{-\half}}_\infty +
     \norm{\sC^{-\half}}_\infty}  \\
      &\text{ w.p. at least  } 1 - 4 \zeta\LRp{\N, \tol} \nonumber - 2k \zeta_{2/3} \LRp{\N, \tol} 
     - 2n \zeta_{2/3} \LRp{\N, \tol}. \nonumber
\end{align}
\end{lemma}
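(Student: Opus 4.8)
The plan is to observe that all three minimizers in the statement are values of the single solution map $\mathcal{G}$ of $g(\P;\ub)=0$, and then to control the distance between the corresponding parameter vectors $\P$ using the concentration results already proved (Propositions~\proporef{zero_mean}, \proporef{outer_product}, \proporef{three_subgaussians} and Corollary~\cororef{alpha_subexponential_error}). First I would check that, under the convexity and differentiability hypotheses of Theorem~\theoref{asymptotic_convergence}, a point minimizes a cost of the form \eqnref{MAPfinite}, \eqnref{independent_SA_loss}, or \eqnref{SA_cost} exactly when it satisfies the associated first-order optimality equation. Differentiating shows that $\grad_\ub\J=0$ is precisely $g(\P^{\text{\normalfont{MAP}}};\ub)=0$; that the stationarity condition for \eqnref{independent_SA_loss} is $g(\tilde\P_\N;\ub)=0$, reading off the substitutions $\L^{-1}\mapsto\Sn$, $\sC^{-1}\mapsto\Ln$, $\eb\mapsto\Sn\sigbarN$, $\zb\mapsto\Ln\delbarN$; and that the stationarity condition for the SAA cost \eqnref{SA_cost} is $g(\hat\P_\N;\ub)=0$ with $\eb\mapsto\frac{1}{\N}\sum_i\epsb^i(\epsb^i)^T\sigb^i$ and $\zb\mapsto\frac{1}{\N}\sum_i\lamb^i(\lamb^i)^T\del^i$. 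Hence $\umap=\mathcal{G}(\P^{\text{\normalfont{MAP}}})$, $\tilde{\ub}^{\text{\normalfont{MAP}}}_\N=\mathcal{G}(\tilde\P_\N)$, and $\umapSA=\mathcal{G}(\hat\P_\N)$.

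Next, Lipschitz well-posedness of $g(\P;\ub)=0$ gives $\norm{\umap-\tilde{\ub}^{\text{\normalfont{MAP}}}_\N}_\infty\le\lip\norm{\P^{\text{\normalfont{MAP}}}-\tilde\P_\N}_\infty$ and $\norm{\umap-\umapSA}_\infty\le\lip\norm{\P^{\text{\normalfont{MAP}}}-\hat\P_\N}_\infty$; since the $\infty$-norm of each concatenated vector is the maximum, hence at most the sum, of the $\infty$-norms of its four blocks, it remains to bound those blocks. The first two blocks are handled by Proposition~\proporef{outer_product} with $\omb^i=\epsb^i$, $\Gamma=\L^{-1}$ (resp. $\omb^i=\lamb^i$, $\Gamma=\sC^{-1}$), giving $\norm{\Sn-\L^{-1}}_\infty\le\tol\norm{\L^{-1}}_\infty$ and $\norm{\Ln-\sC^{-1}}_\infty\le\tol\norm{\sC^{-1}}_\infty$, each failing with probability at most $2\zeta(\N,\tol)$.

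For the data- and prior-perturbation blocks in the independent-sampling bound \eqnref{independent_SA_nonasymptotic}, I would whiten the covariances: rewrite $\Sn\sigbarN=\L^{-\half}\bG_\N\bar\taub$ with $\bG_\N$ an empirical second-moment matrix satisfying $\expect[\bG_\N]=\ident$ and $\bar\taub$ an empirical mean with $\expect[\bar\taub\bar\taub^T]=\ident/\N$ (which uses $\expect[\sigb\sigb^T]=\L$), then split $\bG_\N\bar\taub=\bar\taub+(\bG_\N-\ident)\bar\taub$ and invoke Propositions~\proporef{zero_mean} and \proporef{outer_product} with $\Gamma=\ident$ to get $\norm{\bar\taub}_\infty\le\tol$ and $\norm{\bG_\N-\ident}_\infty\le\tol$, so that $\norm{\Sn\sigbarN}_\infty\le(1+\tol)\tol\norm{\L^{-\half}}_\infty$, and analogously $\norm{\Ln\delbarN}_\infty\le(1+\tol)\tol\norm{\sC^{-\half}}_\infty$. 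For the same blocks in the SAA bound \eqnref{SA_nonasymptotic}, the average $\frac{1}{\N}\sum_i\epsb^i(\epsb^i)^T\sigb^i$ has entries that are empirical means of sums of products of three independent subgaussian scalars, hence mean-zero and $\alpha$-subexponential with $\alpha=2/3$ by Proposition~\proporef{three_subgaussians}; Corollary~\cororef{alpha_subexponential_error}, applied with the covariance of $\epsb\epsb^T\sigb$ (a dimension-dependent multiple of $\L^{-1}$), then yields $\norm{\frac{1}{\N}\sum_i\epsb^i(\epsb^i)^T\sigb^i}_\infty\le\tol\norm{\L^{-\half}}_\infty$ with failure probability at most $2k\,\zeta_{2/3}(\N,\tol)$, and similarly $\norm{\frac{1}{\N}\sum_i\lamb^i(\lamb^i)^T\del^i}_\infty\le\tol\norm{\sC^{-\half}}_\infty$ with failure probability at most $2n\,\zeta_{2/3}(\N,\tol)$. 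A union bound over the relevant events (five $\zeta$-events for each of the data and prior parts in the first case; $2\zeta+2\zeta$ for the two covariance blocks and $2k\zeta_{2/3}+2n\zeta_{2/3}$ for the two perturbation blocks in the second) produces the stated probabilities, and inserting the block bounds into the Lipschitz estimate gives the two right-hand sides.

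The main obstacle is the cross terms $\Sn\sigbarN$ and $\Ln\delbarN$ in the independent-sampling case. The naive submultiplicative bound $\norm{\Sn\sigbarN}_\infty\le\norm{\Sn}_\infty\norm{\sigbarN}_\infty$ yields the larger, \emph{wrong} norm product $\norm{\L^{-1}}_\infty\norm{\L^{\half}}_\infty$ rather than the claimed $\norm{\L^{-\half}}_\infty$; the whitening step is exactly what isolates a single factor $\norm{\L^{-\half}}_\infty$ and cleanly separates the error into an $\bigO(\tol)$ piece and an $\bigO(\tol^2)$ piece. A secondary subtlety is that in the SAA case the governing empirical average is over products of three subgaussians --- strictly heavier-tailed than subexponential --- so one is forced through the $\alpha$-subexponential machinery, which accounts both for the slower concentration rate $\zeta_{2/3}$ and for the dimensional prefactors $k$ and $n$; it also requires tracking the covariance of $\epsb\epsb^T\sigb$ so that the factor $\norm{\L^{-\half}}_\infty$ emerges with the correct scaling.
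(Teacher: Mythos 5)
Your proposal is correct and follows essentially the same route as the paper's proof: identifying $\umap$, $\tilde{\ub}^{\text{\normalfont{MAP}}}_\N$, and $\umapSA$ as $\mathcal{G}$ evaluated at $\P^{\text{\normalfont{MAP}}}$, $\tilde\P_\N$, $\hat\P_\N$, invoking Lipschitz well-posedness, bounding the covariance blocks via Proposition \proporef{outer_product}, handling the cross terms $\Sn\sigbarN$ and $\Ln\delbarN$ by the same whitening/identity-insertion split (your $\bG_\N\bar\taub=\bar\taub+(\bG_\N-\ident)\bar\taub$ is exactly the paper's $\L^{-\half}\LRp{\L^{\half}\Sn\L^{\half}-\ident+\ident}\L^{-\half}\sigbarN$), using Proposition \proporef{three_subgaussians} and Corollary \cororef{alpha_subexponential_error} for the SAA perturbation blocks, and closing with the same union-bound counts. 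The only cosmetic difference is that you spell out the first-order optimality identifications and hedge on the exact covariance of $\epsb\epsb^T\sigb$, neither of which changes the argument.
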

\begin{proof}
    Noting that $\umap = \mathcal{G}\LRp{\P^{\text{\normalfont{MAP}}}}$ and $\tilde{\ub} = \mathcal{G}\LRp{\tilde \P_\N}$,
    we have by the Lipschitz well-posedness assumption,
    \begin{align*}
        &\norm{\umap - \tilde{\ub}^{\text{\normalfont{MAP}}}_N}_\infty 
         = \norm{\mathcal{G}\LRp{\P^{\text{\normalfont{MAP}}}} - \mathcal{G}\LRp{\tilde \P_\N}}_\infty \leq \lip \norm{\P - \tilde \P_\N}_\infty \\
                            &\leq \lip \big (\norm{\L^{-1} - \Sn}_\infty + \norm{\sC^{-1} - \Ln}_\infty  + \norm{\Sn \sigbarN}_\infty + 
                            \norm{\Ln \delbarN}_\infty\big ). 
    \end{align*}
    We can bound  $\Sn\sigbarN$ (and similarly for $\Ln\delbarN$) as follows
    \begin{align*}
        \norm{\Sn \sigbarN}_\infty &= \norm{\L^{-\half}\LRp{\L^{\half}\Sn \L^{\half}- \ident + \ident}\L^{-\half} \sigbarN}_{\infty}\\
        &\leq \norm{\L^{-\half}\LRp{\L^{\half}\Sn\L^{\half} - \ident}}_\infty 
        \norm{\L^{-\half}\sigbarN}_\infty + 
        \norm{\L^{-\half}}_{\infty}\norm{\L^{-\half}\sigbarN}_\infty.
    \end{align*}
    Note that $\L^{-\half} \sigbarN$ is the sample average of a mean-zero subgaussian random variable 
    with identity covariance and  $\expect[\L^{\half}\Sn \L^{\half}] = \ident$. 
    Therefore, applying Proposition \proporef{outer_product} to 
    $\norm{\L^{\half}\Sn \L^{\half} - \ident}_\infty$ and Proposition 
    \proporef{zero_mean} to $\norm{\L^{-\half}\sigbarN}_\infty$ along with the union bound, we obtain
    \begin{equation*}
        \norm{\Sn \sigbarN}_\infty \leq \LRp{\tol^2 + \tol} \norm{\L^{-\half}}_\infty  
        \quad \text{w.p. at least } 1 - 3\zeta \LRp{\N, \tol}.
    \end{equation*}
    The proof of the bound $\norm{\Ln\delbarN}_\infty \le \LRp{\tol^2 + \tol} \norm{\sC^{-\half}}_\infty$ 
    $\text{w.p. at least } 1 - 3\zeta \LRp{\N, \tol}$ follows analogously.
    Applying Proposition \proporef{outer_product} to the terms $\norm{\Sn - \L^{-1}}_{\infty}$ and   
    $\norm{\Ln - \sC^{-1}}_\infty$ along with the union bound, inequality 
    \eqnref{independent_SA_nonasymptotic} follows immediately.
      
    To prove the bound in \eqnref{SA_nonasymptotic} for $\norm{\umap - \umapSA}_{\infty}$, it is sufficient 
    to bound $\norm{\eb}_\infty$  and $\norm{\zb}_\infty$ according to the definition of $\hat\P_N$ 
    since the bounds for $\norm{\Sn - \L^{-1}}_{\infty}$ and   
    $\norm{\Ln - \sC^{-1}}_\infty$ have already been proven.
    By Proposition \proporef{three_subgaussians}, $\epsb^i \LRp{\epsb^i}^T \sigb^i$ is a zero-mean,
    $2/3$-subexponential random variable with covariance $\L^{-1}$. 
    Then by Corollary \cororef{alpha_subexponential_error}, 
    \begin{equation*}
        \prob \LRs{\norm{\eb}_\infty \leq \tol \norm{\L^{-\half}}_\infty} \geq 1 - 2k\zeta_{2/3} \LRp{N, \tol}.
    \end{equation*}
    Similarly, applying Corollary \cororef{alpha_subexponential_error} to $\norm{\zb}_\infty$ yields 
    \begin{equation*}
        \prob \LRs{\norm{\zb}_\infty \leq \tol \norm{\sC^{-\half}}_\infty} \geq 1 - 2n\zeta_{2/3} \LRp{N, \tol}.
    \end{equation*}
    Using the union bound to combine the bounds on $\norm{\eb}_\infty$  and $\norm{\zb}_\infty$ with  
    the bounds previously $\norm{\Sn - \L^{-1}}_\infty$ and 
    $\norm{\Ln - \sC^{-1}}_\infty$ , inequality \eqnref{SA_nonasymptotic} follows. 
\end{proof}

Lemma \lemref{nonasymptotic_bounds} provides a general strategy for analyzing the convergence of various randomized methods. When a variable is not randomized, we simply
drop the corresponding terms in equations \eqnref{independent_SA_nonasymptotic} and \eqnref{SA_nonasymptotic}, and adjust the probability accordingly.
Additionally, as will be discussed later, the Lipschitz constant may be affected by the choice of randomization strategy. 
By choosing to randomize only some variables, the inverse solution may have lower worst-case sensitivity (Lipschitz constant).
Note that since we are mainly concerned with small deviations, 
it is sufficient to consider well-posed inverse problems where the 
solution depends continuously on $\db$ everywhere and 
depends on $\P$ in a locally Lipschitz manner in a neighborhood of 
$\LRs{\vect\LRp{\L^{-1}}; \;\vect\LRp{\sC^{-1}};\; 0;\; 0}$.
Note that it is unlikely for the solution to depend continuously on $\P$ everywhere since $\sC^{-1}$ 
acts as regularization. Otherwise, the original problem would not be ill-posed.
In particular, the regularizing role that $\sC^{-1}$ plays may cause the solution of the inverse problem to be
especially sensitive to perturbations of $\sC^{-1}$, greatly increasing the (local) Lipschitz constant 
compared to the case where $\sC^{-1}$ is not randomized. 
This is clear from the linear case where 
the condition number of the problem takes the place of the Lipschitz constant. It is well-known that 
the choice of $\sC^{-1}$ has a significant impact on the condition number \cite{diao2016structured,chu2011condition}.
Lastly, note that in the linear case, standard perturbation theory for linear systems can be used to find an explicit
bound for the relative error in terms of the condition number of $\A^T \L^{-1}\A + \sC^{-1}$.

\section{Rediscovery of randomized inverse methods}
In this section we derive several different known randomization 
schemes as special cases of the more general randomization 
scheme proposed in \eqnref{stochastic_cost}. For the rest of this section,
we will explicitly write each of the random variables (subset of $\rand = \LRs{\sigb, \epsb, \del, \lam}^T$) that the stochastic
cost function depends on. To keep notation clean, we will use 
$\Jstoch$ to represent the stochastic cost function for all randomization 
schemes, where the form of the cost function will be clear from the context. 
If $\sigb$ does not appear as an argument of $\Jstoch$, we replace it with ${0}$ in \eqnref{stochastic_cost}. Likewise, we replace $\epsb\epsb^T$ with $\L^{-1}$, $\del$ with $0$, and $\lamb\lamb^T$ with $\sC^{-1}$ 
in \eqnref{stochastic_cost} when the corresponding random variable does not appear as an argument of $\Jstoch$.
Additionally, we will present each method in the general nonlinear setting, but we will also explicitly
write the sample average solution in the linear case as closed form solutions are available, yielding additional insights.
To that end, we write two equivalent 
formulations of the MAP estimate for linear inverse problems.

\begin{proposition}
  \propolab{MAPsolution}
  When the PtO map is linear, i.e. $\F(\ub) = \A \ub$, the solution of the MAP problem \eqnref{MAPfinite} is given by either
  \begin{subequations}
    \eqnlab{MAPsolution}
    \begin{align}
      \eqnlab{MAPsolutiona}
  \ub_1 &= \LRp{\A^T\L^{-1}\A + \sC^{-1}}^{-1}\LRp{\A^T\L^{-1}\db + \sC^{-1}\ub_0}
  \intertext{or}
        \eqnlab{MAPsolutionb}
   \ub_2 &= \ub_0 +\sC\A^T\LRp{\L + \A\sC\A^T}^{-1}\LRp{\db -\A\ub_0}.
   \end{align}
   \end{subequations}
\end{proposition}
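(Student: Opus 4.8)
The plan is to obtain \eqnref{MAPsolutiona} from the first-order optimality condition of the quadratic objective in \eqnref{MAPfinite}, and then to deduce \eqnref{MAPsolutionb} from a single matrix-inversion (push-through) identity. With $\F\LRp{\ub} = \A\ub$ the objective $\J\LRp{\ub; \ub_0, \db} = \half\norm{\db - \A\ub}^2_{\L^{-1}} + \half\norm{\ub - \ub_0}^2_{\sC^{-1}}$ is a strictly convex quadratic, since $\sC^{-1}$ is symmetric positive definite; hence it has a unique minimizer, characterized by $\grad_\ub\J = 0$. Differentiating gives $-\A^T\L^{-1}\LRp{\db - \A\ub} + \sC^{-1}\LRp{\ub - \ub_0} = 0$, and collecting the terms containing $\ub$ yields the normal equations $\LRp{\A^T\L^{-1}\A + \sC^{-1}}\ub = \A^T\L^{-1}\db + \sC^{-1}\ub_0$. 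Since $\A^T\L^{-1}\A + \sC^{-1}$ is symmetric positive definite, hence invertible, this gives $\ub_1$ exactly as in \eqnref{MAPsolutiona}.

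To obtain the second form I would center the problem by substituting $\ub = \ub_0 + \bv$ and writing $\rb := \db - \A\ub_0$, so that $\ub_1 = \ub_0 + \LRp{\A^T\L^{-1}\A + \sC^{-1}}^{-1}\A^T\L^{-1}\rb$. Comparing with \eqnref{MAPsolutionb}, it then suffices to prove the identity
\begin{equation*}
\LRp{\A^T\L^{-1}\A + \sC^{-1}}^{-1}\A^T\L^{-1} = \sC\A^T\LRp{\L + \A\sC\A^T}^{-1}.
\end{equation*}
I would verify this by clearing inverses: multiplying on the left by $\A^T\L^{-1}\A + \sC^{-1}$ and on the right by $\L + \A\sC\A^T$, the left side becomes $\A^T\L^{-1}\LRp{\L + \A\sC\A^T} = \A^T + \A^T\L^{-1}\A\sC\A^T$, while the right side becomes $\LRp{\A^T\L^{-1}\A + \sC^{-1}}\sC\A^T = \A^T\L^{-1}\A\sC\A^T + \A^T$; the two agree, and since both outer factors are invertible the identity holds. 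Substituting back gives $\ub_1 = \ub_0 + \sC\A^T\LRp{\L + \A\sC\A^T}^{-1}\rb = \ub_2$, which completes the argument.

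The computations are entirely routine; the only point that deserves a word is the invertibility of $\L + \A\sC\A^T$, needed both to make sense of \eqnref{MAPsolutionb} and to legitimize the push-through step, which holds because $\L$ is a noise covariance and hence symmetric positive definite while $\A\sC\A^T$ is symmetric positive semidefinite. I do not expect any genuine obstacle here --- the proposition is just the standard equivalence between the information (precision) form and the covariance form of the Gaussian linear MAP estimate, stated so that both will be convenient to specialize in the subsequent rediscovery of randomized methods.
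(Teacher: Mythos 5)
Your proposal is correct and follows essentially the same route as the paper: the first form $\ub_1$ is obtained from the first-order optimality condition, and the second form $\ub_2$ is obtained by converting the precision-form solution to covariance form. The only difference is cosmetic --- where the paper simply invokes the Sherman--Morrison--Woodbury formula (under invertibility of $\sC^{-1}$ and $\L^{-1}$), you verify the equivalent push-through identity $\LRp{\A^T\L^{-1}\A + \sC^{-1}}^{-1}\A^T\L^{-1} = \sC\A^T\LRp{\L + \A\sC\A^T}^{-1}$ directly by clearing inverses, which is a self-contained restatement of the same step.
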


\begin{proof}
    The first of these identities is derived directly from the optimality
    condition of \eqnref{MAPfinite}. Specifically, 
    \begin{equation}
        \grad \J  = \LRp{\A^T \L^{-1} \A + \sC^{-1}}  \ub - 
        \A^T \L^{-1} \db - \sC^{-1} \ub_0 = 0.
    \end{equation}
    The second formulation can be derived from $\ub_1$ 
    using the Sherman-Morrison-Woodbury formula  \cite{deng2011generalization}
    under the conditions that $\sC^{-1}$ and $\L^{-1}$ are invertible. 
\end{proof}

\begin{remark}
    This last assumption concerning the invertibility 
    of $\sC^{-1}$, while seemingly trivial in light 
    of the fact that $\sC^{-1}$ is written as the inverse of a matrix, will be important 
    in the following discussion of randomization. 
\end{remark}

Additionally, let us define new random variables:
\begin{equation}
  \eqnlab{rand_means}
  \db^i := \db + \sigb^i \quad \text{and} \quad  \ub_0^i := \ub_0 + \del^i,
\end{equation}
where $\sigb^i$ and $\del^i$ are the first and the third components of $\rand^i$ defined in Theorem \theoref{asymptotic}.
These quantities will be useful in the following discussion. 
By the LLN we have
\[
  \frac{1}{\N} \sum_{i=1}^\N \db^i 
  \xrightarrow[N \to \infty]{a.s.}
  \db \quad
  \text{and} \quad \frac{1}{\N} \sum_{i=1}^\N \ub_0^i \xrightarrow[N \to \infty]{a.s.}
  \ub_0.
\]

\subsection{Randomized MAP approach}
Assuming that the order of minimization and expectation can be
interchanged\footnote{The conditions under which the interchange is
valid can be consulted in \cite[Theorem 14.60]{RockafellarWetts98}.},
we can write
\begin{equation}
\eqnlab{SP}
 \argmin_\ub\Ex_{\sigP \times \delP}\LRs{\Jstoch\LRp{\ub;  \ub_0, \db, \sigb, \del}} = 
\Ex_{\sigP \times \delP}\LRs{\argmin_\ub \Jstoch\LRp{\ub; \ub_0, \db, \sigb, \del}}.
\end{equation}
The sample average approximation of the RHS can then be written 
\begin{equation}
  \eqnlab{rmap_saa}
  \urmap_\N := \frac{1}{\N}\sum_{i=1}^\N \argmin_\ub 
  \Jstoch \LRp{\ub; \ub_0, \db, \sigb^i, \del^i}.
\end{equation}
This randomization approach coincides with the
randomized MAP approach \cite{WangBui-ThanhGhattas18} when $\expect_{\sigP} \LRs{\sigb \sigb^T} = \L$ and 
$\expect_{\delP} \LRs{\del \del^T} = \sC$
(also known as the randomized maximum likelihood
\cite{Kitanidis95, OliverReynoldsLiu08, BardsleySolonenHaarioEtAl13}).
In the linear case, we can write
\[
\urmap_\N = \frac{1}{\N}\sum_{i=1}^\N \LRp{\ub^{\text{RMAP}}}^i, 
\]
where, thanks to Proposition \proporef{MAPsolution},
\begin{subequations}
\begin{align*}
  \eqnlab{rMAPsampleN}
  \LRp{\urmap}^i &= \LRp{\A^T\L^{-1}\A + \sC^{-1}}^{-1}
                        \LRs{\A^T\L^{-1}\LRp{\db + \sigb^i} +
                        \sC^{-1}\LRp{\ub_0+\del^i}} \\
                      &=\LRp{\A^T\L^{-1}\A + \sC^{-1}}^{-1}
                        \LRs{\A^T\L^{-1}\db^i +
                        \sC^{-1}\ub_0^i}.
\end{align*}
\end{subequations}
Since the sample average approximation \eqnref{rmap_saa} of the
right hand side of \eqnref{SP}
converges to its expectation, the analysis from Section \secref{theory}
applies. 
We have the following result for nonlinear PtO map.
\begin{corollary}[Asymptotic convergence of RMAP]
  \corolab{rMAP}
  Suppose that the nonlinear PtO map $\F$ satisfies the assumptions of Theorem \theoref{asymptotic}. Let $\sigb^i \sim \sigP$ and $\del^i \sim \delP$ with bounded covariances where
  $\expect_{\sigP} \LRs{\sigb} = 0$ and $\expect_{\delP} \LRs{\del} = 0$. 
  Then
  \begin{equation*}
    \urmap_\N \asconv \umap \quad \text{as } \N \to \infty.
  \end{equation*}
\end{corollary}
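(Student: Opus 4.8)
The plan is to exploit the structural distinction, emphasized in \eqnref{SP} and \eqnref{rmap_saa}, that $\urmap_\N$ is \emph{not} the minimizer of a sample-averaged cost (that object is $\umapSA$) but rather the empirical average of per-sample minimizers. Setting $\ub^i := \argmin_\ub \Jstoch\LRp{\ub; \ub_0, \db, \sigb^i, \del^i}$, the perturbation pairs $\LRp{\sigb^i, \del^i}$ are i.i.d.\ draws from $\sigP \times \delP$, so the summands $\ub^i$ are themselves i.i.d.\ $\reals^\n$-valued random vectors with $\urmap_\N = \frac{1}{\N}\sum_{i=1}^\N \ub^i$. The argument then reduces to identifying the common mean $\Ex\LRs{\ub^1}$ and invoking the strong law of large numbers (SLLN) \cite{Feller71,Durrett19} to obtain $\urmap_\N \asconv \Ex\LRs{\ub^1}$.

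First I would confirm that each $\ub^i$ is well defined and integrable. Under the hypotheses inherited from Theorem \theoref{asymptotic}, $\Jstoch$ is convex and twice continuously differentiable in $\ub$; combined with the strictly convex prior term $\half \norm{\ub - \ub_0 - \del}^2_{\sC^{-1}}$ (as $\sC^{-1}$ is positive definite), the minimizer is unique, so the selection $\LRp{\sigb^i, \del^i} \mapsto \ub^i$ is a genuine measurable function. Finiteness of $\Ex\LRs{\norm{\ub^1}}$ then follows from the bounded covariances of $\sigb$ and $\del$ together with the continuous dependence of the minimizer on these perturbations; in the linear case it is transparent, since Proposition \proporef{MAPsolution} yields the closed form $\ub^i = \LRp{\A^T\L^{-1}\A + \sC^{-1}}^{-1}\LRs{\A^T\L^{-1}\LRp{\db + \sigb^i} + \sC^{-1}\LRp{\ub_0 + \del^i}}$, an affine image of $\LRp{\sigb^i, \del^i}$ with finite first moment.

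The crucial step --- the one that must not be left at the abstract limit $\Ex_{\sigP \times \delP}\LRs{\argmin_\ub \Jstoch}$ --- is to show that this mean equals $\umap$. I would chain two identities. The interchange of minimization and expectation \eqnref{SP}, valid under the conditions of \cite[Theorem 14.60]{RockafellarWetts98}, gives $\Ex_{\sigP \times \delP}\LRs{\argmin_\ub \Jstoch} = \argmin_\ub \Ex_{\sigP \times \delP}\LRs{\Jstoch}$. The first assertion of Theorem \theoref{asymptotic} then gives $\argmin_\ub \Ex_{\sigP \times \delP}\LRs{\Jstoch} = \argmin_\ub \J = \umap$. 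I would note that this assertion specializes cleanly to the present RMAP setting, in which only $\sigb$ and $\del$ are randomized: its term-by-term computation uses only $\Ex_{\sigP}\LRs{\sigb} = 0$ and $\Ex_{\delP}\LRs{\del} = 0$, while $\epsb\epsb^T$ and $\lamb\lamb^T$ are held fixed at $\L^{-1}$ and $\sC^{-1}$, so the quadratic terms reproduce $\J$ up to an additive $\ub$-independent constant. Combining, $\urmap_\N \asconv \Ex\LRs{\ub^1} = \umap$.

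I expect the main obstacle to lie in the two regularity conditions underpinning the abstract steps rather than in any calculation: establishing integrability of $\ub^1$ so the SLLN applies, and verifying the hypotheses under which the argmin--expectation interchange \eqnref{SP} is legitimate in the nonlinear, non-closed-form regime. Both are comfortable consequences of convexity and bounded second moments, but this is where the genuine content resides; once they are secured, the identification of the limit with $\umap$ and hence the almost-sure convergence follow at once from the results already established.
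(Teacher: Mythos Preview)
Your approach is essentially the same as the paper's: the paper does not give a formal proof of this corollary but simply remarks, immediately before stating it, that ``the sample average approximation \eqnref{rmap_saa} of the right hand side of \eqnref{SP} converges to its expectation'' and that ``the analysis from Section \secref{theory} applies'' --- exactly your SLLN-plus-interchange-plus-Theorem~\theoref{asymptotic}(i) chain. Your write-up is more explicit about the measurability and integrability conditions needed for the SLLN and the interchange \eqnref{SP}, which the paper leaves implicit, but the logical skeleton is identical.
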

  
Note that if \textit{only} the MAP point $\umap$ is needed, then the randomized MAP approach is not useful: in fact very expensive while only giving an approximate solution for $\umap$. However, the approach could be appealing for Bayesian settings. Indeed,
by choosing $\expect_{\sigP} \LRs{\sigb \sigb^T} = \L$ and 
$\expect_{\delP} \LRs{\del \del^T} = \sC$, each solution $\LRp{\urmap}^i$ is a bona fide sample of the posterior
distribution in the linear case. For nonlinear cases, $\LRp{\urmap}^i$ are biased samples of 
the posterior \cite{WangBui-ThanhGhattas18}, but can be corrected via Metropolization \cite{bui2016fem,chen2020fast}.
Note that for linear inverse
problems, the RMAP approach is the same as the randomize-then-optimize
approach in \cite{BardsleySolonenHaarioEtAl13} (see an explanation
from \cite{WangBui-ThanhGhattas18}).
This randomized MAP method is embarrassingly parallel and is well-suited
for implementation on distributed computing systems.
While we could randomize the data and prior mean without exchanging
expectation and optimization and convergence would be maintained,
such a method would be of little use because we would obtain an inaccurate approximation of $\umap$ while having the same cost.

\subsection{Randomized misfit approach (left sketching)}
In this section we show that the randomized misfit approach (RMA) \cite{LeEtAl2017} is a special case of our randomization in \eqnref{stochastic_cost}.
Indeed, if we let $\epsb \sim \epsP$ where $\expect_{\epsP} \LRs{\epsb} = 0$ and $\expect_{\epsP} \LRs{\epsb \epsb^T} = \L^{-1}$, then
\begin{subequations}
    \begin{align*}
    \urma :&= \argmin_{\ub} 
    \expect_{\epsP} \LRs{\Jstoch \LRp{\ub; \ub_0, \db, \epsb}} \\
    &= \argmin_\ub \expect_{\epsP} \LRs{\half \norm{\epsb^T \LRp{\db - \F(\ub)}}^2_2
    + \half \norm{\ub - \ub_0}^2_{\sC^{-1}}}. 
    \end{align*}
\end{subequations}
The SAA of $\urma$ can be written as
\begin{subequations}
  \eqnlab{RMA_saa}
  \begin{align}
    \urma_\N :&= \argmin_{\ub} \frac{1}{\N} \sum_{i=1}^\N 
                   \Jstoch \LRp{\ub; \ub_0, \db, \epsb^i} \\
                 &= \argmin_{\ub} \frac{1}{\N} \sum_{i=1}^\N 
                    \half \norm{\tilde{\db^i} - \tilde{\F^i}(\ub)}^2_2 + \half \norm{\ub - \ub_0}^2_{\sC^{-1}},
  \end{align}
\end{subequations}
where
\begin{equation*}
  \tilde{\F^i} := {\eps^i}^T \F \quad \text{and} \quad
  \tilde{\db^i} := {\eps^i}^T \db.
\end{equation*}
That is, the random samples, which can be combined into a random matrix, sketch the PtO map and the data from the left. 
Random sketching has been used extensively to reduce the cost of solving inverse problems \cite{chen2020,Liu2018,ClarksonWoodruff2017}. The following is a direct consequence of Theorem \theoref{asymptotic_convergence}.
\begin{corollary}[Asymptotic convergence of RMA]
    Let $\urma_\N$ be as defined above. Then 
    \begin{equation*}
        \urma_\N \asconv \ub^{MAP} \quad \text{as } \N \to \infty.
    \end{equation*}
\end{corollary}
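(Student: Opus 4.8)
The plan is to recognize $\urma_\N$ as a special case of the sample average minimizer $\umapSA$ from \eqnref{MAPsols} and then quote Theorem \theoref{asymptotic_convergence} directly. Concretely, the randomized misfit cost is obtained from the master stochastic cost \eqnref{stochastic_cost} by the substitutions prescribed at the start of this section: set $\sigb \equiv 0$ and $\del \equiv 0$, replace $\lamb\lamb^T$ by the deterministic $\sC^{-1}$, and keep only $\epsb \sim \epsP$ with $\expect_{\epsP}\LRs{\epsb} = 0$ and $\expect_{\epsP}\LRs{\epsb\epsb^T} = \L^{-1}$, which is consistent with the standing assumptions \eqnref{boundednessSigC}. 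With these choices $\Jstoch\LRp{\ub;\ub_0,\db,\epsb} = \half\norm{\epsb^T\LRp{\db - \F(\ub)}}_2^2 + \half\norm{\ub - \ub_0}^2_{\sC^{-1}}$, and because only one of the pair $\LRp{\epsb,\sigb}$ and neither of the pair $\LRp{\lamb,\del}$ is randomized, the joint-sampling average \eqnref{SA_cost} coincides with the cost in \eqnref{RMA_saa}; hence $\urma_\N = \umapSA$ for this particular randomization.

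Next I would check that the hypotheses of Theorem \theoref{asymptotic_convergence} hold for the reduced cost. Measurability is immediate since $\Jstoch$ is a polynomial in $\LRp{\ub,\ub_0,\db,\epsb}$. Convexity and twice continuous differentiability in $\ub$ hold under exactly the assumption on $\F$ already carried in Theorem \theoref{asymptotic_convergence}; in the linear case $\F(\ub) = \A\ub$ originally considered for the randomized misfit method \cite{LeEtAl2017}, the misfit term $\half\LRp{\db - \A\ub}^T\epsb\epsb^T\LRp{\db - \A\ub}$ is a convex quadratic for every realization of $\epsb$ and the prior term contributes $\sC^{-1}\succ 0$, so both properties are automatic. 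The bounded-second-moment requirement on $\epsb$ is part of \eqnref{boundednessSigC}.

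With the hypotheses verified, part (i) of Theorem \theoref{asymptotic_convergence} applies: taking the expectation over $\epsP$ and using $\expect_{\epsP}\LRs{\epsb\epsb^T} = \L^{-1}$ collapses the stochastic misfit to $\half\norm{\db - \F(\ub)}^2_{\L^{-1}}$ with no leftover constant (since $\sigb \equiv 0$), while the prior term is unchanged, so $\argmin_\ub \expect_{\epsP}\LRs{\Jstoch} = \argmin_\ub \J = \umap$. Part (ii) then yields $\umapSA \asconv \umap$, i.e. $\urma_\N \asconv \umap$ as $\N \to \infty$, which is the claim.

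The only genuine work is the hypothesis check, and the only delicate point there is convexity and smoothness of $\Jstoch$ in $\ub$ for nonlinear $\F$ — precisely the assumption already made in Theorem \theoref{asymptotic_convergence}, and, as noted in the discussion following that theorem, one that can be arranged through the regularizing role of $\sC^{-1}$; in the linear sketching setting of \cite{LeEtAl2017} it comes for free. Everything else (measurability, the moment identity, and the reduction of \eqnref{SA_cost} to \eqnref{RMA_saa}) is routine bookkeeping.
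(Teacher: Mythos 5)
Your proposal is correct and follows essentially the same route as the paper, which states the corollary as a direct consequence of Theorem \theoref{asymptotic_convergence} after specializing the stochastic cost \eqnref{stochastic_cost} to the case where only $\epsb$ is randomized. Your explicit verification that $\urma_\N$ coincides with $\umapSA$ for this randomization and that the theorem's hypotheses hold is just a more detailed write-up of the same argument.
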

Calculating the optimality condition in the linear case results in
\begin{equation}
  \eqnlab{ls_opt}
  \urma =  \LRp{\A^T \expect_{\epsP} \LRs{\epsb \epsb^T} \A +
      \sC^{-1}}^{-1} \LRp{\A^T \expect_{\epsP} \LRs{\epsb \epsb^T} \db + \sC^{-1} \ub_0}.
\end{equation}
By letting
\begin{equation*}
  \tilde{\A} := \eps^T \A \quad \text{and} \quad
  \tilde{\db} := \eps^T \db,
\end{equation*}
we can rewrite \eqnref{ls_opt} as
\begin{equation*}
  \urma =  \LRp{\expect_{\epsP} \LRs{\tilde{\A}^T \tilde{\A}} +
      \sC^{-1}}^{-1} \LRp{\expect_{\epsP} \LRs{\tilde{\A}^T \tilde{\db}} + \sC^{-1} \ub_0}
\end{equation*}

If we combine the RMA and randomized MAP approaches 
into a single stochastic optimization problem, we discover a new method which we will denote RMA+RMAP.
Specifically, consider the problem directly arising from randomization of \eqnref{stochastic_cost} and 
define the solution using the RMA+RMAP method to be

\begin{subequations}
\begin{align*}
  \urmarmap :&= \argmin_\ub \expect_{\sigP \times \epsP \times \delP}
               \LRs{\Jstoch \LRp{\ub; \ub_0, \db, \sigb, \epsb, \del}}\\
             &= \argmin_\ub \expect_{\sigP \times \epsP \times \delP}
               \LRs{\half \norm{\epsb^T \LRp{\db + \sigb - \F(\ub)}}^2_2
               + \half \norm{\ub - \ub_0 - \del}^2_{\sC^{-1}}},
\end{align*}
\end{subequations}
and the corresponding SAA solution
\begin{subequations}
  \begin{align*}
  \urmarmap_\N :&= \argmin_\ub \frac{1}{\N} \sum_{i=1}^\N
                  \Jstoch \LRp{\ub; \ub_0, \db, \sigb^i, \epsb^i, \del^i} \\
                &=\argmin_\ub \frac{1}{\N} \sum_{i=1}^\N
                  \LRs{\half \norm{{\epsb^i}^T \LRp{\db + \sigb^i - \F(\ub)}}^2_2
                  + \half \norm{\ub - \ub_0 - \del^i}^2_{\sC^{-1}}}.
\end{align*}
\end{subequations}
The following result is immediate from Theorem \theoref{asymptotic_convergence}.
\begin{corollary}[Asymptotic convergence of RMA+RMAP]
  Let $\urmarmap_\N$ be as defined above. Then 
  \begin{equation*}
    \urmarmap_\N \asconv \umap \quad \text{as } \N \to \infty.
  \end{equation*}
\end{corollary}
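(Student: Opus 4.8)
The plan is to reduce this corollary to a direct application of Theorem~\theoref{asymptotic_convergence}, exactly as was done for the RMA and RMAP corollaries. First I would observe that the RMA+RMAP stochastic cost function is precisely the one appearing in \eqnref{stochastic_cost} with the random variable $\lam$ suppressed (i.e., $\lam\lam^T$ replaced by $\sC^{-1}$, as per the convention established at the start of Section~3), so that $\rand = \LRs{\sigb,\epsb,\del}^T$ with joint distribution $\sigP\times\epsP\times\delP$, and the hypotheses \eqnref{boundednessSigC} on $\sigb$, $\epsb$, $\del$ are in force: $\expect\LRs{\sigb}=0$, $\expect\LRs{\epsb\epsb^T}=\L^{-1}$, $\expect\LRs{\del}=0$. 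Thus $\urmarmap_\N = \umapSA$ and $\urmarmap = \argmin_\ub\Jexpect$ for this particular choice of random variables.

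Next I would verify the structural hypotheses of Theorem~\theoref{asymptotic_convergence}: that $\Jstoch$ is convex, twice continuously differentiable in $\ub$ for almost every $\rand$, and measurable. For a linear PtO map $\F(\ub)=\A\ub$ this is automatic, as noted in the discussion following the theorem; in the nonlinear case one simply carries over the standing assumption on $\F$ already invoked in Corollary~\corolab{rMAP} (that $\F$ is such that $\Jstoch$ is convex and $C^2$). With these hypotheses checked, part~(i) of Theorem~\theoref{asymptotic_convergence} gives $\argmin_\ub\Jexpect = \argmin_\ub\J = \umap$, and part~(ii) gives $\umapSA \asconv \umap$. Translating back through the identification $\urmarmap_\N = \umapSA$ yields the claimed convergence $\urmarmap_\N \asconv \umap$.

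There is essentially no obstacle here: the work has all been done in Theorem~\theoref{asymptotic_convergence}, and the only thing to be careful about is bookkeeping --- making sure that dropping the $\lam$ randomization is legitimate under the paper's stated convention, and that the reduced random vector $\LRs{\sigb,\epsb,\del}^T$ still satisfies the moment conditions required by the theorem (it does, trivially, since those conditions were imposed component-wise). If one wants to be thorough, one can also note that the expectation-optimization interchange \eqnref{SP} is not needed for this corollary, because unlike the RMAP construction in \eqnref{rmap_saa}, the RMA+RMAP solution is defined directly as the minimizer of the sample-averaged cost, so the SAA theory of Section~\secref{theory} applies verbatim. The proof is therefore a two-line invocation, and I would write it as such.

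\begin{proof}
  The stochastic cost function defining $\urmarmap$ is \eqnref{stochastic_cost} with $\lam\lam^T$ replaced by $\sC^{-1}$ (i.e., $\lam$ not randomized), so $\urmarmap_\N$ is exactly the SAA minimizer $\umapSA$ of \eqnref{MAPsols} for the random vector $\rand = \LRs{\sigb,\epsb,\del}^T$ with distribution $\sigP\times\epsP\times\delP$. The moment conditions \eqnref{boundednessSigC} hold for $\sigb,\epsb,\del$ by assumption, and $\F$ satisfies the convexity, $C^2$, and measurability hypotheses of Theorem~\theoref{asymptotic_convergence}. Hence that theorem applies, giving $\urmarmap_\N = \umapSA \asconv \umap$ as $\N\to\infty$.
\end{proof}
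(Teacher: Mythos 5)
Your proof is correct and follows essentially the same route as the paper, which states that the result is immediate from Theorem \theoref{asymptotic_convergence}: the RMA+RMAP cost is the general stochastic cost \eqnref{stochastic_cost} with $\lam$ not randomized, and its SAA minimizer therefore converges almost surely to $\umap$. Your extra remarks (that the moment conditions hold componentwise and that no optimization--expectation interchange is needed, unlike for RMAP) are accurate bookkeeping consistent with the paper's argument.
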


Allowing for the interchange of optimization and expectation as in the RMAP approach 
along with independent sample average approximations of each random variable,
the following variant sequences also converge.
\begin{equation}
  \eqnlab{rma_rmap_opt_1}
  \ub^{\text{\normalfont{RMA+RMAP$_1$}}}_\N =
  \frac{1}{\N} \sum_{i=1}^{\N} \argmin_{\ub}
  \LRs{\half \norm{{\epsb^i}^T \LRp{\db + \sigb^i - \F(\ub)}}^2_2
                  + \half \norm{\ub - \ub_0 - \del^i}^2_{\sC^{-1}}}.
\end{equation}
\begin{equation}
  \eqnlab{rma_rmap_opt_2}
  \ub^{\text{\normalfont{RMA+RMAP$_2$}}}_{\N,M} = \frac{1}{\M} \sum_{i=1}^{\M} \argmin_{\ub}
  \frac{1}{\N} \sum_{j=1}^N \LRs{\half \norm{{\epsb^j}^T \LRp{\db + \sigb^i - \F(\ub)}}^2_2
                  + \half \norm{\ub - \ub_0 - \del^i}^2_{\sC^{-1}}}.
\end{equation}
We would like to point out \eqnref{rma_rmap_opt_2} is perhaps the most intuitive 
way to combine RMA and RMAP. Randomization of the
noise covariance matrix acts as a random projection (left sketching) while the 
randomized prior mean and data aid in sampling from the posterior. Note that \eqnref{rma_rmap_opt_2} arises as a variant of the loss function defined in equation \eqnref{independent_SA_loss} by
exchanging the optimization and expectation of only $\sigb$ and $\del$.
On the other hand, \eqnref{rma_rmap_opt_1} would likely yield inaccurate results as 
it is the sum of solutions where the PtO map and data have been projected 
onto a one dimensional subspace: thus the prior dominates each solution. 
In the linear case, $\ub^{\text{\normalfont{RMA+RMAP}}_2}_{\N,M}$ can be written as
\begin{align*}
  \ub^{\text{\normalfont{RMA+RMAP}}_2}_{\N,M} = 
  \frac{1}{M} \sum_{i=1}^{M}
  \LRp{\frac{1}{N} \sum_{j=1}^N \LRp{\tilde{\A}^j}^T \tilde{\A}^j  + \sC^{-1}}^{-1} 
  \LRp{\frac{1}{\N} \sum_{j=1}^{\N}\LRp{\tilde{\A}^j}^T \LRp{\epsb^j}^T \db^i + \sC^{-1}\ub_0^i}.
\end{align*}

Clearly we also have convergence to the MAP point of other combinations, 
such as randomizing only one of the data or prior mean.
To avoid a combinatorial explosion in the number of corollaries, 
we omit all the possibilities here. 

\subsection{Randomized prior}
Here we propose a randomization scheme based on randomizing $\sCinv$ though $\lamb \sim \lamP$ 
where $\expect_{\lamP} \LRs{\lamb} = 0$ and $\expect_{\lamP} \LRs{\lamb \lamb^T} = \sCinv$. Let
\begin{align}
    \eqnlab{randomized_prior}
    \ursuOne :&= \argmin_{\ub} 
    \expect_{\lamP} \LRs{\Jstoch \LRp{\ub; \ub_0, \db, \lamb}} \\
    &= \argmin_\ub \expect_{\lamP} \LRs{\half \norm{\db - \F(\ub)}^2_{\Linv}
    + \half \norm{\lamb^T\LRp{\ub - \ub_0}}^2_2}. \nonumber
\end{align}
The reason for designating this method ``RS\_U1'' is due to its relationship with the right sketching approach (see Section \secref{EnKF}).
Then the SAA reads
\begin{align}
  \eqnlab{RS_U1}
    \ursuOne_\N :&= \argmin_{\ub} \frac{1}{\N} \sum_{i=1}^\N 
                        \Jstoch \LRp{\ub; \ub_0, \db, \epsb^i} \\
                     &= \argmin_{\ub} \frac{1}{\N} \sum_{i=1}^\N 
                         \half \norm{\db - \F(\ub)}^2_{\Linv} + 
                         \half \norm{\LRp{\lamb^i}^T\LRp{\ub - \ub_0}}^2_2. \nonumber
\end{align}

\begin{corollary}[Asymptotic convergence of randomized prior]
  Let $\ursuOne_\N$ be as defined above. Then 
  \begin{equation*}
    \ursuOne_\N \asconv \umap \quad \text{as } \N \to \infty.
  \end{equation*}
\end{corollary}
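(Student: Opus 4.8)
The plan is to derive this statement as an immediate specialization of Theorem~\theoref{asymptotic_convergence}. I would view the randomized-prior scheme \eqnref{randomized_prior}--\eqnref{RS_U1} as the instance of the general framework in which $\sigb \equiv \mb{0}$ and $\del \equiv \mb{0}$ (no data or prior-mean perturbation), the noise-precision term is left unrandomized so that $\epsb\epsb^T$ is replaced by $\Linv$ throughout \eqnref{stochastic_cost}, and only $\lamb \sim \lamP$ is randomized, with $\expect_{\lamP}\LRs{\lamb} = \mb{0}$ and $\expect_{\lamP}\LRs{\lamb\lamb^T} = \sCinv$. Under this identification the SAA objective \eqnref{SA_cost} is exactly the averaged objective appearing in \eqnref{RS_U1}, hence $\umapSA = \ursuOne_\N$ and it suffices to invoke Theorem~\theoref{asymptotic_convergence}.

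First I would verify the theorem's hypotheses for this choice of randomization. The regularization term $\half\norm{\lamb^T(\ub - \ub_0)}_2^2 = \half(\ub - \ub_0)^T\lamb\lamb^T(\ub - \ub_0)$ is a convex quadratic in $\ub$ for every realization of $\lamb$ and is jointly measurable (being polynomial in $(\ub,\ub_0,\lamb)$); the misfit term $\half\norm{\db - \F(\ub)}_{\Linv}^2$ is deterministic and, under the standing convexity and smoothness assumption on $\F$, convex and twice continuously differentiable in $\ub$. Thus $\Jstoch\LRp{\ub;\ub_0,\db,\lamb}$ meets the convexity, smoothness, and measurability requirements of Theorem~\theoref{asymptotic_convergence}, while the degenerate choices $\sigb \equiv \mb{0}$, $\del \equiv \mb{0}$ and the deterministic $\Linv$ trivially have bounded second moments and satisfy \eqnref{boundednessSigC} in the remaining coordinate $\lamb$.

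Then I would apply the theorem. Part~(i) gives that minimizing $\Jexpect$ is equivalent to minimizing $\J$; in fact here one has equality, since $\expect_{\lamP}\LRs{\lamb\lamb^T} = \sCinv$ yields
\[
\Jexpect(\ub) \;=\; \half\norm{\db - \F(\ub)}_{\Linv}^2 + \half(\ub - \ub_0)^T\sCinv(\ub - \ub_0) \;=\; \J(\ub).
\]
Part~(ii) then gives $\ursuOne_\N = \umapSA \asconv \umap$ as $\N \to \infty$, which is the assertion.

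The main obstacle is essentially only bookkeeping: one must confirm that collapsing three of the four random variables to deterministic quantities does not invalidate any hypothesis of Theorem~\theoref{asymptotic_convergence} (it does not, since constants have bounded moments and the product $\sigma$-algebra argument is unaffected). If one prefers to sidestep any discussion of degenerate distributions, an alternative is to re-check condition~(v) of that theorem's proof directly: by the strong law of large numbers $\Ln = \frac{1}{\N}\sum_{i=1}^\N \lamb^i(\lamb^i)^T \asconv \sCinv$, so the averaged objective in \eqnref{RS_U1} converges pointwise (and, by convexity, uniformly on compact sets) to $\J$, and the epi-convergence argument behind \cite[Theorem~5.4]{ShapiroDentchevaRuszczynski09} delivers convergence of the minimizers.
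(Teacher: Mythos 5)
Your proposal is correct and matches the paper's own route: the paper presents this corollary as an immediate specialization of Theorem~\theoref{asymptotic_convergence} to the case where only $\lamb$ is randomized (with $\sigb$, $\del$ set to zero and $\epsb\epsb^T$ replaced by $\Linv$), exactly the bookkeeping you carry out. Your additional observation that $\Jexpect = \J$ in this case, and the alternative LLN/epi-convergence check, are consistent with but not needed beyond the paper's argument.
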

In the linear case, the optimality condition yields the following solution
\begin{equation*}
    \ursuOne = \LRp{\A^T \Linv \A + \expect_{\lamP} \LRs{\lamb \lamb^T}}^{-1}
    \LRp{\A^T \Linv \db + \expect_{\lamP} \LRs{\lamb \lamb^T} \ub_0}.
\end{equation*}
This approach can be thought of as sketching the prior from the left. 

\section{Optimize, transform, then randomize}
An important observation about 
the methods discussed so far
is that the linear settings are
solved using $\ub_1$ given in \eqnref{MAPsolutiona}.
That is, to show the equivalence of the solution of the randomized cost function
to the solution of the corresponding method in the literature,
one only needs to consider the optimal solution
\eqnref{MAPsolutiona}.
Additionally, the sample average approximation of the cost function is 
exactly the same as replacing the expectations in form $\ub_1$ with their 
respective sample average approximations. 
The next methods require form $\ub_2$ in \eqnref{MAPsolutionb}
to be used to see the equivalence of the randomized solution
and the corresponding method given in the literature -- where the Sherman-Morrison-Woodbury
formula is applied to the optimality condition before making sample average approximations.
{\em As $\ub_2$ is only equivalent to $\ub_1$ in the linear case, we
will restrict the following discussion to linear inverse problems. }

\subsection{Right sketching and the Ensemble Kalman filter}
\seclab{EnKF}
Since we are now considering schemes derived from randomizing $\ub_2$, 
we introduce a new random variable, $\omb$, defined such that
$\expect_{\omP} \LRs{\omb} = 0$ and $\expect_{\omP}\LRs{\omb\omb^T} = \sC$. 
By taking advantage of the asymptotic convergence of the SAA of
$\expect_{\omP}\LRs{\omb\omb^T}$, we have
\begin{subequations}
  \eqnlab{CALLN}
  \begin{align}
    \frac{1}{\N}\sum_{i=1}^\N \omb^i\LRp{\A\omb^i}^T
    & 
    \xrightarrow[N \to \infty]{a.s.}
      \Ex_{\omb}\LRs{\omb\omb^T\A^T} = \sC\A^T, \\
    \frac{1}{\N}\sum_{i=1}^\N \LRp{\A\omb^i}\LRp{\A\omb^i}^T
    & 
    \xrightarrow[N \to \infty]{a.s.}
      \Ex_{\omb}\LRs{\A\omb\omb^T\A^T} = \A\sC\A^T.
  \end{align}
\end{subequations}
Combining \eqnref{MAPsolutionb} and \eqnref{CALLN} gives
\begin{equation}
  \eqnlab{rightSketch}
  \urs_\N := \ub_0 +\LRp{\frac{1}{\N}\sum_{i=1}^\N
    \omb^i\LRp{\A\omb^i}^T}\LRp{\L + \frac{1}{\N}\sum_{i=1}^\N
    \LRp{\A\omb^i}\LRp{\A\omb^i}^T}^{-1}\LRp{\db -\A\ub_0},
\end{equation}
which is the same as sketching the PtO map $\A$ from the
right or sketching the transpose of the PtO map from the left.

\begin{lemma}[Asymptotic convergence of right sketching]
  \lemlab{RS}
  Let $\urs_\N$ be defined in \eqnref{rightSketch} and assume that
  $\expect_{\lamP} \LRs{\lam \lam^T}$ (where $\lam$ is defined in 
  Section \secref{theory}) is invertible.  Then 
  \begin{equation*}
    \urs_\N \asconv \umap \quad \text{as } \N \to \infty.
  \end{equation*}
\end{lemma}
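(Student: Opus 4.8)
The plan is to show that the right-sketching iterate $\urs_\N$ defined in \eqnref{rightSketch} is exactly the sample average approximation of $\ub_2$ from \eqnref{MAPsolutionb}, where the expectations $\expect_{\omP}\LRs{\omb\omb^T\A^T} = \sC\A^T$ and $\expect_{\omP}\LRs{\A\omb\omb^T\A^T} = \A\sC\A^T$ have been replaced by their empirical means, and then to invoke the strong law of large numbers together with continuity of the matrix operations (in particular matrix inversion) to conclude almost sure convergence. First I would apply the strong LLN coordinatewise to the two empirical averages in \eqnref{CALLN}, giving $\frac{1}{\N}\sum_i \omb^i(\A\omb^i)^T \asconv \sC\A^T$ and $\frac{1}{\N}\sum_i (\A\omb^i)(\A\omb^i)^T \asconv \A\sC\A^T$ almost surely; this uses the bounded-second-moment/finite-covariance hypothesis on $\omb$ so that $\omb\omb^T$ has finite first moment.

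Next I would handle the inverse. The quantity being inverted in \eqnref{rightSketch} is $\L + \frac{1}{\N}\sum_i (\A\omb^i)(\A\omb^i)^T$, which converges a.s. to $\L + \A\sC\A^T$. Since $\L$ is a positive definite noise covariance and $\A\sC\A^T$ is positive semidefinite, the limit $\L + \A\sC\A^T$ is positive definite, hence invertible; matrix inversion is continuous on the open set of invertible matrices, so on the almost sure event where the empirical averages converge, the inverses converge too: $\LRp{\L + \frac{1}{\N}\sum_i (\A\omb^i)(\A\omb^i)^T}^{-1} \asconv \LRp{\L + \A\sC\A^T}^{-1}$. Multiplying the three convergent factors (the outer empirical average, the convergent inverse, and the deterministic vector $\db - \A\ub_0$) and adding $\ub_0$, the product converges a.s. to $\ub_0 + \sC\A^T\LRp{\L + \A\sC\A^T}^{-1}\LRp{\db - \A\ub_0} = \ub_2$. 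Finally, by Proposition \proporef{MAPsolution}, $\ub_2 = \ub_1 = \umap$ in the linear case, which gives $\urs_\N \asconv \umap$.

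The role of the invertibility assumption on $\expect_{\lamP}\LRs{\lam\lam^T} = \sC^{-1}$ deserves a word: it guarantees that $\sC$ itself is a genuine (finite, invertible) covariance matrix, so that $\ub_2$ is well-defined and equals $\ub_1$ via the Sherman–Morrison–Woodbury step in Proposition \proporef{MAPsolution} — that step requires both $\sC^{-1}$ and $\L^{-1}$ invertible. Without it, $\ub_2$ and $\ub_1$ need not coincide, so the target of convergence would be ambiguous; I would flag this explicitly.

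The main obstacle, and really the only nontrivial point, is the passage from convergence of the empirical matrices to convergence of their inverse — i.e., justifying that one may compose the a.s. limits through the (nonlinear) inversion map. The clean way is to note that all these operations are continuous wherever the limiting matrix is invertible, restrict to the probability-one event on which the empirical averages converge, and use the continuous mapping theorem for a.s. convergence; the positive definiteness of $\L + \A\sC\A^T$ is what ensures we land in the domain of continuity. Everything else is a routine assembly of LLN plus Proposition \proporef{MAPsolution}.
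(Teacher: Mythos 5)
Your proposal is correct and follows essentially the same route as the paper: identify the limit of the empirical second-moment matrices via the strong LLN, pass through the matrix inverse by the continuous mapping theorem, and use the Sherman--Morrison--Woodbury equivalence of $\ub_1$ and $\ub_2$ from Proposition \proporef{MAPsolution} (which is exactly where the invertibility of $\expect_{\lamP}\LRs{\lam\lam^T} = \sC^{-1}$ enters). The only difference is cosmetic --- the paper derives the target by starting from the prior-randomized optimality condition in form $\ub_1$ and converting to $\ub_2$, whereas you start from $\ub_2$ directly --- and your explicit verification that $\L + \A\sC\A^T$ is positive definite makes the continuity step slightly more careful than the paper's terse statement.
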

\begin{proof}
  Beginning with equation \eqnref{expect_cost} and randomizing
  only $\sC^{-1}$ through  $\lam$, the optimality condition is 
  \begin{equation*}
    \ub^* = \LRp{\A \L^{-1} \A + \expect_{\lamP} \LRs{\lam \lam^T}}^{-1}
    \LRp{\A^T \L^{-1} \db +  \expect_{\lamP} \LRs{\lam \lam^T}\ub_0}. 
  \end{equation*}
  Since $\expect_{\lamP} \LRs{\lam \lam^T}$ is assumed to be invertible, this can
  be rewritten using the Sherman-Morrison-Woodbury formula in the form
  $\ub_2$ as
  \begin{align*}
    \ub^* &= \ub_0 + \LRp{\expect_{\lamP} \LRs{\lam \lam^T}}^{-1}
            \A^T\LRp{\L + \A \LRp{\expect_{\lamP} \LRs{\lam \lam^T}}^{-1}
            \A^T}^{-1}\LRp{\db -\A\ub_0}.
  \end{align*}
  Before making a sample average approximation, note that
  \begin{equation*}
    \LRp{\expect_{\lamP} \LRs{\lam \lam^T}}^{-1} = \LRp{\sC^{-1}}^{-1}
    = \sC = \expect_{\omP} \LRs{\omb \omb^T}.
  \end{equation*}
  Then,
  \begin{align*}
    \ub^* = \urs = \ub_0 + \expect_{\omP} \LRs{\omb \omb^T}
            \A^T\LRp{\L + \A \expect_{\omP} \LRs{\omb \omb^T}
            \A^T}^{-1}\LRp{\db -\A\ub_0}.
  \end{align*}
  since matrix multiplication and matrix inversion are continuous functions, 
  \begin{equation*}
    \urs_\N \asconv \urs = \ub^*.
  \end{equation*}
  by the continuous mapping theorem \cite[Theorem 2.3]{van2000asymptotic}.
\end{proof}

The key step here is recognizing that, asymptotically, sampling from $\lamP$
and solving using form $\ub_1$ \eqnref{MAPsolutiona} 
gives the same results as sampling from $\omP$ 
and solving using
form $\ub_2$ \eqnref{MAPsolutionb}. However, Lemma \lemref{RS} does
not imply that $\urs_\N$ is equivalent to sampling from $\lamP$ and
solving using form $\ub_1$ for a finite $\N$. Indeed, when $\N < \dim \LRp{\ub}$,
$\urs_\N$  cannot be rewritten in the form $\ub_1$ since
\begin{equation*}
  \text{rank} \LRp{\frac{1}{\N} \sum_{j=1}^\N \omb^j \LRp{\omb^j}^T}
  \leq \N < \dim \LRp{\ub}.
\end{equation*}
This implies that the sample average of $\expect_{\omP} \LRs{\omb \omb^T}$ is not
invertible, breaking an assumption of Lemma \lemref{RS} and
showing that $\urs_\N$ does not satisfy the optimality condition of \eqnref{MAPfinite}. 
Here, it is important that $\sCinv$ is invertible, otherwise
$\ub_1 \neq \ub_2$ and the randomized schemes discussed here have no hope of
converging to $\ub^*$.

If in addition to right sketching we use \eqnref{rand_means}
to randomize $\db$ and $\ub_0$ in \eqnref{rightSketch}, and define
\begin{equation}
  \LRp{\uenkf_\N}^i := \ub_0^i +\LRp{\frac{1}{N}\sum_{j=1}^\N
    \omb^j\LRp{\A\omb^j}^T}\LRp{\L + \frac{1}{\N}\sum_{j=1}^\N
    \LRp{\A\omb^j}\LRp{\A\omb^j}^T}^{-1}\LRp{\db^i -\A\ub_0^i},
    \eqnlab{ensemble_kal}
\end{equation}
we rediscover the well-known ensemble Kalman filter (EnKF) update formula
for a single member of the ensemble \cite{Evensen03}.
Notice here that the sketching of $\A$ from the right is
fixed for each random sample $\db^i$ and $\ub_0^i$.
In the language of the EnKF, the sample prior covariance
matrix is fixed for all members of the ensemble.

\begin{corollary}[Asymptotic convergence of EnKF]
  Let
  \begin{equation}
    \eqnlab{enkf_saa}
    \uenkf_{\M,\N} := \frac{1}{\M}
    \sum_{i=1}^\M \LRp{\uenkf_\N}^i. 
  \end{equation}
  Then
  \begin{equation*}
    \uenkf_{\M, \N} \asconv \umap \quad
    \text{as} \quad \M, \N \to \infty.
  \end{equation*}
\end{corollary}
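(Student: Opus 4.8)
The plan is to reduce this double limit to the single‑index statement already proved in Lemma~\lemref{RS}, exploiting the fact that each ensemble member $\LRp{\uenkf_\N}^i$ depends \emph{affinely} on the randomized data and prior mean, while the right sketch is \emph{shared} across the ensemble. First I would collect the ensemble Kalman gain
\[
  \bK_\N := \LRp{\frac{1}{\N}\sum_{j=1}^\N \omb^j\LRp{\A\omb^j}^T}
  \LRp{\L + \frac{1}{\N}\sum_{j=1}^\N \LRp{\A\omb^j}\LRp{\A\omb^j}^T}^{-1},
\]
which is well defined for every $\N$ since $\L$ is positive definite, and which depends only on $\LRc{\omb^j}_{j\le\N}$, not on $i$. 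Then $\LRp{\uenkf_\N}^i = \ub_0^i + \bK_\N\LRp{\db^i - \A\ub_0^i}$ is affine in $\LRp{\db^i,\ub_0^i}$, so averaging over $i$ commutes with it and, using \eqnref{rand_means},
\[
  \uenkf_{\M,\N} = \LRp{\ub_0 + \overline{\del}_\M} + \bK_\N\LRp{\db + \overline{\sigb}_\M - \A\ub_0 - \A\,\overline{\del}_\M},
\]
where $\overline{\sigb}_\M := \frac{1}{\M}\sum_{i=1}^\M \sigb^i$ and $\overline{\del}_\M := \frac{1}{\M}\sum_{i=1}^\M \del^i$.

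Next I would take the two limits separately. For fixed $\N$, the strong law of large numbers gives $\overline{\sigb}_\M\asconv 0$ and $\overline{\del}_\M\asconv 0$ as $\M\to\infty$, hence $\uenkf_{\M,\N}\asconv \ub_0 + \bK_\N\LRp{\db - \A\ub_0} = \urs_\N$; and Lemma~\lemref{RS} gives $\urs_\N\asconv\umap$ as $\N\to\infty$. A triangle inequality then yields
\[
  \norm{\uenkf_{\M,\N} - \umap}\;\le\;\norm{\uenkf_{\M,\N} - \urs_\N} + \norm{\urs_\N - \umap},
  \qquad
  \norm{\uenkf_{\M,\N} - \urs_\N}\le\LRp{1 + \norm{\bK_\N}\norm{\A}}\norm{\overline{\del}_\M} + \norm{\bK_\N}\norm{\overline{\sigb}_\M}.
\]

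The step I expect to be the main obstacle is upgrading these two one‑index statements into a genuine joint limit, which requires controlling $\norm{\uenkf_{\M,\N}-\urs_\N}$ \emph{uniformly in} $\N$. For this I would note that $\bK_\N$ is itself an a.s.\ convergent sequence: by the limits recorded in \eqnref{CALLN} its two empirical factors converge to $\sC\A^T$ and $\A\sC\A^T$, the limit $\L + \A\sC\A^T$ is invertible (as $\L\succ 0$), and matrix inversion is continuous (continuous mapping theorem, exactly as in the proof of Lemma~\lemref{RS}), so $\bK_\N\to\sC\A^T\LRp{\L + \A\sC\A^T}^{-1}$ and in particular $C:=\sup_\N\norm{\bK_\N}<\infty$ almost surely. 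On the full‑measure event where all invoked laws of large numbers (for $\overline{\sigb}_\M$, $\overline{\del}_\M$, and the two factors of $\bK_\N$) and the convergence of Lemma~\lemref{RS} hold simultaneously --- legitimate because $\LRc{\sigb^i,\del^i}_i$ and $\LRc{\omb^j}_j$ are independent --- the bound above becomes $\norm{\uenkf_{\M,\N}-\urs_\N}\le\LRp{1 + C(1+\norm{\A})}\LRp{\norm{\overline{\del}_\M}+\norm{\overline{\sigb}_\M}}$, which no longer involves $\N$. Hence, given $\eps>0$, there is $K_1$ with $\norm{\urs_\N-\umap}<\eps/2$ for all $\N\ge K_1$, and a threshold $K_2$ (valid for every $\N$, by the uniformity just established) with the first term $<\eps/2$ for all $\M\ge K_2$; so $\norm{\uenkf_{\M,\N}-\umap}<\eps$ whenever $\M,\N\ge\max(K_1,K_2)$, giving $\uenkf_{\M,\N}\asconv\umap$ as $\M,\N\to\infty$. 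The only remaining care is the routine measurability/independence bookkeeping ensuring all these strong laws hold on a common full‑measure set.
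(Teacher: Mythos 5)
Your proposal is correct, and it is in substance a fully worked-out version of what the paper disposes of in one sentence: the paper's proof merely cites Theorem \theoref{asymptotic_convergence}, Lemma \lemref{RS}, and the interchange of expectation and optimization used for Corollary \cororef{rMAP}, leaving the reader to assemble the argument. Your route differs in two useful ways. First, you never need the (nontrivial to justify) interchange of $\argmin$ and expectation at all: because the shared sketch makes each ensemble member an explicit affine function $\ub_0^i + \bK_\N\LRp{\db^i - \A\ub_0^i}$ of $\LRp{\db^i,\ub_0^i}$, averaging over $i$ commutes with the update exactly, and the $\M$-limit reduces to the plain strong LLN for $\overline{\sigb}_\M$ and $\overline{\del}_\M$. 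Second, you address the point the paper's "follows immediately" glosses over, namely that the statement is a genuine joint limit in $\LRp{\M,\N}$: your observation that $\bK_\N$ converges a.s.\ (continuous mapping, $\L \succ 0$ guaranteeing invertibility for every $\N$) and is therefore a.s.\ uniformly bounded in $\N$ gives a bound on $\norm{\uenkf_{\M,\N}-\urs_\N}$ that is uniform in $\N$, which is exactly what is needed to splice the two single-index limits into the double limit on a common full-measure event (using independence of $\LRc{\sigb^i,\del^i}$ and $\LRc{\omb^j}$). What the paper's citation-style proof buys is brevity and consistency with how the other corollaries are derived from the general SAA framework; what your argument buys is an elementary, self-contained proof whose only probabilistic inputs are the LLN and the continuous mapping theorem, plus an explicit treatment of the double-limit uniformity that the paper leaves implicit. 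No gaps.
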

\begin{proof}
  The result follows immediately from Theorem \theoref{asymptotic_convergence},
  Lemma \lemref{RS}, and the interchange of expectation and integration used in
  Corollary \cororef{rMAP}. 
\end{proof}

As with right sketching, this result only holds asymptotically, 
with special sensitivity to $\N$, since the validity of $\LRp{\uenkf_\N}^i$ as an optimal solution
of \eqnref{MAPfinite} requires $\N$ to be large enough to ensure invertibility
of all matrices involved. It should be noted that the EnKF is often used as part of
an iterative method for solving inverse problems rather than used directly 
\cite{iglesias2013ensemble}. The reason for this can be understood by investigating what right sketching
(and thus the EnKF) is doing to the prior covariance matrix and viewing this
through the lens of regularization.

\subsubsection{Right sketching from the left as randomized regularization}
\seclab{randomized_regularization}
Consider again the form $\ub_1$ given in \eqnref{MAPsolutiona}:
\begin{equation*}
    \ub_1 = \LRp{\A^T\L^{-1}\A + \sC^{-1}}^{-1}\LRp{\A^T\L^{-1}\db + \sC^{-1}\ub_0}.
\end{equation*}
While the randomized prior \eqnref{RS_U1}, 
right sketching \eqnref{rightSketch} 
and ensemble Kalman filter \eqnref{ensemble_kal} 
methods still fall under the asymptotic analysis given in Section
\secref{asymptotic_theory} for linear inverse problems,
a practical and theoretical issue arises due to the regularizing role that $\sCinv$ plays.
The inverse of the prior covariance, $\sC^{-1}$, can be considered to be 
a regularization operator when viewed through the lens of deterministic 
inverse problems and is indeed equivalent to a Tikhonov regularization 
strategy \cite{EnglKunischNeubauer89a}. In the deterministic setting, the role of regularization
is often to ``damp out'' highly oscillatory modes caused by the rapidly 
decaying spectrum of $\A$ --- modes that are highly polluted by noise.
While asymptotic analysis (see Theorem \theoref{asymptotic_convergence}) establishes the convergence of these aforementioned methods, it is incapable of explaining why these methods could fail for finite sample size $\N$. This is where non-asymptotic analysis shines.  
Indeed, Lemma \lemref{nonasymptotic_bounds} shows that 
the successful (small error) probability requires quite a large number of samples.
According to Remark 4.7.2 of \cite{vershynin_2018}, the number of samples
required to accurately estimate the covariance matrix is proportional to $n/\tol^2 $ where
$n$ is the dimension of the matrix and $\tol$ is the tolerance.
This is not surprising from a regularization point of view as the sample covariance needs to closely approximate the true covariance
in order to adequately perform its role as a regularizer. 

As a concrete example, consider the simple case where $\sCinv = \alpha \ident$
with $\alpha > 0$.
Letting $\L^{-\half}\A = \Ub \Sb \Vb^T$ be the SVD of the whitened PtO map, 
the first term of $\ub_1$ can be written 
\begin{equation*}
    \LRp{\Vb\Sb^2\Vb^T + \sC^{-1}}^{-1}
    = \LRp{\Vb \LRp{\Sb^2 + \alpha \ident}\Vb^T}^{-1}
    = \Vb \mb{D} \Vb^T,
\end{equation*}
where $\mb{D}$ is the diagonal matrix with the $i$th diagonal element given by $\mb{D}_{ii} =\frac{1}{\Sb_{ii}^2 + \alpha}$.
Comparing to the case of no regularization, we can see that the inverse of the prior 
covariance shifts the spectrum of $\A^T \L^{-1} \A$ upward by the constant $\alpha$. 
Furthermore, upon inverting, $\alpha > 0$ ensures that the denominator of
$\frac{1}{\Sb_{ii}^2 + \alpha}$ is not too close to 0, keeping the 
inverse solution from blowing up as $\Sb_{ii}^2 \to 0$. 
Now, consider the RS\_U1 randomization of $\sCinv$ proposed in \eqnref{RS_U1} --- 
the same randomization as right sketching
when viewed in the $\ub_1$ form (sketching the prior from the left):
\begin{equation*}
    \sC^{-1} = \expect_{\lamP} \LRs{\lam \lam^T} 
    \approx \frac{1}{\N} \sum_{i=1}^{\N} \LRp{\lam^i} \LRp{\lam^i}^T.
\end{equation*} 
As we saw before, this randomization converges as $\N \to \infty$, 
but the convergence rate $\bigO (1 / \sqrt{\N})$ of a 
SAA is notoriously slow. So how 
does this slow convergence affect the regularization strategy?
Clearly when $\N < \dim \LRp{\ub}$, the regularization is not full rank and 
there may be $\dim \LRp{\ub} - \N$ modes of $\A^T \L^{-1} \A$
left unregularized, assuming the random matrix has linearly independent columns.
Even in the case when $\N \geq \dim \LRp{\ub}$, 
slow convergence of the SAA leaves modes underregularized leading to oscillatory solutions as seen in 
Figures \figref{1D_Decon_RS} and \figref{1D_Decon_ENKF} for the 1D deconvolution problem. This can also be
seen explicitly in Figure \figref{saa_covariance_convergence_identity} where the spectrum of the sample average
inverse covariance is plotted against the spectrum of the true prior inverse covariance for various $\N$.

In Figure \figref{saa_covariance_convergence_identity}, we consider the case
where $\sC = \ident$ and $\dim (\ub) = 1000$. 
\begin{figure}[h!t!b!]
  \centering
  \begin{tabular}{c}
    \textbf{\small{Spectrum of sample inverse covariance $\lam \lam^T$}}\\
    \begin{subfigure}{0.45\textwidth}
      \centering
      \includegraphics[trim=1.5cm 0.5cm 1.5cm 0.68cm,clip=true,width=\textwidth]{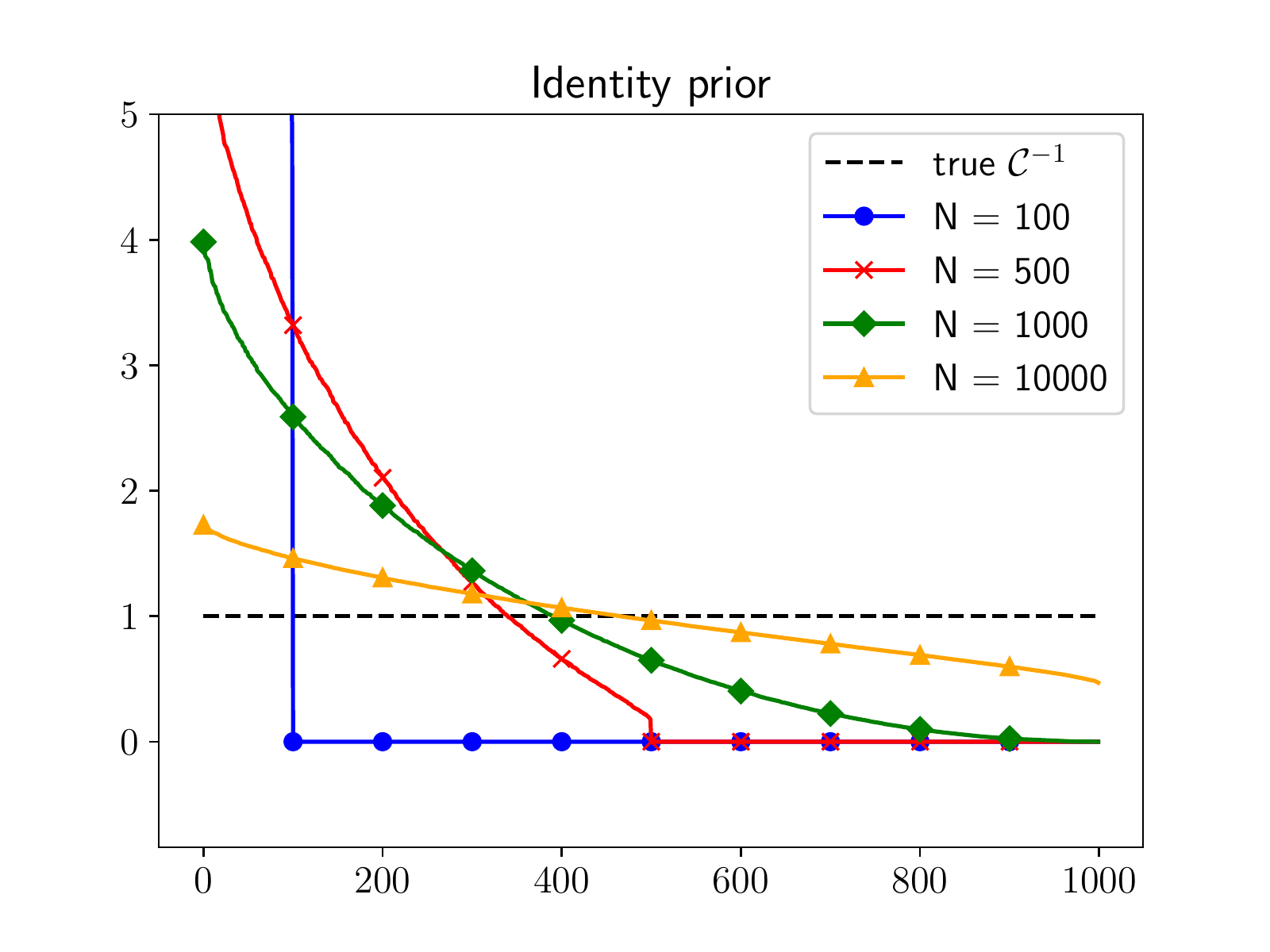}
      \caption{}
      \figlab{saa_covariance_convergence_identity}
    \end{subfigure}%
    \hfill
    \hspace{0.5cm}
    \begin{subfigure}{0.45\textwidth}
      \centering
      \includegraphics[trim=1.5cm 0.5cm 1.5cm 0.68cm,clip=true,width=\textwidth]{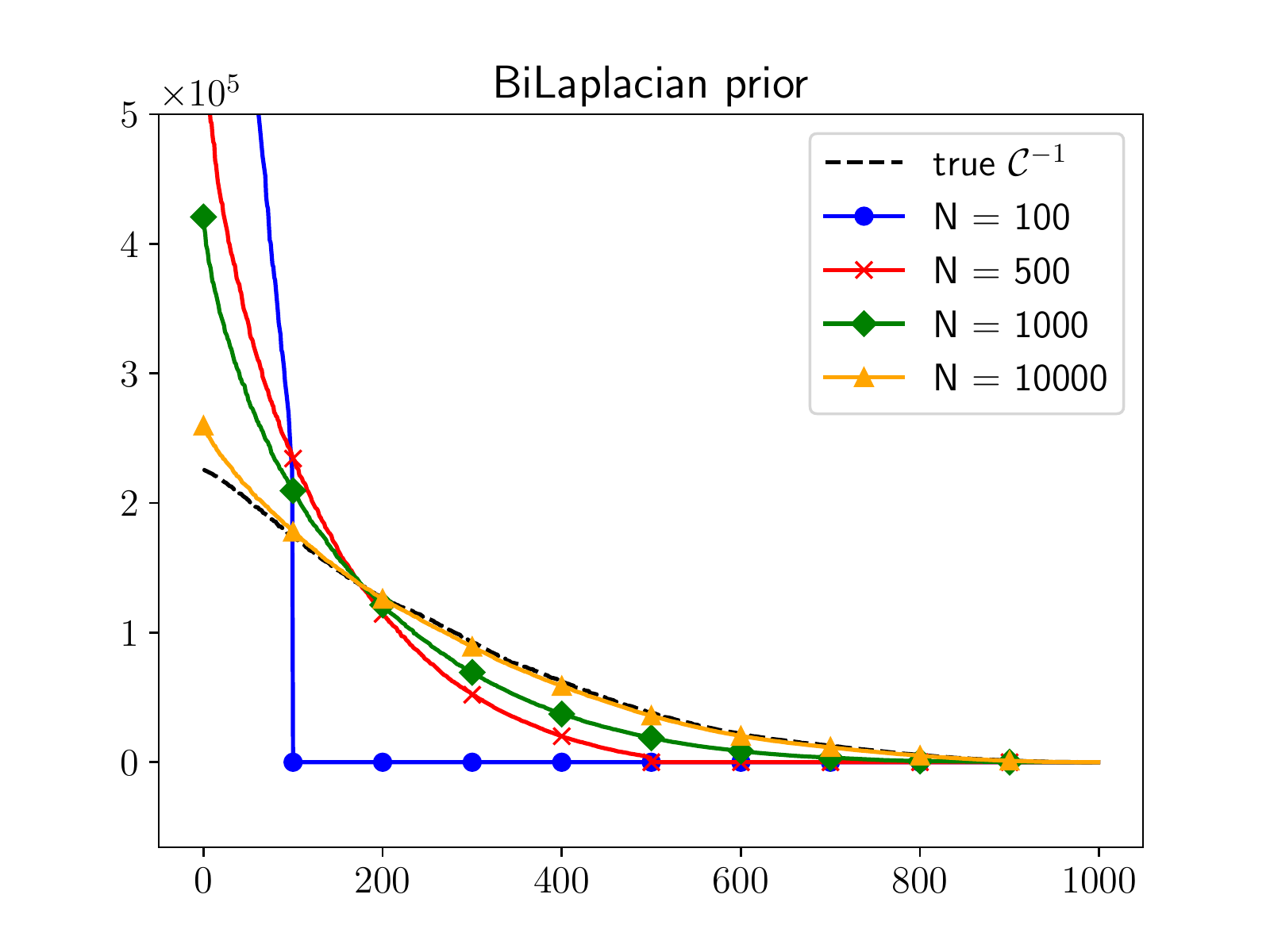}
      \caption{}
      \figlab{saa_covariance_convergence_bilaplacian}
    \end{subfigure}
  \end{tabular}
  \caption{Convergence of spectrum of the sample average approximation of the
    inverse prior covariance for the case where $\sC = \ident \in \reals^{1000 \times 1000}$ (a) and $\sC$ is the BiLaplacian (b). When $N = 100$, there
    are fewer samples than the dimension of the parameter and some modes are left completely unregularized. Even when there are more samples than the dimension of the parameter, this does not
    guarantee acceptable convergence for SAA. This shows that the sample
    average of the inverse prior covariance converges slowly to the true inverse prior covariance.
    However, when the spectrum of the inverse prior covariance decays, the sample average approximation more closely
    matches the true inverse prior covariance with fewer samples. }
    \figlab{saa_covariance_convergence}
\end{figure}
Because randomizing the inverse of the prior covariance results 
in a poor performing regularizer, solutions using right sketching 
or a single step of the ensemble Kalman filter exhibit highly oscillatory behavior when 
choosing $\N$ to be of reasonable size, at least for the identity prior.
In problems where a decaying prior spectrum is desirable,
randomization of the prior
has a less pronounced effect on the quality of the inverse solution.
For example,
the advection-diffusion PDE constrained inverse problem detailed in Section \secref{hippylib}
with the BiLaplacian prior shows good results with right sketching. The similarity of the
sample average spectrum to the spectrum of the true prior inverse covariance can be seen
in Figure \figref{saa_covariance_convergence_bilaplacian} for the BiLaplacian prior. Additionally, problems where the PtO map has a slowly decaying 
spectrum as in the X-ray tomography problem (Section \secref{xray_tomography})
may also be less sensitive to inaccurate approximations to $\sC^{-1}$.
\section{Numerical results}
\label{section:numerical_results}
In this section we show numerical results for a variety of inverse problems demonstrating the asymptotic convergence of
various methods. As the possible number of randomized variants would be unnecessarily burdensome to enumerate,
we will focus on a few key methods: 
randomized misfit approach \eqnref{RMA_saa}, 
randomized MAP \eqnref{rmap_saa}, 
the combination of RMA and RMAP \eqnref{rma_rmap_opt_2},
right sketching \eqnref{rightSketch},  
the ensemble Kalman filter \eqnref{enkf_saa}, 
and randomizing everything \eqnref{independent_SA_loss} (listed as ALL). 
It is important to keep in mind
that we are not advocating for or against the use of any particular method --- this section is to serve as
numerical validation of the asymptotic convergence of each method. 
Additionally we discuss the differing convergence behavior of each method for different problems. 
In particular, we find empirically that
methods randomizing $\sCinv$
such as right sketching, the EnKF, and ALL generally have very poor performance and require
many more samples than the dimension of the problem in order to provide suitable 
results for several  problems. 
The reason for this has been discussed at length in Section \secref{randomized_regularization}.
These methods do however exhibit asymptotic convergence to the MAP solution as predicted by our theoretical results. 

To explore the performance and convergence of the various methods, we consider a variety of
prototype problems with different characteristics. The 1D deconvolution problem 
with scaled identity prior covariance is a relatively simple inverse problem 
that provides easily digestible visualizations of the convergence
for each method. X-ray tomography is a mildly ill-posed two dimensional imaging problem with fewer
observations than parameters. The fact that it is only mildly ill-posed exposes interesting effects
in the context of randomization.
We also show the convergence of each method for a linear time dependent PDE-constrained
inverse problem with PDE-based prior covariance on a domain with a hole. Finally, we conclude with an
example demonstrating convergence on a non-linear elliptic PDE-constrained inverse problem.

In problems with more than one randomization, such as EnKF and RMA+RMAP,
each expectation can be approximated
by a separate sample average. However, exploring the effect of choosing a different number of samples
for each random variable is outside the scope of this paper and serves only to obscure
the asymptotic convergence property that we aim to show in this section. Therefore, all methods
assume that the number of random samples is the same for all random variables. To be more concrete, we set
$\uenkf_{M,N} = \uenkf_{N,N}$  in \eqnref{enkf_saa}.
In addition, the relative errors presented are with respect to $\umap$, not the true solution,
emphasizing the errors induced by randomization rather than errors due to other effects.
This is due to the fact that the theory presented shows convergence to $\umap$.

\subsection{1D Deconvolution problem}
Deconvolution, the inverse problem associated with the convolution process, finds enormous application
in the signal and image processing domains \cite{kundur1996blind,swedlow2013quantitative,ryan2007free}.
For demonstration, we consider the 1D deconvolution problem with a 1-periodic function given by: 

\[f(x) = \sin (2\pi x) + \cos (2\pi x) \quad x \in [0,1].\]
The domain is divided into $n = 1000$ sub-intervals. The kernel is constructed as \cite{mueller2012linear}:

\[\Psi(x) = C_a \LRp{x+a}^2 \LRp{x-a}^2,\]
where $a = 0.235$ and the constant $C_a$ is chosen to enforce the normalization condition \cite{mueller2012linear}. Synthetic observations are generated with
5\%  additive Gaussian noise. We choose to randomize using the Achlioptas distribution \cite{LeEtAl2017,achlioptas2003}, 
an example of an $l-$percent sparse random variable with $l = 2/3$ and entries in $\LRc{-1, 0, 1}$ with equal probability.
The reconstructed functions obtained by different randomization approaches are shown in
Figure  \figref{1D_Decon} and 
the relative errors are given in Table \ref{Table:1D_Deconvolution}.
It can be seen that the right sketching and EnKF methods give the least accurate results
as evidenced in Table \ref{Table:1D_Deconvolution}. 
This is because randomizing the inverse of the prior covariance results in poor performance
as a regularizer, providing numerical confirmation of the discussion in Section \secref{randomized_regularization}. Other methods perform reasonably well.
While not all methods perform equally well, all methods converge as more samples are
taken and this is consistent with our asymptotic convergence results. 

\begin{table}[h!]
\centering
\begin{tabular}{ |l|l|l|l|l|l| }
    \hline
    \multirow{2}{*}{Method} & \multicolumn{5}{c|}{Relative error ($\%$)}   \\ \cline{2-6}
      & N = 10& N = 100& N = 1000& N = 10000 & N = 100000\\ \cline{2-6}
    \hline
    \text{RMAP}  & 7.74 & 2.39 & 0.80 & 0.25 & 0.07  \\ \cline{1-6}
    \text{RMA}  & 35.5 & 4.2 & 1.8 & 0.42 & 0.18  \\ \cline{1-6}
    \text{RMA+RMAP}  & 41.7 & 9.0 & 1.7 & 0.80 & 0.15  \\ \cline{1-6}
    \text{RS}  & 917 & 295 & 106 & 31.4 & 9.8  \\ \cline{1-6}
    \text{ENKF}  & 896 & 352 & 100 & 31.9 & 9.7  \\ \cline{1-6}
    \text{ALL}  & 158 & 33875 & 844 & 33.3 & 9.3  \\ \cline{1-6}
  \end{tabular}
  \vspace*{0.25cm}
  \caption{Relative error for various randomized methods compared to the $\umap$
  solution for 1D deconvolution. }
  \label{Table:1D_Deconvolution}
\end{table}
 
\begin{figure}[htb!]
    \centering
    \begin{tabular}{c}
        \hspace{-1.5cm}\textbf{1D Deconvolution}\\
        \rotatebox{90}{\hspace{2cm}Relative error}
        \includegraphics[width=0.9\textwidth, trim=1.0cm 0.9cm 0.0cm 0.8cm,clip=true,]{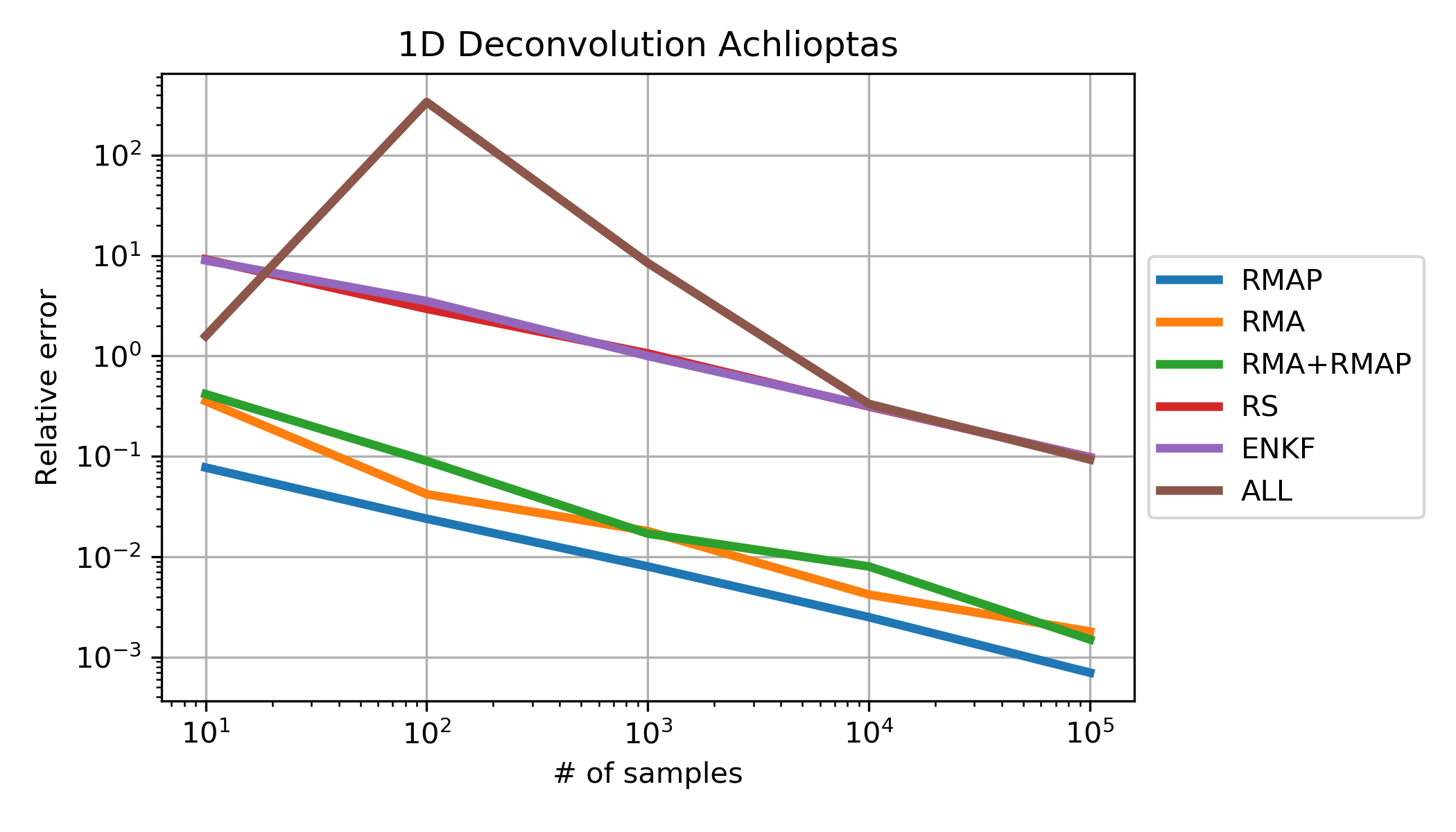} \\
        \hspace{-1.0cm}$N$
    \end{tabular}
    \caption{Relative error plot for 1D Deconvolution problem with Achlioptas random variable. }
\end{figure}

\subsection{X-ray tomography}
\seclab{xray_tomography}
\newcommand{\xraySize}{64}
In x-ray tomographic imaging, X-ray projections of an object are captured at multiple
angles and the inverse problem is to recover the internal structure of the object from the
projection data \cite{mueller2012linear}.
We consider the canonical \textit{phantom} image of size $\xraySize \times \xraySize$ pixels 
with $45$ measurement angles uniformly divided over the range $[0, \pi]$. With this number of measurement
angles, the PtO map has shape $\xraySize \times 45$ by $\xraySize^2$ resulting in fewer observations than parameters
(pixels). 
A scaled identity prior covariance is once again considered.
Measurements are corrupted with 1\% additive Gaussian noise.

\begin{table}[h!]
\centering
\begin{tabular}{ |l|l|l|l|l|l| }
    \hline
    \multirow{2}{*}{Method} & \multicolumn{5}{c|}{Relative error ($\%$)}   \\ \cline{2-6}
      & N = 10& N = 100& N = 1000& N = 10000& N = 50000\\ \cline{2-6}
    \hline
    \text{RMAP}  & 6.44 & 6.44 & 6.44 & 6.44 & 0.04  \\ \cline{1-6}
    \text{RMA}  & 94.17 & 74.95 & 39.42 & 31.07 & 4.02  \\ \cline{1-6}
    \text{RMA+RMAP}  & 96.64 & 77.94 & 40.35 & 30.89 & 4.02  \\ \cline{1-6}
    \text{RS}  & 191.87 & 373.07 & 176.02 & 51.87 & 22.43  \\ \cline{1-6}
    \text{ENKF}  & 324.27 & 352.58 & 178.58 & 52.25 & 21.97  \\ \cline{1-6}
    \text{ALL}  & 95.12 & 71.30 & 80.50 & 60.36 & 23.60  \\ \cline{1-6}
  \end{tabular}
  \vspace*{0.25cm}
  \caption{Relative error for various randomized methods compared to the $\umap$ solution for 
  the X-ray tomography problem. }
  \label{Table:XRAY}
\end{table}

\begin{figure}[htb!]
    \centering
    \begin{tabular}{c}
        \hspace{-1.5cm}\textbf{X-ray tomography}\\
        \rotatebox{90}{\hspace{2cm}Relative error}
        \includegraphics[width=0.9\textwidth, trim=1.0cm 0.9cm 0.0cm 0.8cm,clip=true,]{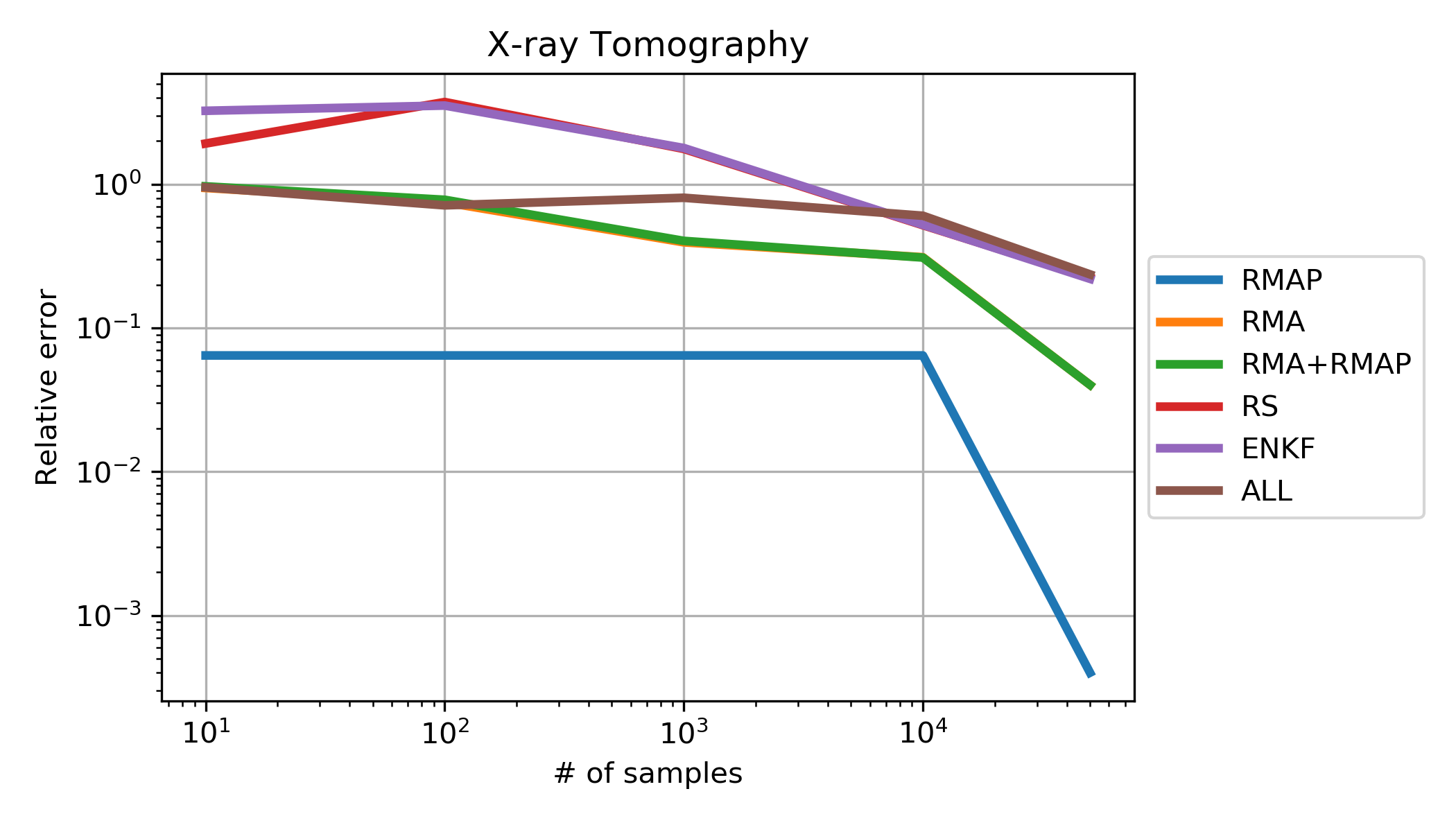} \\
        \hspace{-1.0cm}$N$
    \end{tabular}
    \caption{Relative error plot for X-ray tomography problem.}
\end{figure}

The results are shown in Figure \ref{X_ray} and 
Table \ref{Table:XRAY} shows the relative error for different methods.
Two observations are in order.
First, results show asymptotic convergence of all methods, though convergence is noticeably slower for
RMA and RMA+RMAP than in previous problems.  
This occurs because X-ray tomography is only a mildly ill-posed inverse problem with the spectrum of the PtO map
decaying slowly after an initial fast decay (Figure \figref{xray_spectrum}). This means that the effective rank of the PtO map
is close to the dimension of the data in the case presented. While mildly ill-posed problems
are usually easier to work with, this can present a challenge for randomized methods, particularly
methods such as RMA that randomize the misfit term. Recall that for any two matrices $\bs{X}$ and $\bs{Y}$,
$\Rank(\bs{XY}) \leq \Rank(\bs{X})$. By projecting the misfit term onto a lower dimensional
subspace, important information is lost in the case where $\A$ is has effective
rank close to the dimension of the data. This indicates that such a method is better suited for problems
that are severely ill-posed.

\begin{figure}
\centering 
\hspace*{-2cm}
    \begin{tabular}{c}
        \hspace{0.5cm}\textbf{Spectrum of the PTO map for X-ray tomography}\\
        \rotatebox{90}{\hspace{2cm}singular value}
        \includegraphics[width=0.7\textwidth, trim=0.8cm 0.9cm 0.0cm 0.0cm,clip=true,]{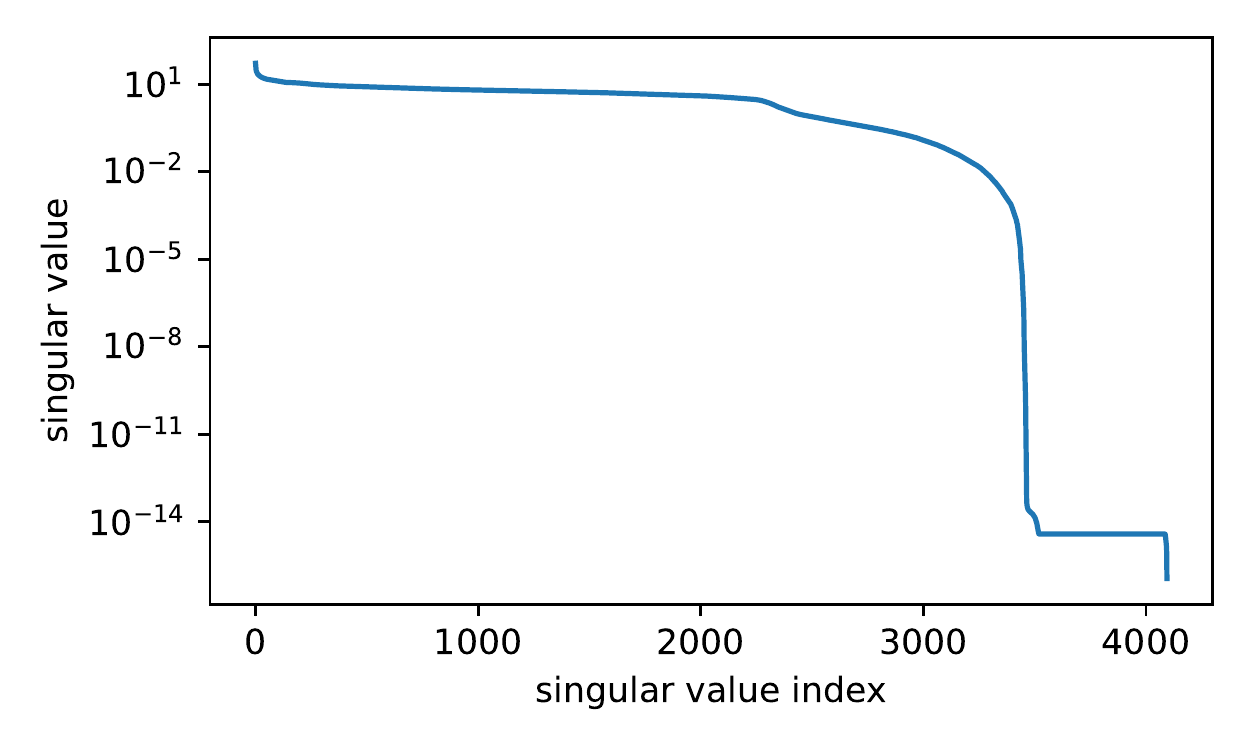} \\
        \hspace{0.8cm}singular value index
    \end{tabular}
\caption{The singular values of the parameter-to-observable map for an X-ray tomography problem 
decay rapidly at first and then slowly until the last few singular vectors. This shows that the 
effective rank of the PtO map is close to the minimum dimension. }
\figlab{xray_spectrum}
\end{figure}

The second observation is that the error for randomized prior methods initially increases
then decreases as the number of samples increases.
In previous problems, we have used a direct linear solver to find the solution to the stochastic optimization problem.
In this problem, we use an iterative conjugate gradient (CG) solver to showcase how solver choice
interacts with the randomized approaches. The main difference that can be seen here
between a direct solver and an iterative solver such as CG is that a direct solver will
invert possibly tiny eigenvalues of an ill-conditioned matrix while an iterative solver will often
stop early, depending on the convergence parameters set, acting as an iterative regularizer 
\cite{PiccolominiZama99,LandiPiccolominiTomba16,HankeNagy96}.
This effect is particularly pronounced on the randomized prior methods such as RS and EnKF where
the low rank randomized prior causes the iterative solver to stop earlier with fewer samples.
This causes the error of RS and EnKF to increase initially until the regularization has sufficient rank, 
then the methods converge asymptotically to $\umap$. 
In the case of X-ray tomography, full-rank regularization 
is not needed due to the mildly ill-posed nature
of the problem. 

\subsection{Initial condition inversion in an advection-diffusion problem}
\seclab{hippylib}
We now consider a linear inverse problem governed by a parabolic PDE
based on the method used in \cite{AustinMerced2017}.
The parameter to observable map (advection-diffusion equation) maps an initial condition
$\ub \in L^2(\Omega)$ to pointwise spatio-temporal observations of the concentration field
$\by(x,t)$. The advection-diffusion equation is given by:

\begin{equation}
    \begin{split}
        \by_t - \kappa\Delta \by + {\vec v} \cdot \nabla \by &= 0 \quad \text{in } \Omega \times(0,T),\\
        \by(.,0)&=\ub\quad \text{in } \Omega,\\
        \kappa \nabla \by \cdot \bf{n}&=0\quad \text{on } \partial \Omega \times (0,T),
    \end{split} 
    \eqnlab{advection_diffusion}
\end{equation}
where, $\Omega\subset \R^2$ is a bounded domain, $\kappa>0$ is the diffusion coefficient,
$T>0$ is the final time. The velocity field $\vec{v}$ is computed by solving the following
steady-state Navier-Stokes equation with the side walls driving the flow:

\begin{equation}
   \begin{aligned}
        - \frac{1}{\operatorname{Re}} \Delta {\vec v} + \nabla q + {\vec v} \cdot \nabla {\vec v} &= 0 &\quad&\text{ in }\Omega,\\
        \nabla \cdot {\vec v} &= 0 &&\text{ in }\Omega,\\
        {\vec v} &= {\vec g} &&\text{ on } \partial\Omega.
    \end{aligned} 
    \eqnlab{velocity_field}
\end{equation}
where $q$ is the pressure, and $\operatorname{Re}$ is the Reynolds number.
The Dirichlet boundary condition $\vec{g}\in \R^2$ is prescribed as $\vec{g}=[0, 1]$ on the left side of the domain,
and $\vec{g}=[0, 0]$ elsewhere. Velocity boundary conditions are not prescribed on the right side of the boundary. 
The values of the forward solution $\by$ on a set of locations $\{ x_1, x_2, ..., x_m\}$ at the final
time $T$ are extracted
and used as the observation vector $\bd\in \R^k$ for solving the initial condition inverse problem.
Synthetic observations are generated by corrupting this observation vector with 1\% additive Gaussian noise. 
The observation data and the velocity profile used in the study are shown in Figure \ref{velocity_profile}.
Upon discretization, the operator $\A$ maps the initial condition  $\ub \in \R^n$ to the observation $\bd\in \R^k$.
\begin{figure}[h!t!b!]
  \centering
        \begin{tabular}{c}
            \begin{subfigure}[b]{0.4\textwidth}
                \begin{tabular}{c}
                    \textbf{{ Velocity profile} }\\
                    \begin{subfigure}{\textwidth}
                        \centering
                        \includegraphics[trim=2.0cm 1.5cm 5.5cm 0.8cm,clip=true,width=\textwidth]{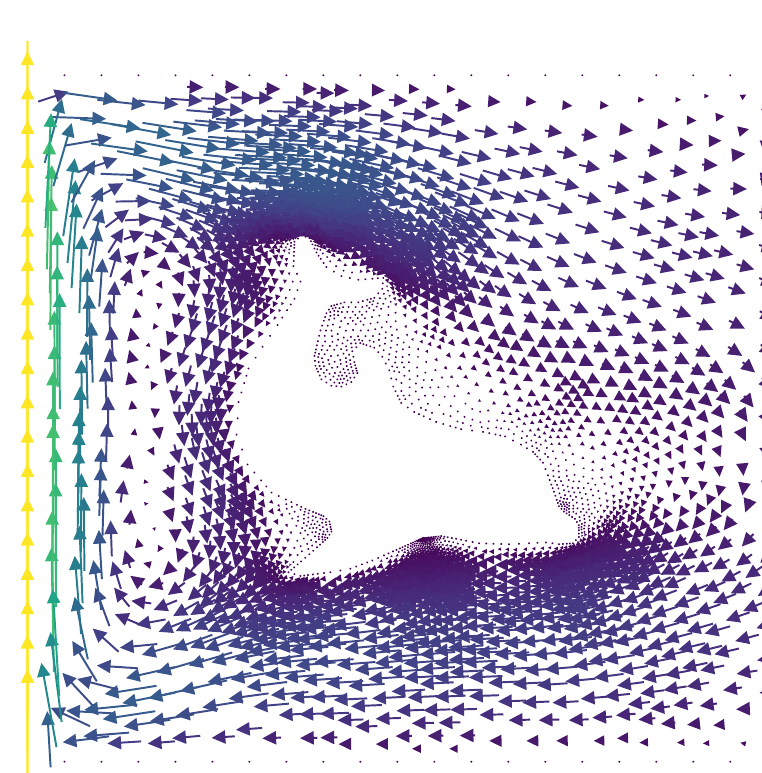}
                    \end{subfigure}
                \end{tabular}
            \end{subfigure}%
            
            \begin{subfigure}[b]{0.1\textwidth}
                \begin{tabular}{c}
                    $\;$ \\
                    $\;$ \\
                    \begin{subfigure}[b]{\textwidth}
                        \centering
                        \includegraphics[trim=11.5cm 1.0cm 1.5cm 0.8cm,clip=true,width=\textwidth]{pdf_figures/Linear_PDE/velocity.pdf}
                    \end{subfigure}
                \end{tabular}
            \end{subfigure}%

            \begin{subfigure}[b]{0.4\textwidth}
                \begin{tabular}{c}
                \textbf{Observation at T=3s}\\
                    \begin{subfigure}{\textwidth}
                        \centering
                        \includegraphics[trim=2.0cm 1.5cm 7.0cm 0.8cm,clip=true, width=\textwidth]{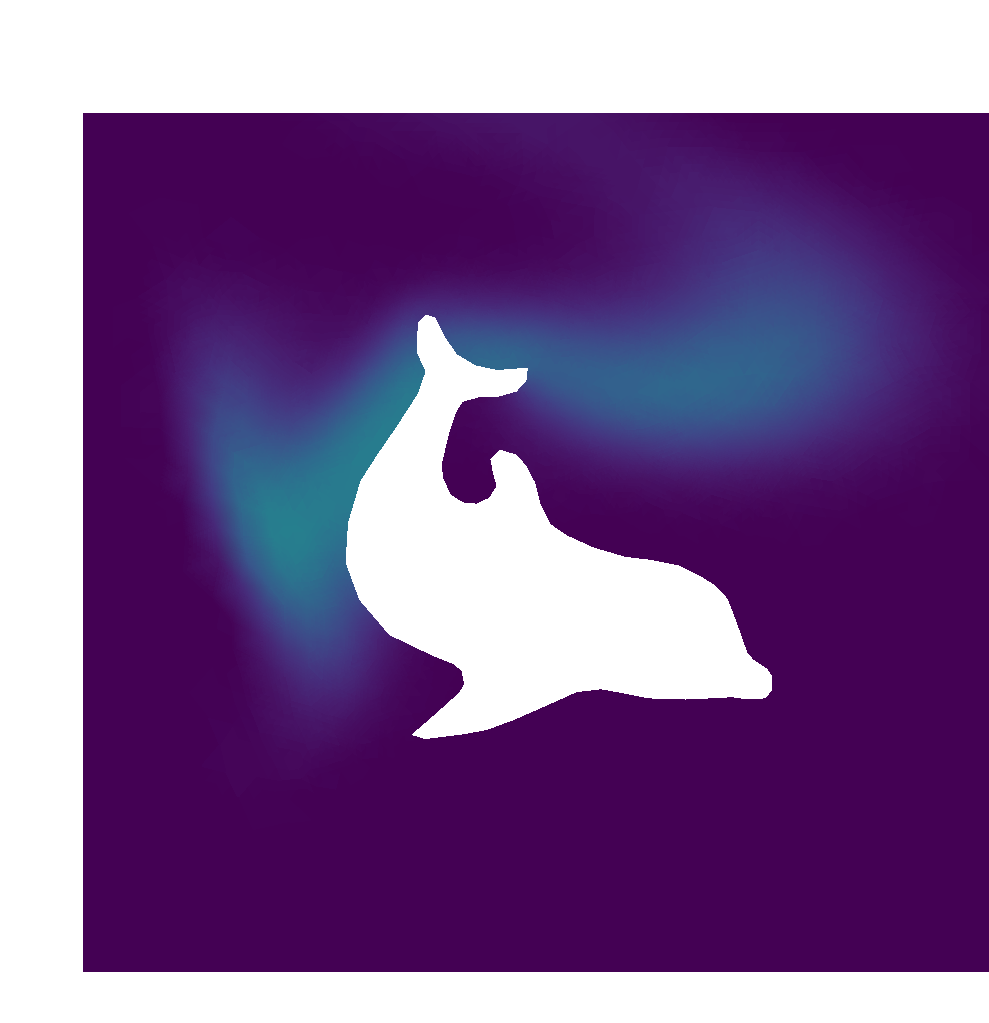}
                    \end{subfigure}
                \end{tabular}
            \end{subfigure}
            
            \begin{subfigure}[b]{0.1\textwidth}
                \begin{tabular}{c}
                 $\quad\quad$\\
                 $\;$ \\
                    \begin{subfigure}{\textwidth}
                        \centering
                        \includegraphics[trim=14.8cm 1.0cm 1.5cm 0.8cm,clip=true, width=\textwidth]{pdf_figures/Linear_PDE/observ.pdf}
                    \end{subfigure}
                \end{tabular}
            \end{subfigure}
      \end{tabular}
    \caption{ The velocity profile and observation data used for inversion}
    \label{velocity_profile}
\end{figure}

In addition, we define the prior covariance matrix to be the PDE-based BiLaplacian prior defined as:

\begin{equation}
  \sC=\LRp{\delta I+\gamma \nabla \cdot (\theta \nabla)}^{-2},  
  \eqnlab{Bi_Laplacian}
\end{equation}
where, $\delta$ governs the variance of the samples, while the ratio $\frac{\gamma}{\delta}$ governs the correlation length.
$\theta$ is a symmetric positive definite tensor to introduce anisotropy in the correlation length.

Following \cite{AustinMerced2017}, a mixed formulation employing $P2$ Lagrange elements for approximating the velocity field and $P1$
elements for pressure is adopted
for solving \eqnref{velocity_field} to obtain the velocity field.
The computed velocity field is then used to solve the advection-diffusion equation, \eqnref{advection_diffusion}.
$P1$ Lagrange elements are used for the variational formulation of the advection-diffusion equation. 

The observation vector $\db$ is computed at time $t=3s$ with $m=200$ observation points.
For this problem, there are $n = 2868$ degrees of freedom. The diffusion coefficient is $\kappa=0.001$
and the parameters of the BiLaplacian prior \eqnref{Bi_Laplacian} are $\delta=8$, $\gamma=1$, 
and $\Theta = \ident$.

\begin{table}[h!]
\centering
\begin{tabular}{ |l|l|l|l|l| }
    \hline
    \multirow{2}{*}{Method} & \multicolumn{4}{c|}{Relative error ($\%$)}   \\ \cline{2-5}
      & N = 10& N = 100& N = 1000& N = 10000\\ \cline{2-5}
    \hline
    \text{RMAP}  & 5.02 & 1.96 & 0.49 & 0.15  \\ \cline{1-5}
    \text{RMA}  & 53.38 & 15.16 & 5.30 & 1.15  \\ \cline{1-5}
    \text{RMA+RMAP}  & 80.14 & 14.48 & 7.05 & 1.30  \\ \cline{1-5}
    \text{RS}  & 59.58 & 26.78 & 7.33 & 5.06  \\ \cline{1-5}
    \text{ENKF}  & 74.09 & 18.76 & 7.10 & 5.43  \\ \cline{1-5}
    \text{ALL}  & 91.58 & 197.66 & 193.25 & 9.66  \\ \cline{1-5}
  \end{tabular}
  \vspace*{0.25cm}
  \caption{Relative error for various randomized methods compared to the $\umap$
  solution for 2D linear advection-diffusion initial condition inverse problem.}
  \label{Table:Linear_PDE}
\end{table}

\begin{figure}[htb!]
    \centering
    \begin{tabular}{c}
        \hspace{-1.5cm}\textbf{2D Advection-Diffusion}\\
        \rotatebox{90}{\hspace{2cm}Relative error}
        \includegraphics[width=0.85\textwidth, trim=1.0cm 0.9cm 0.0cm 0.8cm,clip=true,]{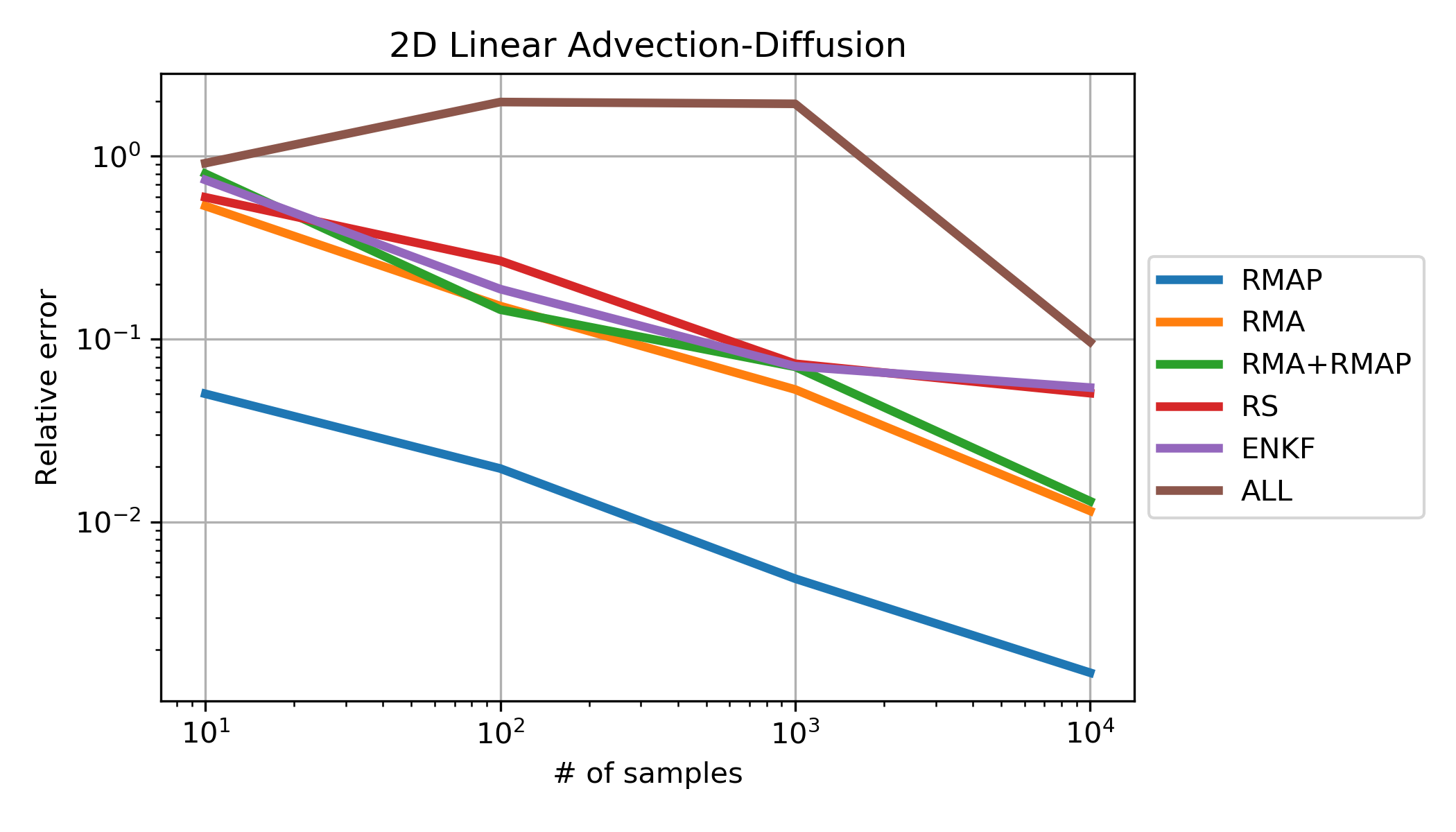} \\
        \hspace{-1.0cm}$N$
    \end{tabular}
    \caption{Relative error plot for 2D linear advection-diffusion initial condition inversion.}
\end{figure}

The MAP solution $\umap$ is shown in Figure \ref{non_u1_xtrue}.
The condition number of $\sC^{-1}$ is of the order of $10^{6}$.
The results of different randomization schemes are shown in Figure
\ref{Linear_PDE} in the appendix. 
Table \ref{Table:Linear_PDE} gives the relative error with respect to $\umap$.
As expected, randomized MAP gives the most accurate results followed by left sketching and the randomized misfit approach.
In contrast to the previous examples considered, 
the right sketching and EnKF approaches gives reasonably good results as evident from Table \ref{Table:Linear_PDE} and Figure \ref{Linear_PDE} in the appendix.

\begin{figure}[h!t!b!]      
\centering
  \begin{tabular}{c}
    \begin{subfigure}[b]{0.31\textwidth}
      \begin{tabular}{c}
        \textbf{True solution}\\
        \begin{subfigure}{\textwidth}
          \centering
          \includegraphics[trim=2.5cm 2.5cm 7.0cm 1.5cm,clip=true, width=\textwidth]{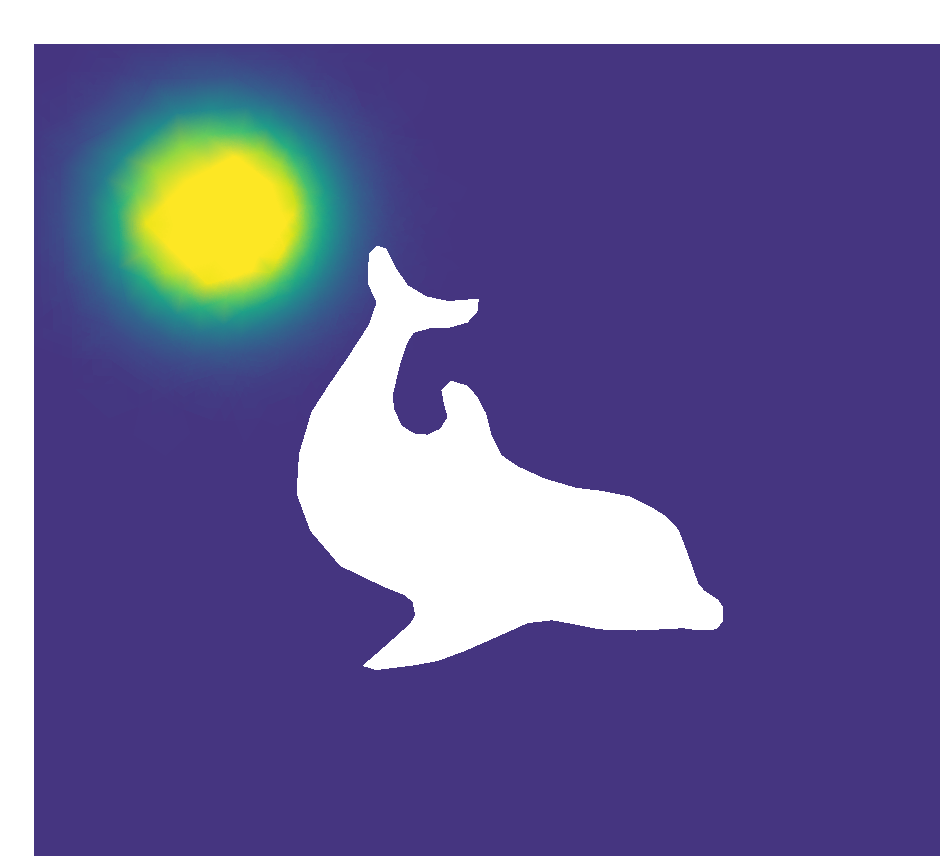}
        \end{subfigure}
      \end{tabular}
    \end{subfigure}%
    \begin{subfigure}[b]{0.31\textwidth}
      \begin{tabular}{c}
        \textbf{$\umap$ solution }\\
        \begin{subfigure}{\textwidth}
          \centering
          \includegraphics[trim=2.5cm 2.5cm 7.0cm 1.5cm,clip=true,width=\textwidth]{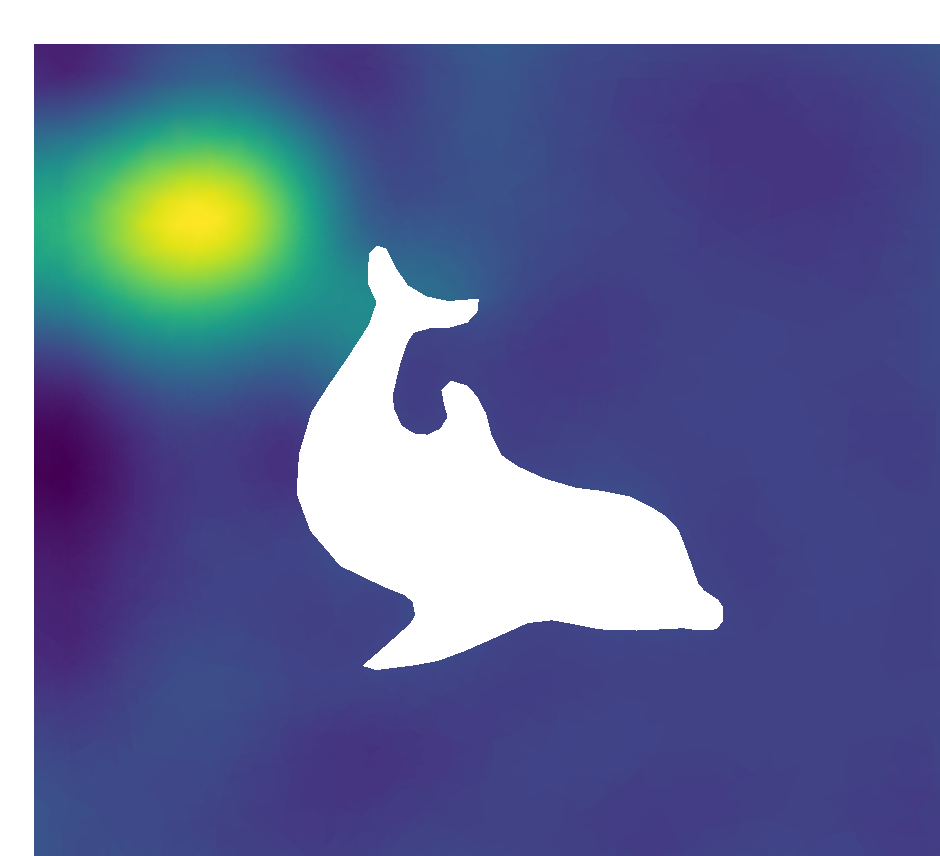}
        \end{subfigure}
      \end{tabular}
    \end{subfigure}%
    
    \begin{subfigure}[b]{0.31\textwidth}
      \begin{tabular}{c}
        \textbf{RS, N = 100 }\\
        \begin{subfigure}{\textwidth}
          \centering
          \includegraphics[trim=2.5cm 2.5cm 7.0cm 1.5cm,clip=true,width=\textwidth]{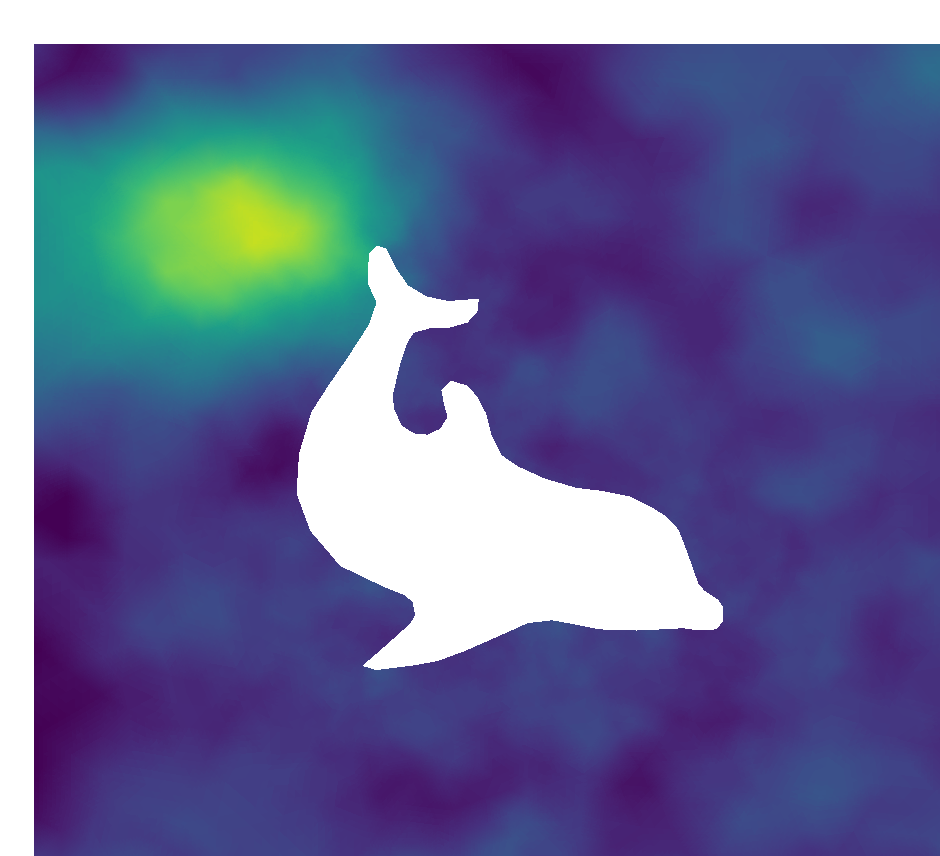}
        \end{subfigure}
      \end{tabular}
    \end{subfigure}%
  \end{tabular}
  \caption{From left to right are true solution,  $\umap$ solution and right sketching solution for linear 
  advection-diffusion initial condition inverse problem. With only 
  100 random samples, right sketching can obtain a reasonably good initial condition reconstruction
  (26\% relative error). 
  This behavior indicates the fast convergence of the 
  randomly sampled prior inverse covariance to the true prior inverse covariance.}
  \label{non_u1_xtrue}
\end{figure}     

This is due to the faster convergence of the randomized prior covariance
to the true prior covariance for the BiLaplacian prior. 
This also points to the fact that care should be exercised when choosing 
a randomized method for a particular problem. For inverse problems 
where a prior with a decaying spectrum is desirable, the EnKF or RS approach
may perform well.

\subsection{Nonlinear parameter inversion in a steady-state heat equation}
To show the convergence of various methods for nonlinear inverse problems, we consider 
a nonlinear PDE constrained parameter inversion problem. In the previous section, we 
considered an initial condition problem where the data depended linearly on the parameter, 
even though the statement of the problem itself was rather involved. Now
we consider a simple to state but extremely ill-posed nonlinear inverse problem. 
Given a steady-state temperature distribution $\bs{T}\LRp{x, y}$ and boundary conditions, 
invert for the conductivity everywhere in the domain. 
The governing equations are given by 
\begin{align*}
    \grad \cdot \LRp{e^\kappa \grad \bs{T}} &= f \quad \text{in } \Omega,\\
    \bs{T} \LRp{x, 0} &= 2 \LRp{1-x},  \\
    \bs{T} \LRp{x, 1} &= 2x,  \\
    \grad \bs{T} \cdot \bs{n} &= 0 \quad \text{on } \partial \Omega \setminus \LRc{y = 0, y = 1}.
\end{align*}
While this equation is linear in the temperature distribution $\bs{T}$, the (log-) thermal conductivity that we are
inverting for, $\kappa$, appears non-linearly. That is, the parameter-to-observable map is nonlinear. 
The heat equation makes for an excellent test problem for inverse solvers since the dependence of the 
steady-state temperature distribution on the conductivity is rather weak. 

\begin{figure}[!htb]
\begin{tabular}{cccc}
\begin{subfigure}[b]{0.35\textwidth}
    \begin{tabular}{c}
    \textbf{True Parameter}\\
    \begin{subfigure}[]{\textwidth}
        \includegraphics[width=\textwidth,trim={1.2cm 1cm 5.5cm 2cm},clip=true]{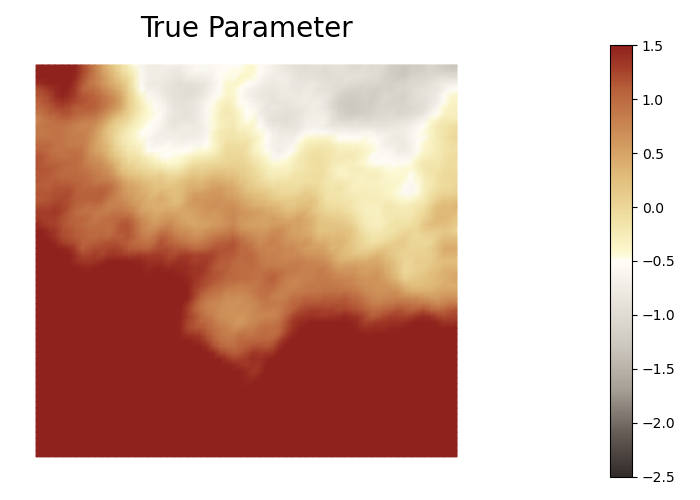}
    \end{subfigure}
    \end{tabular}
    \caption{}
\end{subfigure}&
\begin{subfigure}[b]{0.085\textwidth}
    \includegraphics[width=\textwidth,trim={15.15cm 0cm 0cm 0cm},clip=true]{pdf_figures/Nonlinear/true_parameter.png}
\end{subfigure}&
\begin{subfigure}[b]{0.35\textwidth}
    \begin{tabular}{c}
    \textbf{Observations}\\
    \begin{subfigure}[]{\textwidth}
        \includegraphics[width=\textwidth,trim={0cm 0cm 5cm 0.95cm},clip=true]{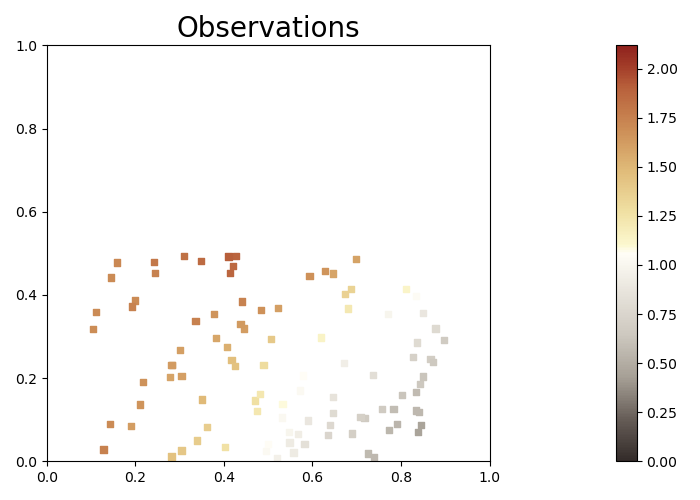}
    \end{subfigure}
    \end{tabular}
    \caption{}
\end{subfigure}&
\begin{subfigure}[b]{0.085\textwidth}
    \includegraphics[width=\textwidth,trim={15.3cm 0.75cm 0cm 1cm},clip=true]{pdf_figures/Nonlinear/observations.png}
\end{subfigure}
\end{tabular}
\caption{The true log-conductivity ($\kappa$) and 100 sparse observations. Observing the 
temperature distribution only in the lower half of the 
domain  makes inverting for the log-conductivity in the entire domain a more difficult task. }
\figlab{nonlinear_setup}
\end{figure}

We again follow a mixed formulation where the temperature distribution is modeled using $P2$ Lagrange 
elements and the parameter is modeled with $P1$ Lagrange elements. With a mesh size of $64 \times 64$ elements, 
this results in discrete variables $\bs{y} \in \R^{16,641}$ and $\kappa \in \R^{4,225}$. 
The BiLaplacian prior defined in \eqnref{Bi_Laplacian} is also used here with $\delta = 0.5$, $\gamma = 0.1$,
and the anisotropic diffusion tensor 
\begin{equation*}
    \theta = 
    \begin{bmatrix}
        \theta_1 \sin^2 \alpha & \LRp{\theta_1 - \theta_2} \sin \alpha \cos \alpha\\
        \LRp{\theta_1 - \theta_2} \sin \alpha \cos \alpha & \theta_2 \cos^2 \alpha
    \end{bmatrix},
\end{equation*}
where $\theta_1 = 2.0$, $\theta_2 = 0.5$ and $\alpha = \pi / 4$. 
Lastly, we consider an inhomogeneous case where 
\begin{equation*}
    f = 50  \sin^2 \LRp{\pi x} \cos^2\LRp{\pi y}.
\end{equation*}

\begin{table}[h!]
\centering
\begin{tabular}{ |l|l|l|l|l| }
  \hline
  \multirow{2}{*}{Method} & \multicolumn{4}{c|}{Relative error ($\%$)}   \\ \cline{2-5}
                          & N = 10& N = 100& N = 1000& N = 5000\\ \cline{2-5}
  \hline
  \text{RMAP}  & 19.50 & 14.66 & 16.39 & 16.17  \\ \cline{1-5}
  \text{RMA}  & 24.97 & 8.12 & 4.19 & 3.08  \\ \cline{1-5}
  \text{RMA+RMAP}  & 17.51 & 17.82 & 16.27 & 16.49  \\ \cline{1-5}
  \text{RS}  & 47.06 & 110.88 & 107.46 & 14.44  \\ \cline{1-5}
  \text{ENKF}  & 20.82 & 18.26 & 17.54 & 19.55  \\ \cline{1-5}
  \text{ALL}  & 26.13 & 15.63 & 19.45 & 19.62  \\ \cline{1-5}
  \end{tabular}
  \vspace*{0.25cm}
  \caption{Relative error of MAP solution for various randomization schemes compared to the $\umap$ solution.}
  \label{Table:Nonlinear}
\end{table}

\begin{figure}[htb!]
    \centering
    \begin{tabular}{c}
        \hspace{-1.5cm}\textbf{2D Nonlinear Elliptic}\\
        \rotatebox{90}{\hspace{2cm}Relative error}
        \includegraphics[width=0.9\textwidth, trim=1.0cm 0.9cm 0.0cm 0.8cm,clip=true,]{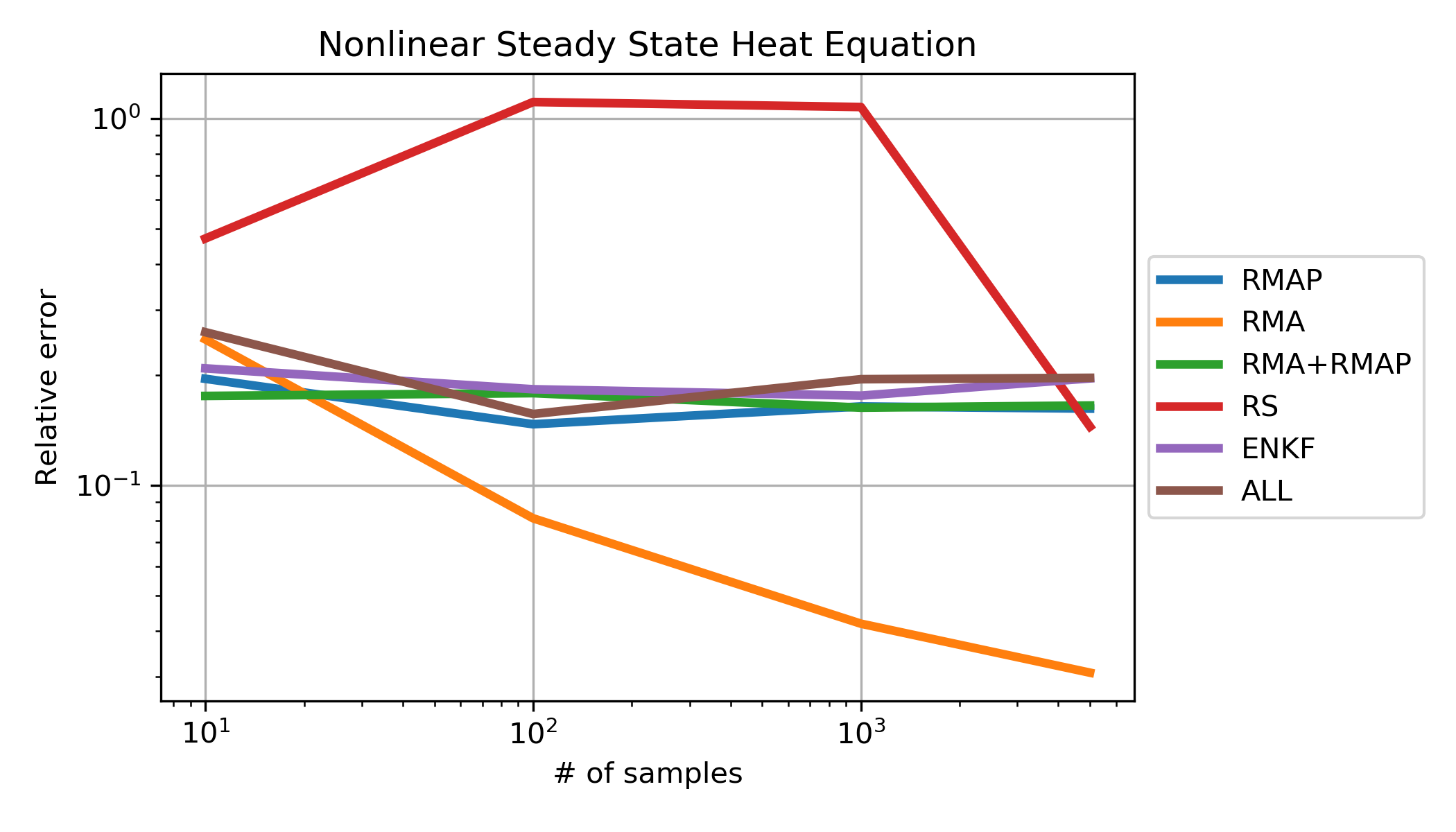} \\
        \hspace{-1.0cm}$N$
    \end{tabular}
    \caption{Relative error plot for nonlinear elliptic parameter inverse problem.}
\end{figure}

A few remarks are in order to understand the rather unimpressive
results in Table \ref{Table:Nonlinear}. 
First, recall that the results 
shown here are for an extremely difficult problem. The inverse of the diffusion 
equation is notoriously ill-posed, as it amounts to the inverse of a compact operator.
That is, small perturbations in the data can lead to drastically 
different inversion results. Indeed, we show an even more difficult 
problem where the task is to infer 4,225 parameters from only 
100 measurements which are recorded in only half of the domain as 
shown in Figure \figref{nonlinear_setup}. 
With so few measurements, adding noise to the data as in 
RMAP, RMA+RMAP, ENKF, and ALL may not lead to desirable results. 

Secondly, while there is still $\sim$20\% error for these methods, 
the MAP estimate for each of these is still reasonably good visibly as seen in Figure \figref{Nonlinear_eyeball}. 
The high relative error is in part due to the small norm of the 
$\umap$ solution. Simply shifting the parameter up by a constant 
changes the norm of the denominator in the relative error formula 
\begin{equation*}
    \text{relative error} := \norm{\ub_N^{\langle method \rangle} - \umap} / \norm{\umap}.
\end{equation*}
In addition to the numerical relative error, it is important to 
consider the ``eyeball norm''. Figure \figref{Nonlinear_eyeball} shows that the 
estimated solutions are still quite close to the $\umap$ solution, 
especially given the extreme ill-posedness. 
Despite this shortcoming of data-randomization methods, recall that the main advantage of additively randomizing the data 
and prior mean is to \textit{aid in sampling} from the posterior. 
In other words, 
we are most interested in accelerating uncertainty quantification, not 
getting the SAA MAP estimate error down to machine precision. 
These randomization schemes may still find use in such applications. 

\begin{figure}[h!t!b!]
    \begin{tabular}[h!]{c}
        \begin{subfigure}[b]{0.31\textwidth}
            \begin{tabular}{c}
            \\\textbf{$\umap$}\\
                \begin{subfigure}{\textwidth}
                    \centering
                    \includegraphics[trim=2.0cm 1.3cm 6.0cm 1.5cm,clip=true,width=\textwidth]{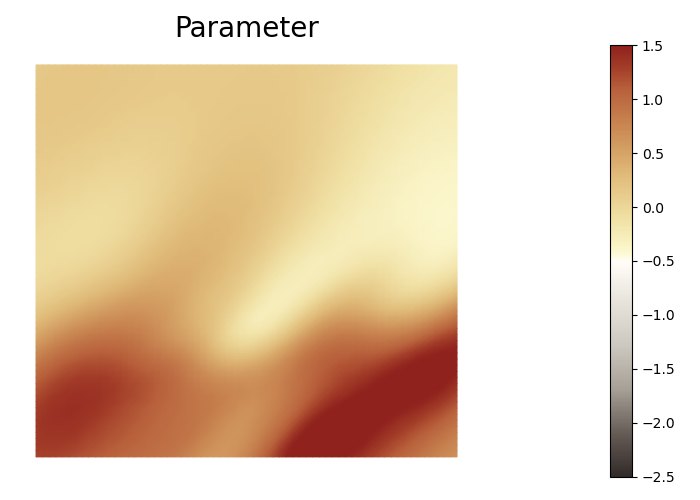}
                \end{subfigure}
            \end{tabular}
        \end{subfigure}

        \begin{subfigure}[b]{0.31\textwidth}
            \begin{tabular}{c}
            \textbf{RMAP}\\ \textbf{N = 5000}\\
                \begin{subfigure}{\textwidth}
                    \centering
                    \includegraphics[trim=2.0cm 1.3cm 6.0cm 1.5cm,clip=true,width=\textwidth]{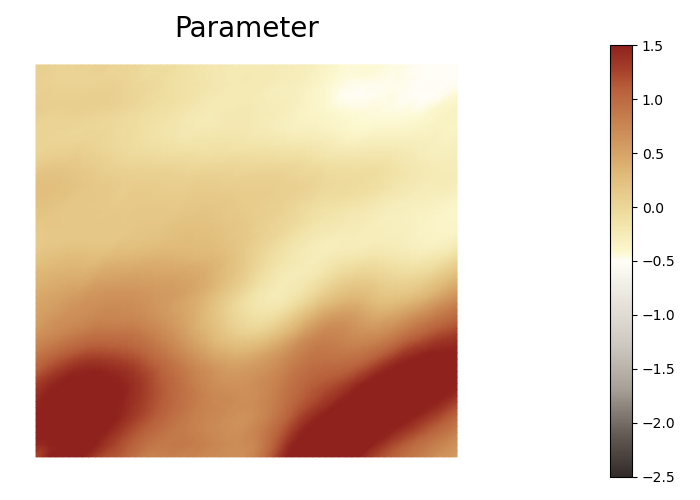}
                \end{subfigure}
            \end{tabular}
        \end{subfigure}
        
        \begin{subfigure}[b]{0.31\textwidth}
            \begin{tabular}{c}
            \textbf{ALL}\\\textbf{N = 5000}\\
                \begin{subfigure}{\textwidth}
                    \centering
                    \includegraphics[trim=2.0cm 1.3cm 6.0cm 1.5cm,clip=true,width=\textwidth]{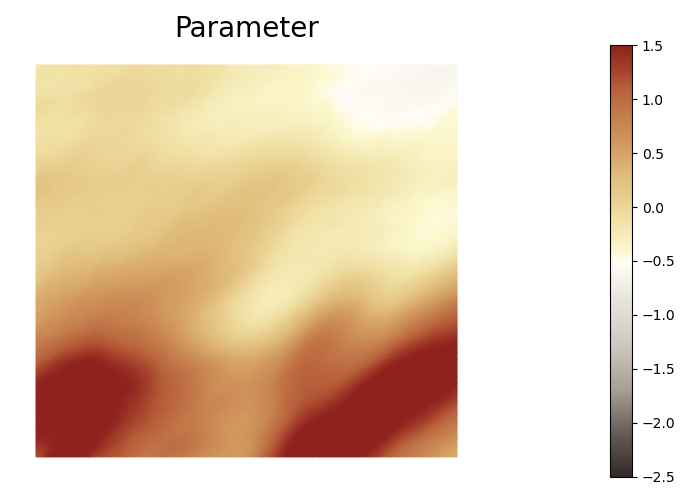}
                \end{subfigure}
            \end{tabular}
        \end{subfigure}
  \end{tabular}
    \caption{Solutions to the nonlinear diffusion inverse problem for 
    two different methods. Visually, these methods give nearly identical 
    results compared to the $\umap$ solution even though numerically, they 
    have relative error $\sim$20\%. }
    \figlab{Nonlinear_eyeball}
\end{figure}

\section{Conclusions}
By viewing the randomized solution of inverse problems through the lens of stochastic programming and
the sample average approximation, we developed a unified framework though which we can analyze the asymptotic
convergence of randomized solutions of linear inverse problems to the solution obtained with its deterministic counterpart.
This framework allowed us to prove the asymptotic and non-asymptotic convergence of the minimizer of a general stochastic cost function 
to the minimizer of the expected value of the stochastic cost function.
Several well-known methods for introducing randomness into linear and nonlinear inverse problems were recovered as special cases of
this general framework. 
Viewing the solution to randomized inverse problems through the lens 
of the sample average approximation also allowed us to prove a novel 
non-asymptotic error analysis that applies to all randomized methods discussed.
We also show that while all of the methods presented converge asymptotically, the results
can be quite poor if an insufficient number of samples are drawn. 
While this observation is easily understood through our non-asymptotic 
error analysis, it is not possible from an asymptotic view point.
In particular, we showed that randomizing the prior
covariance matrix may not be a good idea for certain priors due to the regularizing role that the prior plays
in the solution of inverse problems.  
This is due to the potentially slow convergence of random matrices to their expected value,
depending on the spectrum of the expected value matrix. 
The convergence of all schemes was shown numerically for a variety of linear and nonlinear inverse problems,
including 1D and 2D problems governed by algebraic and PDE constraints.

\section{Acknowledgements}
We would like to thank the Texas Advanced Computing Center (TACC) at the 
University of Texas at Austin for providing 
HPC resources that contributed to the results presented in this work. 
URL: http://www.tacc.utexas.edu 

\appendix
\newpage

\section{Figures}
\begin{figure}[h!t!b!]  
  \begin{subfigure}[b]{\textwidth}
    \begin{tabular}{c}
      \textbf{1D Deconvolution Problem}\\
      \begin{subfigure}[b]{0.45\textwidth}
        \begin{tabular}{c}
          \begin{subfigure}{\textwidth}
            \centering
            \includegraphics[trim=0.0cm 0.0cm 0.0cm 0.0cm,clip=true,width=\textwidth]{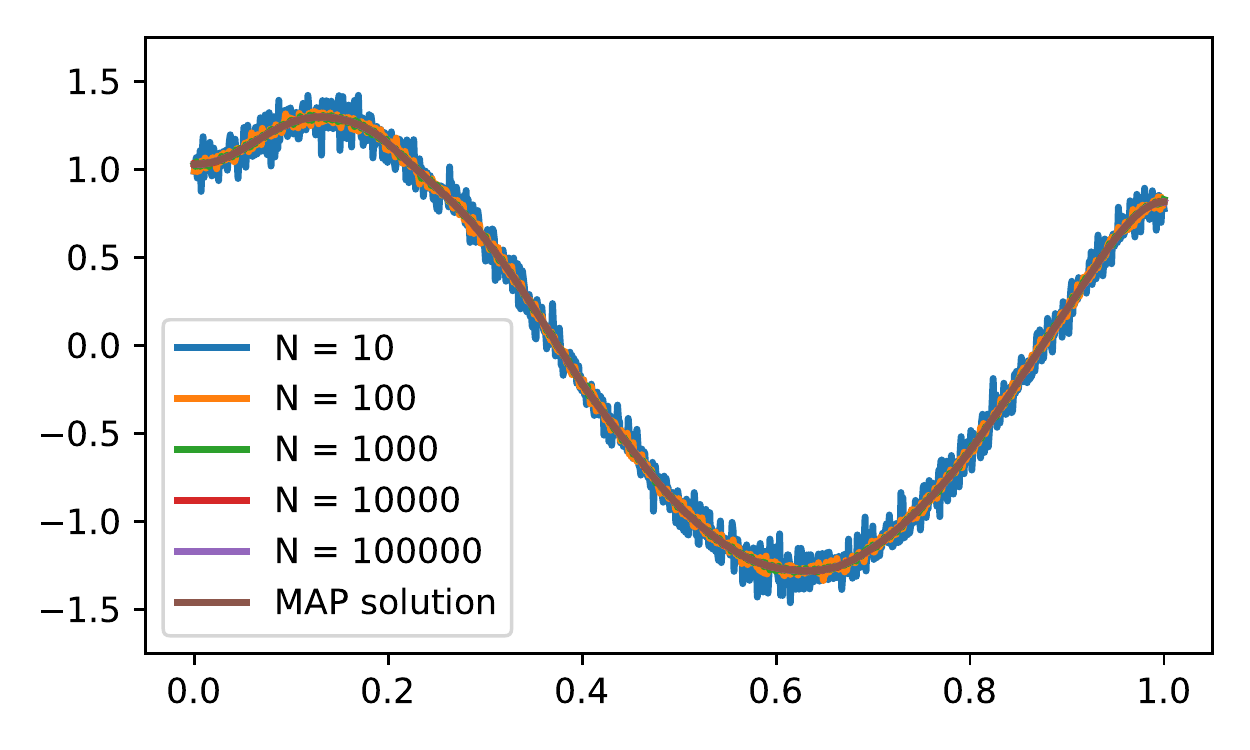}
            \caption{Randomized MAP}
            \figlab{1D_Decon_RMAP}
          \end{subfigure}
        \end{tabular}
      \end{subfigure}%
      \hfill%
      \hspace{1cm}
      \begin{subfigure}[b]{0.45\textwidth}
        \begin{tabular}{c}
          \begin{subfigure}{\textwidth}
            \centering
            \includegraphics[trim=0.0cm 0.0cm 0.0cm 0.0cm,clip=true, width=\textwidth]{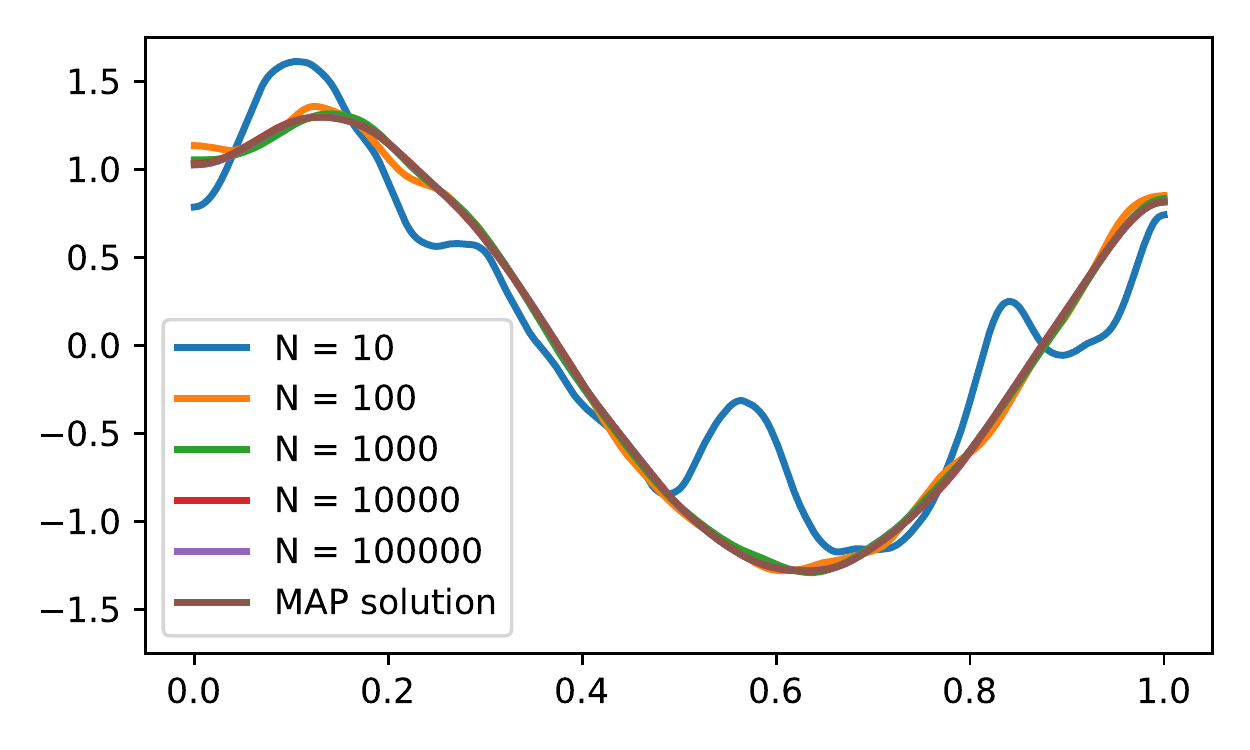}
            \caption{Randomized misfit approach}
            \figlab{1D_Decon_LS}
          \end{subfigure}
        \end{tabular}
      \end{subfigure}\\
      \begin{subfigure}[b]{0.45\textwidth}
        \begin{tabular}{c}
          \begin{subfigure}{\textwidth}
            \centering
            \includegraphics[trim=0.0cm 0.0cm 0.0cm 0.0cm,clip=true,width=\textwidth]{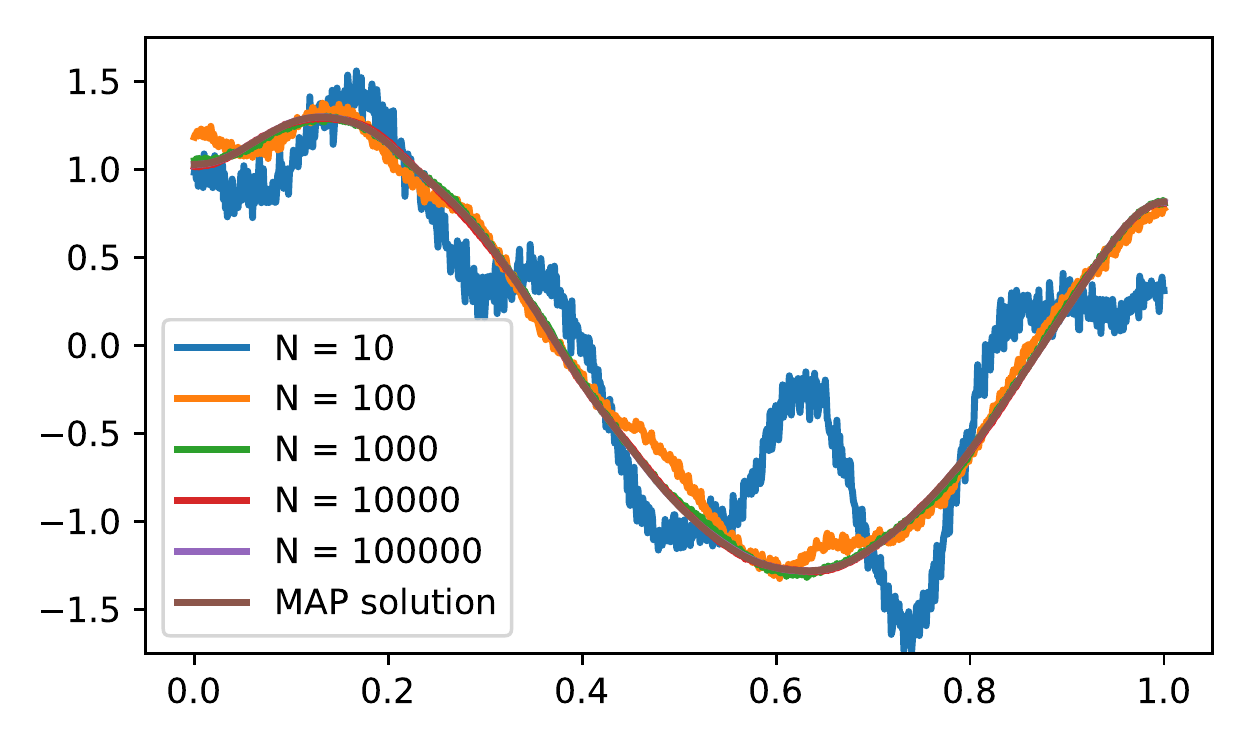}
            \caption{RMA + RMAP}
            \figlab{1D_Decon_RMA}
          \end{subfigure}
        \end{tabular}
      \end{subfigure}%
      \hfill%
      \hspace{1cm}
      \begin{subfigure}[b]{0.45\textwidth}
        \begin{tabular}{c}
          \begin{subfigure}{\textwidth}
            \centering
            \includegraphics[trim=0.0cm 0.0cm 0.0cm 0.0cm,clip=true, width=\textwidth]{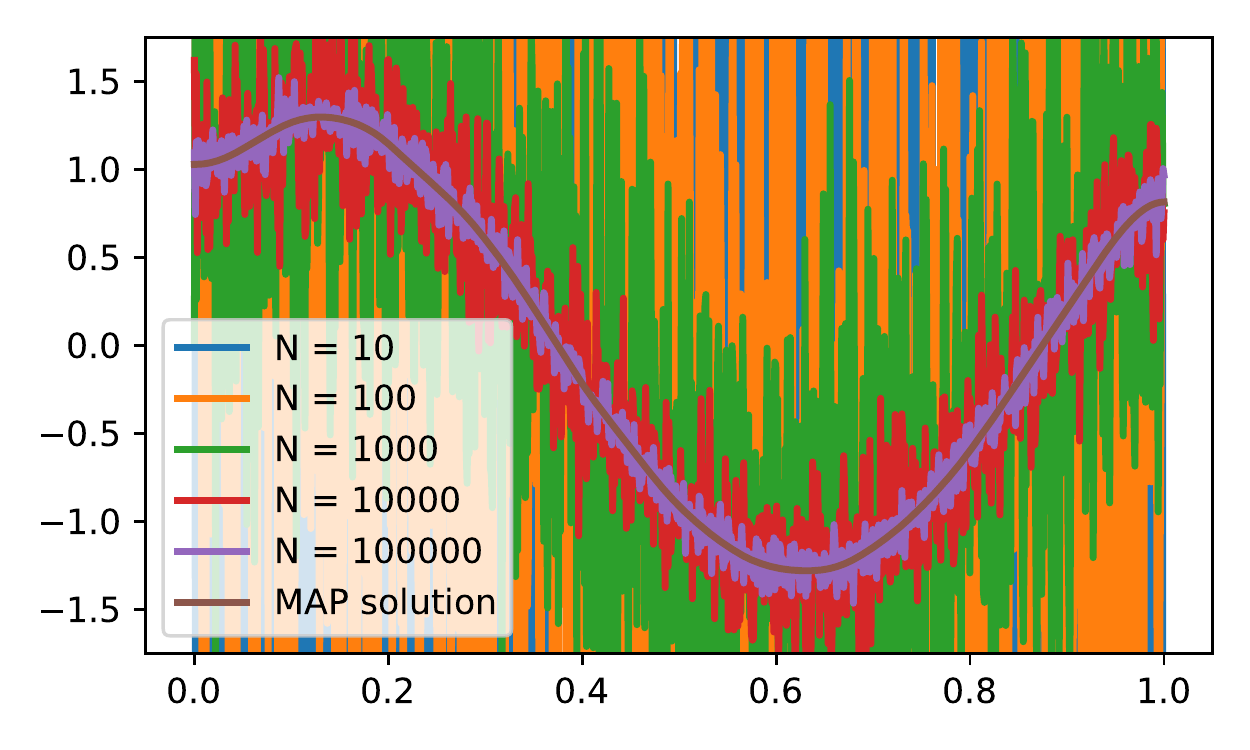}
            \caption{Right sketching}
            \figlab{1D_Decon_RS}
          \end{subfigure}
        \end{tabular}
      \end{subfigure}\\
      \begin{subfigure}[b]{0.45\textwidth}
        \begin{tabular}{c}
          \begin{subfigure}{\textwidth}
            \centering
            \includegraphics[trim=0.0cm 0.0cm 0.0cm 0.0cm,clip=true,width=\textwidth]{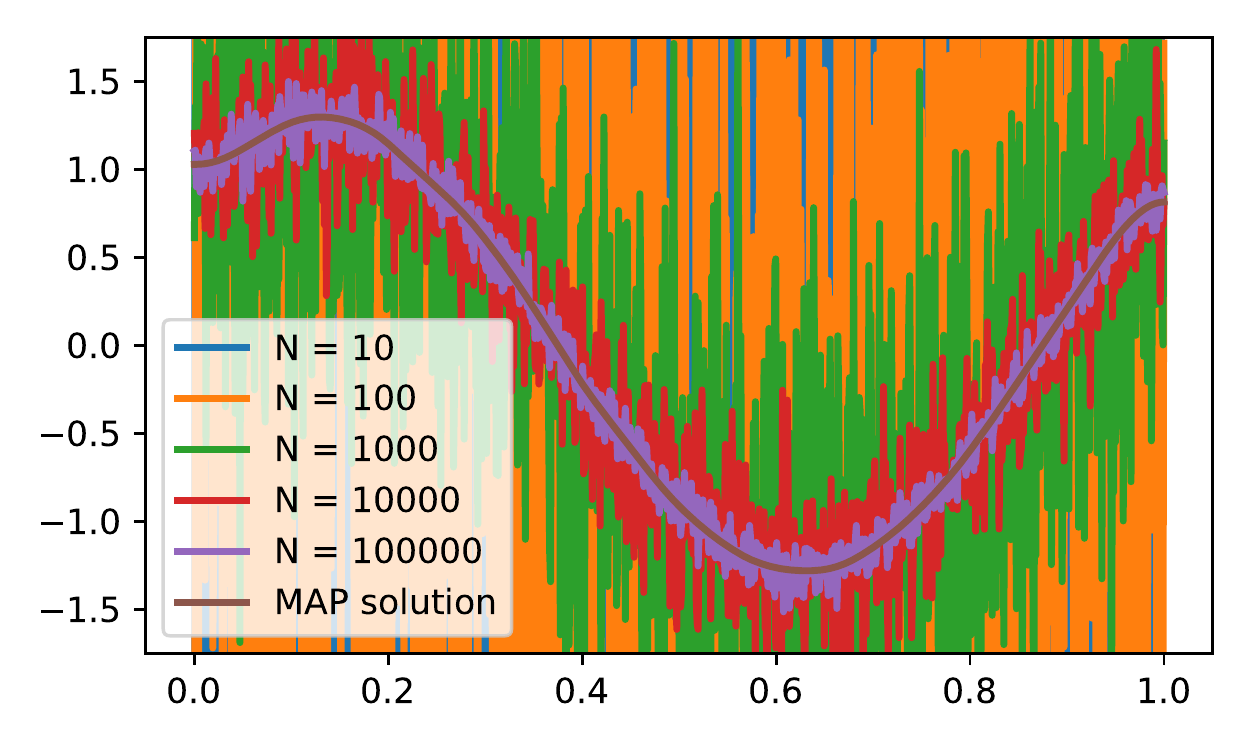}
            \caption{EnKF}
            \figlab{1D_Decon_ENKF}
          \end{subfigure}
        \end{tabular}
      \end{subfigure}%
      \hfill%
      \hspace{1cm}
      \begin{subfigure}[b]{0.45\textwidth}
        \begin{tabular}{c}
          \begin{subfigure}{\textwidth}
            \centering
            \includegraphics[trim=0.0cm 0.0cm 0.0cm 0.0cm,clip=true, width=\textwidth]{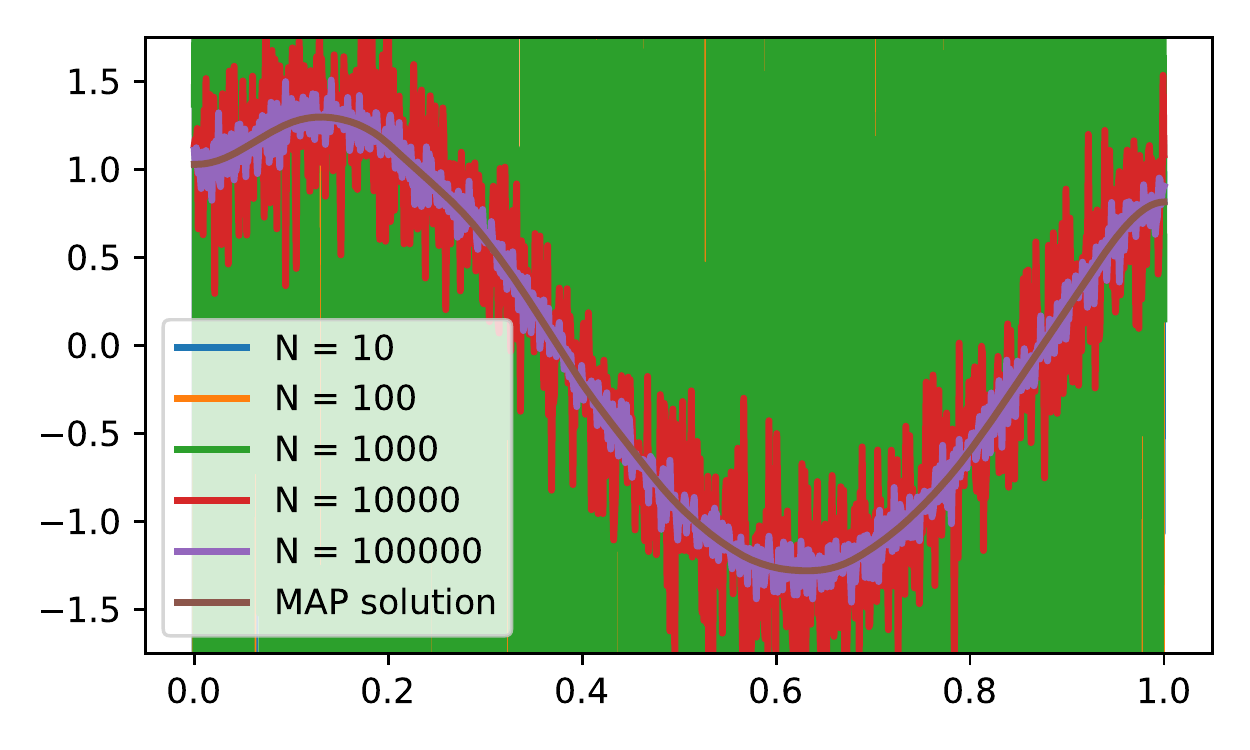}
            \caption{All}
            \figlab{1D_Decon_All}
          \end{subfigure}
        \end{tabular}
      \end{subfigure}\\
    \end{tabular}

  \end{subfigure} 
  \caption{Solutions to 1D deconvolution problem with mesh size $n = 1000$ using various randomization schemes with scaled identity prior.
    This prior works sufficiently well for those randomization schemes that do not randomize the prior covariance
    (RMAP, RMA, RMA+RMAP), but performs poorly for RS, EnKF, and ALL which randomize the prior covariance. 
    Random sampling is performed via an Achlioptas ($2/3-$sparse) random variable}
  \figlab{1D_Decon}
\end{figure}
\FloatBarrier


\newcommand{\NOne}{00010}
\newcommand{\NTwo}{00100}
\newcommand{\NThree}{01000}
\newcommand{\NFour}{50000}
\def\stripzero#1{\expandafter\stripzerohelp#1}
\def\stripzerohelp#1{\ifx 0#1\expandafter\stripzerohelp\else#1\fi}
\newcommand{\xrayWidth}{0.2}
\begin{figure}[h!t!b!]
    \begin{tabular}[h!]{c}
        \rotatebox{90}{\hspace{-0.7cm}RMAP}
        \begin{subfigure}[b]{\xrayWidth\textwidth}
            \begin{tabular}{c}
            \textbf{N = \stripzero{\NOne}}\\
                \begin{subfigure}{\textwidth}
                    \centering
                    \includegraphics[trim=1.0cm 1.0cm 1.0cm 1.0cm,clip=true,width=\textwidth]{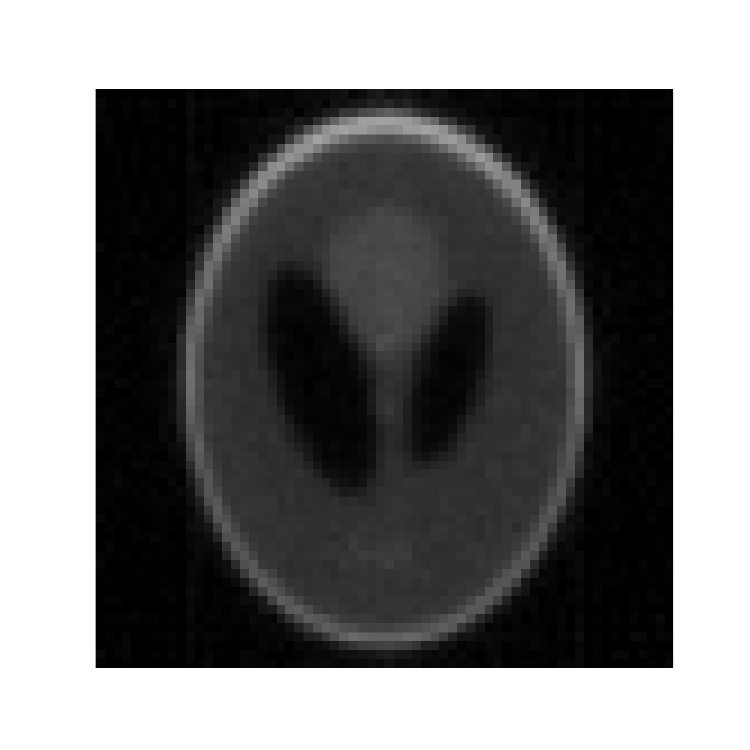}
                \end{subfigure}
            \end{tabular}
        \end{subfigure}

        \begin{subfigure}[b]{\xrayWidth\textwidth}
            \begin{tabular}{c}
            \textbf{N = \stripzero{\NTwo}}\\
                \begin{subfigure}{\textwidth}
                    \centering
                    \includegraphics[trim=1.0cm 1.0cm 1.0cm 1.0cm,clip=true,width=\textwidth]{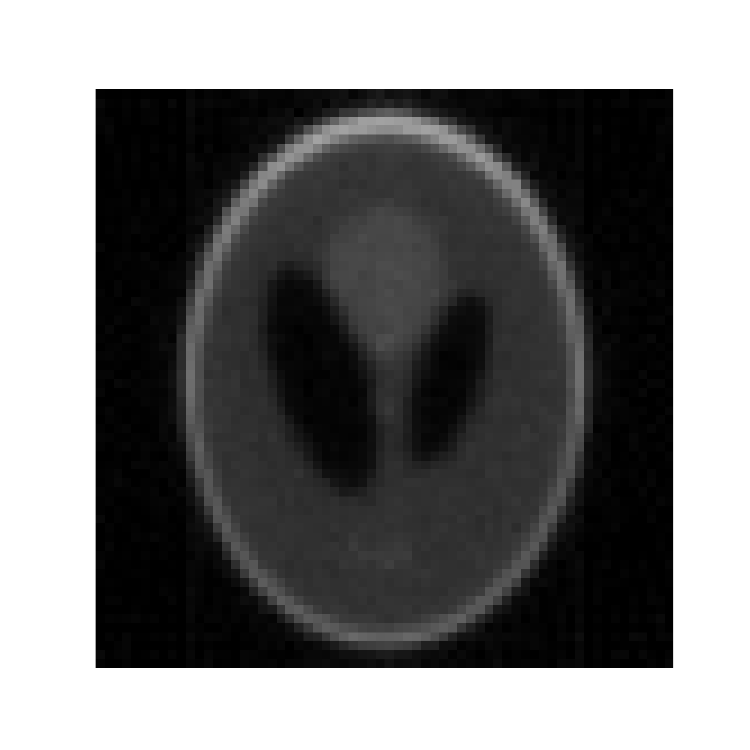}
                \end{subfigure}
            \end{tabular}
        \end{subfigure}
        
        \begin{subfigure}[b]{\xrayWidth\textwidth}
            \begin{tabular}{c}
            \textbf{N = \stripzero{\NThree}}\\
                \begin{subfigure}{\textwidth}
                    \centering
                    \includegraphics[trim=1.0cm 1.0cm 1.0cm 1.0cm,clip=true,width=\textwidth]{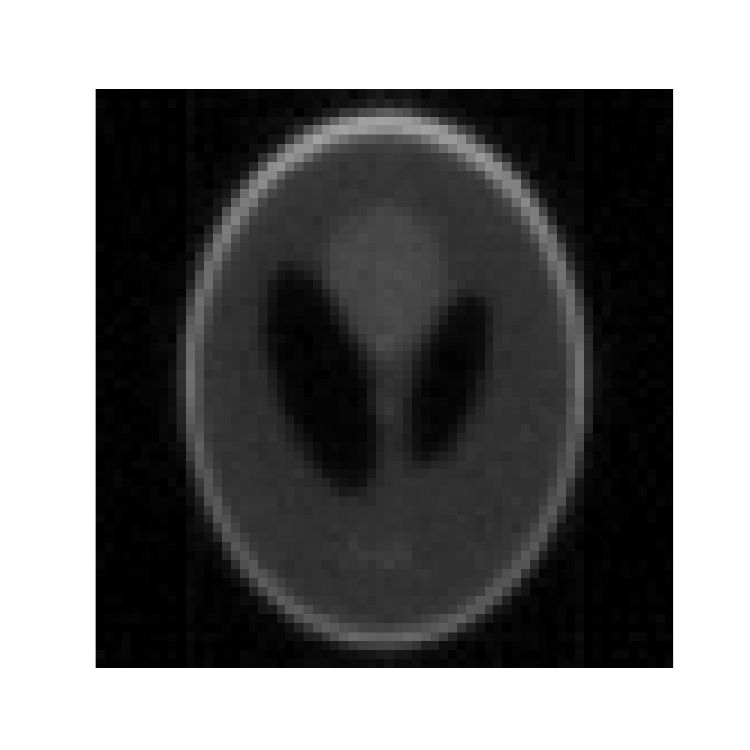}
                \end{subfigure}
            \end{tabular}
        \end{subfigure} 
        
        \begin{subfigure}[b]{\xrayWidth\textwidth}
            \begin{tabular}{c}
            \textbf{N = \stripzero{\NFour}}\\
                \begin{subfigure}{\textwidth}
                    \centering
                    \includegraphics[trim=1.0cm 1.0cm 1.0cm 1.0cm,clip=true,width=\textwidth]{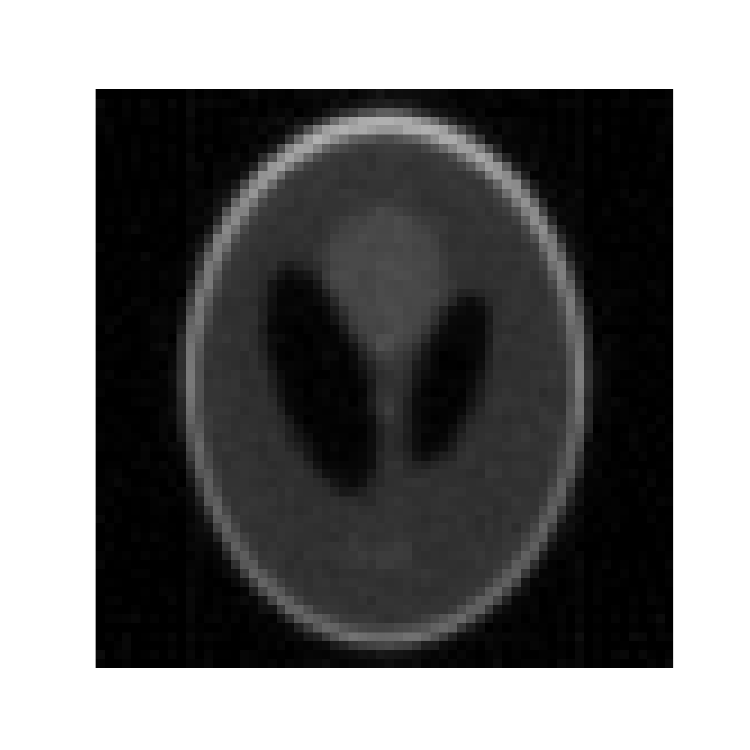}
                \end{subfigure}
            \end{tabular}
        \end{subfigure}\\
        
        \rotatebox{90}{\hspace{-0.4cm}RMA}
        \begin{subfigure}[b]{\xrayWidth\textwidth}
            \begin{tabular}{c}
                \begin{subfigure}{\textwidth}
                    \centering
                    \includegraphics[trim=1.0cm 1.0cm 1.0cm 1.0cm,clip=true,width=\textwidth]{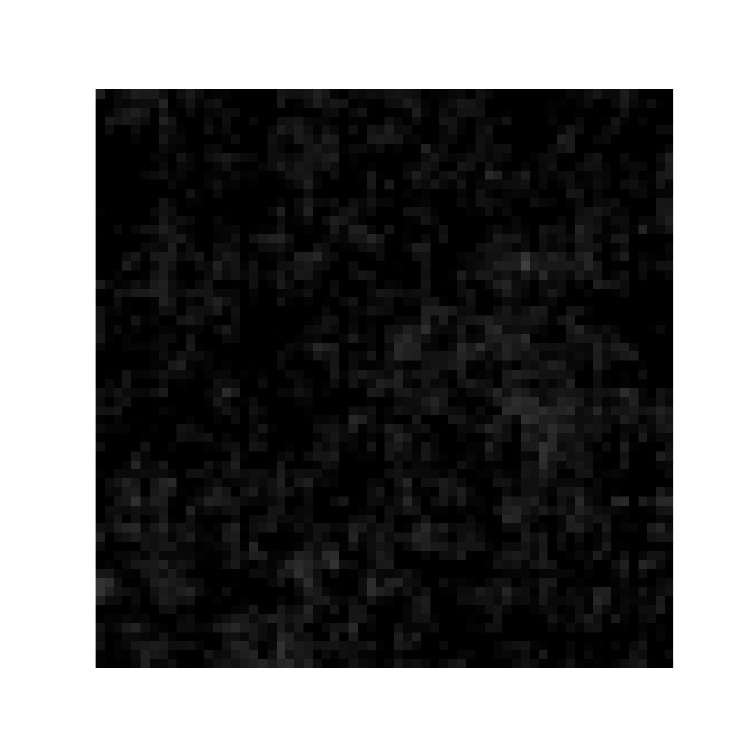}
                \end{subfigure}
            \end{tabular}
        \end{subfigure}

        \begin{subfigure}[b]{\xrayWidth\textwidth}
            \begin{tabular}{c}
                \begin{subfigure}{\textwidth}
                    \centering
                    \includegraphics[trim=1.0cm 1.0cm 1.0cm 1.0cm,clip=true,width=\textwidth]{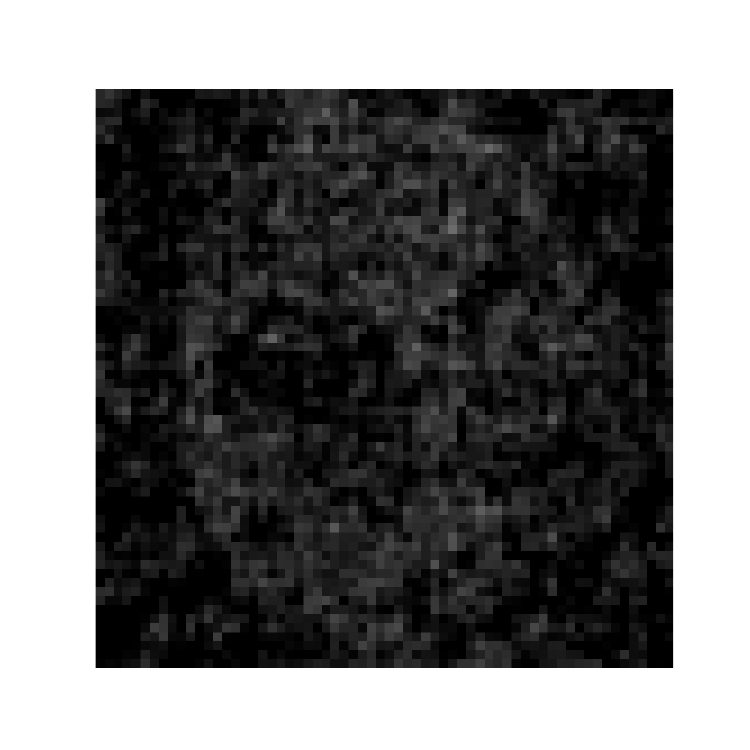}
                \end{subfigure}
            \end{tabular}
        \end{subfigure}
        
        \begin{subfigure}[b]{\xrayWidth\textwidth}
            \begin{tabular}{c}
                \begin{subfigure}{\textwidth}
                    \centering
                    \includegraphics[trim=1.0cm 1.0cm 1.0cm 1.0cm,clip=true,width=\textwidth]{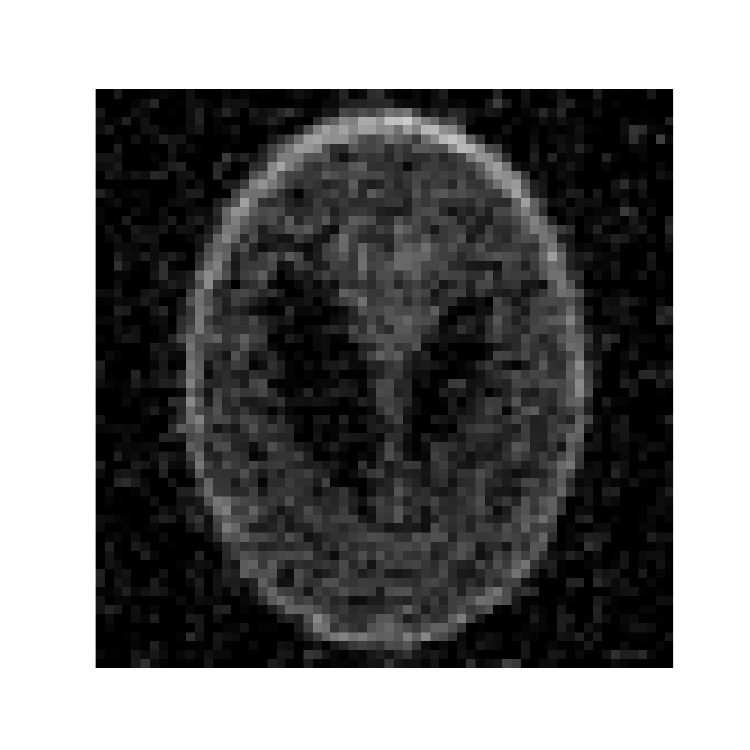}
                \end{subfigure}
            \end{tabular}
        \end{subfigure} 
        
        \begin{subfigure}[b]{\xrayWidth\textwidth}
            \begin{tabular}{c}
                \begin{subfigure}{\textwidth}
                    \centering
                    \includegraphics[trim=1.0cm 1.0cm 1.0cm 1.0cm,clip=true,width=\textwidth]{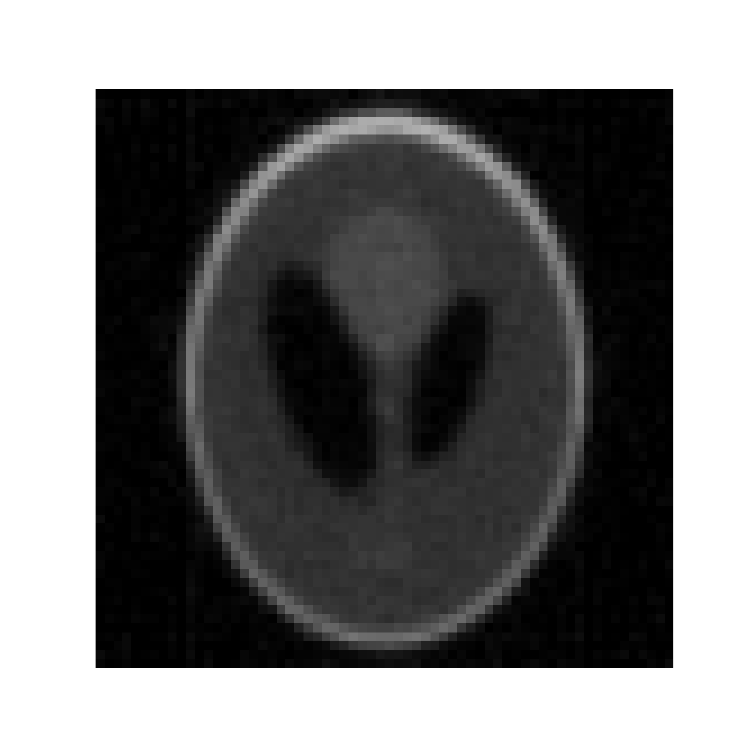}
                \end{subfigure}
            \end{tabular}
        \end{subfigure}\\
        
        \rotatebox{90}{\hspace{-0.9cm}RMA+RMAP}
        \begin{subfigure}[b]{\xrayWidth\textwidth}
            \begin{tabular}{c}
                \begin{subfigure}{\textwidth}
                    \centering
                    \includegraphics[trim=1.0cm 1.0cm 1.0cm 1.0cm,clip=true,width=\textwidth]{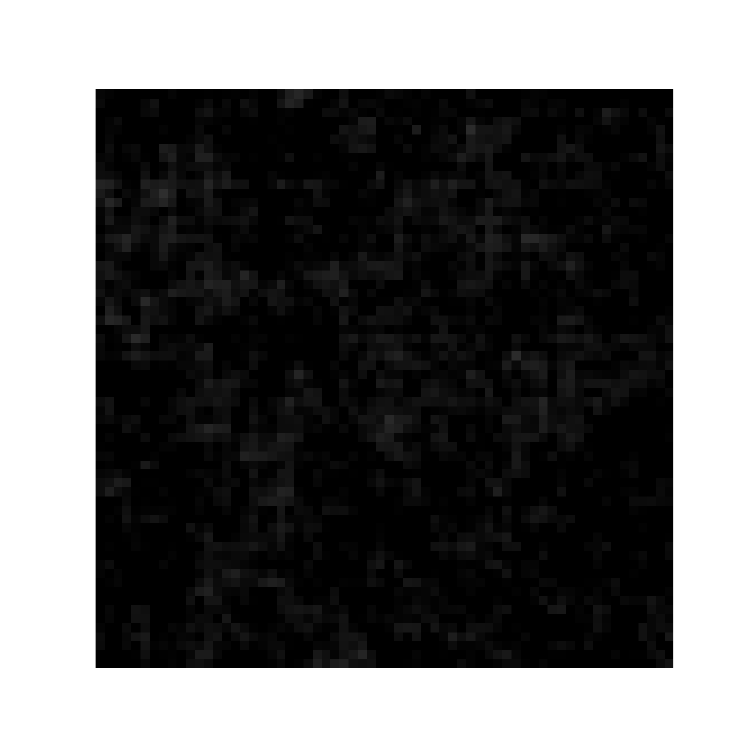}
                \end{subfigure}
            \end{tabular}
        \end{subfigure}

        \begin{subfigure}[b]{\xrayWidth\textwidth}
            \begin{tabular}{c}
                \begin{subfigure}{\textwidth}
                    \centering
                    \includegraphics[trim=1.0cm 1.0cm 1.0cm 1.0cm,clip=true,width=\textwidth]{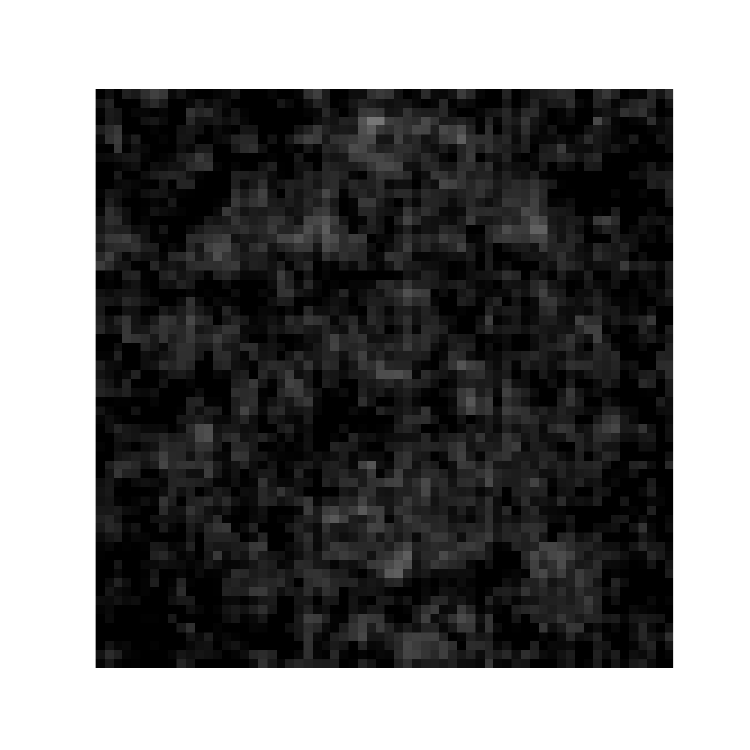}
                \end{subfigure}
            \end{tabular}
        \end{subfigure}
        
        \begin{subfigure}[b]{\xrayWidth\textwidth}
            \begin{tabular}{c}
                \begin{subfigure}{\textwidth}
                    \centering
                    \includegraphics[trim=1.0cm 1.0cm 1.0cm 1.0cm,clip=true,width=\textwidth]{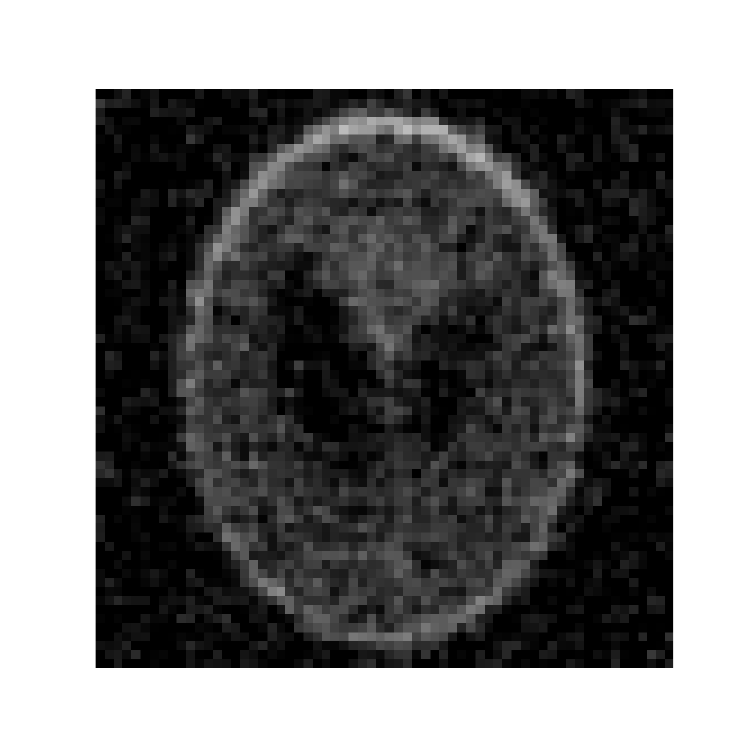}
                \end{subfigure}
            \end{tabular}
        \end{subfigure} 
        
        \begin{subfigure}[b]{\xrayWidth\textwidth}
            \begin{tabular}{c}
                \begin{subfigure}{\textwidth}
                    \centering
                    \includegraphics[trim=1.0cm 1.0cm 1.0cm 1.0cm,clip=true,width=\textwidth]{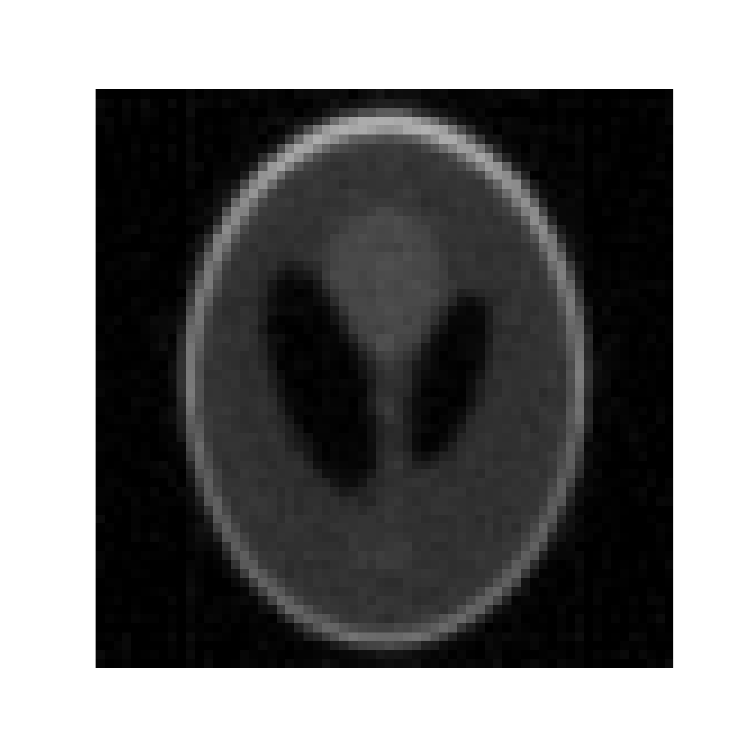}
                \end{subfigure}
            \end{tabular}
        \end{subfigure} \\

        \rotatebox{90}{\hspace{-0.2cm}RS}
        \begin{subfigure}[b]{\xrayWidth\textwidth}
            \begin{tabular}{c}
                \begin{subfigure}{\textwidth}
                    \centering
                    \includegraphics[trim=1.0cm 1.0cm 1.0cm 1.0cm,clip=true,width=\textwidth]{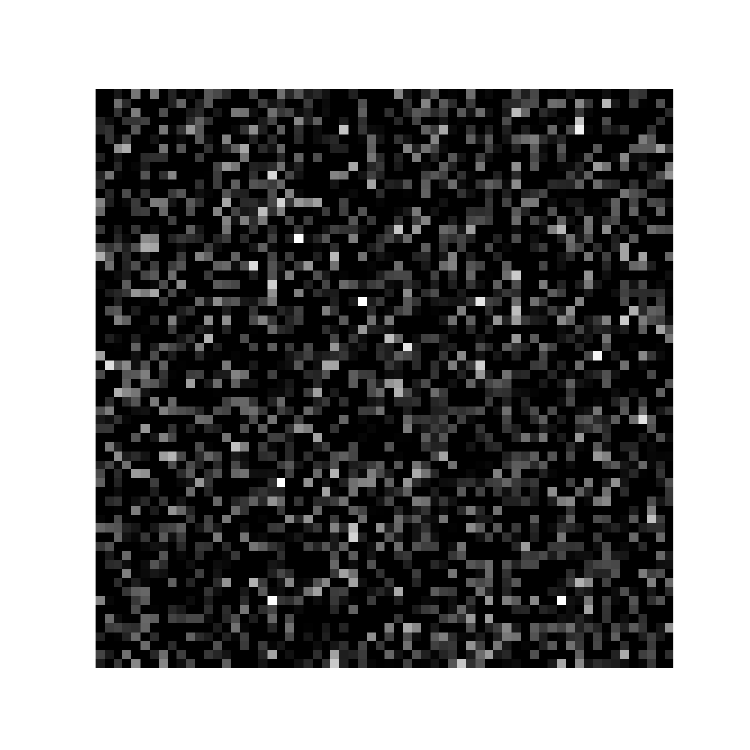}
                \end{subfigure}
            \end{tabular}
        \end{subfigure}

        \begin{subfigure}[b]{\xrayWidth\textwidth}
            \begin{tabular}{c}
                \begin{subfigure}{\textwidth}
                    \centering
                    \includegraphics[trim=1.0cm 1.0cm 1.0cm 1.0cm,clip=true,width=\textwidth]{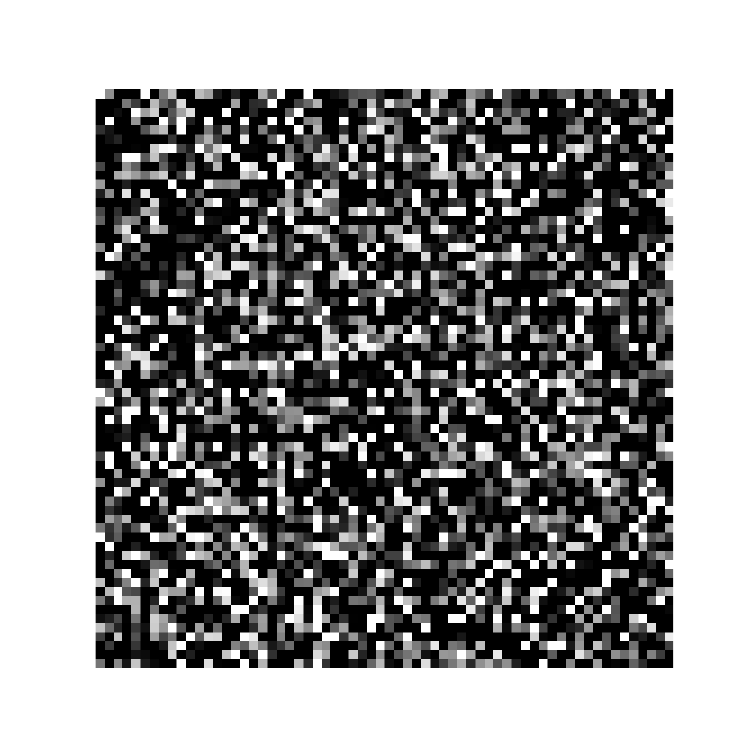}
                \end{subfigure}
            \end{tabular}
        \end{subfigure}
        
        \begin{subfigure}[b]{\xrayWidth\textwidth}
            \begin{tabular}{c}
                \begin{subfigure}{\textwidth}
                    \centering
                    \includegraphics[trim=1.0cm 1.0cm 1.0cm 1.0cm,clip=true,width=\textwidth]{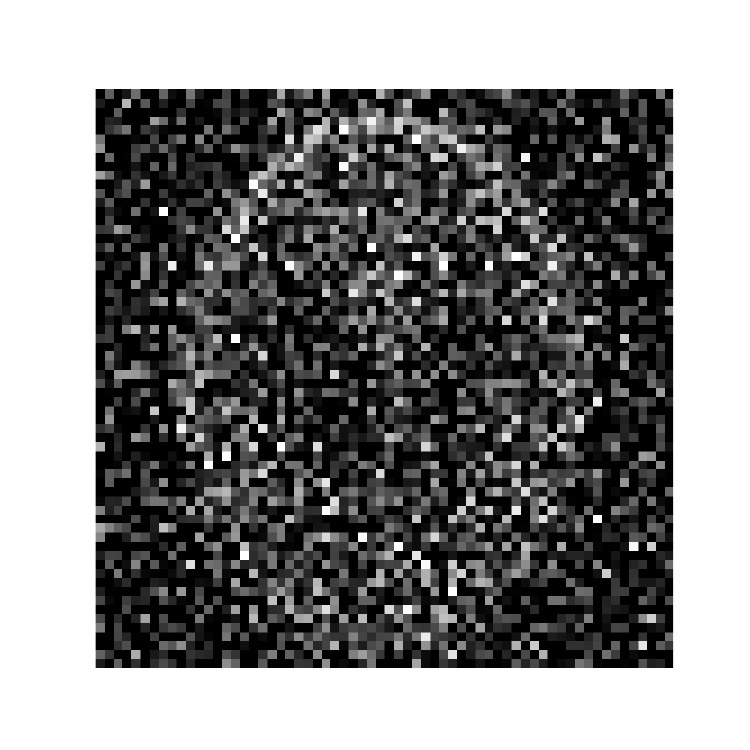}
                \end{subfigure}
            \end{tabular}
        \end{subfigure} 
        
        \begin{subfigure}[b]{\xrayWidth\textwidth}
            \begin{tabular}{c}
                \begin{subfigure}{\textwidth}
                    \centering
                    \includegraphics[trim=1.0cm 1.0cm 1.0cm 1.0cm,clip=true,width=\textwidth]{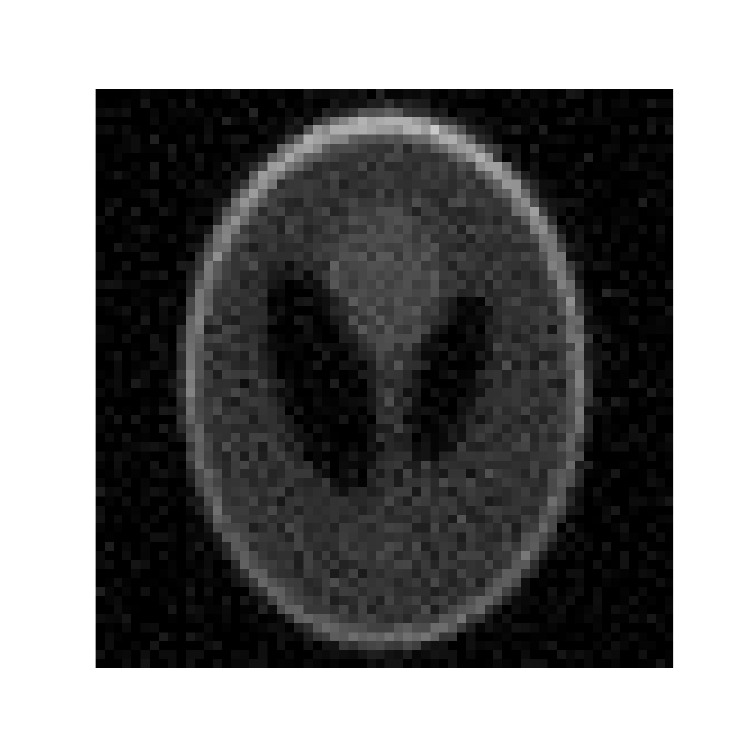}
                \end{subfigure}
            \end{tabular}
        \end{subfigure}\\

        \rotatebox{90}{\hspace{-0.5cm}ENKF}
        \begin{subfigure}[b]{\xrayWidth\textwidth}
            \begin{tabular}{c}
                \begin{subfigure}{\textwidth}
                    \centering
                    \includegraphics[trim=1.0cm 1.0cm 1.0cm 1.0cm,clip=true,width=\textwidth]{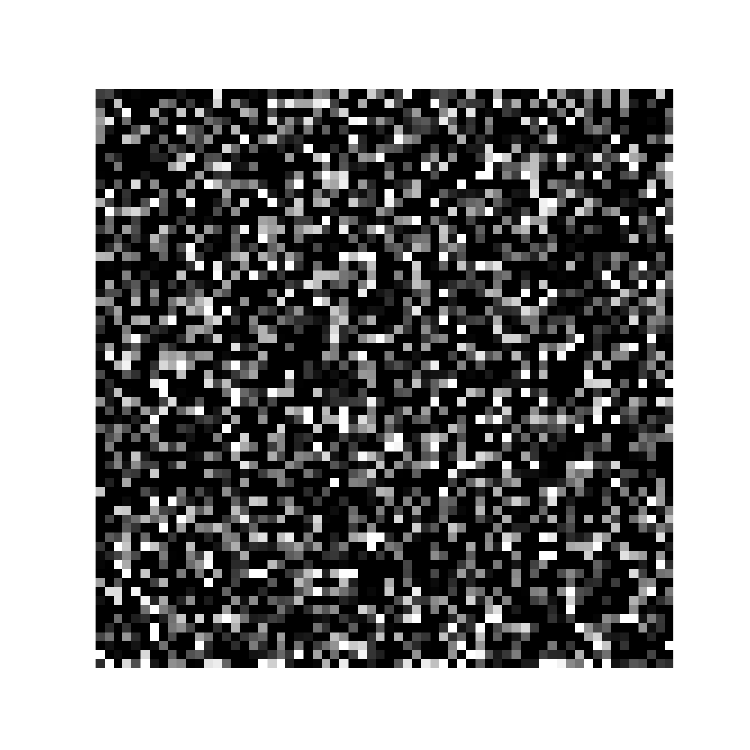}
                \end{subfigure}
            \end{tabular}
        \end{subfigure}

        \begin{subfigure}[b]{\xrayWidth\textwidth}
            \begin{tabular}{c}
                \begin{subfigure}{\textwidth}
                    \centering
                    \includegraphics[trim=1.0cm 1.0cm 1.0cm 1.0cm,clip=true,width=\textwidth]{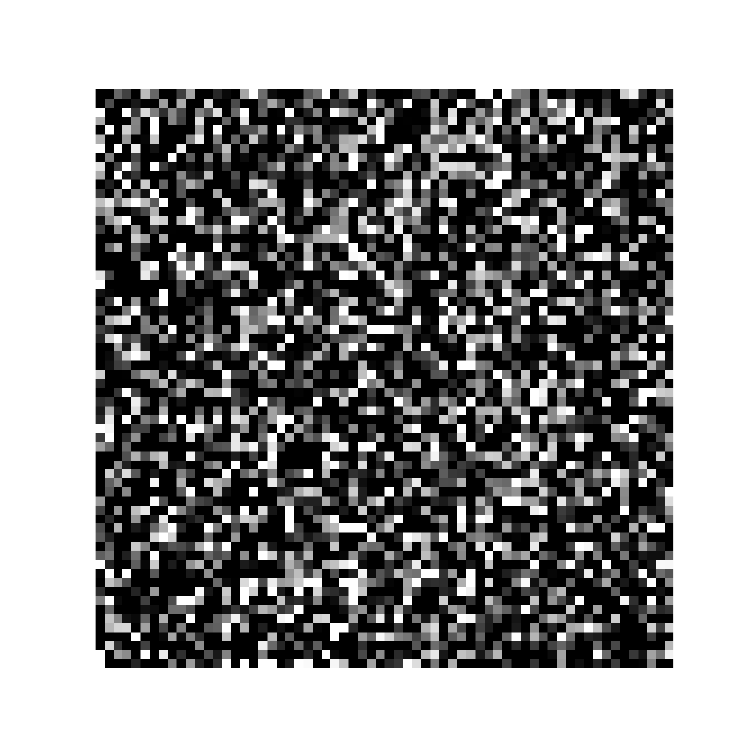}
                \end{subfigure}
            \end{tabular}
        \end{subfigure}
        
        \begin{subfigure}[b]{\xrayWidth\textwidth}
            \begin{tabular}{c}
                \begin{subfigure}{\textwidth}
                    \centering
                    \includegraphics[trim=1.0cm 1.0cm 1.0cm 1.0cm,clip=true,width=\textwidth]{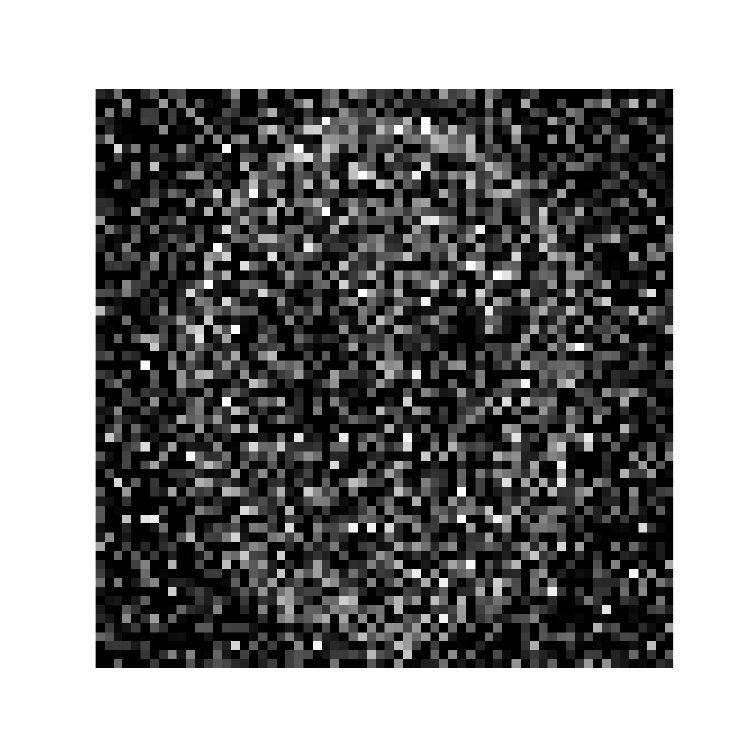}
                \end{subfigure}
            \end{tabular}
        \end{subfigure} 
        
        \begin{subfigure}[b]{\xrayWidth\textwidth}
            \begin{tabular}{c}
                \begin{subfigure}{\textwidth}
                    \centering
                    \includegraphics[trim=1.0cm 1.0cm 1.0cm 1.0cm,clip=true,width=\textwidth]{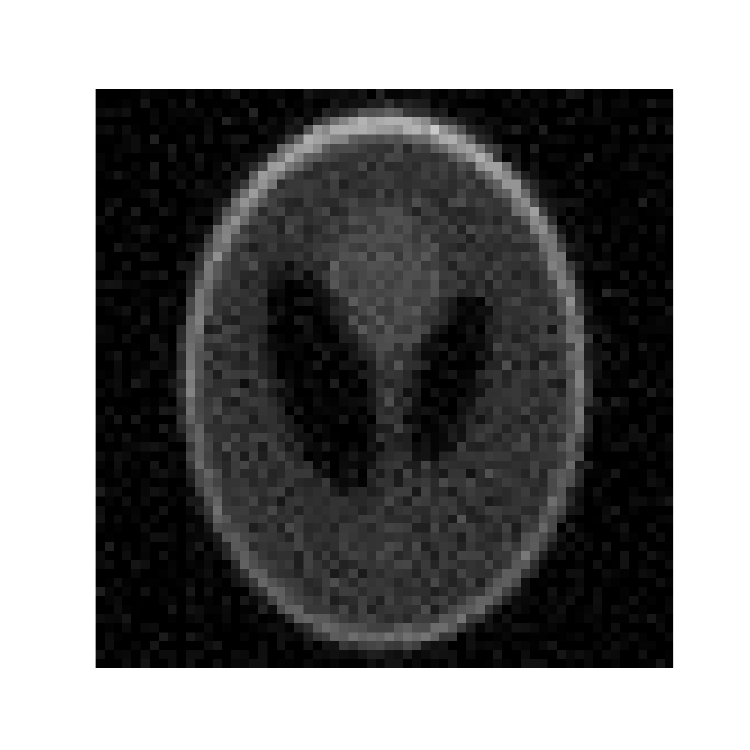}
                \end{subfigure}
            \end{tabular}
        \end{subfigure}\\
        
        \rotatebox{90}{\hspace{-0.3cm}ALL}
        \begin{subfigure}[b]{\xrayWidth\textwidth}
            \begin{tabular}{c}
                \begin{subfigure}{\textwidth}
                    \centering
                    \includegraphics[trim=1.0cm 1.0cm 1.0cm 1.0cm,clip=true,width=\textwidth]{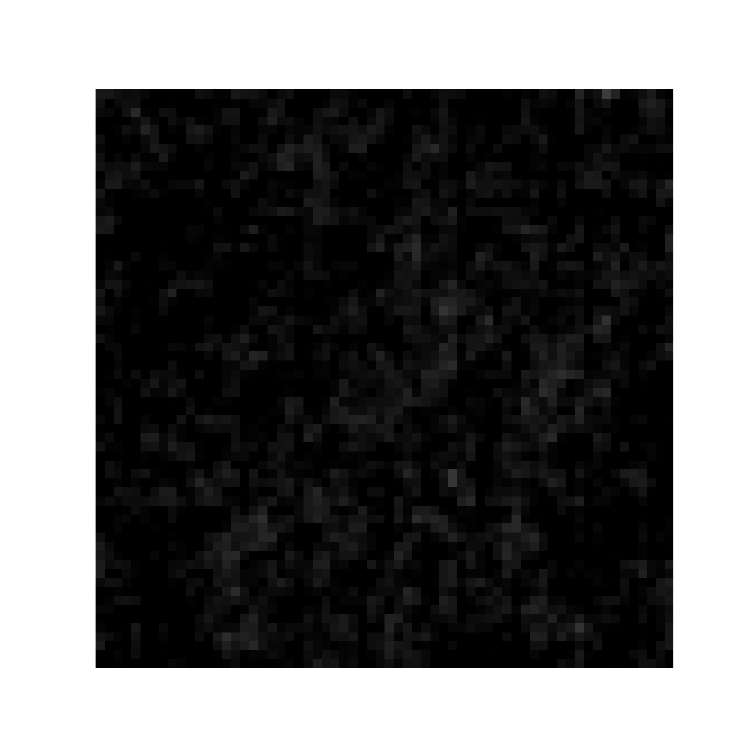}
                \end{subfigure}
            \end{tabular}
        \end{subfigure}

        \begin{subfigure}[b]{\xrayWidth\textwidth}
            \begin{tabular}{c}
                \begin{subfigure}{\textwidth}
                    \centering
                    \includegraphics[trim=1.0cm 1.0cm 1.0cm 1.0cm,clip=true,width=\textwidth]{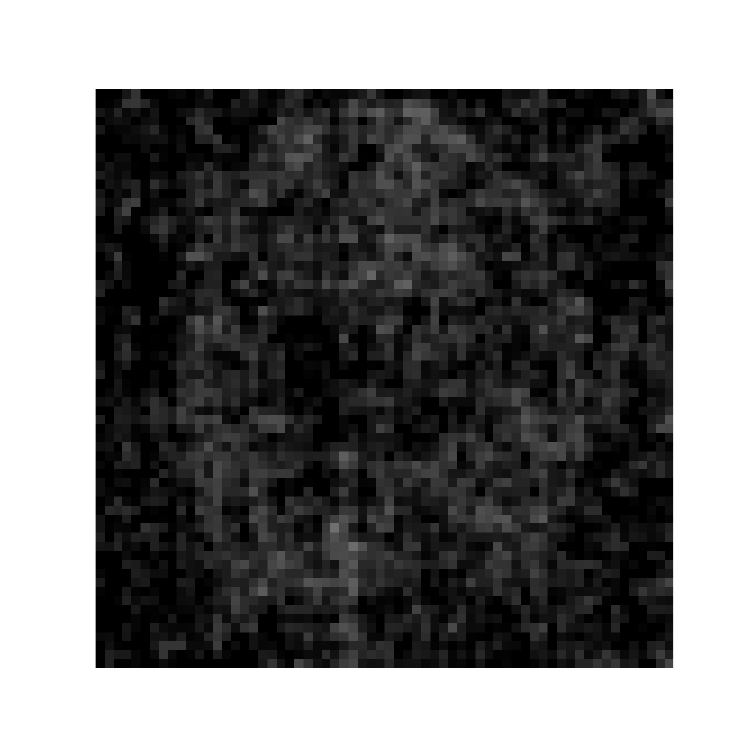}
                \end{subfigure}
            \end{tabular}
        \end{subfigure}
        
        \begin{subfigure}[b]{\xrayWidth\textwidth}
            \begin{tabular}{c}
                \begin{subfigure}{\textwidth}
                    \centering
                    \includegraphics[trim=1.0cm 1.0cm 1.0cm 1.0cm,clip=true,width=\textwidth]{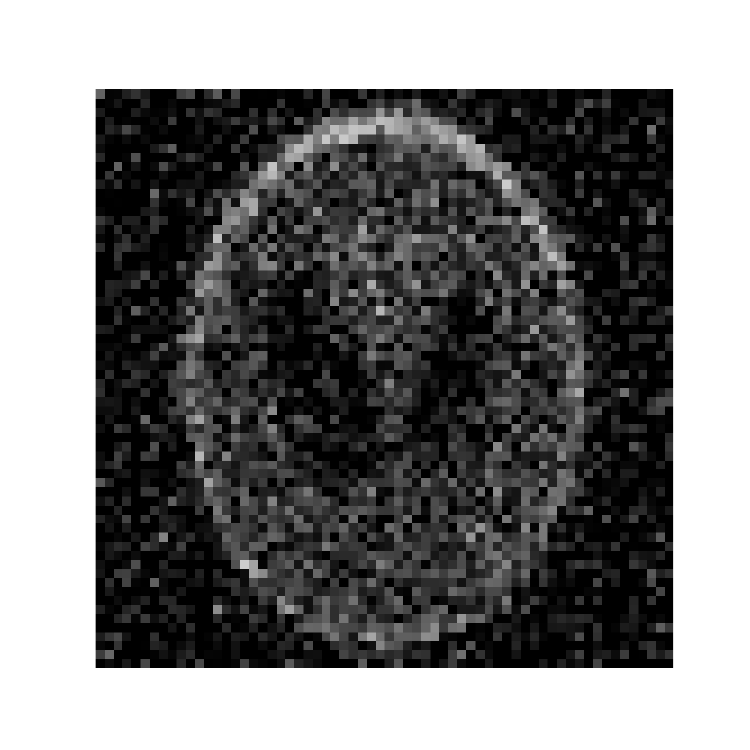}
                \end{subfigure}
            \end{tabular}
        \end{subfigure} 
        
        \begin{subfigure}[b]{\xrayWidth\textwidth}
            \begin{tabular}{c}
                \begin{subfigure}{\textwidth}
                    \centering
                    \includegraphics[trim=1.0cm 1.0cm 1.0cm 1.0cm,clip=true,width=\textwidth]{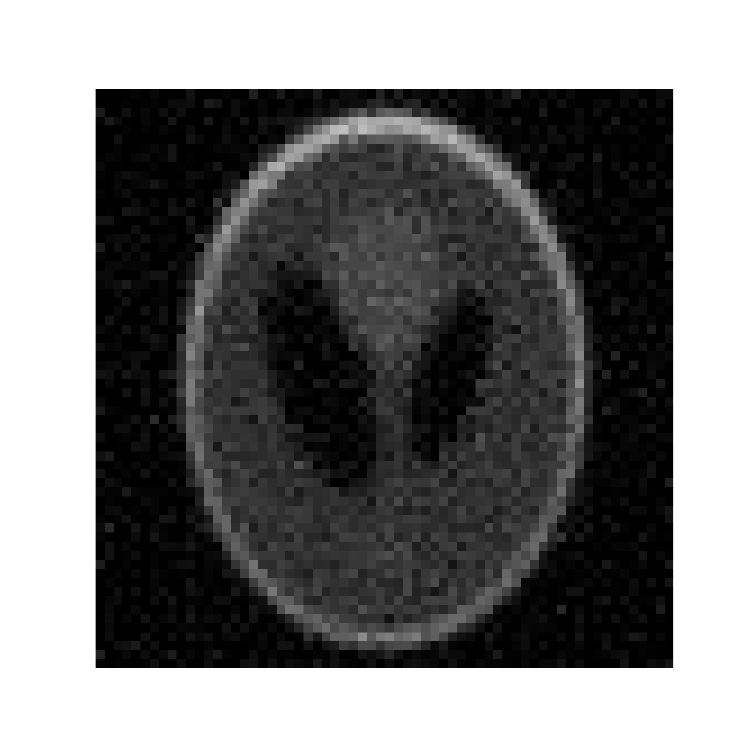}
                \end{subfigure}
            \end{tabular}
        \end{subfigure}\\

  \end{tabular}
    \caption{Solutions for various randomization approaches for an x-ray tomography problem with Gaussian random variables.}
    \label{X_ray}
\end{figure}

\newcommand{\DNOne}{00010}
\newcommand{\DNTwo}{00100}
\newcommand{\DNThree}{01000}
\newcommand{\DNFour}{10000}
\def\stripzero#1{\expandafter\stripzerohelp#1}
\def\stripzerohelp#1{\ifx 0#1\expandafter\stripzerohelp\else#1\fi}
\begin{figure}[h!t!b!]
    \begin{tabular}[h!]{c}
        \rotatebox{90}{\hspace{-0.7cm}RMAP}
        \begin{subfigure}[b]{0.22\textwidth}
            \begin{tabular}{c}
            \textbf{N = \stripzero{\DNOne}}\\
                \begin{subfigure}{\textwidth}
                    \centering
                    \includegraphics[trim=2.5cm 2.5cm 7.0cm 1.5cm,clip=true,width=\textwidth]{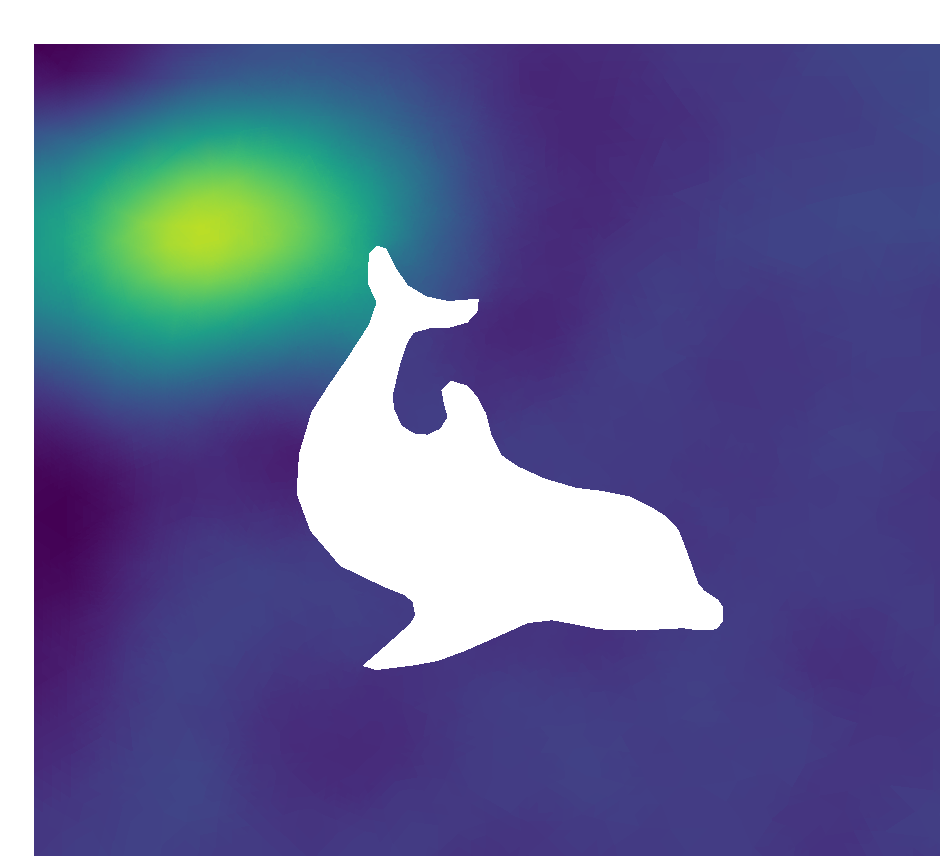}
                \end{subfigure}
            \end{tabular}
        \end{subfigure}

        \begin{subfigure}[b]{0.22\textwidth}
            \begin{tabular}{c}
            \textbf{N = \stripzero{\DNTwo}}\\
                \begin{subfigure}{\textwidth}
                    \centering
                    \includegraphics[trim=2.5cm 2.5cm 7.0cm 1.5cm,clip=true,width=\textwidth]{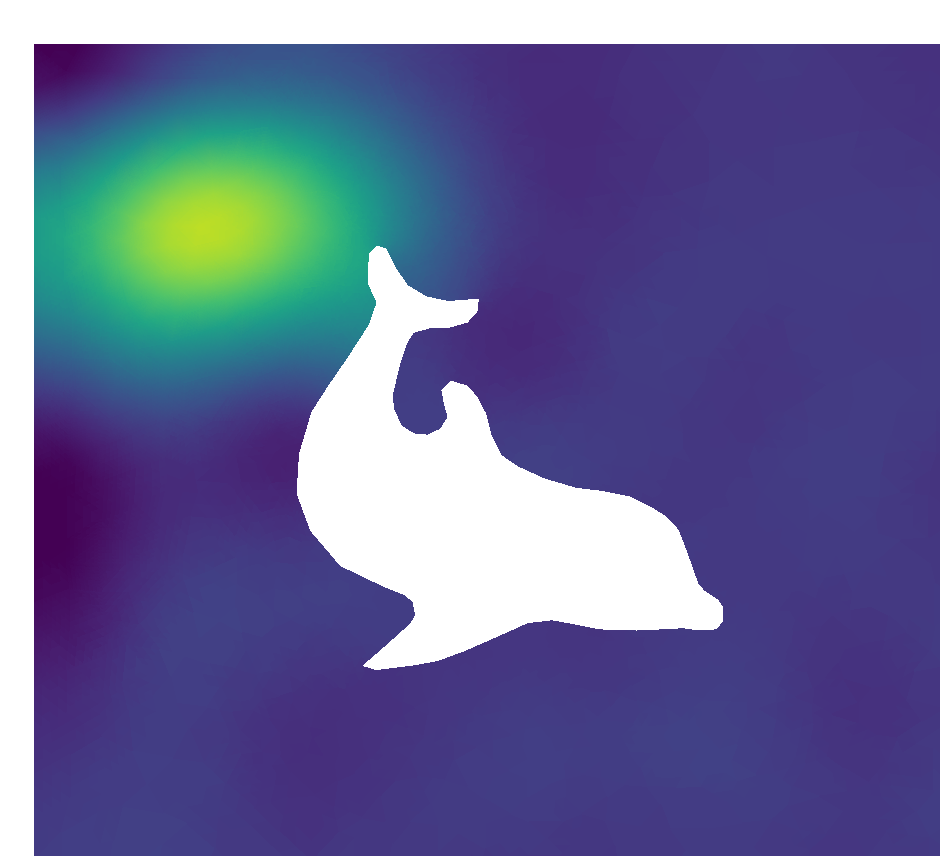}
                \end{subfigure}
            \end{tabular}
        \end{subfigure}
        
        \begin{subfigure}[b]{0.22\textwidth}
            \begin{tabular}{c}
            \textbf{N = \stripzero{\DNThree}}\\
                \begin{subfigure}{\textwidth}
                    \centering
                    \includegraphics[trim=2.5cm 2.5cm 7.0cm 1.5cm,clip=true,width=\textwidth]{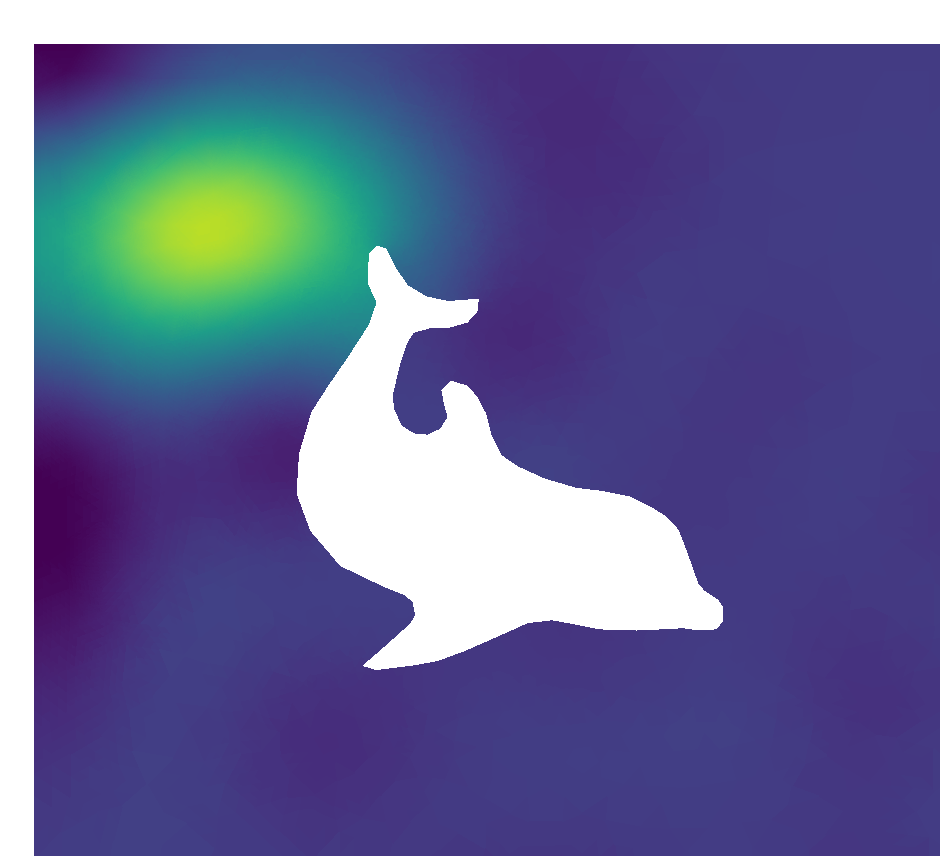}
                \end{subfigure}
            \end{tabular}
        \end{subfigure} 
        
        \begin{subfigure}[b]{0.22\textwidth}
            \begin{tabular}{c}
            \textbf{N = \stripzero{\DNFour}}\\
                \begin{subfigure}{\textwidth}
                    \centering
                    \includegraphics[trim=2.5cm 2.5cm 7.0cm 1.5cm,clip=true,width=\textwidth]{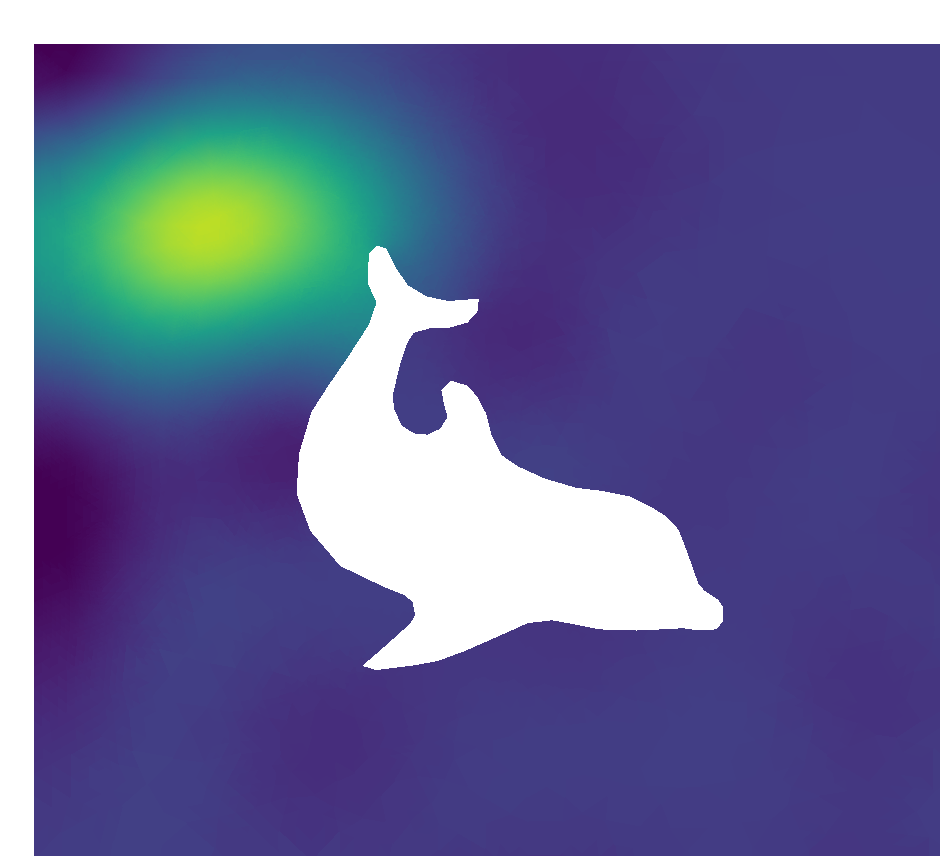}
                \end{subfigure}
            \end{tabular}
        \end{subfigure}\\
        
        \rotatebox{90}{\hspace{-0.4cm}RMA}
        \begin{subfigure}[b]{0.22\textwidth}
            \begin{tabular}{c}
                \begin{subfigure}{\textwidth}
                    \centering
                    \includegraphics[trim=2.5cm 2.5cm 7.0cm 1.5cm,clip=true,width=\textwidth]{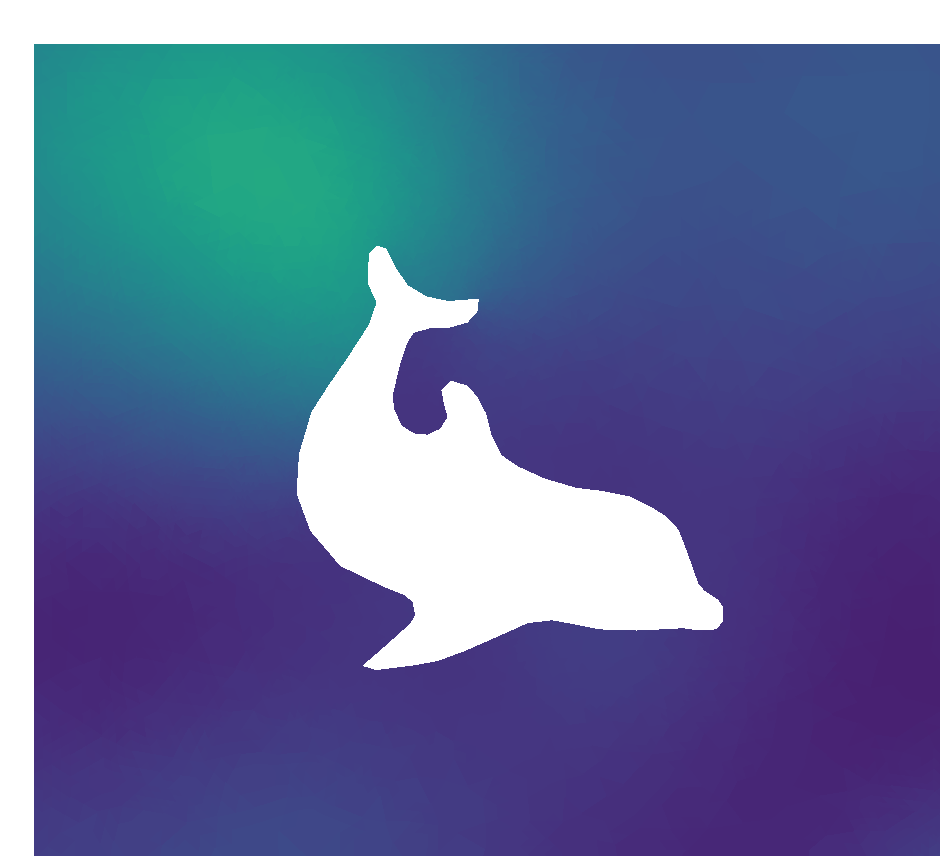}
                \end{subfigure}
            \end{tabular}
        \end{subfigure}

        \begin{subfigure}[b]{0.22\textwidth}
            \begin{tabular}{c}
                \begin{subfigure}{\textwidth}
                    \centering
                    \includegraphics[trim=2.5cm 2.5cm 7.0cm 1.5cm,clip=true,width=\textwidth]{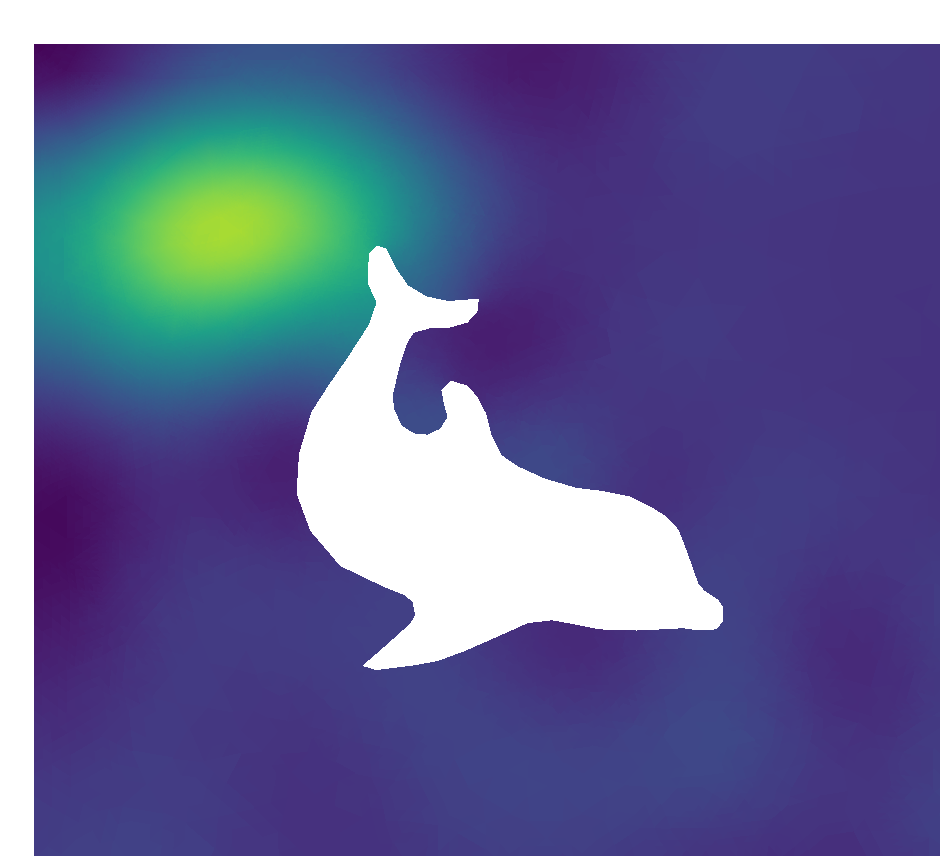}
                \end{subfigure}
            \end{tabular}
        \end{subfigure}
        
        \begin{subfigure}[b]{0.22\textwidth}
            \begin{tabular}{c}
                \begin{subfigure}{\textwidth}
                    \centering
                    \includegraphics[trim=2.5cm 2.5cm 7.0cm 1.5cm,clip=true,width=\textwidth]{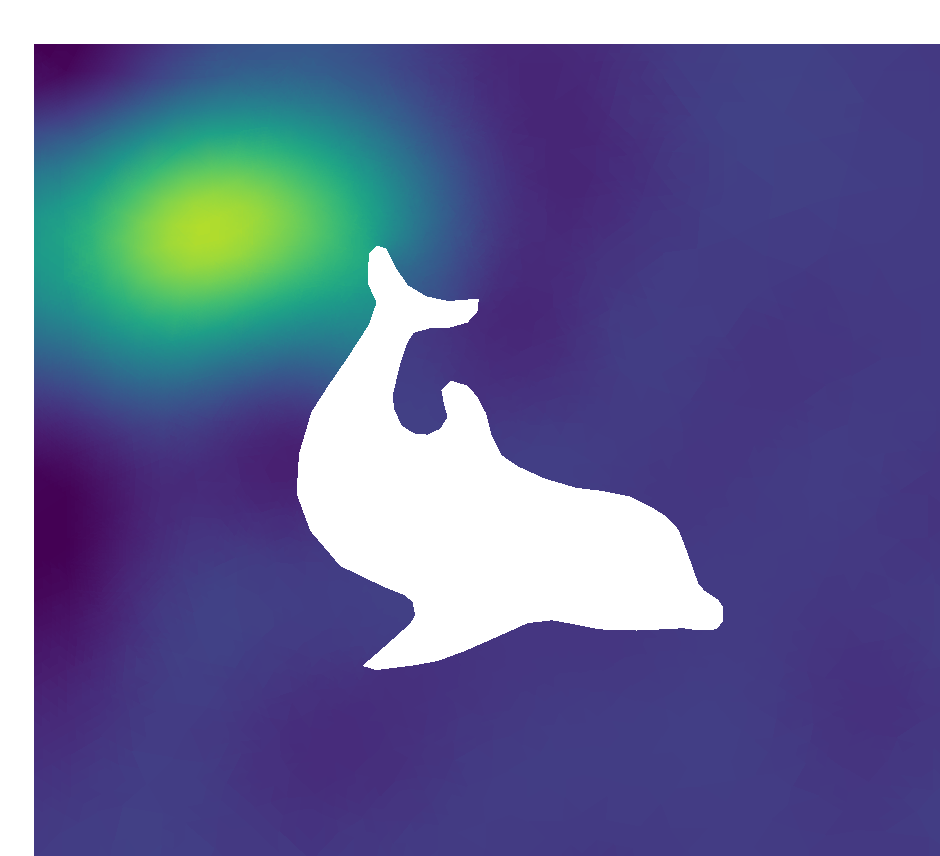}
                \end{subfigure}
            \end{tabular}
        \end{subfigure} 
        
        \begin{subfigure}[b]{0.22\textwidth}
            \begin{tabular}{c}
                \begin{subfigure}{\textwidth}
                    \centering
                    \includegraphics[trim=2.5cm 2.5cm 7.0cm 1.5cm,clip=true,width=\textwidth]{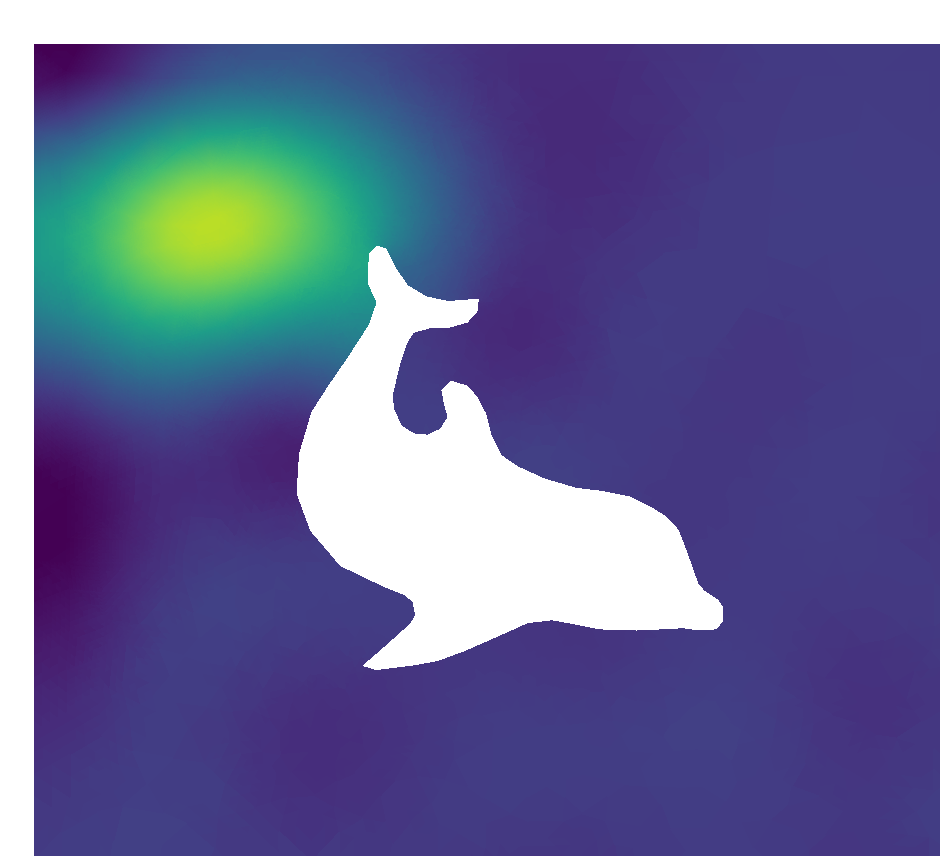}
                \end{subfigure}
            \end{tabular}
        \end{subfigure}\\
        
        \rotatebox{90}{\hspace{-0.9cm}RMA+RMAP}
        \begin{subfigure}[b]{0.22\textwidth}
            \begin{tabular}{c}
                \begin{subfigure}{\textwidth}
                    \centering
                    \includegraphics[trim=2.5cm 2.5cm 7.0cm 1.5cm,clip=true,width=\textwidth]{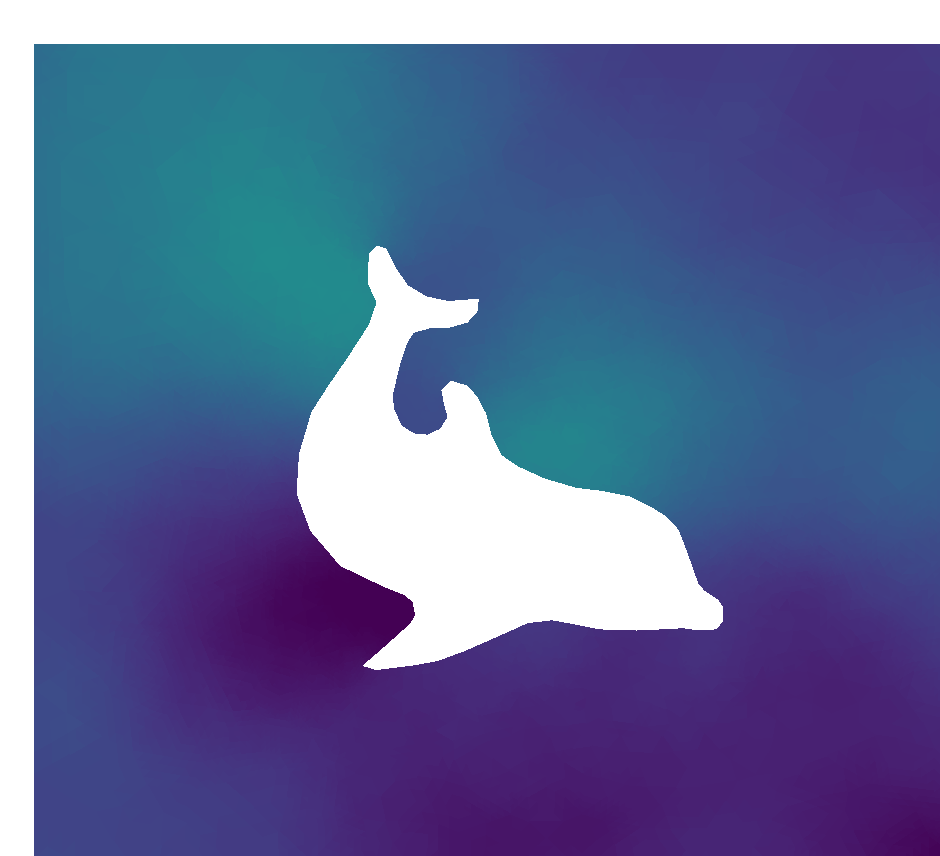}
                \end{subfigure}
            \end{tabular}
        \end{subfigure}

        \begin{subfigure}[b]{0.22\textwidth}
            \begin{tabular}{c}
                \begin{subfigure}{\textwidth}
                    \centering
                    \includegraphics[trim=2.5cm 2.5cm 7.0cm 1.5cm,clip=true,width=\textwidth]{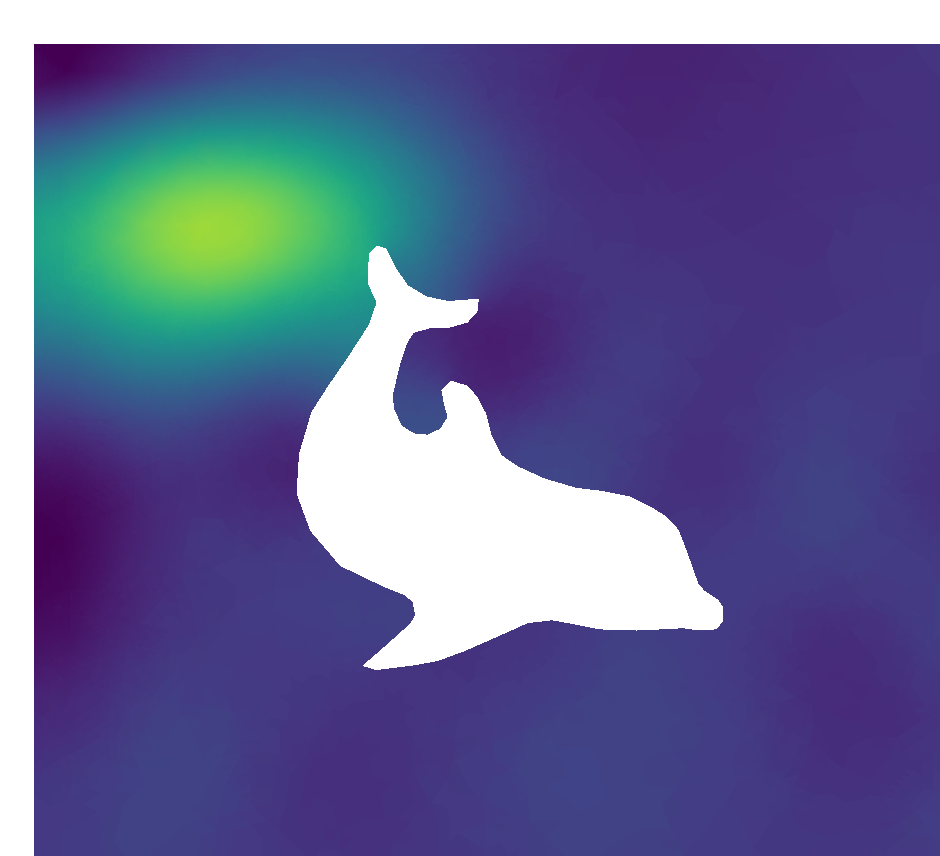}
                \end{subfigure}
            \end{tabular}
        \end{subfigure}
        
        \begin{subfigure}[b]{0.22\textwidth}
            \begin{tabular}{c}
                \begin{subfigure}{\textwidth}
                    \centering
                    \includegraphics[trim=2.5cm 2.5cm 7.0cm 1.5cm,clip=true,width=\textwidth]{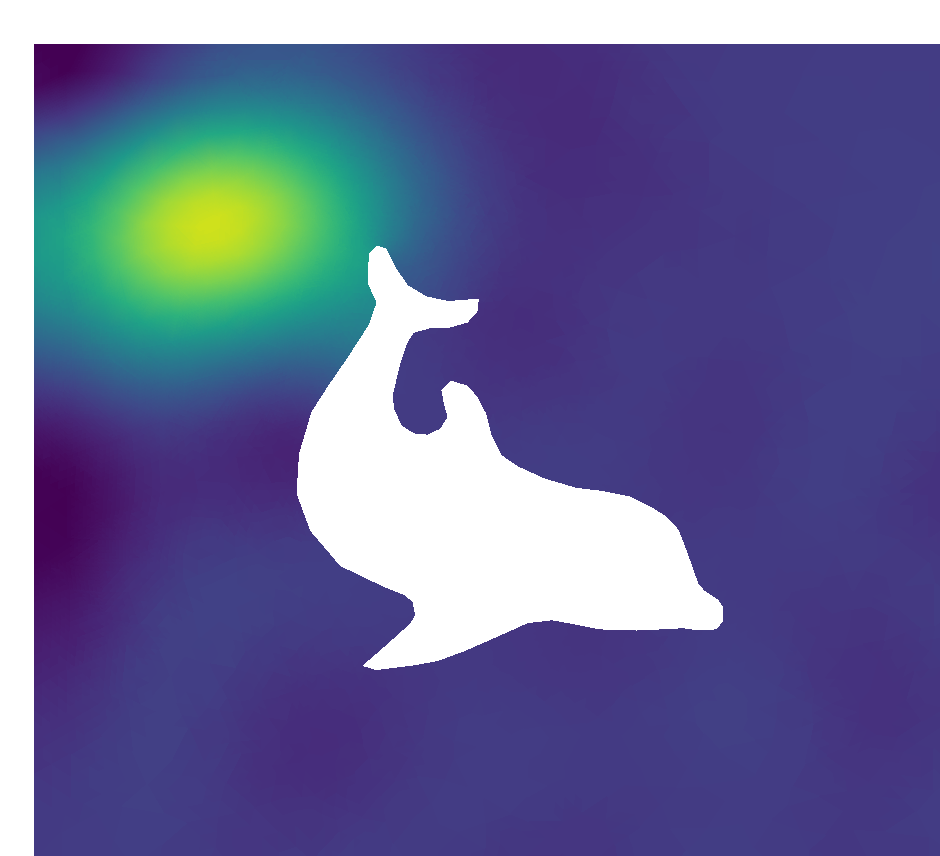}
                \end{subfigure}
            \end{tabular}
        \end{subfigure} 
        
        \begin{subfigure}[b]{0.22\textwidth}
            \begin{tabular}{c}
                \begin{subfigure}{\textwidth}
                    \centering
                    \includegraphics[trim=2.5cm 2.5cm 7.0cm 1.5cm,clip=true,width=\textwidth]{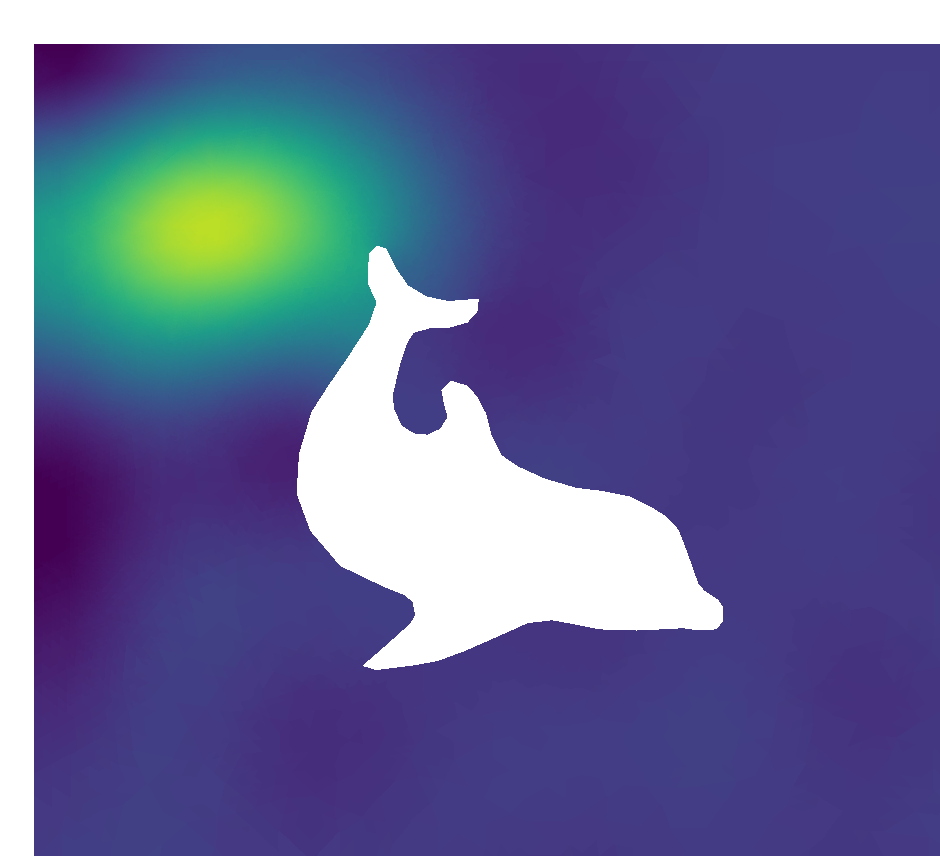}
                \end{subfigure}
            \end{tabular}
        \end{subfigure} \\

        \rotatebox{90}{\hspace{-0.2cm}RS}
        \begin{subfigure}[b]{0.22\textwidth}
            \begin{tabular}{c}
                \begin{subfigure}{\textwidth}
                    \centering
                    \includegraphics[trim=2.5cm 2.5cm 7.0cm 1.5cm,clip=true,width=\textwidth]{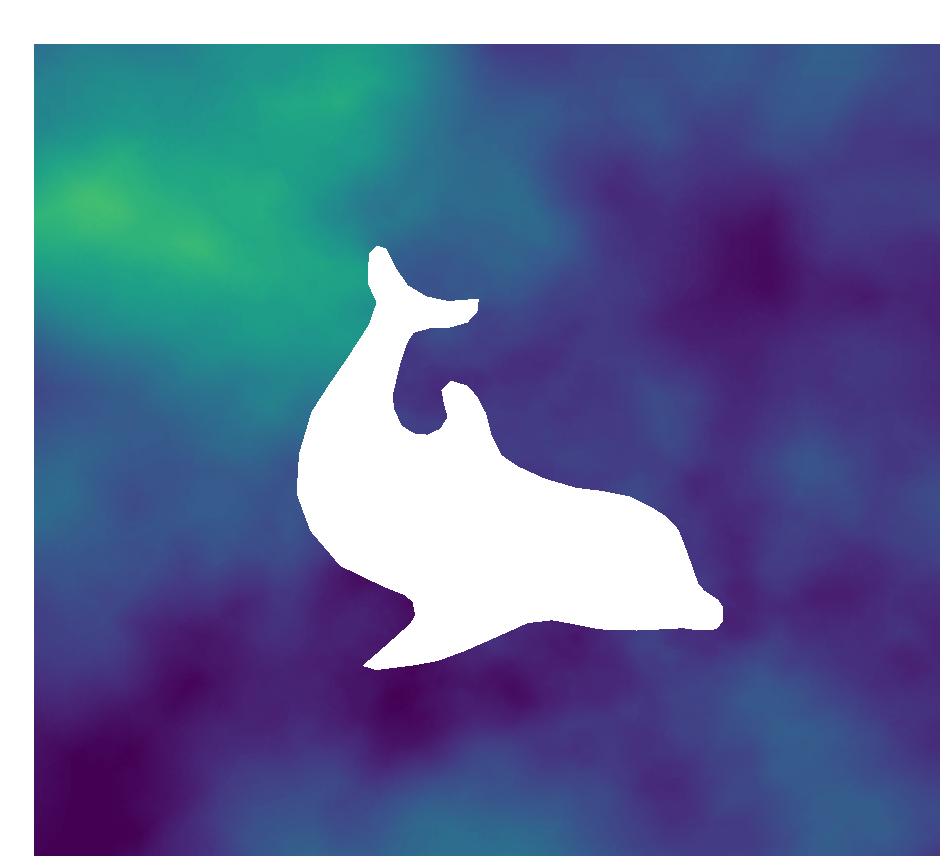}
                \end{subfigure}
            \end{tabular}
        \end{subfigure}

        \begin{subfigure}[b]{0.22\textwidth}
            \begin{tabular}{c}
                \begin{subfigure}{\textwidth}
                    \centering
                    \includegraphics[trim=2.5cm 2.5cm 7.0cm 1.5cm,clip=true,width=\textwidth]{pdf_figures/Linear_PDE/RS_\DNTwo.pdf}
                \end{subfigure}
            \end{tabular}
        \end{subfigure}
        
        \begin{subfigure}[b]{0.22\textwidth}
            \begin{tabular}{c}
                \begin{subfigure}{\textwidth}
                    \centering
                    \includegraphics[trim=2.5cm 2.5cm 7.0cm 1.5cm,clip=true,width=\textwidth]{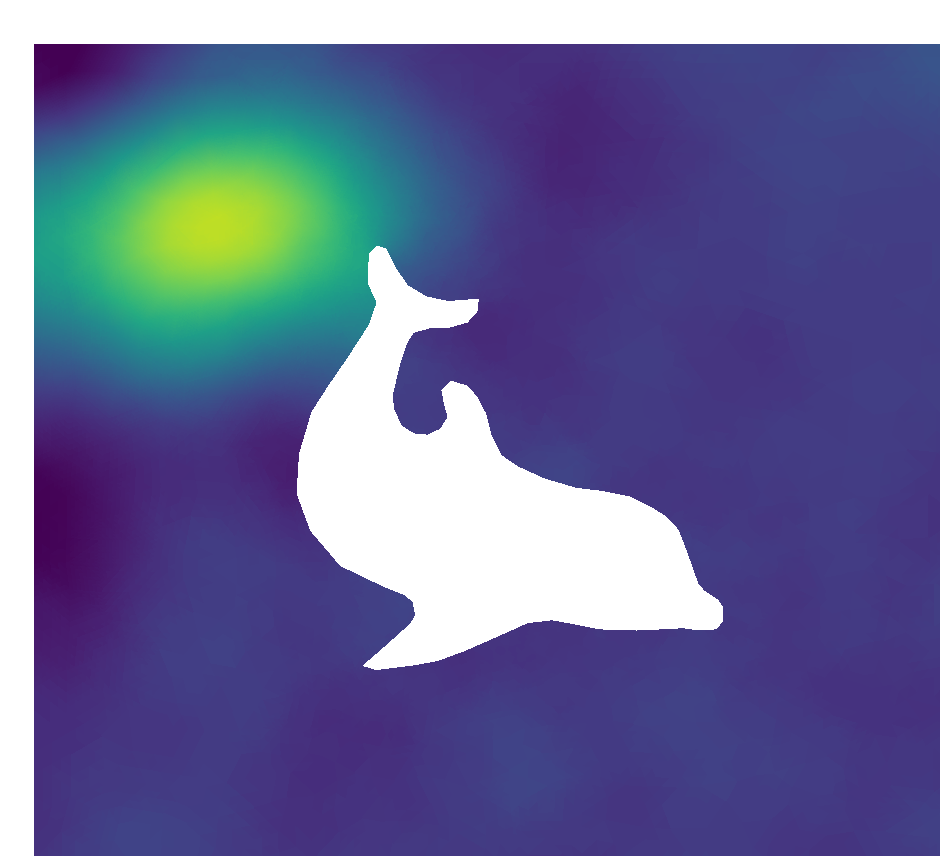}
                \end{subfigure}
            \end{tabular}
        \end{subfigure} 
        
        \begin{subfigure}[b]{0.22\textwidth}
            \begin{tabular}{c}
                \begin{subfigure}{\textwidth}
                    \centering
                    \includegraphics[trim=2.5cm 2.5cm 7.0cm 1.5cm,clip=true,width=\textwidth]{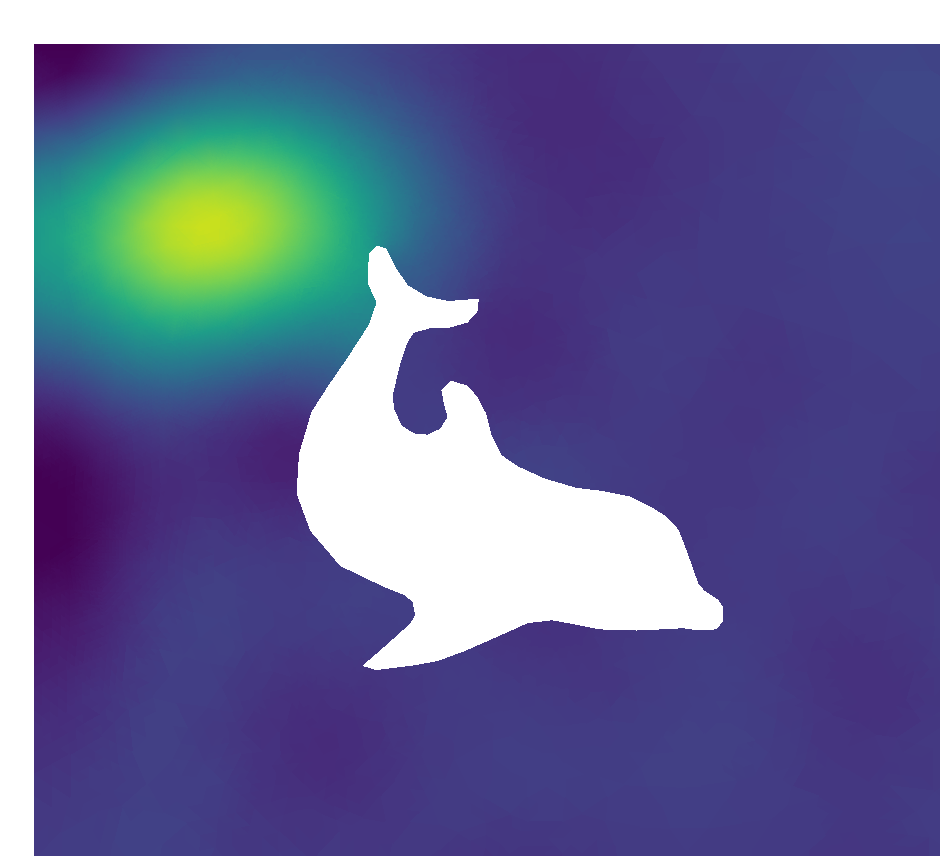}
                \end{subfigure}
            \end{tabular}
        \end{subfigure}\\

        \rotatebox{90}{\hspace{-0.5cm}ENKF}
        \begin{subfigure}[b]{0.22\textwidth}
            \begin{tabular}{c}
                \begin{subfigure}{\textwidth}
                    \centering
                    \includegraphics[trim=2.5cm 2.5cm 7.0cm 1.5cm,clip=true,width=\textwidth]{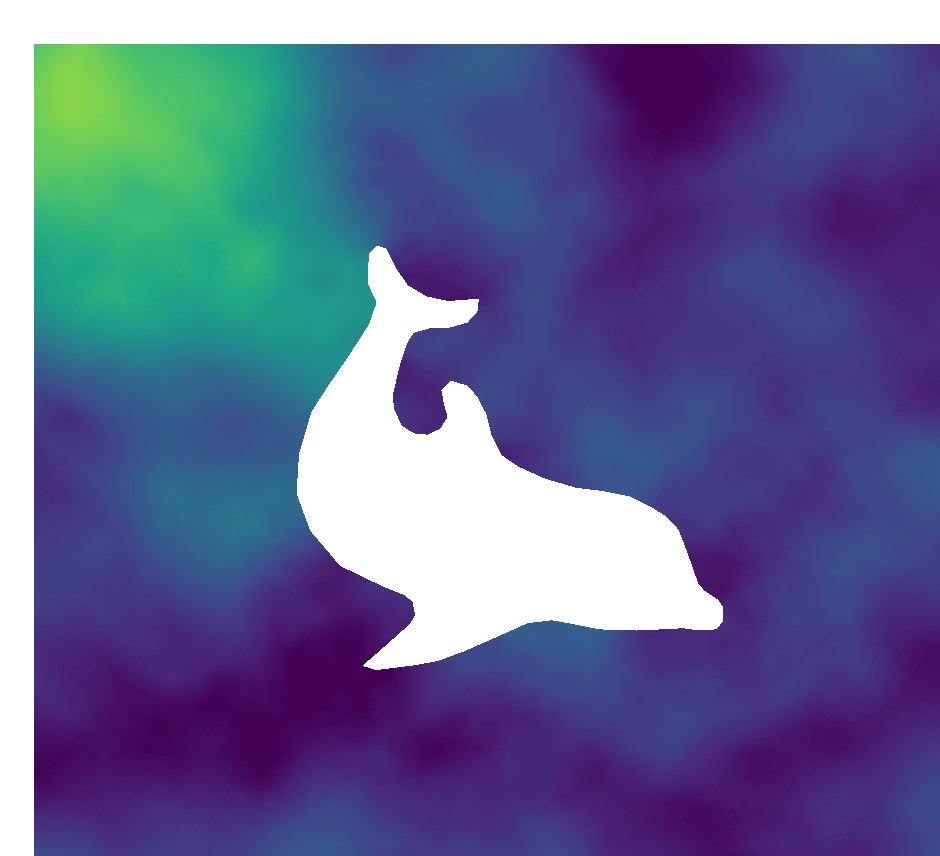}
                \end{subfigure}
            \end{tabular}
        \end{subfigure}

        \begin{subfigure}[b]{0.22\textwidth}
            \begin{tabular}{c}
                \begin{subfigure}{\textwidth}
                    \centering
                    \includegraphics[trim=2.5cm 2.5cm 7.0cm 1.5cm,clip=true,width=\textwidth]{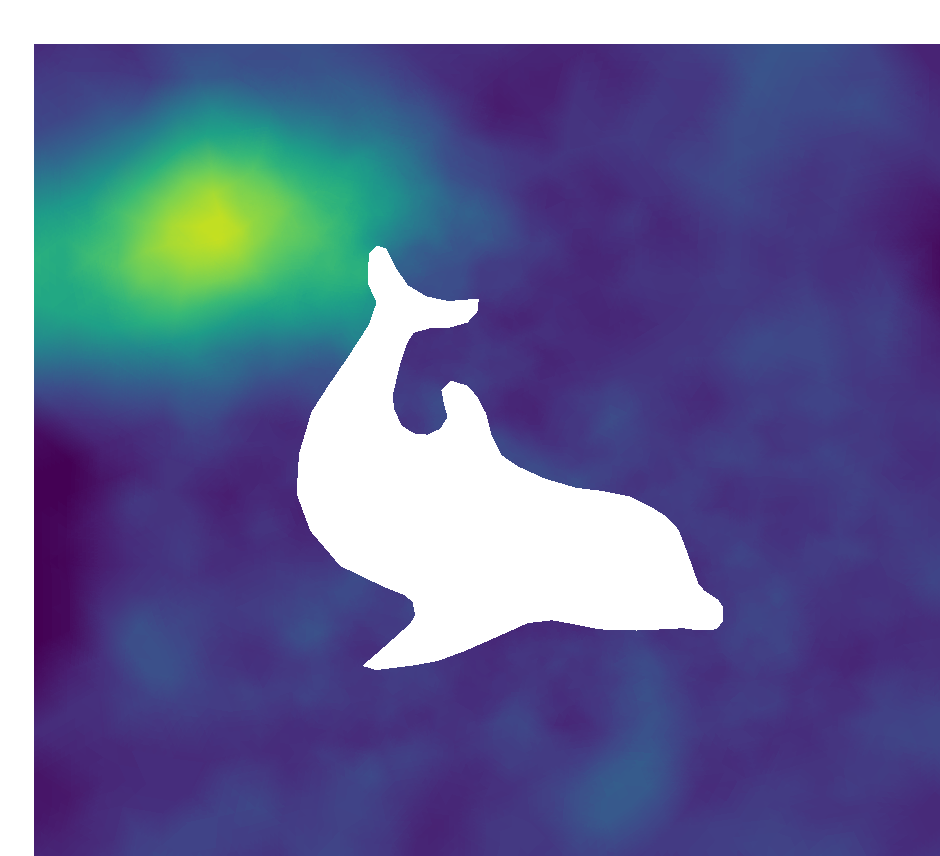}
                \end{subfigure}
            \end{tabular}
        \end{subfigure}
        
        \begin{subfigure}[b]{0.22\textwidth}
            \begin{tabular}{c}
                \begin{subfigure}{\textwidth}
                    \centering
                    \includegraphics[trim=2.5cm 2.5cm 7.0cm 1.5cm,clip=true,width=\textwidth]{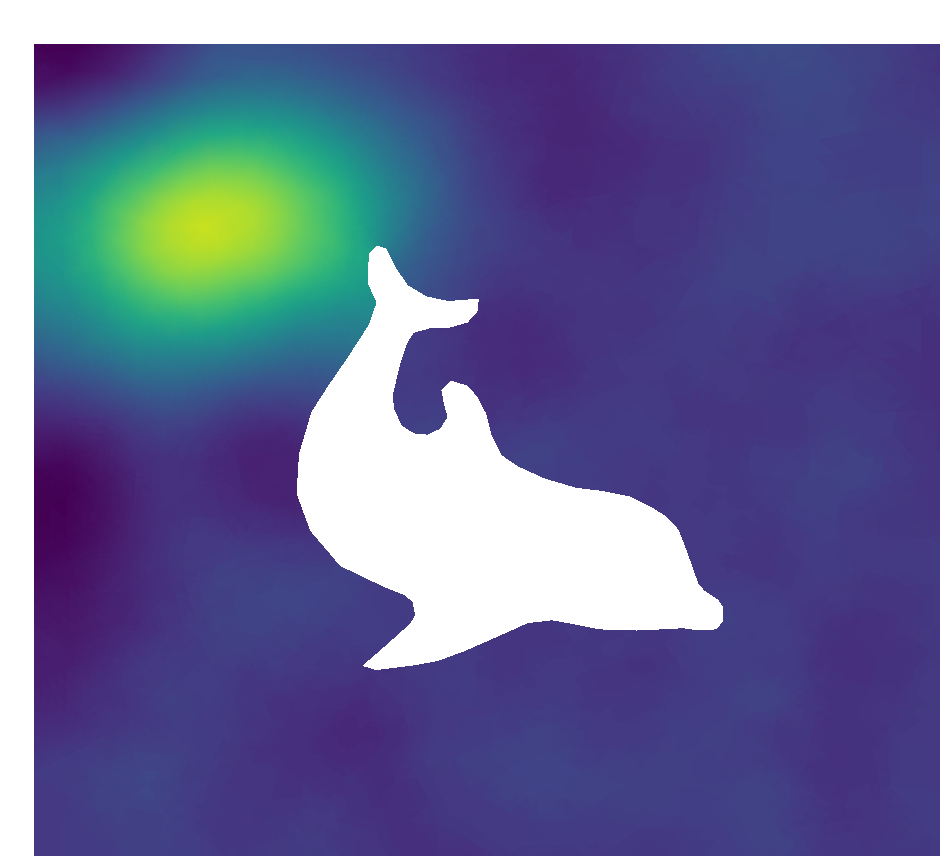}
                \end{subfigure}
            \end{tabular}
        \end{subfigure} 
        
        \begin{subfigure}[b]{0.22\textwidth}
            \begin{tabular}{c}
                \begin{subfigure}{\textwidth}
                    \centering
                    \includegraphics[trim=2.5cm 2.5cm 7.0cm 1.5cm,clip=true,width=\textwidth]{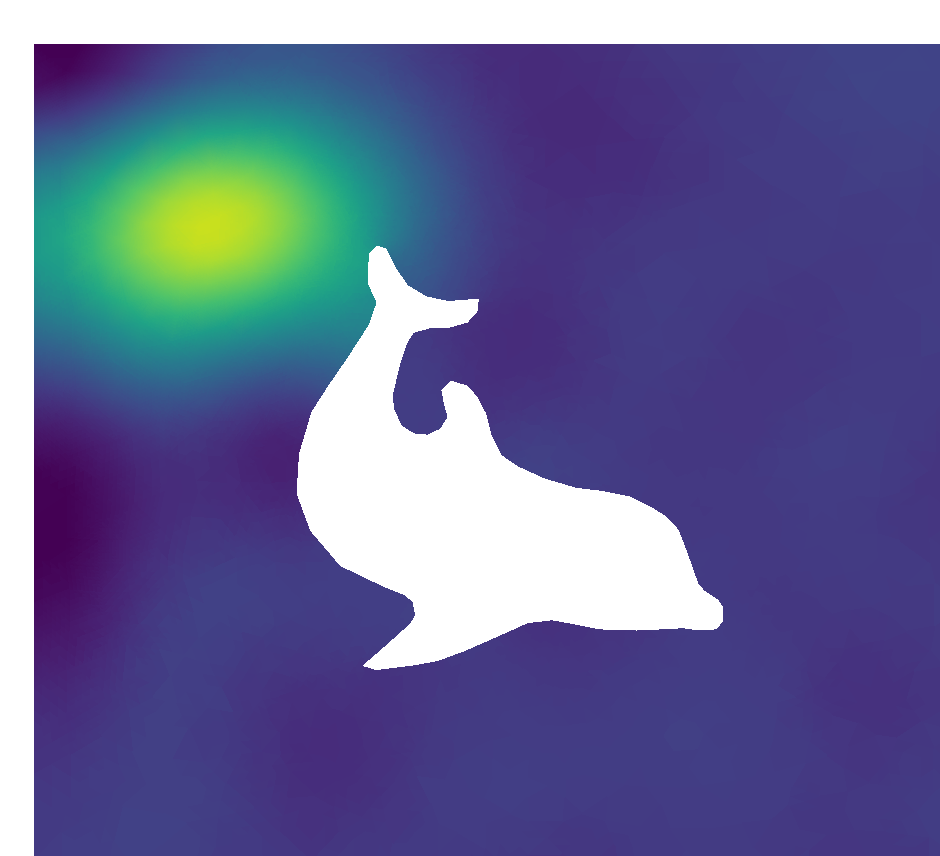}
                \end{subfigure}
            \end{tabular}
        \end{subfigure}\\
        
        \rotatebox{90}{\hspace{-0.3cm}ALL}
        \begin{subfigure}[b]{0.22\textwidth}
            \begin{tabular}{c}
                \begin{subfigure}{\textwidth}
                    \centering
                    \includegraphics[trim=2.5cm 2.5cm 7.0cm 1.5cm,clip=true,width=\textwidth]{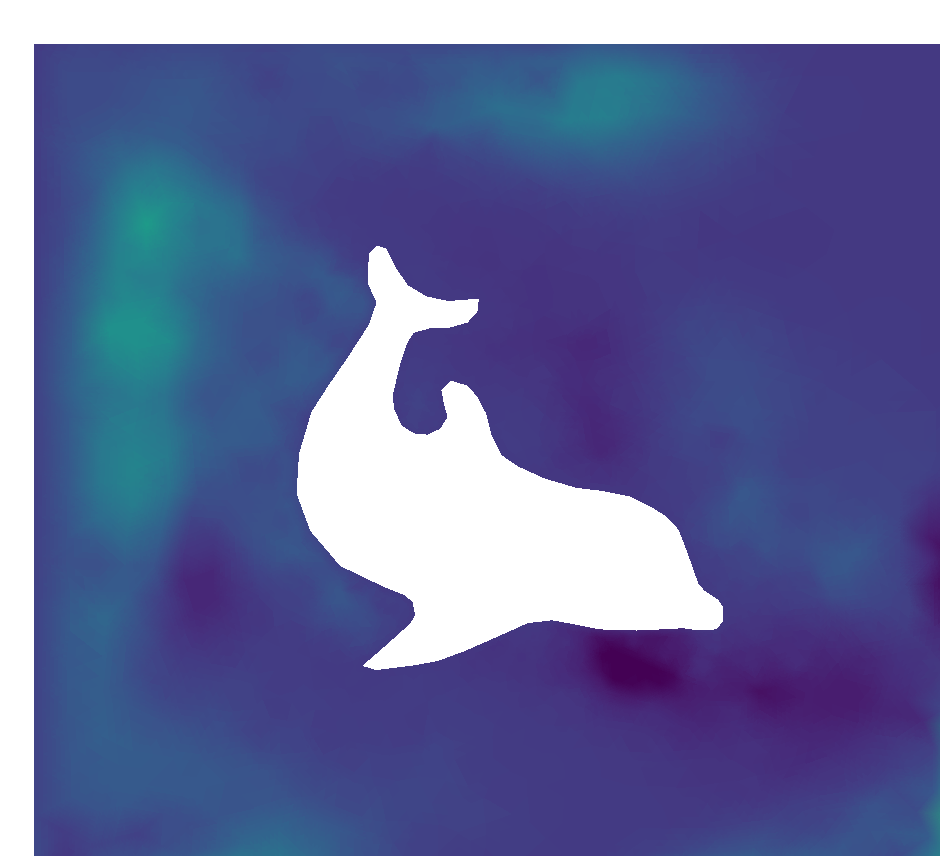}
                \end{subfigure}
            \end{tabular}
        \end{subfigure}

        \begin{subfigure}[b]{0.22\textwidth}
            \begin{tabular}{c}
                \begin{subfigure}{\textwidth}
                    \centering
                    \includegraphics[trim=2.5cm 2.5cm 7.0cm 1.5cm,clip=true,width=\textwidth]{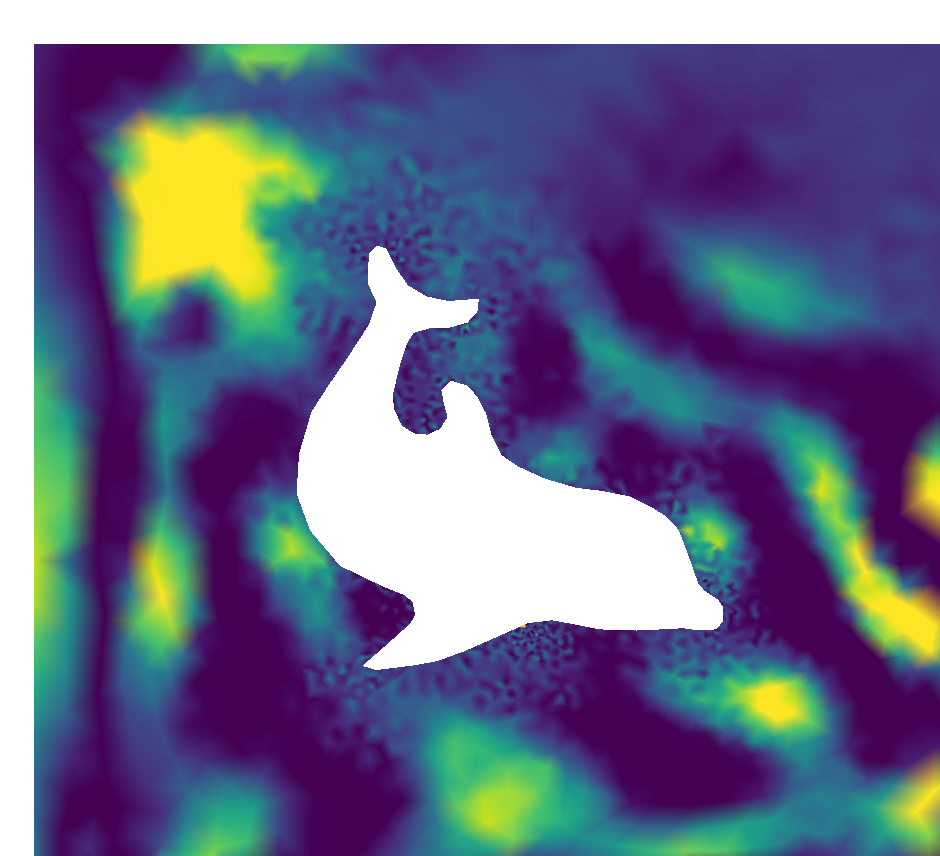}
                \end{subfigure}
            \end{tabular}
        \end{subfigure}
        
        \begin{subfigure}[b]{0.22\textwidth}
            \begin{tabular}{c}
                \begin{subfigure}{\textwidth}
                    \centering
                    \includegraphics[trim=2.5cm 2.5cm 7.0cm 1.5cm,clip=true,width=\textwidth]{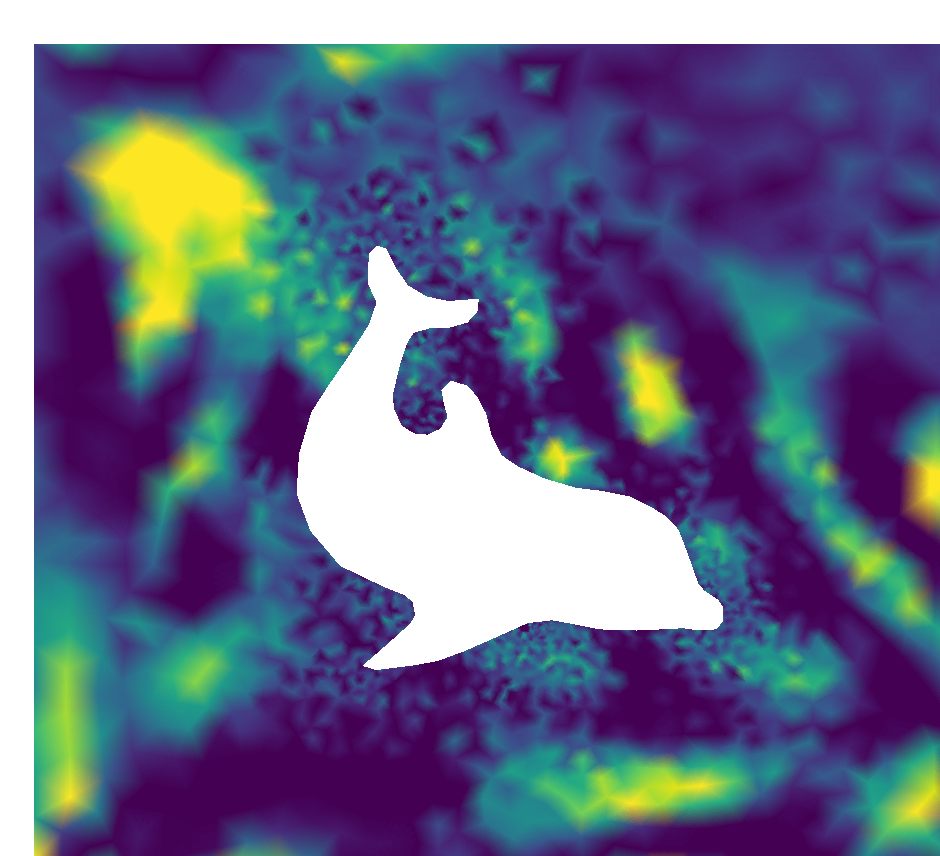}
                \end{subfigure}
            \end{tabular}
        \end{subfigure} 
        
        \begin{subfigure}[b]{0.22\textwidth}
            \begin{tabular}{c}
                \begin{subfigure}{\textwidth}
                    \centering
                    \includegraphics[trim=2.5cm 2.5cm 7.0cm 1.5cm,clip=true,width=\textwidth]{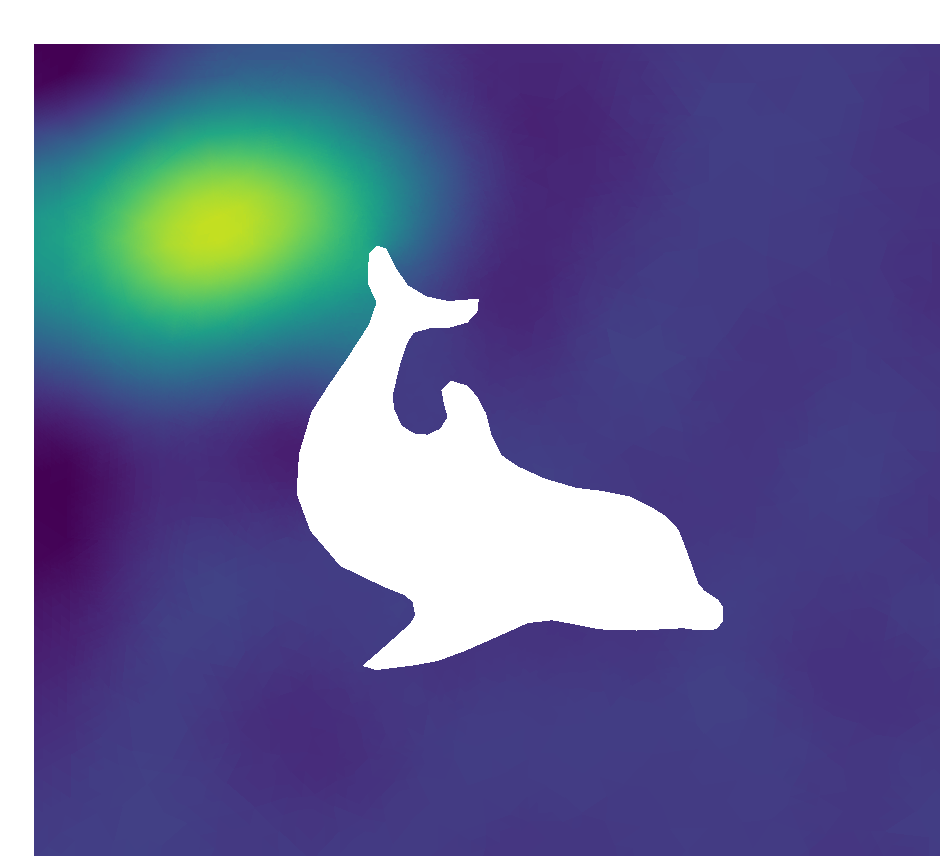}
                \end{subfigure}
            \end{tabular}
        \end{subfigure}\\

  \end{tabular}
    \caption{Solutions for various randomization approaches for a linear advection-diffusion initial condition inverse problem with Gaussian random variables.}
    \label{Linear_PDE}
\end{figure}

\newcommand{\POne}{10}
\newcommand{\PTwo}{100}
\newcommand{\PThree}{1000}
\newcommand{\PFour}{5000}
\def\stripzero#1{\expandafter\stripzerohelp#1}
\def\stripzerohelp#1{\ifx 0#1\expandafter\stripzerohelp\else#1\fi}
\newcommand{\linearPDEWidth}{0.2}
\begin{figure}[h!t!b!]
    \begin{tabular}[h!]{c}
        \rotatebox{90}{\hspace{-0.7cm}RMAP}
        \begin{subfigure}[b]{\linearPDEWidth\textwidth}
            \begin{tabular}{c}
            \textbf{N = \stripzero{\POne}}\\
                \begin{subfigure}{\textwidth}
                    \centering
                    \includegraphics[trim=2.5cm 2.5cm 7.0cm 1.5cm,clip=true,width=\textwidth]{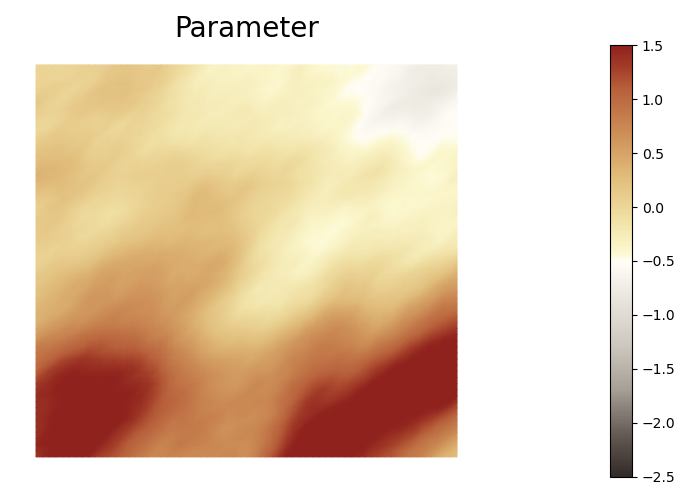}
                \end{subfigure}
            \end{tabular}
        \end{subfigure}

        \begin{subfigure}[b]{\linearPDEWidth\textwidth}
            \begin{tabular}{c}
            \textbf{N = \stripzero{\PTwo}}\\
                \begin{subfigure}{\textwidth}
                    \centering
                    \includegraphics[trim=2.5cm 2.5cm 7.0cm 1.5cm,clip=true,width=\textwidth]{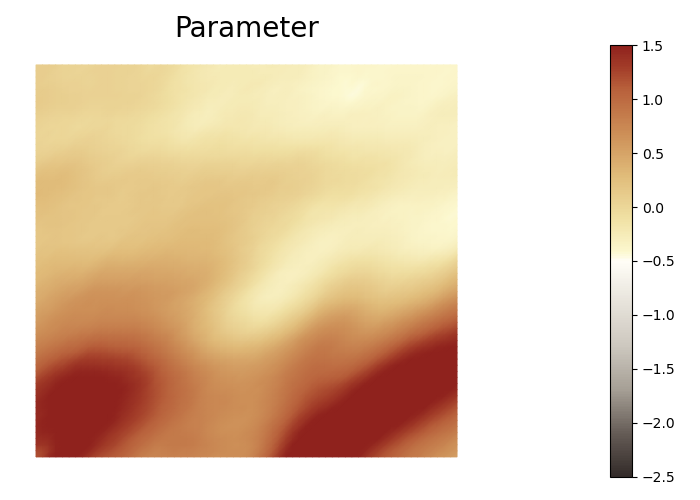}
                \end{subfigure}
            \end{tabular}
        \end{subfigure}
        
        \begin{subfigure}[b]{\linearPDEWidth\textwidth}
            \begin{tabular}{c}
            \textbf{N = \stripzero{\PThree}}\\
                \begin{subfigure}{\textwidth}
                    \centering
                    \includegraphics[trim=2.5cm 2.5cm 7.0cm 1.5cm,clip=true,width=\textwidth]{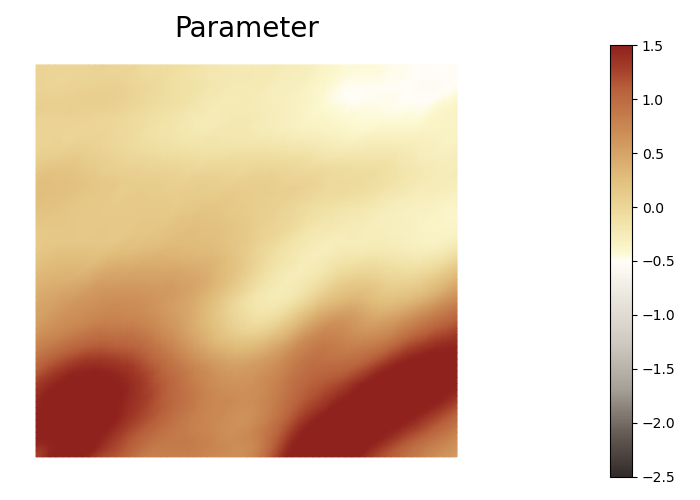}
                \end{subfigure}
            \end{tabular}
        \end{subfigure} 
        
        \begin{subfigure}[b]{\linearPDEWidth\textwidth}
            \begin{tabular}{c}
            \textbf{N = \stripzero{\PFour}}\\
                \begin{subfigure}{\textwidth}
                    \centering
                    \includegraphics[trim=2.5cm 2.5cm 7.0cm 1.5cm,clip=true,width=\textwidth]{pdf_figures/Nonlinear/final_parameter_rmap_\PFour.png}
                \end{subfigure}
            \end{tabular}
        \end{subfigure}\\
        
        \rotatebox{90}{\hspace{-0.4cm}RMA}
        \begin{subfigure}[b]{\linearPDEWidth\textwidth}
            \begin{tabular}{c}
                \begin{subfigure}{\textwidth}
                    \centering
                    \includegraphics[trim=2.5cm 2.5cm 7.0cm 1.5cm,clip=true,width=\textwidth]{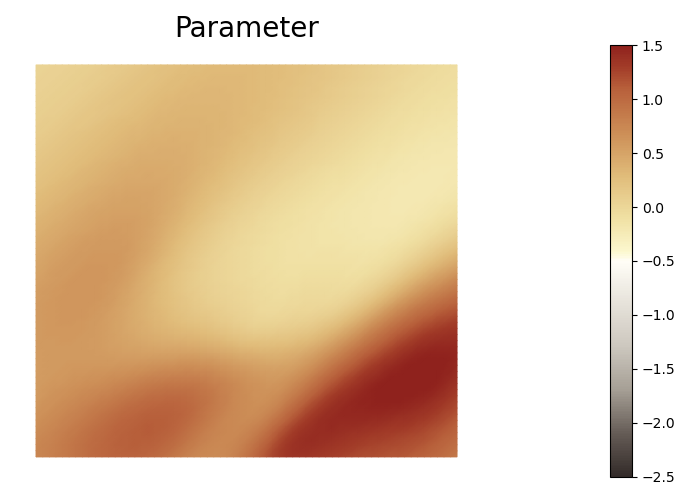}
                \end{subfigure}
            \end{tabular}
        \end{subfigure}

        \begin{subfigure}[b]{\linearPDEWidth\textwidth}
            \begin{tabular}{c}
                \begin{subfigure}{\textwidth}
                    \centering
                    \includegraphics[trim=2.5cm 2.5cm 7.0cm 1.5cm,clip=true,width=\textwidth]{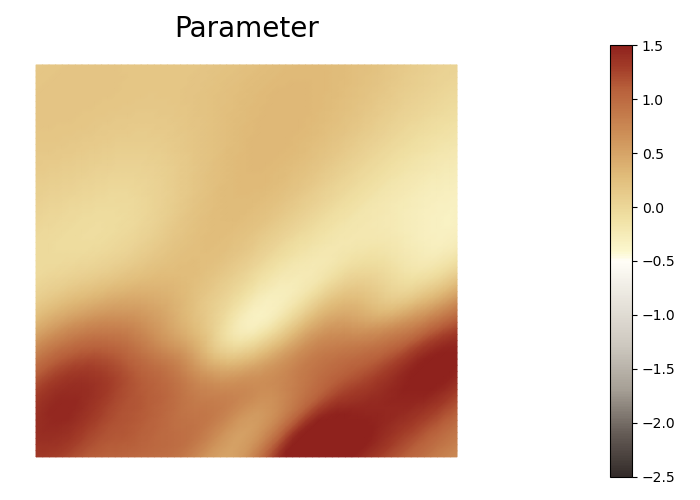}
                \end{subfigure}
            \end{tabular}
        \end{subfigure}
        
        \begin{subfigure}[b]{\linearPDEWidth\textwidth}
            \begin{tabular}{c}
                \begin{subfigure}{\textwidth}
                    \centering
                    \includegraphics[trim=2.5cm 2.5cm 7.0cm 1.5cm,clip=true,width=\textwidth]{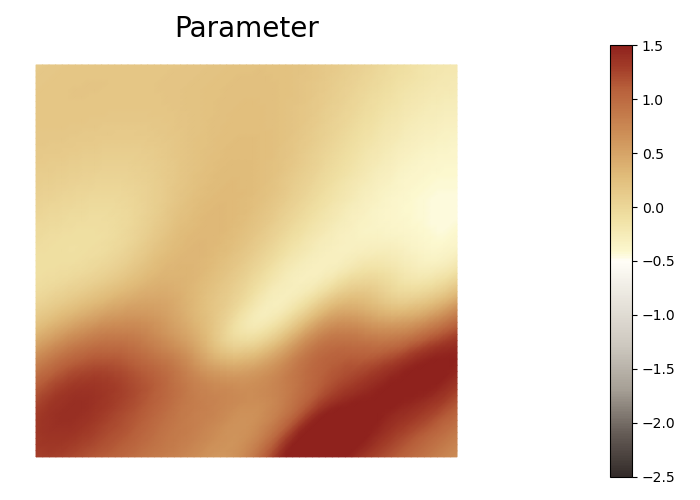}
                \end{subfigure}
            \end{tabular}
        \end{subfigure} 
        
        \begin{subfigure}[b]{\linearPDEWidth\textwidth}
            \begin{tabular}{c}
                \begin{subfigure}{\textwidth}
                    \centering
                    \includegraphics[trim=2.5cm 2.5cm 7.0cm 1.5cm,clip=true,width=\textwidth]{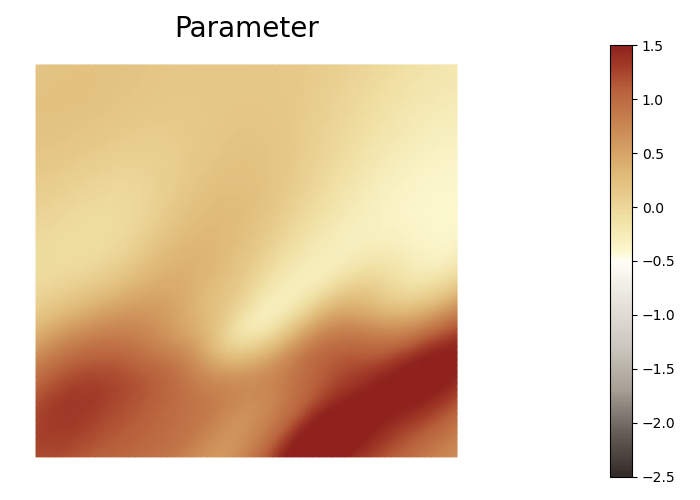}
                \end{subfigure}
            \end{tabular}
        \end{subfigure}\\
        
        \rotatebox{90}{\hspace{-0.9cm}RMA+RMAP}
        \begin{subfigure}[b]{\linearPDEWidth\textwidth}
            \begin{tabular}{c}
                \begin{subfigure}{\textwidth}
                    \centering
                    \includegraphics[trim=2.5cm 2.5cm 7.0cm 1.5cm,clip=true,width=\textwidth]{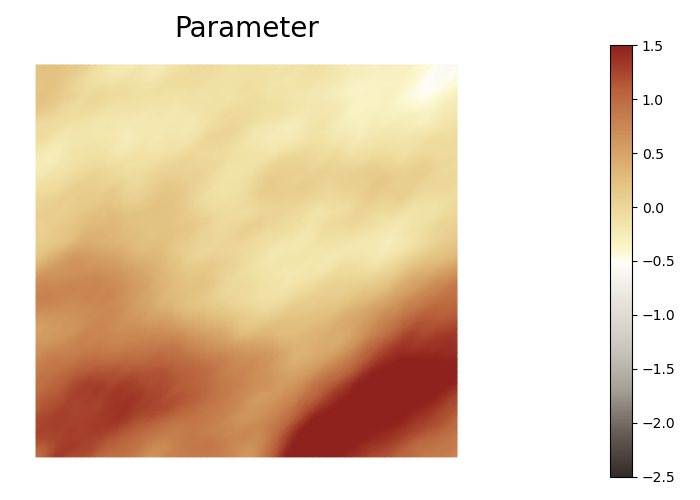}
                \end{subfigure}
            \end{tabular}
        \end{subfigure}

        \begin{subfigure}[b]{\linearPDEWidth\textwidth}
            \begin{tabular}{c}
                \begin{subfigure}{\textwidth}
                    \centering
                    \includegraphics[trim=2.5cm 2.5cm 7.0cm 1.5cm,clip=true,width=\textwidth]{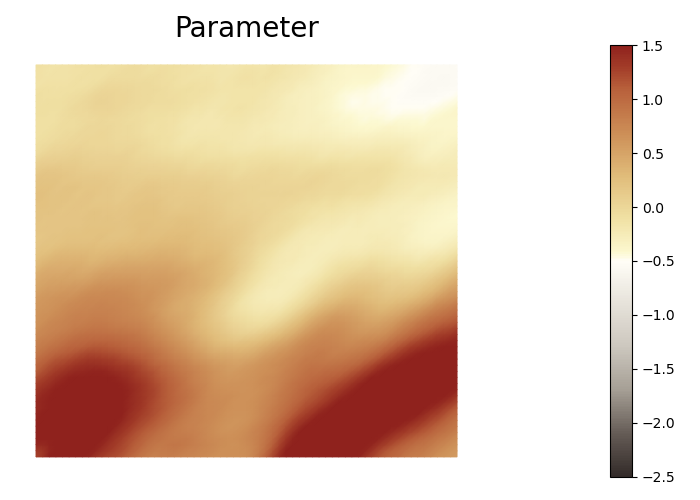}
                \end{subfigure}
            \end{tabular}
        \end{subfigure}
        
        \begin{subfigure}[b]{\linearPDEWidth\textwidth}
            \begin{tabular}{c}
                \begin{subfigure}{\textwidth}
                    \centering
                    \includegraphics[trim=2.5cm 2.5cm 7.0cm 1.5cm,clip=true,width=\textwidth]{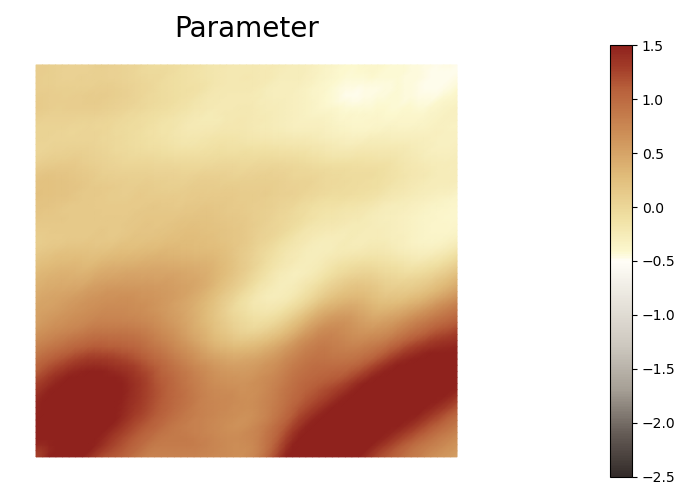}
                \end{subfigure}
            \end{tabular}
        \end{subfigure} 
        
        \begin{subfigure}[b]{\linearPDEWidth\textwidth}
            \begin{tabular}{c}
                \begin{subfigure}{\textwidth}
                    \centering
                    \includegraphics[trim=2.5cm 2.5cm 7.0cm 1.5cm,clip=true,width=\textwidth]{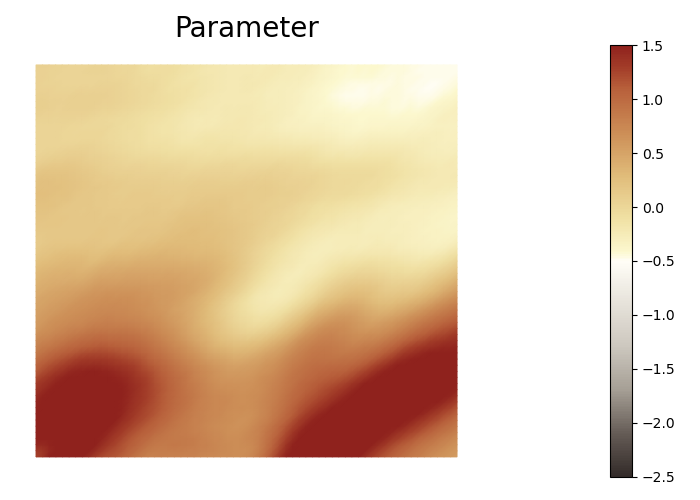}
                \end{subfigure}
            \end{tabular}
        \end{subfigure} \\

        \rotatebox{90}{\hspace{-0.2cm}RS}
        \begin{subfigure}[b]{\linearPDEWidth\textwidth}
            \begin{tabular}{c}
                \begin{subfigure}{\textwidth}
                    \centering
                    \includegraphics[trim=2.5cm 2.5cm 7.0cm 1.5cm,clip=true,width=\textwidth]{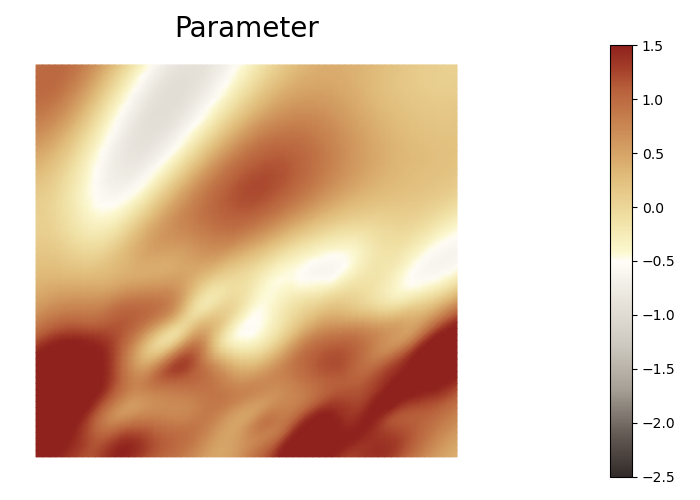}
                \end{subfigure}
            \end{tabular}
        \end{subfigure}

        \begin{subfigure}[b]{\linearPDEWidth\textwidth}
            \begin{tabular}{c}
                \begin{subfigure}{\textwidth}
                    \centering
                    \includegraphics[trim=2.5cm 2.5cm 7.0cm 1.5cm,clip=true,width=\textwidth]{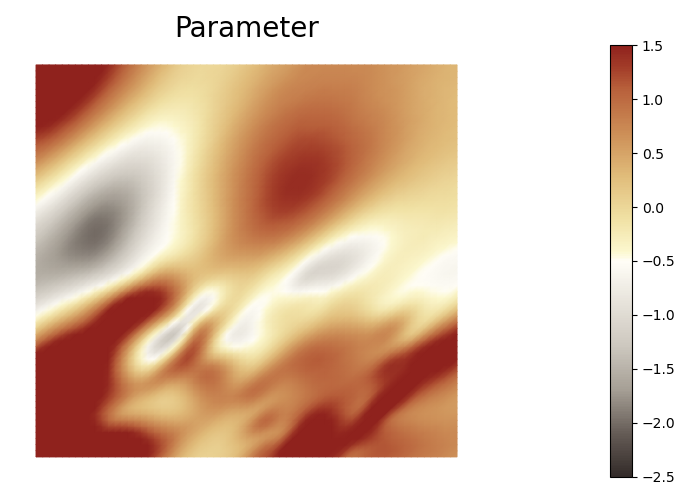}
                \end{subfigure}
            \end{tabular}
        \end{subfigure}
        
        \begin{subfigure}[b]{\linearPDEWidth\textwidth}
            \begin{tabular}{c}
                \begin{subfigure}{\textwidth}
                    \centering
                    \includegraphics[trim=2.5cm 2.5cm 7.0cm 1.5cm,clip=true,width=\textwidth]{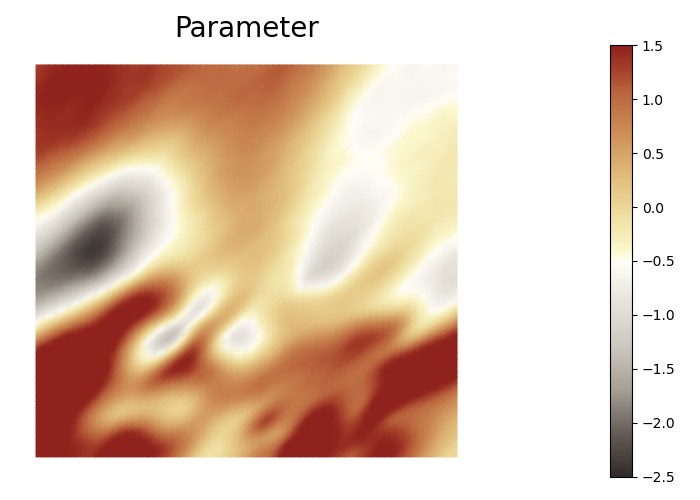}
                \end{subfigure}
            \end{tabular}
        \end{subfigure} 
        
        \begin{subfigure}[b]{\linearPDEWidth\textwidth}
            \begin{tabular}{c}
                \begin{subfigure}{\textwidth}
                    \centering
                    \includegraphics[trim=2.5cm 2.5cm 7.0cm 1.5cm,clip=true,width=\textwidth]{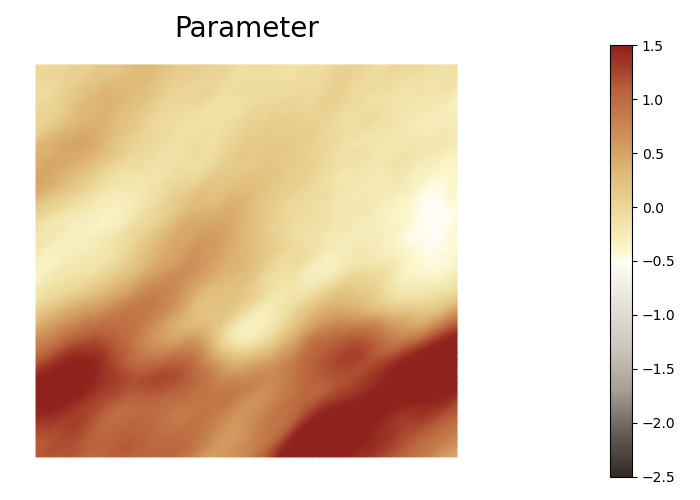}
                \end{subfigure}
            \end{tabular}
        \end{subfigure}\\

        \rotatebox{90}{\hspace{-0.5cm}ENKF}
        \begin{subfigure}[b]{\linearPDEWidth\textwidth}
            \begin{tabular}{c}
                \begin{subfigure}{\textwidth}
                    \centering
                    \includegraphics[trim=2.5cm 2.5cm 7.0cm 1.5cm,clip=true,width=\textwidth]{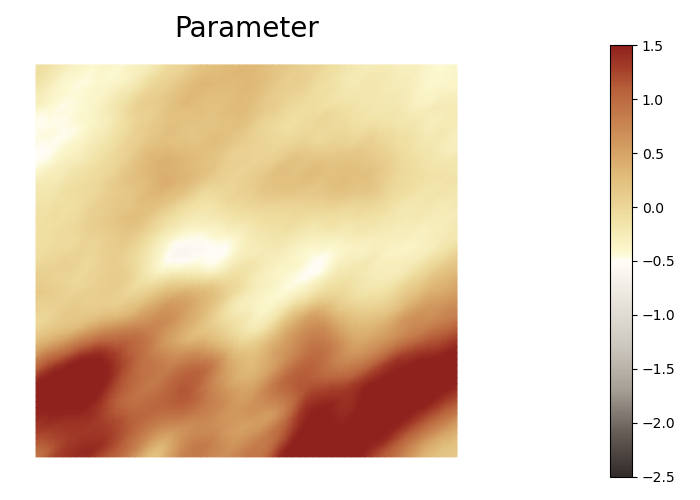}
                \end{subfigure}
            \end{tabular}
        \end{subfigure}

        \begin{subfigure}[b]{\linearPDEWidth\textwidth}
            \begin{tabular}{c}
                \begin{subfigure}{\textwidth}
                    \centering
                    \includegraphics[trim=2.5cm 2.5cm 7.0cm 1.5cm,clip=true,width=\textwidth]{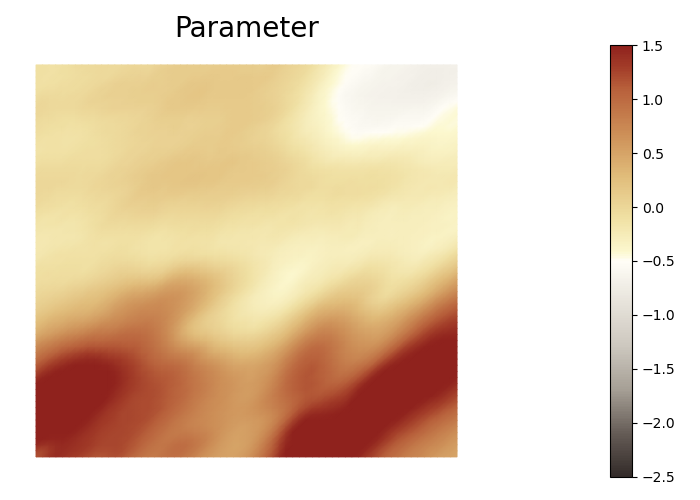}
                \end{subfigure}
            \end{tabular}
        \end{subfigure}
        
        \begin{subfigure}[b]{\linearPDEWidth\textwidth}
            \begin{tabular}{c}
                \begin{subfigure}{\textwidth}
                    \centering
                    \includegraphics[trim=2.5cm 2.5cm 7.0cm 1.5cm,clip=true,width=\textwidth]{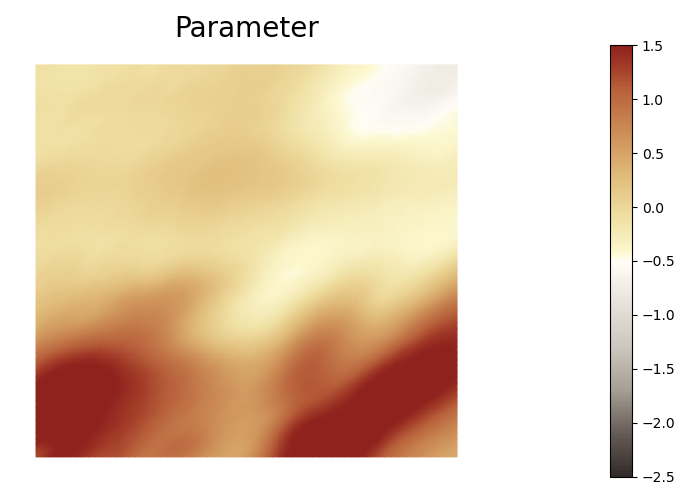}
                \end{subfigure}
            \end{tabular}
        \end{subfigure} 
        
        \begin{subfigure}[b]{\linearPDEWidth\textwidth}
            \begin{tabular}{c}
                \begin{subfigure}{\textwidth}
                    \centering
                    \includegraphics[trim=2.5cm 2.5cm 7.0cm 1.5cm,clip=true,width=\textwidth]{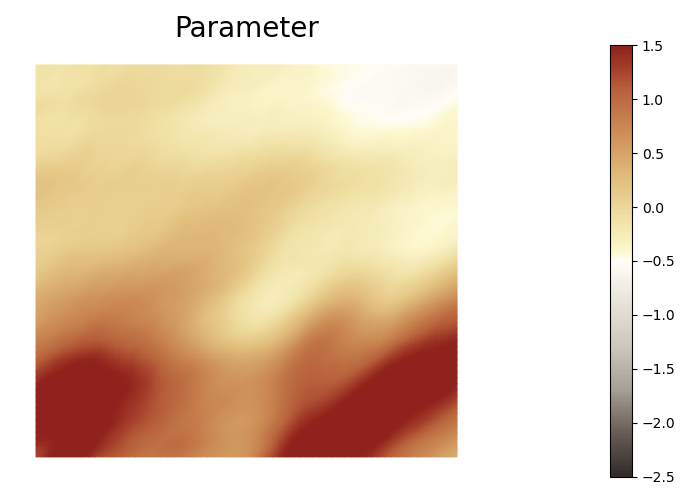}
                \end{subfigure}
            \end{tabular}
        \end{subfigure}\\
        
        \rotatebox{90}{\hspace{-0.3cm}ALL}
        \begin{subfigure}[b]{\linearPDEWidth\textwidth}
            \begin{tabular}{c}
                \begin{subfigure}{\textwidth}
                    \centering
                    \includegraphics[trim=2.5cm 2.5cm 7.0cm 1.5cm,clip=true,width=\textwidth]{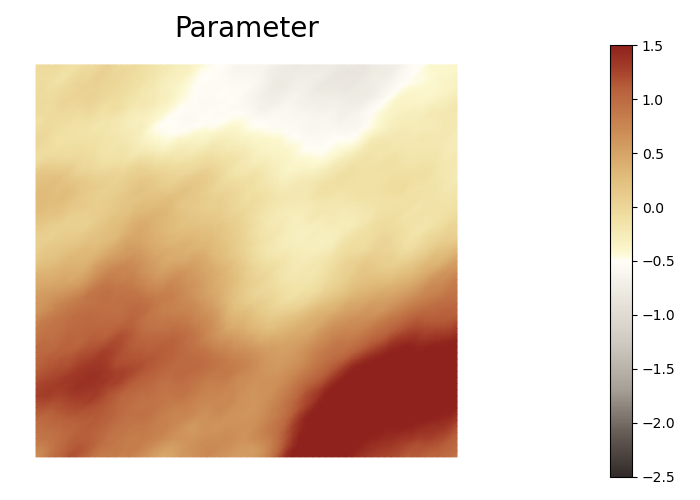}
                \end{subfigure}
            \end{tabular}
        \end{subfigure}

        \begin{subfigure}[b]{\linearPDEWidth\textwidth}
            \begin{tabular}{c}
                \begin{subfigure}{\textwidth}
                    \centering
                    \includegraphics[trim=2.5cm 2.5cm 7.0cm 1.5cm,clip=true,width=\textwidth]{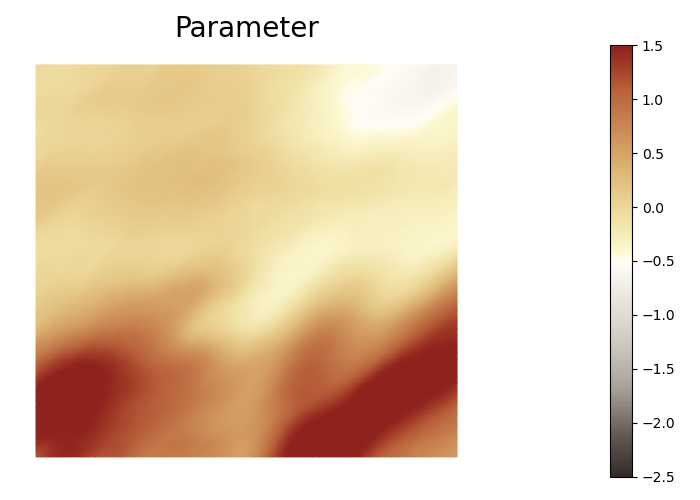}
                \end{subfigure}
            \end{tabular}
        \end{subfigure}
        
        \begin{subfigure}[b]{\linearPDEWidth\textwidth}
            \begin{tabular}{c}
                \begin{subfigure}{\textwidth}
                    \centering
                    \includegraphics[trim=2.5cm 2.5cm 7.0cm 1.5cm,clip=true,width=\textwidth]{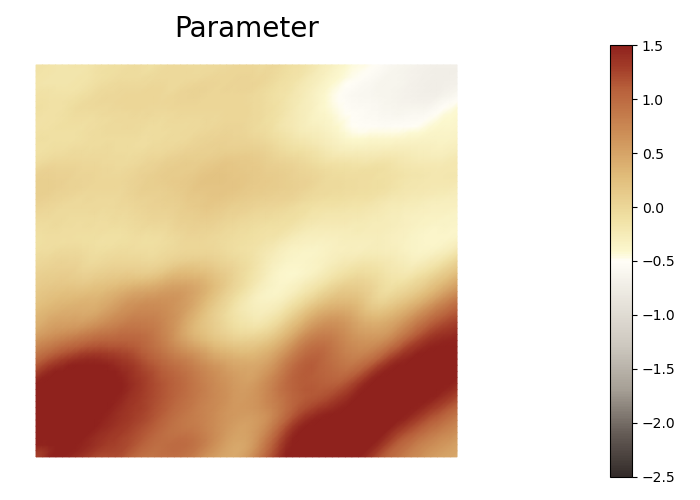}
                \end{subfigure}
            \end{tabular}
        \end{subfigure} 
        
        \begin{subfigure}[b]{\linearPDEWidth\textwidth}
            \begin{tabular}{c}
                \begin{subfigure}{\textwidth}
                    \centering
                    \includegraphics[trim=2.5cm 2.5cm 7.0cm 1.5cm,clip=true,width=\textwidth]{pdf_figures/Nonlinear/final_parameter_all_\PFour.png}
                \end{subfigure}
            \end{tabular}
        \end{subfigure}\\

  \end{tabular}
    \caption{Solutions for various randomization approaches for a nonlinear diffusion parameter inversion problem with Gaussian random variables.}
    \label{Nonlinear_PDE}
\end{figure} 

\newcommand{\nonlinearWidth}{0.2}
\begin{figure}[h!t!b!]
    \begin{tabular}[h!]{c}
        \rotatebox{90}{\hspace{-0.7cm}RMAP}
        \begin{subfigure}[b]{\nonlinearWidth\textwidth}
            \begin{tabular}{c}
            \textbf{N = \stripzero{\DNOne}}\\
                \begin{subfigure}{\textwidth}
                    \centering
                    \includegraphics[trim=2.5cm 2.5cm 7.0cm 1.5cm,clip=true,width=\textwidth]{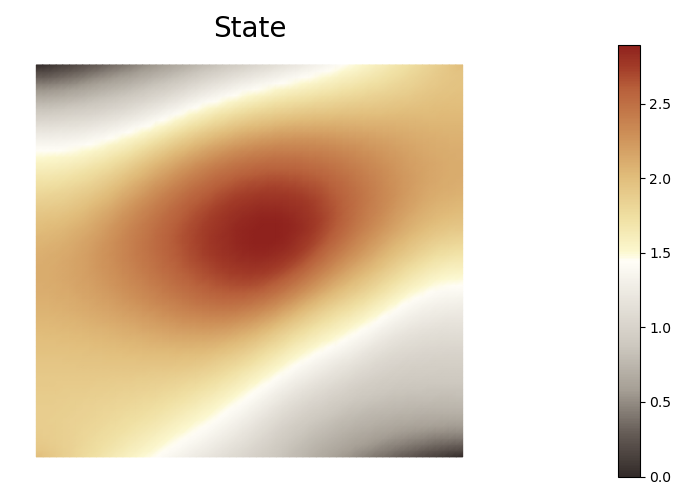}
                \end{subfigure}
            \end{tabular}
        \end{subfigure}

        \begin{subfigure}[b]{\nonlinearWidth\textwidth}
            \begin{tabular}{c}
            \textbf{N = \stripzero{\PTwo}}\\
                \begin{subfigure}{\textwidth}
                    \centering
                    \includegraphics[trim=2.5cm 2.5cm 7.0cm 1.5cm,clip=true,width=\textwidth]{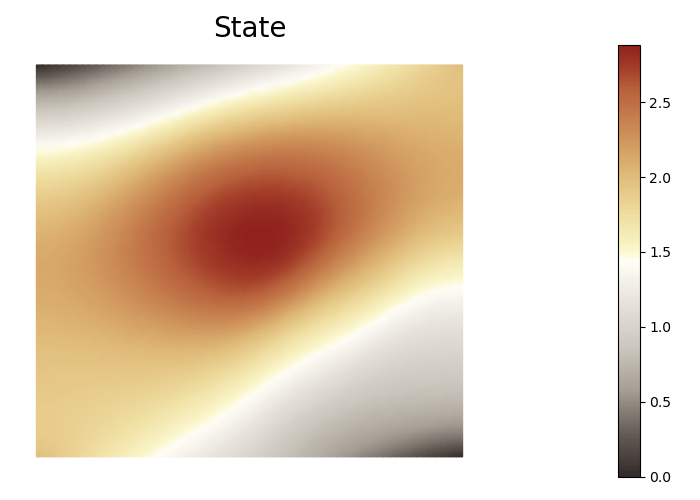}
                \end{subfigure}
            \end{tabular}
        \end{subfigure}
        
        \begin{subfigure}[b]{\nonlinearWidth\textwidth}
            \begin{tabular}{c}
            \textbf{N = \stripzero{\PThree}}\\
                \begin{subfigure}{\textwidth}
                    \centering
                    \includegraphics[trim=2.5cm 2.5cm 7.0cm 1.5cm,clip=true,width=\textwidth]{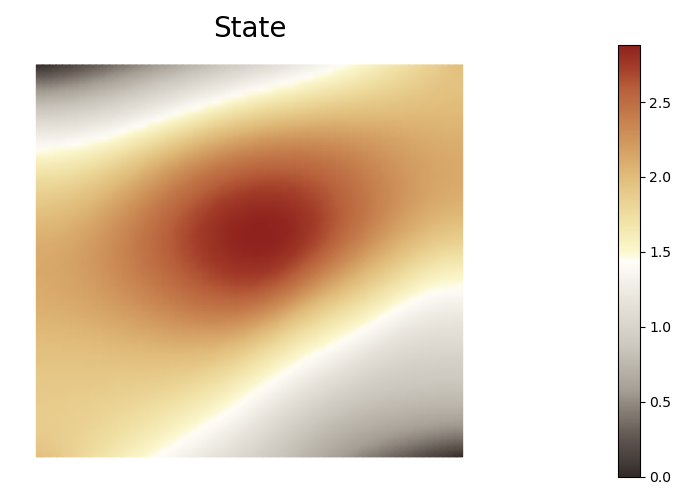}
                \end{subfigure}
            \end{tabular}
        \end{subfigure} 
        
        \begin{subfigure}[b]{\nonlinearWidth\textwidth}
            \begin{tabular}{c}
            \textbf{N = \stripzero{\DNFour}}\\
                \begin{subfigure}{\textwidth}
                    \centering
                    \includegraphics[trim=2.5cm 2.5cm 7.0cm 1.5cm,clip=true,width=\textwidth]{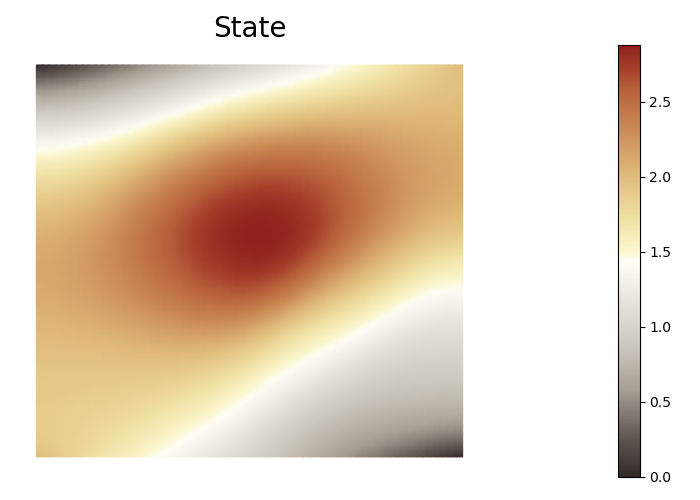}
                \end{subfigure}
            \end{tabular}
        \end{subfigure}\\
        
        \rotatebox{90}{\hspace{-0.4cm}RMA}
        \begin{subfigure}[b]{\nonlinearWidth\textwidth}
            \begin{tabular}{c}
                \begin{subfigure}{\textwidth}
                    \centering
                    \includegraphics[trim=2.5cm 2.5cm 7.0cm 1.5cm,clip=true,width=\textwidth]{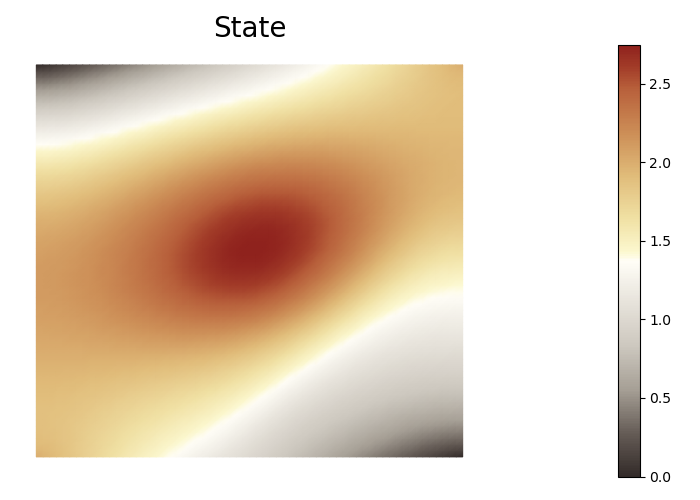}
                \end{subfigure}
            \end{tabular}
        \end{subfigure}

        \begin{subfigure}[b]{\nonlinearWidth\textwidth}
            \begin{tabular}{c}
                \begin{subfigure}{\textwidth}
                    \centering
                    \includegraphics[trim=2.5cm 2.5cm 7.0cm 1.5cm,clip=true,width=\textwidth]{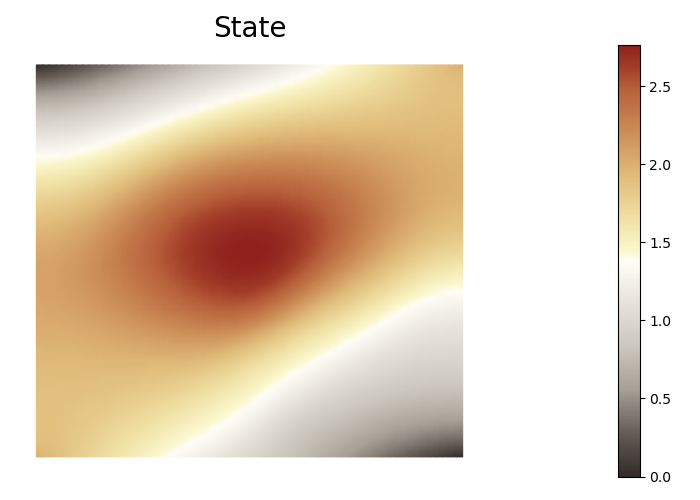}
                \end{subfigure}
            \end{tabular}
        \end{subfigure}
        
        \begin{subfigure}[b]{\nonlinearWidth\textwidth}
            \begin{tabular}{c}
                \begin{subfigure}{\textwidth}
                    \centering
                    \includegraphics[trim=2.5cm 2.5cm 7.0cm 1.5cm,clip=true,width=\textwidth]{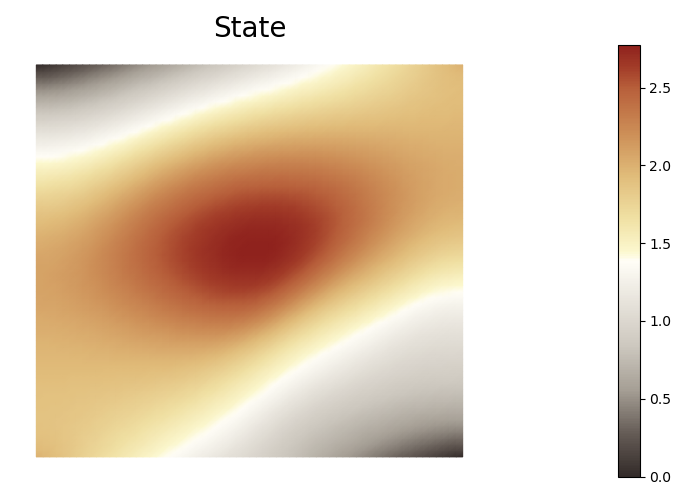}
                \end{subfigure}
            \end{tabular}
        \end{subfigure} 
        
        \begin{subfigure}[b]{\nonlinearWidth\textwidth}
            \begin{tabular}{c}
                \begin{subfigure}{\textwidth}
                    \centering
                    \includegraphics[trim=2.5cm 2.5cm 7.0cm 1.5cm,clip=true,width=\textwidth]{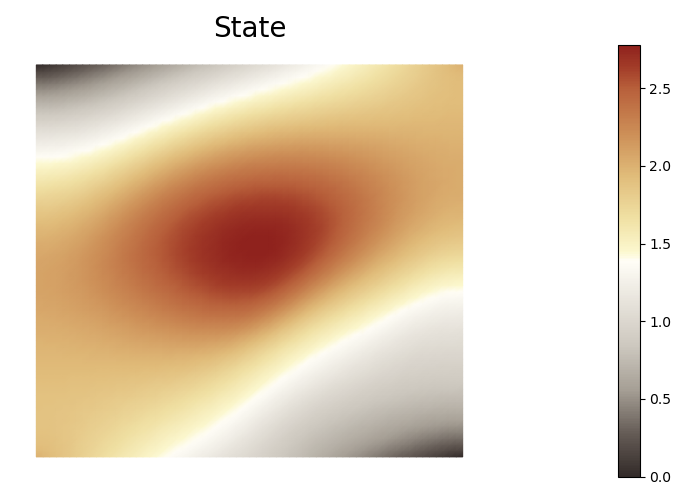}
                \end{subfigure}
            \end{tabular}
        \end{subfigure}\\
        
        \rotatebox{90}{\hspace{-0.9cm}RMA+RMAP}
        \begin{subfigure}[b]{\nonlinearWidth\textwidth}
            \begin{tabular}{c}
                \begin{subfigure}{\textwidth}
                    \centering
                    \includegraphics[trim=2.5cm 2.5cm 7.0cm 1.5cm,clip=true,width=\textwidth]{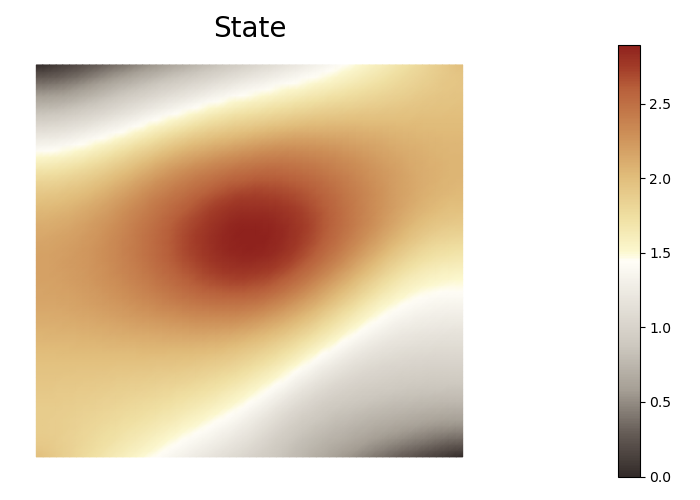}
                \end{subfigure}
            \end{tabular}
        \end{subfigure}

        \begin{subfigure}[b]{\nonlinearWidth\textwidth}
            \begin{tabular}{c}
                \begin{subfigure}{\textwidth}
                    \centering
                    \includegraphics[trim=2.5cm 2.5cm 7.0cm 1.5cm,clip=true,width=\textwidth]{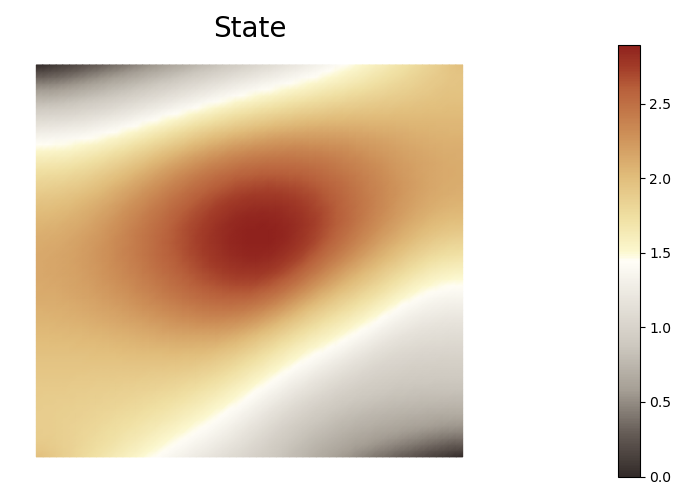}
                \end{subfigure}
            \end{tabular}
        \end{subfigure}
        
        \begin{subfigure}[b]{\nonlinearWidth\textwidth}
            \begin{tabular}{c}
                \begin{subfigure}{\textwidth}
                    \centering
                    \includegraphics[trim=2.5cm 2.5cm 7.0cm 1.5cm,clip=true,width=\textwidth]{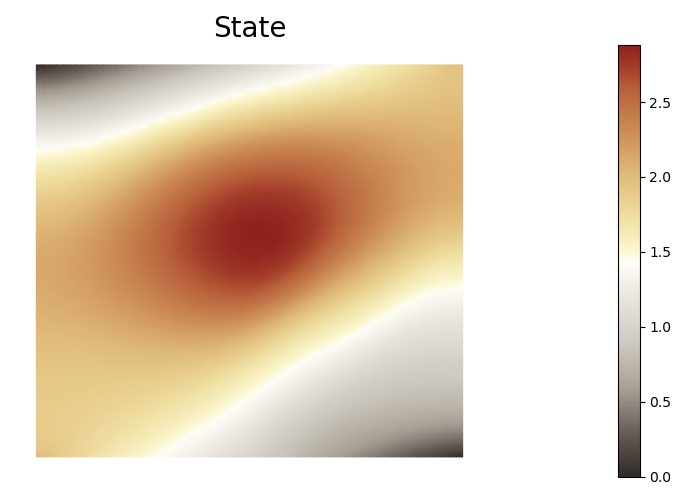}
                \end{subfigure}
            \end{tabular}
        \end{subfigure} 
        
        \begin{subfigure}[b]{\nonlinearWidth\textwidth}
            \begin{tabular}{c}
                \begin{subfigure}{\textwidth}
                    \centering
                    \includegraphics[trim=2.5cm 2.5cm 7.0cm 1.5cm,clip=true,width=\textwidth]{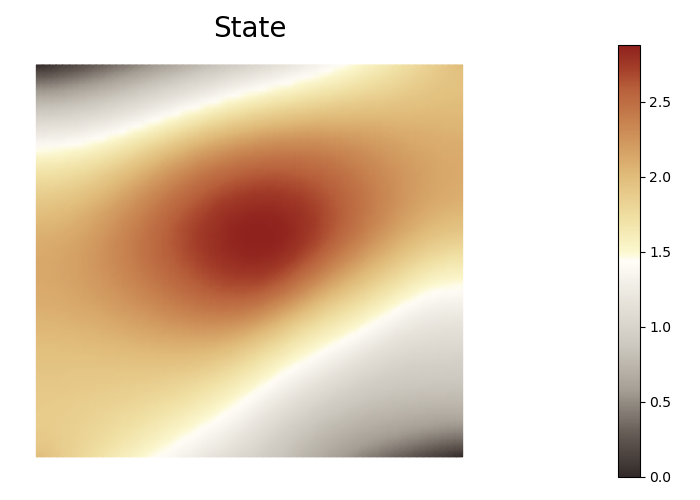}
                \end{subfigure}
            \end{tabular}
        \end{subfigure} \\

        \rotatebox{90}{\hspace{-0.2cm}RS}
        \begin{subfigure}[b]{\nonlinearWidth\textwidth}
            \begin{tabular}{c}
                \begin{subfigure}{\textwidth}
                    \centering
                    \includegraphics[trim=2.5cm 2.5cm 7.0cm 1.5cm,clip=true,width=\textwidth]{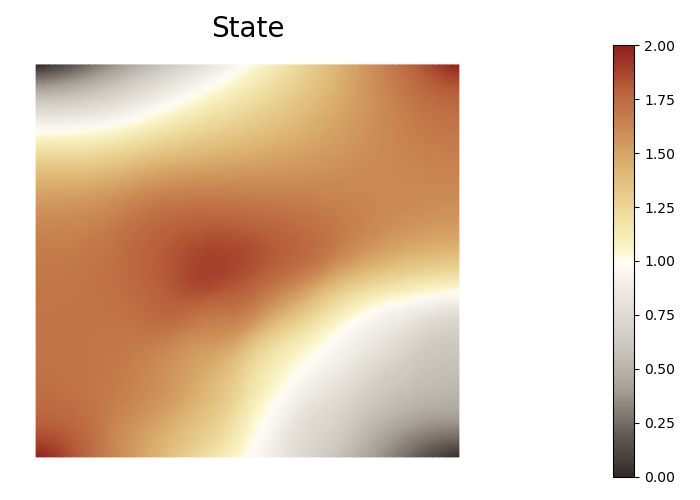}
                \end{subfigure}
            \end{tabular}
        \end{subfigure}

        \begin{subfigure}[b]{\nonlinearWidth\textwidth}
            \begin{tabular}{c}
                \begin{subfigure}{\textwidth}
                    \centering
                    \includegraphics[trim=2.5cm 2.5cm 7.0cm 1.5cm,clip=true,width=\textwidth]{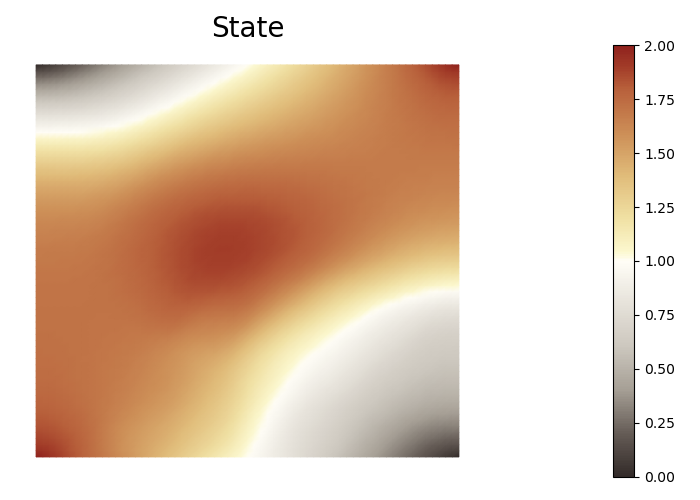}
                \end{subfigure}
            \end{tabular}
        \end{subfigure}
        
        \begin{subfigure}[b]{\nonlinearWidth\textwidth}
            \begin{tabular}{c}
                \begin{subfigure}{\textwidth}
                    \centering
                    \includegraphics[trim=2.5cm 2.5cm 7.0cm 1.5cm,clip=true,width=\textwidth]{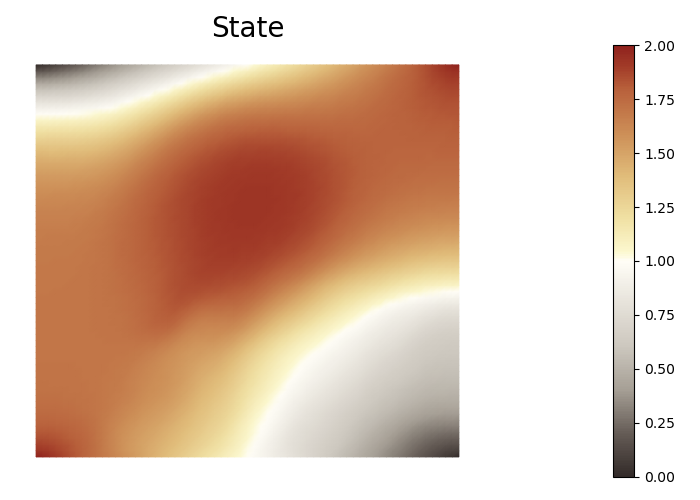}
                \end{subfigure}
            \end{tabular}
        \end{subfigure} 
        
        \begin{subfigure}[b]{\nonlinearWidth\textwidth}
            \begin{tabular}{c}
                \begin{subfigure}{\textwidth}
                    \centering
                    \includegraphics[trim=2.5cm 2.5cm 7.0cm 1.5cm,clip=true,width=\textwidth]{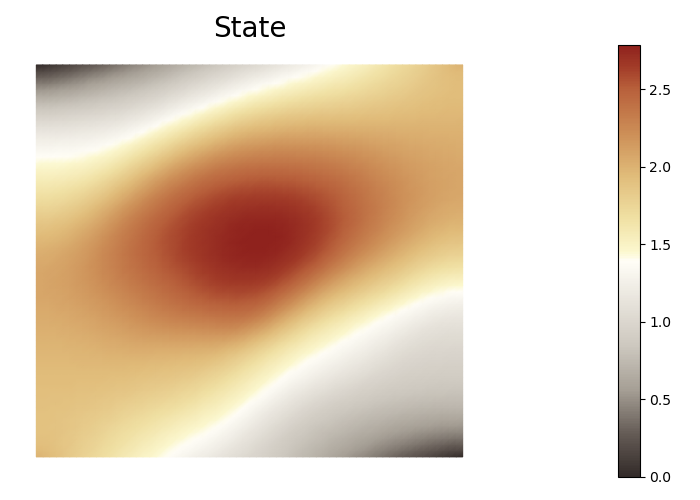}
                \end{subfigure}
            \end{tabular}
        \end{subfigure}\\

        \rotatebox{90}{\hspace{-0.5cm}ENKF}
        \begin{subfigure}[b]{\nonlinearWidth\textwidth}
            \begin{tabular}{c}
                \begin{subfigure}{\textwidth}
                    \centering
                    \includegraphics[trim=2.5cm 2.5cm 7.0cm 1.5cm,clip=true,width=\textwidth]{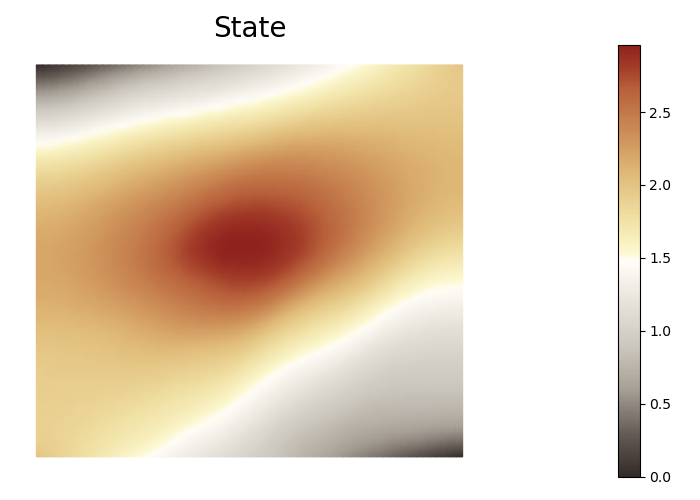}
                \end{subfigure}
            \end{tabular}
        \end{subfigure}

        \begin{subfigure}[b]{\nonlinearWidth\textwidth}
            \begin{tabular}{c}
                \begin{subfigure}{\textwidth}
                    \centering
                    \includegraphics[trim=2.5cm 2.5cm 7.0cm 1.5cm,clip=true,width=\textwidth]{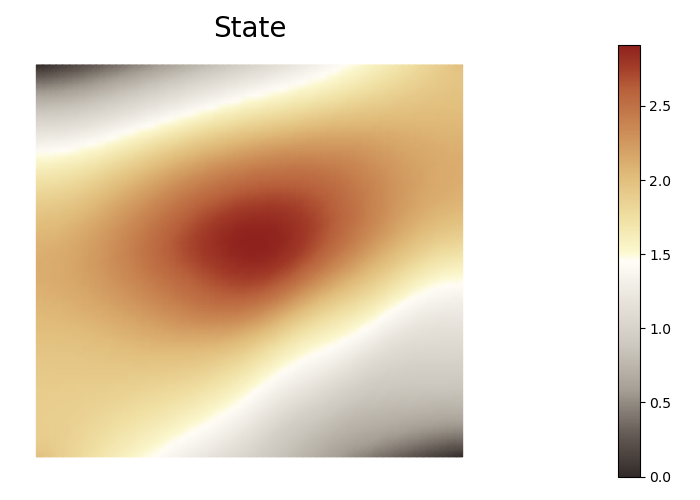}
                \end{subfigure}
            \end{tabular}
        \end{subfigure}
        
        \begin{subfigure}[b]{\nonlinearWidth\textwidth}
            \begin{tabular}{c}
                \begin{subfigure}{\textwidth}
                    \centering
                    \includegraphics[trim=2.5cm 2.5cm 7.0cm 1.5cm,clip=true,width=\textwidth]{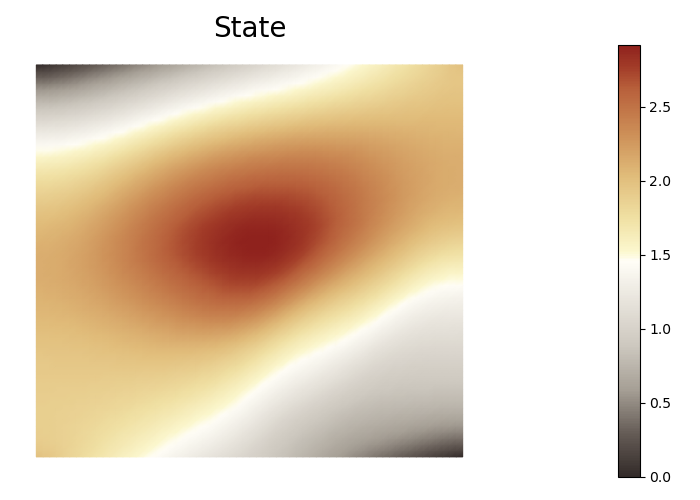}
                \end{subfigure}
            \end{tabular}
        \end{subfigure} 
        
        \begin{subfigure}[b]{\nonlinearWidth\textwidth}
            \begin{tabular}{c}
                \begin{subfigure}{\textwidth}
                    \centering
                    \includegraphics[trim=2.5cm 2.5cm 7.0cm 1.5cm,clip=true,width=\textwidth]{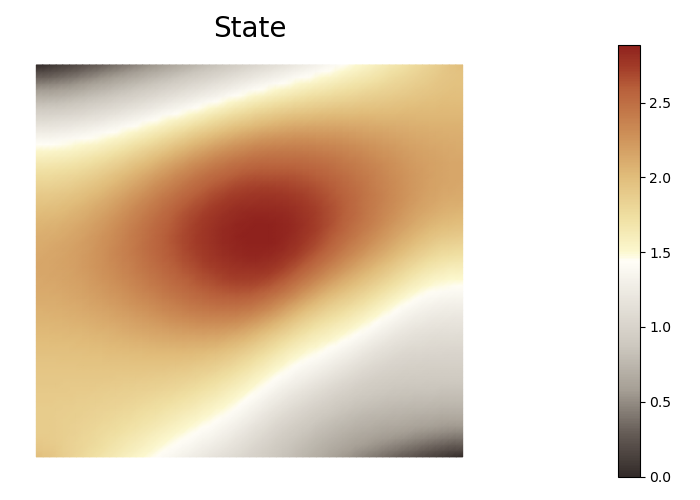}
                \end{subfigure}
            \end{tabular}
        \end{subfigure}\\
        
        \rotatebox{90}{\hspace{-0.3cm}ALL}
        \begin{subfigure}[b]{\nonlinearWidth\textwidth}
            \begin{tabular}{c}
                \begin{subfigure}{\textwidth}
                    \centering
                    \includegraphics[trim=2.5cm 2.5cm 7.0cm 1.5cm,clip=true,width=\textwidth]{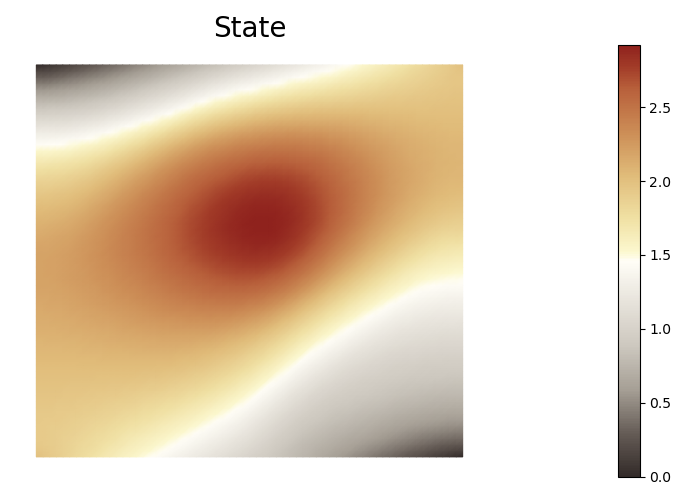}
                \end{subfigure}
            \end{tabular}
        \end{subfigure}

        \begin{subfigure}[b]{\nonlinearWidth\textwidth}
            \begin{tabular}{c}
                \begin{subfigure}{\textwidth}
                    \centering
                    \includegraphics[trim=2.5cm 2.5cm 7.0cm 1.5cm,clip=true,width=\textwidth]{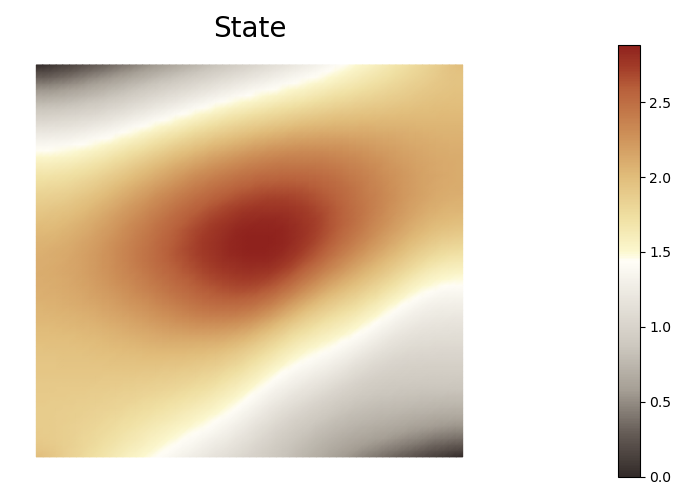}
                \end{subfigure}
            \end{tabular}
        \end{subfigure}
        
        \begin{subfigure}[b]{\nonlinearWidth\textwidth}
            \begin{tabular}{c}
                \begin{subfigure}{\textwidth}
                    \centering
                    \includegraphics[trim=2.5cm 2.5cm 7.0cm 1.5cm,clip=true,width=\textwidth]{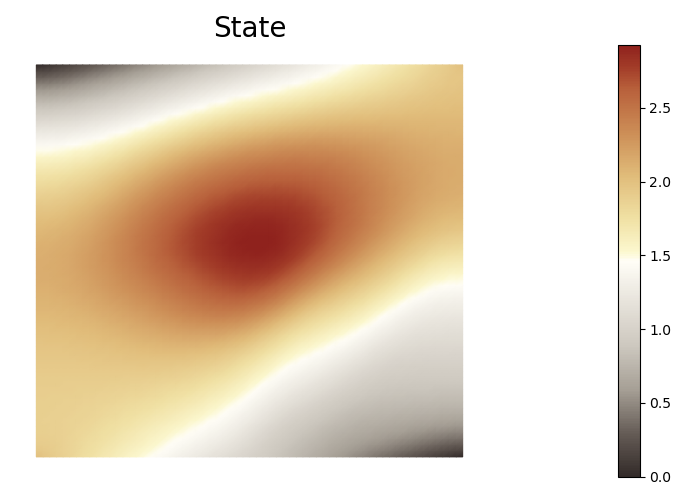}
                \end{subfigure}
            \end{tabular}
        \end{subfigure} 
        
        \begin{subfigure}[b]{\nonlinearWidth\textwidth}
            \begin{tabular}{c}
                \begin{subfigure}{\textwidth}
                    \centering
                    \includegraphics[trim=2.5cm 2.5cm 7.0cm 1.5cm,clip=true,width=\textwidth]{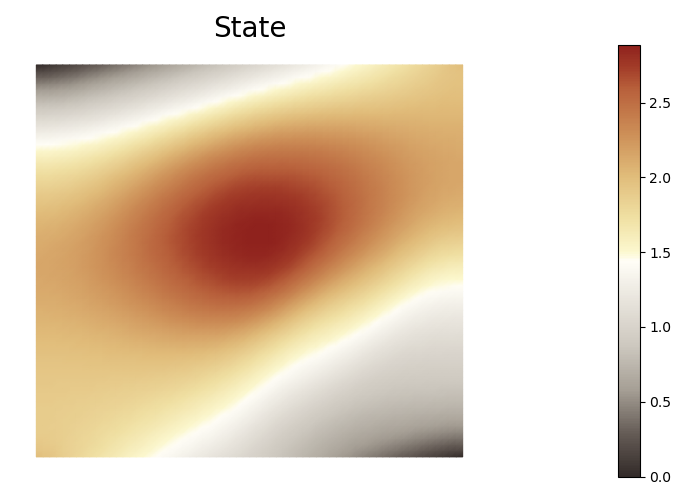}
                \end{subfigure}
            \end{tabular}
        \end{subfigure}\\

  \end{tabular}
    \caption{Reconstructed state for various randomization approaches for a nonlinear diffusion problem. Even for the right sketching 
    method which did not give good parameter reconstructions 
    until $N = 10000$ samples, the state in the lower half of the
    domain, where the 100 measurements are taken, look similar to all the other methods. }
    \label{Nonlinear_PDE_state}
\end{figure} 

\FloatBarrier


\newpage

\bibliography{ceo_new}

\end{document}